\newcommand{\sd}{d_\mathrm{Sen}}
\newcommand{\htd}{d_\mathrm{HT}}
\newcommand{\drd}{d_\mathrm{dR}}
\newcommand{\bdr}{B_{\mathrm{dR}}}
\newcommand{\bht}{B_{\mathrm{HT}}}
\newcommand{\bdrk}[1]{\bdr^+/t^{#1}\bdr^+}
\newcommand{\br}{\operatorname{br}}
\newcommand{\Frac}{\operatorname{Frac}}
\newcommand{\End}{\operatorname{End}}
\newcommand{\supp}{\operatorname{Supp}}
\newcommand{\spec}{\operatorname{Spec}}
\newcommand{\spm}{\operatorname{Spm}}
\newcommand{\gal}{\operatorname{Gal}}
\newcommand{\rank}{\operatorname{rank}}
\newcommand{\ind}{\operatorname{Ind}}
\newcommand{\tr}{\operatorname{Tr}}
\newcommand{\im}{\operatorname{im}}
\newcommand{\coim}{\operatorname{coim}}
\newcommand{\coker}{\operatorname{coker}}
\newcommand{\GL}{\mathrm{GL}}
\newcommand{\surj}{\twoheadrightarrow}
\newcommand{\inj}{\hookrightarrow}
\newcommand{\set}[1]{\left\lbrace#1\right\rbrace}
\newcommand{\ol}[1]{\overline{#1}}
\newcommand{\ses}[3]{0\rightarrow #1 \rightarrow #2 \rightarrow #3 \rightarrow 0}
\newcommand{\iso}{\stackrel{\sim}{\rightarrow}}
\renewcommand{\hat}[1]{\widehat{#1}}
\renewcommand{\(}{\left(}
\renewcommand{\)}{\right)}
\newtheorem{thm}{Theorem}[section]
\newtheorem{thmn}{Main Theorem}
\newtheorem{lem}[thm]{Lemma}
\newtheorem{prop}[thm]{Proposition}
\newtheorem{coro}[thm]{Corollary}
\newtheorem{claim}[thm]{Claim}
\theoremstyle{remark}
\newtheorem{remark}[thm]{Remark}
\theoremstyle{definition}
\newtheorem{defin}[thm]{Definition}
\title{Interpolating Hodge-Tate and de Rham Periods}
\author{Shrenik Shah}
\address{Department of Mathematics, Columbia University, New York, NY 10027}
\email{snshah@math.columbia.edu}
\thanks{S.S.\ has been supported through NSF grants DGE-1148900, DMS-1401967, and the Department of Defense (DoD) National Defense Science \& Engineering Graduate Fellowship (NDSEG) Program.}
\begin{document}

\begin{abstract}

We study the interpolation of Hodge-Tate and de Rham periods over rigid analytic families of Galois representations.  Given a Galois representation on a coherent locally free sheaf over a reduced rigid space and a bounded range of weights, we obtain a stratification of this space by locally closed subvarieties where the Hodge-Tate and bounded de Rham periods (within this range) as well as 1-cocycles form locally free sheaves.  We also prove strong vanishing results for higher cohomology.  Together, these results give a simultaneous generalization of results of Sen, Kisin, and Berger-Colmez.  The main result has been applied by Varma in her proof of geometricity of Harris-Lan-Taylor-Thorne Galois representations as well as in several works of Ding.

\end{abstract}

\maketitle

\section{Introduction} \label{sec:ipintro}

\subsection{Motivation}

Fontaine introduced period rings including $\bht$, the ring of Hodge-Tate periods, and $\bdr$, the ring of de Rham periods, in order to algebraically detect and study geometric properties of $p$-adic representations.  Let $B$ be a ring of periods.  If $K$ is a finite extension of $\mathbf{Q}_p$ and $V$ is a finite dimensional vector space over $\mathbf{Q}_p$ with a continuous linear action of $G_K$, one defines $D_B(V) = H^0(G_K,B \otimes_{\mathbf{Q}_p} V)$ to be the $B^{G_K}$-vector space of $B$-periods of $V$.  For the ring $B=\bdr$, we have $B^{G_K} = K$.

The dimension $\dim_K D_B(V)$ is conjecturally involved in detecting whether $V$ may be a restriction of a global Galois representation ``arising from geometry,'' i.e.\ appearing inside the \'etale cohomology of a variety over a number field.  It is therefore important to be able to calculate $\dim_K D_B(V)$.  A result of Tsuji achieves this for $V$ arising from geometry, but in many cases, the representation $V$ is constructed via the use of congruences.  In such situations, $V$ can often be realized as the specialization of a $p$-adic family.  Here, we study the variation of $\dim_K D_B(V)$ as $V$ moves in such a family.

We work with a $\mathbf{Q}_p$-Banach algebra $\mathfrak{R}$ and a finite free module $\mathfrak{M}$ over it, equipped with a continuous $\mathfrak{R}$-linear action of $G_K$.  We study $D_B(\mathfrak{M}) = H^0(G_K,B \widehat{\otimes}_{\mathbf{Q}_p} \mathfrak{M}),$ which has the structure of a module over $B^{G_K} \otimes_{\mathbf{Q}_p} \mathfrak{R}$.  (Here $\hat{\otimes}$ denotes completed tensor product.)

The first understanding of $D_B(\mathfrak{M})$ was achieved for $B = \bht$ in the seminal work of Sen \cite{sen,sen2}.  Sen attaches $p$-adically varying Hodge-Tate-Sen weights to the family $\mathfrak{M}$.  These are generalized eigenvectors of an endomorphism $\phi$, called the Sen operator, which acts on a finite $\mathfrak{R}$-module $\mathfrak{E}$.  Since $\phi$ is not necessarily semi-simple, a continuous $G_K$-representation on a $\mathbf{Q}_p$-vector space $V$ with a multiple integral Hodge-Tate-Sen weight $k$ need not have a multiple Hodge-Tate weight $k$.

We work instead with $B = t^k\bdrk{\ell}$ for $k < \ell \in \mathbf{Z}$, a ``bounded'' de Rham period ring.  If the Hodge-Tate-Sen weights do not vary or if considering a specialization to a maximal point of $\mathfrak{R}$, we can deduce statements for $B= \bdr$ as well.

The primary impetus for this study is the growing need for a refined understanding of the variation of periods in families.  Our main results are suited to, for example, studying the Galois representations constructed by Harris, Lan, Taylor, and Thorne \cite{hltt} -- this application has already been carried out by Varma \cite{varma}, who puts their representations into $p$-adic families.  Ding \cite{ding1,ding2,ding3} has studied partially de Rham families using our results in order to prove Breuil's locally analytic socle conjecture for $\GL_2$ as well as formulas and identifications for $\mathcal{L}$-invariants of Fontaine-Mazur and Breuil.

Possible future applications could include generalizations to higher rank groups of the proof of properness of the eigencurve by Diao-Liu \cite{dl}, which compared variation of a de Rham period and a crystalline period over the eigencurve.  Another is to extensions constructed by specializing a $p$-adic family; these arise, for example, in the work of Skinner and Urban \cite{su}.  We develop one method for proving potential semi-stability of an extension of a representation $V$ by a twist of its dual when this extension arises as the specialization of a $p$-adic family (see Theorem \ref{thm:vhtdr}).

We are also interested in the geometry of families, including the global eigenvariety of a reductive group $G$ over $\mathbf{Q}$.   One interpretation of the Hodge-Tate-Sen theory is that the weight space for $\mathfrak{X}$ is intrinsic to the $G_K$-representation on $\mathfrak{M}$.  Our results refine this statement, implying that for each choice of some fixed Hodge-Tate-Sen weights, there is a stratification of the corresponding subspace of $\mathfrak{X}$ defined entirely in terms of existence of de Rham periods.

\subsection{Related work}

Interpolation results of this sort have already been essential to work on the Langlands program.  Berger and Colmez \cite{bc} interpolate periods when \emph{all} of the Hodge-Tate-Sen weights are fixed.  Their work is used by Chenevier and Harris \cite{ch} to establish local-global compatibility of the Langlands correspondence at $p$ for $p$-adic Galois representations that are constructed via $p$-adic interpolation rather than in the cohomology of a Shimura variety.  Kisin \cite{kisin} proved an important interpolation result for a single Hodge-Tate, de Rham, or crystalline period of fixed Hodge-Tate-Sen weight, where the remaining weights are required to vary.  His work has been applied to prove local-global compatibility statements in works of Skinner, Jorza, Luu, and the author \cite{skinner,jorza,luu,shahdiss}.

Liu proved an interpolation theorem for multiple semistable periods using different methods, which implies the interpolation of the corresponding de Rham periods in that setting \cite{rliu}.  However, this result does not allow one to interpolate de Rham periods that do not come from semistable periods, except in the aforementioned bounded weight situation of Berger and Colmez \cite{bc}.

Bellovin \cite{bell} independently proved strong results regarding $p$-adic families of $(\varphi,\Gamma)$-modules in her dissertation, which we learned about after posting a draft of this article.  She proves base change results that include a statement analogous to Proposition \ref{prop:dr1cocycle} below, and obtains the stratification given in Theorem \ref{thm:introstratfam}.  Neither our intermediate nor our final results are exactly equivalent, as the passage to $(\varphi,\Gamma)$-modules is not an equivalence of categories in the family setting (c.f.\ \cite{kliu}).  Although some aspects of our strategies are similar, the methodologies for many crucial parts of the argument are completely different.  Moreover, the results on higher cohomology in Section \ref{sec:highercoh} are completely distinct.

In fact, there is a long line of works on families of $(\varphi,\Gamma)$-modules \cite{kliu,pottharst,liu2,hellmann,kpx,rliu}.  Bellovin's results in \cite{bell} extend earlier works of Kedlaya-Liu \cite{kliu} and Pottharst \cite{pottharst}.  Another main emphasis was constructing a global triangulation \cite{liu2,hellmann,kpx}, which primarily involves globalizing a crystalline period -- this is substantially more difficult than the analogous results for Hodge-Tate and de Rham periods given in Section \ref{sec:stratfam} below.  The aforementioned work of Liu \cite{rliu} used the results of \cite{kpx} to obtain the interpolation of any number of crystalline (and even semistable) periods in families, which together with the results here and by Bellovin \cite{bell} allow one to interpolate any of Fontaine's periods in $p$-adic families.

\subsection{Statement and discussion of main results}

For the finite flat interpolation of Hodge-Tate and bounded de Rham periods, our work generalizes both Kisin's work and Berger-Colmez; we allow any number of fixed Hodge-Tate-Sen weights and any number of varying weights.  More precisely, in Theorem \ref{thm:stratfam2}, we provide for any bounded interval $[i,j]$ a decomposition $\mathfrak{X} = \coprod \mathfrak{L}$ of the reduced space $\mathfrak{X}$ into locally closed strata $\pi_\mathfrak{L}:\mathfrak{L} \inj \mathfrak{X}$ so that $D_B(\pi_\mathfrak{L}^*\mathfrak{M})$ is a coherent locally free sheaf if $B$ is bounded in this range.  If $\mathfrak{X}$ is a non-reduced affinoid and we interpolate a de Rham period for every fixed Hodge-Tate-Sen weight, we obtain finite flatness after localizing to fix multiplicities (as Kisin does).

We also study specializations of the family.  This presents some serious difficulties in general. For example, Mazur and Wiles \cite{mw} have studied cases where a family of de Rham representations specialize to one that does not even have a semisimple Sen operator; it is this kind of behavior that we avoid by localization in the result just mentioned.  However, in Theorem \ref{thm:dr1cocycle}, we construct Hodge-Tate and de Rham periods at every specialization that was originally removed by this localization.  Mazur and Wiles' example demonstrates that our construction gives an optimal bound on the number of periods of the specialization in both the Hodge-Tate and de Rham cases.  The technical heart of this paper is in Section \ref{subsec:special}, and our results on specialization hinge on the arguments there.

The first main result of this paper is Theorem \ref{thm:drcase}.  We provide here a simplified statement that includes this result together with Propositions \ref{prop:nicespecial}, \ref{prop:anyspecial}, and \ref{prop:drspecial}.  See Definition \ref{defin:stabdens} for the notion of stable density, which generalizes Zariski density to the setting of non-reduced rings.  See Remark \ref{remark:topology} for the definitions of the topology on $\bdr$ and of $\otimes_{\mathbf{Q}_p}$ for non-Fr\'echet spaces.  Note that much of the following holds even if $\mathfrak{R}$ is only a Noetherian $\mathbf{Q}_p$-Banach algebra; see Section \ref{sec:nonreducedbase} for precise statements.

\begin{thmn} \label{thm:mainthm}

Fix positive integers $n$ and $k$ and a multiset $\set{w_1, \dots, w_n}$ of integers in the interval $[0,k-1]$.  Let $S(T) = \prod_{j=1}^n (T+w_j)$.  Let $K$ be a finite extension of $\mathbf{Q}_p$, let $\mathfrak{R}$ be an affinoid $\mathbf{Q}_p$-algebra, and let $\mathfrak{M}$ be a finitely generated free $\mathfrak{R}$-module equipped with a continuous $G_K$-action.  Suppose that the Sen polynomial $P(T)$ of $\mathfrak{M}$ factors as $P(T) = Q(T)S(T)$, and define $Q_k=\prod_{j=0}^{k-1}Q(-j)$.  Assume that $\set{\xi_i}_{i \in I}$ is a set of maps $\xi_i: \mathfrak{R}\rightarrow \mathfrak{R}_i$ of $\mathbf{Q}_p$-Banach algebras, where $\mathfrak{R}_i$ is local Artinian of finite dimension over $\mathbf{Q}_p$, with the following properties.
\begin{enumerate}[{\normalfont (i)}]
  \item For $i \in I$, $\dim_K H^0(G_K,\bdrk{k} \otimes_{\mathbf{Q}_p} (\mathfrak{M} \otimes_{\mathfrak{R}} \mathfrak{R}_i))=n\dim_{\mathbf{Q}_p} \mathfrak{R}_i$.
  \item For $i \in I$, the image of $Q_k$ in $\mathfrak{R}_i$ is a unit.
  \item The collection $\set{\xi_i}$ is stably dense in $\mathfrak{R}$.
\end{enumerate}
We then have the following.
\begin{enumerate}[{\normalfont (a)}]
	\item For $r \in \set{0,1}$, the $K \otimes_{\mathbf{Q}_p} \mathfrak{R}_{Q_k}$-module $H^r(G_K,\bdrk{k} \hat{\otimes}_{\mathbf{Q}_p} \mathfrak{M})_{Q_k}$ is finite flat of rank $n$.
	\item For a map $\xi:\mathfrak{R} \rightarrow \mathfrak{R}'$ of affinoid $\mathbf{Q}_p$-algebras, the kernel and cokernel of the map
\[H^r(G_K,\bdrk{k} \hat{\otimes}_{\mathbf{Q}_p} \mathfrak{M}) \otimes_{\mathfrak{R}} \mathfrak{R}' \rightarrow H^r(G_K,\bdrk{k} \hat{\otimes}_{\mathbf{Q}_p} (\mathfrak{M} \otimes_{\mathfrak{R}} \mathfrak{R}'))\]
are killed by a power of $\xi(Q_k)$ for $r \in \set{0,1}$.  If $\xi(Q(-j))$ is a unit for $j\notin [0,k-1]$, then $H^r(G_K,\bdr \hat{\otimes}_{\mathbf{Q}_p} (\mathfrak{M} \otimes_{\mathfrak{R}} \mathfrak{R}'))$ and $H^r(G_K,\bdr^+ \hat{\otimes}_{\mathbf{Q}_p} (\mathfrak{M} \otimes_{\mathfrak{R}} \mathfrak{R}'))$ are finite flat of rank $n$ over $K \otimes_{\mathbf{Q}_p} \mathfrak{R}'$ for $r \in \set{0,1}$.
\end{enumerate}

Let $\xi':\mathfrak{R} \rightarrow \mathfrak{R}'$ be a map of $\mathbf{Q}_p$-Banach algebras with $\mathfrak{R}'$ local Artinian of finite $\mathbf{Q}_p$-dimension.  If either $\mathfrak{R}'$ is a field or $\xi'(Q_k)$ is a unit, then for $B \in \set{\bdr,\bdr^+,\bdrk{k}}$ we have
\begin{align*}
	\dim_K H^0(G_K,\bdr \otimes_{\mathbf{Q}_p} (\mathfrak{M}\otimes_\mathfrak{R} \mathfrak{R}')) &\ge \dim_K H^0(G_K,\bdr^+ \otimes_{\mathbf{Q}_p} (\mathfrak{M}\otimes_\mathfrak{R} \mathfrak{R}'))\\
	\ge \dim_K H^0(G_K,\bdrk{k} \otimes_{\mathbf{Q}_p} (\mathfrak{M}\otimes_\mathfrak{R} \mathfrak{R}')) &\ge n\dim_{\mathbf{Q}_p} \mathfrak{R}'\end{align*}
\[\textrm{and}\quad \dim_K H^0(G_K,B \otimes_{\mathbf{Q}_p} (\mathfrak{M}\otimes_\mathfrak{R} \mathfrak{R}')) = \dim_K H^1(G_K,B \otimes_{\mathbf{Q}_p} (\mathfrak{M}\otimes_\mathfrak{R} \mathfrak{R}')).\]

\end{thmn}

To illustrate this result, we explain Varma's study \cite{varma} of the Galois representations $\rho_\pi: G_F \rightarrow \GL_n(E)$ associated to regular cuspidal algebraic automorphic representations $\pi$ on $\GL_{n/F}$ by Harris, Lan, Taylor, and Thorne \cite{hltt}, where $F$ is a CM field and $E$ is a finite extension of $\mathbf{Q}_p$.  Varma has constructs a family $\mathfrak{M}$ with a dense set of specializations to de Rham representations as well as a specialization to the Galois representation $\rho_\pi\oplus\rho_\pi^{c\vee}(k)$, where $(\cdot)(k)$ denotes a Tate twist and $c$ denotes complex conjugation.  Moreover, in $\mathfrak{M}$, she is able to fix the Hodge-Tate-Sen weights associated to the factor $\rho_\pi$ and have $k$ be such that the weights of $\rho_\pi^{c\vee}(k)$ are negative while those of $\rho_\pi$ are nonnegative.  (Our convention is that $\mathbf{Q}_p(1)$ has the Hodge-Tate weight $-1$.)  In this setting, Theorem \ref{thm:drcase} applies, and using Proposition \ref{prop:nicespecial}, one shows that $\dim_{F} D_{\bdr^+}(\rho_\pi \oplus \rho_\pi^{c\vee}(k))=n[E:\mathbf{Q}_p]$ has exactly the expected dimension.

If $\mathfrak{R}$ is reduced, one can think of Main Theorem \ref{thm:mainthm} as showing that if there are $n$ fixed Hodge-Tate-Sen weights, having $n$ de Rham periods defines a closed condition on $\spm\mathfrak{R}$.  Requiring that these are the same number turns out to be unnecessarily restrictive; one could imagine trying to interpolate $m$ Hodge-Tate periods of weight 0 even though $n > m$ Hodge-Tate weights 0 are fixed, or a similar statement in the de Rham setting.

To achieve this, we define a \emph{de Rham datum} $\mathbf{D} = (\Omega, \Delta)$ in Section \ref{subsec:drdatumdef}, where $\Omega:\mathbf{Z} \rightarrow \mathbf{Z}_{\ge 0}$ and $\Delta:\mathbf{Z} \times \mathbf{Z} \rightarrow \mathbf{Z}_{\ge 0}$ keep track of the number of fixed Hodge-Tate-Sen weights and the generic dimension of $H^0(G_K,t^k\bdrk{\ell} \otimes_{\mathbf{Q}_p} \ol{\mathfrak{M}}_x)$, respectively.  (We write $\ol{\mathfrak{M}}_x = \mathfrak{M}_x \otimes_{\mathfrak{O}_{\mathfrak{X},x}} \kappa(x)$, where $\mathfrak{O}_\mathfrak{X}$ is the structure sheaf, the subscript $x$ on a sheaf denotes localization at $x$, and $\kappa(x)$ is the residue field.)   The definition encodes the natural conditions that are satisfied by these quantities. We say $\mathbf{D} \ge \mathbf{D}'$ if this inequality holds pointwise for $\Omega$ and $\Delta$.  We show that on an irreducible rigid space $\mathfrak{X}$ with a coherent locally free module $\mathfrak{M}$ equipped with a continuous $G_K$ action, there is a naturally associated de Rham datum that can be detected locally.

Associated to $\mathbf{D}$ are the Hodge-Tate dimension in the interval $(k,\ell)$, given by $\htd^{(k,\ell)}(\mathbf{D})=\sum_{i=k}^{\ell-1} \Delta(i,i+1)$, and the de Rham dimension, given by $\drd(\mathbf{D})=\max_{k,\ell \in \mathbf{Z}}\Delta(k,\ell)$.  We also write $\supp(\mathbf{D})$ for the smallest interval containing the support of $\Omega$.  A summary of Theorems \ref{thm:stratfam1}, \ref{thm:stratfam2}, and \ref{thm:stratfam3} appears below.  We write $\mathbf{D}_{\mathfrak{M},x}$ for the de Rham datum of the representation $\ol{\mathfrak{M}}_x$ over the base $\spm\kappa(x)$.  By ``thickened geometric point'', we mean a local Artinian $\mathbf{Q}_p$-Banach algebra of finite $\mathbf{Q}_p$-dimension.

\begin{thmn} \label{thm:introstratfam}

Suppose that $\mathfrak{X}$ is a reduced rigid space over $\mathbf{Q}_p$ and $\mathfrak{M}$ is a coherent locally free sheaf of $\mathfrak{O}_\mathfrak{X}$-modules equipped with a homomorphism $G_K \rightarrow \End_{\mathfrak{O}_\mathfrak{X}} \mathfrak{M}$ that is continuous when restricted to any affinoid.

For any de Rham datum $\mathbf{D}$, the points $x \in \mathfrak{X}$ such that $\mathbf{D}_{\mathfrak{M},x} \ge \mathbf{D}$ form a closed analytic subvariety $\mathfrak{S}_\mathbf{D}$.  If $\mathbf{D} \ge \mathbf{D}'$, then $\mathfrak{S}_\mathbf{D} \subseteq \mathfrak{S}_{\mathbf{D}'}$.

For any interval $[i,j] \subseteq \mathbf{Z}$, there is a decomposition of $\mathfrak{X}$ into a finite disjoint union of locally closed subspaces $\pi_{\mathbf{D}}^{[i,j]}: \mathfrak{L}_{\mathbf{D}}^{[i,j]} \inj \mathfrak{X}$ indexed by de Rham data $\mathbf{D}=(\Omega,\Delta)$ with $\supp(\mathbf{D})\subseteq [i,j]$.  Let $\mathfrak{M}_\mathbf{D}^{[i,j]} = \pi_{\mathbf{D}}^{[i,j]*}\mathfrak{M}$.  If $i \le k < \ell \le j+1$, there is a coherent locally free sheaf $\mathfrak{H}_{(k,\ell)}^r(\mathfrak{M}_\mathbf{D}^{[i,j]})$ such that for any affinoid subdomain $\mathfrak{U}\subseteq \mathfrak{L}_\mathbf{D}^{[i,j]}$, there is a canonical isomorphism $\mathfrak{H}_{(k,\ell)}^r(\mathfrak{M}_\mathbf{D}^{[i,j]})(\mathfrak{U}) \cong H^r(G_K,t^k\bdrk{\ell} \hat{\otimes}_{K,\sigma} \mathfrak{M}_\mathbf{D}^{[i,j]} (\mathfrak{U})).$  Moreover, the formation of $\mathfrak{H}_{(k,\ell)}^r(\mathfrak{M}_\mathbf{D}^{[i,j]})$ is compatible with pullback along any morphism $\pi:\mathfrak{Y} \rightarrow \mathfrak{L}_\mathbf{D}^{[i,j]}$ of reduced rigid spaces.

For any thickened geometric point $\xi:\mathfrak{x} \rightarrow \mathfrak{L}_{\mathbf{D}}^{[i,j]}$, where $\mathfrak{x} = \spm \mathfrak{R}_\mathfrak{x}$, we have
\begin{enumerate}[{\normalfont (a)}]
	\item $H^r(G_K,t^k\bht^+/t^\ell\bht^+ \otimes_{\mathbf{Q}_p}\xi^*\mathfrak{M}_{\mathbf{D}}^{[i,j]})$ is flat over $K \otimes_{\mathbf{Q}_p} \mathfrak{R}_\mathfrak{x}$ of rank $\htd^{(k,\ell)}(\mathbf{D})$,
  \item $H^r(G_K,t^k\bdrk{\ell} \otimes_{\mathbf{Q}_p} \xi^*\mathfrak{M}_{\mathbf{D}}^{[i,j]})$ is flat over $K \otimes_{\mathbf{Q}_p} \mathfrak{R}_\mathfrak{x}$ of rank $\Delta(k,\ell)$, and
  \item $\dim_K H^r(G_K,\bdr \otimes_{\mathbf{Q}_p} \xi^*\mathfrak{M}_{\mathbf{D}}^{[i,j]}) \ge \dim_K H^{r^+}(G_K,t^i\bdr^+ \otimes_{\mathbf{Q}_p} \xi^*\mathfrak{M}_{\mathbf{D}}^{[i,j]}) \ge \drd(\mathbf{D})\dim_{\mathbf{Q}_p} \mathfrak{R}_\mathfrak{x}$
\end{enumerate}
for all $i \le k < \ell \le j + 1$ and $r,r^+ \in \set{0,1}$.

\end{thmn}

We can recover the reduced case of Main Theorem \ref{thm:mainthm} by setting $\mathbf{D}$ to be a \emph{full} de Rham datum, which is one for which $\Omega(i) = \Delta(i,i+1)$ and $\Delta(i,k) = \Delta(i,j) + \Delta(j,k)$ for all $i \le j \le k \in \mathbf{Z}$.

In Section \ref{sec:highercoh}, we prove strong vanishing results for higher continuous Galois cohomology of modules of Hodge-Tate or de Rham periods in a family.  The key ingredient, Theorem \ref{thm:hkvanish}, generalizes a result of Sen \cite[Proposition 2]{sen}.  The main result, Theorem \ref{thm:gkvanishdr}, is the following.

\begin{thmn} \label{thm:mainthm2}

Suppose that $K$ is a finite extension of $\mathbf{Q}_p$, $\mathfrak{R}$ is a Noetherian $\mathbf{Q}_p$-Banach algebra, and the finitely generated $\mathfrak{R}$-Banach module $\mathfrak{M}$ is equipped with a continuous $\mathfrak{R}$-linear action of $G_K$.  Then for $n \ge 2$, $k \le \ell \in \mathbf{Z}$, and $B \in \set{\bht,t^k\bdrk{\ell}, t^k\bdr^+,\bdr}$, we have $H^n(G_K,B\hat{\otimes}_{\mathbf{Q}_p} \mathfrak{M})=0$.

\end{thmn}

Appendix \ref{sec:appendix} proves a simple result on specialization of a morphism of free modules over regular local rings.  This provides an alternate approach to the main theorems above; we include this due to its possible application to situations where our approach of studying $1$-cocycles is unavailable.

\subsection{Method of proof}

The aforementioned results of Berger-Colmez \cite{bc} and Bellovin \cite{bell} use an approach very different from ours; they use the theory of $(\varphi,\Gamma)$-modules while we work directly with the $G_K$-module itself.

Our approach to Theorem \ref{thm:drcase} is similar to Kisin's work in \cite[\S2]{kisin}.  Kisin shows that in \emph{any} family of representations with a single fixed Hodge-Tate-Sen weight 0, the $\mathbf{C}_p$-periods form a finite flat module of rank 1 after localizing to remove possible multiplicity of this weight.  However, if a Hodge-Tate-Sen weight occurs with multiplicity or if one has distinct Hodge-Tate-Sen weights and would like to study de Rham periods, this property is no longer automatic.  One needs to use the existence of Hodge-Tate or de Rham periods at a dense set of points.  To interpolate these periods, we devise an algebraic situation in which finite flatness of $D_B(\mathfrak{M})$ will follow from the density of the given specializations.  Since the formation of $D_B(\mathfrak{M})$ is not always compatible with specialization, we use Sen's theory to study a simpler module $\mathfrak{E}$ associated to $\mathfrak{M}$ instead \cite{sen,sen2}.

Another consideration is that in attempting to pass to unbounded de Rham periods, the direct approach requires one to localize at all integral weights above the fixed range.  For thickenings of geometric points, we control the dimension of the space of periods as one passes to the limit.  This requires some functional analysis.

We also need to retain information when multiplicities change.  For this, we study the behavior of a morphism under base change in Section \ref{subsec:special}.  The series of lemmas proved there use Sen's theory to show that certain tensor products of Banach spaces are already complete, allowing us to apply homological algebra.  By various arguments that allow us to switch from 1-cocycles to 0-cocycles, we are able to show existence of periods at specializations anywhere on the base.

For the stratification result, we apply the same techniques, but since we no longer have fine control over the localization needed to make the module of periods finite flat, we need a slightly different approach.  In order to globalize in this setting, we first study the affinoid case, and then apply those interpolation results to analytically continue across intersections.

\subsection{Acknowledgments}

This work formed part of the author's Ph.D. dissertation.  I would like to thank my advisor Christopher Skinner for his guidance over the course of the research.  I also thank Kiran Kedlaya, Mark Kisin, and Aaron Pollack for helpful discussions, and thank Brian Conrad both for helpful discussions and his comments on a draft of this paper (including the argument for Lemma \ref{lem:galdes} used below, which is more efficient than my original proof).  I thank the anonymous referees for their careful reading of the manuscript and for numerous suggestions that greatly improved the exposition.

\section{Interpolating de Rham and Hodge-Tate periods and 1-cocycles} \label{sec:nonreducedbase}

We study the interpolation of Hodge-Tate and de Rham periods over a Noetherian Banach algebra.  We prove some useful lemmas in commutative algebra, and then summarize the work of Sen \cite{sen,sen2} and Kisin \cite[\S 2]{kisin} on the construction and properties of the Sen operator.  We proceed to the proofs of Theorem \ref{thm:htcase} in the Hodge-Tate setting and Theorem \ref{thm:drcase} in the de Rham setting for periods over an open set $\spec \mathfrak{R}_{Q_{\sigma,k}}$.  We then study specializations on all of $\spec\mathfrak{R}$ under the additional assumption that $\mathfrak{R}$ is affinoid, pass from bounded periods to the full ring $\bdr$, and give an application to essentially self-dual specializations.

\subsection{Commutative algebra}

All rings are commutative.  When we say a module is finite, we mean that it is finitely generated.

\begin{lem}\label{lem:galdes}

Suppose that $G$ is a finite group, $K$ is a field of characteristic 0, and $M$ is a module over a $K$-algebra $R$ equipped with an $R$-linear $G$-action.
\begin{enumerate}[{\normalfont (a)}]
	\item If $M$ is finite (resp.\ flat, projective), then $M^G$ is finite (resp.\ flat, projective).
	\item For any $R$-module $N$, the natural map $\varphi:N \otimes_R M^G \rightarrow (N \otimes_R M)^G$ is an isomorphism.
\end{enumerate}

\end{lem}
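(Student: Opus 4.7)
The plan is to use the averaging idempotent, which is available since $|G|$ is invertible in $K$ and hence in $R$. Define
\[e = \frac{1}{|G|}\sum_{g \in G} g \in \End_R(M).\]
Because the $G$-action is $R$-linear, this is an $R$-linear endomorphism. A direct check shows $e^2 = e$, that $e$ fixes $M^G$ pointwise, and that $\im(e) \subseteq M^G$; hence $\im(e) = M^G$ and we obtain an internal direct sum decomposition
\[M = M^G \oplus (1-e)M\]
as $R$-modules.

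For part (a), the finite/flat/projective hypotheses are each inherited by direct summands, so $M^G$, as a summand of $M$, inherits all three. For part (b), the natural $G$-action on $N \otimes_R M$ is given by $g \cdot (n \otimes m) = n \otimes g(m)$, so the corresponding averaging operator on $N \otimes_R M$ is $1_N \otimes e$. On the one hand, by the same reasoning as for $M$, this operator is an $R$-linear idempotent whose image equals $(N \otimes_R M)^G$. On the other hand, tensoring the decomposition $M = M^G \oplus (1-e)M$ by $N$ over $R$ yields $N \otimes_R M = (N \otimes_R M^G) \oplus (N \otimes_R (1-e)M)$, and under this splitting $1_N \otimes e$ is the projection onto the first summand, so its image equals $N \otimes_R M^G$. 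Comparing the two descriptions of $\im(1_N \otimes e)$ shows $\varphi$ is an isomorphism.

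There is no real obstacle here: the only place characteristic zero (or more generally $|G|$ invertible in $R$) is used is in forming $e$, and everything else is formal. Note in particular that no hypothesis on $N$ or $R$ beyond being a module and a $K$-algebra is required; the argument is entirely through the split idempotent, which is preserved by any additive functor and in particular by $N \otimes_R -$.
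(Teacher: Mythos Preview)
Your proof is correct and follows essentially the same approach as the paper: both use the averaging idempotent $e = \frac{1}{|G|}\sum_{g\in G} g$ to split $M = M^G \oplus (1-e)M$, deduce (a) from the fact that direct summands inherit finiteness/flatness/projectivity, and obtain (b) by identifying $(N \otimes_R M)^G$ and $N \otimes_R M^G$ as the image of $1_N \otimes e$. Your presentation is slightly more explicit in matching the two descriptions of $\im(1_N \otimes e)$, whereas the paper phrases it as separate injectivity and surjectivity checks, but the content is the same.
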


\begin{proof}
There is a functorial decomposition $M = M^G \oplus M^{G,\perp}$ as the sum of the image and kernel of the projection $\frac{1}{|G|}\sum_{g \in G}g: M \rightarrow M$.  Part (a) follows, as well as that $\varphi$ is injective.  For any $K[G]$-module $P$, the image of $\frac{1}{|G|}\sum_{g \in G}g:P \rightarrow P$ is $P^G$, and the image of this map for the module $N \otimes_R M$ lands in the submodule $N \otimes_R M^G$.  Thus $\varphi$ is also surjective.
\end{proof}

We will use the next lemma to decompose a module into finite flat summands.  We state it in a slightly more general form in case it may be helpful in other contexts.

\begin{lem} \label{lem:splitht}

Suppose $M$ is a module over the ring $R$.  Let $r_1,\dots,r_n \in R$, let $k_1,\dots,k_n \in \mathbf{Z}_{\ge 1}$, let $S(x)=\prod_{i=1}^n (x-r_i)^{k_i}$, let $Q(x) \in R[x]$, let $P(x) = Q(x)S(x)$, and assume that $r = \prod_i Q(r_i)\prod_{i<j}(r_i-r_j)$ is a unit in $R$.  Let $\varphi: M \rightarrow M$ be a morphism of $R$-modules such that $P(\varphi)M=0$.

Define $S_i(x) = \prod_{j \ne i} (x-r_j)^{k_j}$ and $P_i(x)=Q(x)S_i(x)$ for $1 \le i \le n$. We define the following submodules of $M$: $M_0 = \ker Q(\varphi), M^0 = S(\varphi)M, M_i = \ker (\varphi-r_i)^{k_i},$ and $M^i = P_i(\varphi)M.$  Then we have $M_i=M^i$ for $0 \le i \le n$ and a $\varphi$-compatible splitting $M = \oplus_{i=0}^n M_i = \oplus_{i=0}^n M^i.$

\end{lem}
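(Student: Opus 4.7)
The plan is to translate the problem into a Chinese Remainder Theorem (CRT) computation in $R[x]/(P(x))$: since $P(\varphi)M = 0$, the $\varphi$-action makes $M$ into an $R[x]/(P(x))$-module via $x \mapsto \varphi$, so decomposing this ring as a direct product of rings will induce a $\varphi$-compatible decomposition of $M$.

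The first and essentially only substantive step is to verify pairwise coprimality in $R[x]$ of the factors $Q(x), (x-r_1)^{k_1}, \ldots, (x-r_n)^{k_n}$. Working in $R[x]/((x-r_i)^{k_i}) \cong R[t]/(t^{k_i})$ with $t = x - r_i$, the image of $Q(x)$ is $Q(r_i)$ plus a nilpotent, hence a unit because $Q(r_i) \in R^\times$ by hypothesis on $r$; for $j \ne i$, the image of $x - r_j$ is $(r_i - r_j) + t$, again a unit since $r_i - r_j \in R^\times$. Hence the CRT furnishes an isomorphism
\[R[x]/(P(x)) \iso R[x]/(Q(x)) \times \prod_{i=1}^n R[x]/((x-r_i)^{k_i}),\]
together with orthogonal idempotents $e_0, e_1, \ldots, e_n$ summing to $1$, yielding the $\varphi$-compatible direct sum $M = \bigoplus_{i=0}^n e_i M$.

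It remains to identify each $e_i M$ with both $M_i$ and $M^i$. For $i=0$: by construction $e_0$ is annihilated by $(x-r_j)^{k_j}$ modulo $P(x)$ for every $j$, and the pairwise coprimality established above lets me lift $e_0$ to a multiple $u(x) S(x)$ in $R[x]$ (since the intersection of pairwise coprime ideals equals their product); this forces $e_0 M \subseteq S(\varphi) M = M^0$. Conversely, $M^0 = S(\varphi) M$ is killed by $Q(\varphi)$ because $Q(\varphi) S(\varphi) = P(\varphi) = 0$, so $M^0 \subseteq M_0$; and since $e_0 \equiv 1 \pmod{Q(x)}$ inside $R[x]/(P)$, every $m \in M_0 = \ker Q(\varphi)$ satisfies $e_0 m = m$, giving $M_0 \subseteq e_0 M$. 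Chaining these yields $e_0 M = M_0 = M^0$. The identical argument with $(Q, S)$ replaced by $((x-r_i)^{k_i}, P_i)$ handles each $i \ge 1$, producing $e_i M = M_i = M^i$. I expect the coprimality verification to be the only delicate point — the hypothesis that $r$ is a unit is tailored precisely to make it succeed — after which everything is formal CRT bookkeeping.
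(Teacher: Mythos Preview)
Your proposal is correct and follows essentially the same route as the paper: both arguments make $M$ into an $R[x]/(P(x))$-module, verify coprimality of $Q(x)$ and the $(x-r_i)^{k_i}$ using that $Q(r_i)$ and $r_i-r_j$ are units (so a unit plus a nilpotent is a unit in $R[x]/((x-r_i)^{k_i})$), and invoke the Chinese Remainder Theorem. The paper phrases the identification slightly more directly by observing that $S(x)$, $Q(x)$, $P_i(x)$, and $(x-r_i)^{k_i}$ are each unit multiples of the appropriate idempotents in $R[x]/(P(x))$, which immediately gives $M^i = e_iM = M_i$; your chain of inclusions $e_iM \subseteq M^i \subseteq M_i \subseteq e_iM$ accomplishes the same thing.
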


\begin{proof}
Given a factorization $U(x)=(x-r)^kT(x)$, where $U,T\in R[x]$ and $T(r)$ is a unit, $T(x)$ is a unit in $R[x]/(x-r)$ and thus in $R[x]/((x-r)^k)$.  So $T(x)$ and $((x-r)^k)$ are coprime, and
\[R[x]/U(x) = R[x]/T(x) \times R[x]/(x-r)^k\]
by the Chinese remainder theorem.  Setting $U(x) = Q(x)\prod_{j=i}^n (x-r_j)^{k_j}$, $r=r_i$, $k=k_i$, and $T(x) = Q(x)\prod_{j=i+1}^n (x-r_j)^{k_j}$ for each $i$, we deduce
\[R[x]/P(x) = \underbrace{R[x]/Q(x)}_{R_0} \times \prod_{i=1}^n \underbrace{R[x]/(x-r_i)^{k_i}}_{R_i}.\]
Since $P(\varphi)M=0$, $M$ can be regarded as an $R[x]/P(x)$-module with $x$ acting as $\varphi$.  All claims above follow from the observations that each $P_i(x)$ (resp.\ each $(x-r_i)^{k_i}$) is a unit multiple of the idempotent projecting onto $R_i$ (resp.\ $R_0\times\prod_{j\ne i}R_j$), and $S(x)$ (resp.\ $Q(x)$) is a unit multiple of the the idempotent projecting onto $R_0$ (resp.\ $\prod_{i=1}^n R_i$).
\end{proof}

We will also use the following easy lemma.

\begin{lem} \label{lem:niceend}

Let $M$ be a flat module over a ring $R$ equipped with an endomorphism $\psi:M \rightarrow M$.  If $\psi$ has flat cokernel, then it has flat kernel and image.  Moreover, the formation of $\ker \psi$, $\im \psi$, and $\coker \psi$ commutes with taking the tensor product with an $R$-module $N$.  If $M$ is also finitely generated, and $R$ is Noetherian, then the kernel, image, and cokernel of $\psi$ are finitely generated, with the rank of the kernel equal to the rank of the cokernel on each connected component of $\spec R$.

If we drop the hypotheses that $M$ is flat and $\psi$ has flat cokernel, but instead ask that the $R$-module $N$ is flat, then tensoring with $N$ again commutes with formation of $\ker \psi$, $\im \psi$, and $\coker \psi$.

\end{lem}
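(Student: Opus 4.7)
The plan is to split the four-term exact sequence
\[0 \to \ker\psi \to M \xrightarrow{\psi} M \to \coker\psi \to 0\]
into the two short exact sequences $0 \to \ker\psi \to M \to \im\psi \to 0$ and $0 \to \im\psi \to M \to \coker\psi \to 0$, and then deduce every assertion by standard flatness and $\tor$ arguments.

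First, for the flatness claim, I apply the long exact $\tor$ sequence to the second short exact sequence: for any $R$-module $N$, one has $\tor_2^R(\coker\psi, N) \to \tor_1^R(\im\psi, N) \to \tor_1^R(M, N)$, and both flanking groups vanish because $M$ and $\coker\psi$ are flat. Hence $\im\psi$ is flat, and the same argument applied to $0 \to \ker\psi \to M \to \im\psi \to 0$ shows $\ker\psi$ is flat as well.

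Next, for tensor product compatibility, since either $\coker\psi$ is flat (primary hypothesis) or $N$ is flat (addendum), tensoring the sequence $0 \to \im\psi \to M \to \coker\psi \to 0$ with $N$ remains exact. In particular $\im\psi \otimes_R N \to M \otimes_R N$ is injective, so the factorization $\psi: M \twoheadrightarrow \im\psi \hookrightarrow M$ tensors to identify $\im(\psi \otimes 1) = \im\psi \otimes_R N$ inside $M \otimes_R N$, and hence $\coker(\psi \otimes 1) = \coker\psi \otimes_R N$. Similarly, since $\im\psi$ is flat (primary case) or $N$ is flat (addendum), tensoring $0 \to \ker\psi \to M \to \im\psi \to 0$ with $N$ is exact, yielding $\ker(\psi \otimes 1) = \ker\psi \otimes_R N$.

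Finally, when $R$ is Noetherian and $M$ is finite, all three modules are sub- or quotient-modules of $M$, hence finite. Being finite flat, each has locally constant rank on $\spec R$, and on any connected component the alternating sum of ranks in the original four-term exact sequence vanishes (verified after localizing at a prime, where finite projectives become free), which forces $\rank\ker\psi = \rank\coker\psi$. No step presents a serious obstacle; the most delicate point is simply the identification $\im(\psi\otimes 1) = \im\psi \otimes_R N$ as a submodule of $M\otimes_R N$, which rests on the injectivity of $\im\psi \otimes_R N \to M \otimes_R N$ noted above.
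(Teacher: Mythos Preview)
Your proof is correct and follows essentially the same approach as the paper: both split the four-term sequence into the two short exact sequences $0 \to \ker\psi \to M \to \im\psi \to 0$ and $0 \to \im\psi \to M \to \coker\psi \to 0$, deduce flatness of $\im\psi$ and $\ker\psi$ from that of $M$ and $\coker\psi$, and then use flatness (of either the relevant modules or of $N$) to preserve exactness under $-\otimes_R N$. The only cosmetic difference is that the paper packages the tensor-compatibility step as a 5-lemma on the full four-term diagram, whereas you argue directly via the injectivity of $\im\psi \otimes_R N \to M \otimes_R N$; these are equivalent.
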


\begin{proof}

We have exact sequences
\begin{equation} \label{eqn:niceendexact} 0 \rightarrow \ker \psi \rightarrow M \rightarrow \im \psi \rightarrow 0\quad\textrm{and}\quad 0 \rightarrow \im \psi \rightarrow M \rightarrow \coker \psi \rightarrow 0.\end{equation}
It follows that if $\coker \psi$ is flat, so are $\im \psi$ and $\ker \psi$.  The finite generation claim is clear, and equality of ranks follows from examining the localization of (\ref{eqn:niceendexact}) at a prime of $R$ in each connected component.

Let $N$ be an $R$-module, and assume that either $M$ and $\coker \psi$ or $N$ are flat.  By tensoring (\ref{eqn:niceendexact}) with $N$ and using the flatness of either $\coker \psi$ and $\im \psi$ or $N$, we obtain a commutative diagram
\[\xymatrix@R=15pt{ 0 \ar[r] & (\ker \psi) \otimes_R N \ar[r] \ar[d] & M \otimes_R N \ar[r] \ar[d]^{\begin{sideways}=\end{sideways}} & M \otimes_R N \ar[r] \ar[d]^{\begin{sideways}=\end{sideways}} & (\coker \psi) \otimes_R N \ar[r] \ar[d] & 0\\
0 \ar[r] & \ker (\psi \otimes_R N) \ar[r] & M \otimes_R N \ar[r] & M \otimes_R N \ar[r] & \coker (\psi \otimes_R N) \ar[r] & 0
}\]
with both rows exact.  By the 5-lemma, we obtain isomorphisms
\[(\ker \psi) \otimes_R N \iso \ker (\psi \otimes_R N)\textrm{ and }(\coker \psi) \otimes_R N \iso \coker (\psi \otimes_R N)\]
via the natural maps.  The last part of the claim follows similarly from the exact sequence
\[0 \rightarrow (\im \psi) \otimes_R N \rightarrow M \otimes_R N \rightarrow (\coker \psi) \otimes_R N \rightarrow 0\]
and the isomorphism $(\coker \psi) \otimes_R N \iso \coker (\psi \otimes_R N)$ above.
\end{proof}

We can say more in the case of the cokernel.

\begin{lem} \label{lem:cokerspec}

Suppose $\psi: M \rightarrow N$ is a map of $R$-modules and $P$ is an $R$-module.  Then the natural map $(\coker \psi) \otimes_R P \rightarrow \coker(\psi \otimes_R P)$ is an isomorphism.

\end{lem}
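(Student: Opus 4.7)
The statement is the standard right-exactness of tensor product, so the plan is simply to apply this. Starting from the tautological exact sequence
\[M \xrightarrow{\psi} N \longrightarrow \coker \psi \longrightarrow 0,\]
I would tensor with $P$ over $R$. Since $(-)\otimes_R P$ is right exact, the resulting sequence
\[M \otimes_R P \xrightarrow{\psi \otimes_R P} N \otimes_R P \longrightarrow (\coker \psi) \otimes_R P \longrightarrow 0\]
is exact. But the defining exact sequence for $\coker(\psi \otimes_R P)$ is
\[M \otimes_R P \xrightarrow{\psi \otimes_R P} N \otimes_R P \longrightarrow \coker(\psi \otimes_R P) \longrightarrow 0,\]
so comparing the two identifies $(\coker \psi)\otimes_R P$ with $\coker(\psi \otimes_R P)$ via the natural map, which is what the lemma asserts.

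There is no real obstacle here; the only subtlety is verifying that the natural map in the statement is indeed the one induced by the universal property of cokernels from $N \otimes_R P \to (\coker \psi) \otimes_R P$, which is immediate from the construction. Note that this lemma requires no flatness hypothesis on either $M$, $\coker \psi$, or $P$, in contrast to the kernel/image statements of Lemma \ref{lem:niceend}; this is precisely because right exactness is a free property of the tensor product while left exactness requires flatness.
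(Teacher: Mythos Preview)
Your proof is correct and essentially the same as the paper's, which simply says ``Tensor products commute with colimits.'' Your argument is just the explicit unpacking of this fact in the special case of cokernels (right-exactness of $(-)\otimes_R P$).
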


\begin{proof}
Tensor products commute with colimits.
\end{proof}

We will need an explicit description of continuous $\mathbf{Z}_p$-cohomology for certain nice modules.

\begin{lem} \label{lem:zpcoh}

Suppose that $\Gamma$ is the additive topological group $\mathbf{Z}_p$, and write $\gamma$ for the topological generator $1 \in \mathbf{Z}_p$.  Let $M$ be a continuous $\Gamma$-module with the property that there exist $\Gamma$-invariant submodules $M_i$ for $i \in \mathbf{Z}_{\ge 0}$ such that $M_i \subseteq M_{i-1}$, $M \cong \varprojlim_i M/M_i$ with the inverse limit topology, and each $M/M_i$ is a discrete $p^\infty$-torsion module, meaning that each $m \in M/M_i$ is killed by a finite power of $p$.  Then there is an isomorphism $H^1(\Gamma,M) \cong \coker(\gamma-1:M \rightarrow M)$.

As a special case, this identification applies to any $\mathbf{Q}_p$-Banach space with a continuous $\Gamma$-action.

\end{lem}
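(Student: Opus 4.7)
The plan is to identify $H^1(\Gamma,M)$ with the group of continuous $1$-cocycles $c:\Gamma\to M$ modulo coboundaries, and to check this is naturally isomorphic to $\coker(\gamma-1)$. The cocycle relation $c(a+b)=c(a)+\gamma^a c(b)$ together with $c(0)=0$ forces $c(n)=(1+\gamma+\cdots+\gamma^{n-1})c(1)$ for $n\in\mathbf{Z}_{\ge 0}$, and then the values on all of $\mathbf{Z}_p$ are determined by density of $\mathbf{Z}$ and continuity. Under $c\mapsto c(1)$ the coboundaries correspond exactly to $(\gamma-1)M$. So it suffices to produce, for every $m\in M$, a continuous $1$-cocycle $c_m$ with $c_m(1)=m$, and then the assignment $m\mapsto [c_m]$ will be a surjection $M\twoheadrightarrow H^1(\Gamma,M)$ with kernel $(\gamma-1)M$.

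The main point is the continuity of $c_m$. Define $c_m:\mathbf{Z}\to M$ by $c_m(n)=(1+\gamma+\cdots+\gamma^{n-1})m$ for $n\ge 0$ (and extend to negative integers via the cocycle relation); this satisfies the cocycle relation on $\mathbf{Z}$. Writing $M=\varprojlim M/M_i$, extending $c_m$ continuously to $\mathbf{Z}_p$ reduces, upon projecting to each $M/M_i$, to showing that the image of $c_m(p^N)$ in $M/M_i$ vanishes for $N$ sufficiently large. Let $\bar m$ be the image of $m$ in $M/M_i$. Since $M/M_i$ is discrete and the $\Gamma$-action is continuous, the stabilizer of $\bar m$ is open in $\Gamma$, hence contains $p^L\mathbf{Z}_p$ for some $L$; in particular $\gamma^{p^L}\bar m=\bar m$, and since $c_m(p^L)$ is a polynomial in $\gamma$ applied to $\bar m$, $\gamma^{p^L}$ also fixes $c_m(p^L)$ in $M/M_i$. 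Now the cocycle relation applied iteratively gives $c_m(p^{L+k})=p^k\, c_m(p^L)$ in $M/M_i$, and since $M/M_i$ is $p^\infty$-torsion, the element $c_m(p^L)$ is killed by some $p^N$, so $c_m(p^{L+N})=0$ in $M/M_i$ as required.

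With this decay in hand, continuous extension from $\mathbf{Z}$ to $\mathbf{Z}_p$ is automatic: for $n,n'\in\mathbf{Z}$ with $n-n'$ divisible by a sufficiently large power of $p$, the identity $c_m(n)-c_m(n')=\gamma^{n'}c_m(n-n')$ shows that $c_m$ is uniformly continuous modulo each $M_i$, so it extends to a continuous cocycle $c_m:\mathbf{Z}_p\to M$ with $c_m(1)=m$. This produces the desired inverse to the map $H^1(\Gamma,M)\to\coker(\gamma-1)$ induced by evaluation at $\gamma$. The only real obstacle is the continuity step; the hypothesis that $M$ is an inverse limit of discrete $p^\infty$-torsion modules with continuous $\Gamma$-action is used exactly there, and the rest of the argument is formal.
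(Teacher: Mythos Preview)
Your proof is correct and follows essentially the same approach as the paper: reduce to constructing, for each $m\in M$, a continuous cocycle with value $m$ at $\gamma$, and verify continuity by projecting to each discrete quotient $M/M_i$. The only difference is that the paper cites \cite[Proposition 1.7.7]{nsw} for the discrete step, whereas you carry out that argument explicitly via the stabilizer-plus-torsion computation showing $c_m(p^{L+k})=p^k c_m(p^L)$ in $M/M_i$.
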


\begin{proof}
It suffices to show that for any $m \in M$, there is a continuous 1-cocycle $\varphi:\Gamma \rightarrow M$ with $\varphi(\gamma)=m$.  The cocycle relation forces $\varphi(\gamma^k) = \sum_{j=0}^{k-1} \gamma^j m$ for $k \in \mathbf{Z}_{> 0}$.  Since a continuous $\varphi:\Gamma \rightarrow M$ is determined by $\varphi|_{\mathbf{Z}_{> 0}}$, we need only check that this has a continuous extension to $\Gamma$.  In fact, we need only check that $\varphi|_{\mathbf{Z}_{>0}}: \Gamma \rightarrow M_i$ has a continuous extension for each $i$; these are automatically compatible and give $\varphi:\Gamma \rightarrow M$ by the universal property.  This is proved in \cite[Proposition 1.7.7]{nsw}.

To see the final claim, note that for any $\mathbf{Q}_p$-Banach space $M$ with a continuous $\Gamma$-action, $\Gamma$ preserves a norm on $M$ inducing its topology by \cite[Lemma 6.5.5]{emerton}, so $M$ is the projective limit of the discrete $p^{\infty}$-torsion spaces $M/M_i$ for $M_i$ the open ball of radius $p^{-i}$ centered at 0 with respect to the invariant norm.  So $M$ and these $M_i$'s meet the conditions above.
\end{proof}

\subsection{Some results of Sen and Kisin} \label{subsec:senkisin}

Fix a separable algebraic closure $\ol{\mathbf{Q}}_p$ of $\mathbf{Q}_p$ and denote by $\mathbf{C}_p$ its completion.  For any finite extension $K \subseteq \ol{\mathbf{Q}}_p$ of $\mathbf{Q}_p$, let $G_K = \gal(\ol{\mathbf{Q}}_p/K)$ be the absolute Galois group of $K$, and let $\chi_K: G_K \rightarrow \mathbf{Z}_p^\times$ denote the cyclotomic character.  Denote the kernel of $\chi_K^{p-1}$ (or $\chi_K^2$ if $p=2$) by $H_K$, and write $\Gamma_K = G_K/H_K$.  Note that $\Gamma_K$ is isomorphic to $\mathbf{Z}_p$.  We define $K_\infty = \ol{\mathbf{Q}}_p^{H_K}$ and $\widehat{K}_\infty = \mathbf{C}_p^{H_K}$.

Let $K/\mathbf{Q}_p$ and $E/\mathbf{Q}_p$ be finite extensions such that $E$ contains the normal closure of $K$.  Define $\Sigma = \set{\sigma:K \rightarrow E}$ to be the set of embeddings of $K$ in $E$.  Let $\mathfrak{R}$ be a Noetherian Banach algebra over $E$ and let $\mathfrak{M}$ be a free $\mathfrak{R}$-module of finite rank equipped with a continuous $G_K$-action.

We denote the completed tensor product of Banach spaces or modules by $\hat{\otimes}$, and we always require any map between topological algebras or modules to be continuous.  In this paper, rings are always commutative (except for endomorphism rings of modules) and cohomology is always continuous cohomology.  We \emph{never} regard a cohomology group $H^r(G,M)$ for $r \ge 1$ as having a topology, even if it is possible to do so by virtue of it being a finitely generated module over a Banach algebra.  (But we do equip $H^0(G,M)$ with the subspace topology when $M$ is topological.)

Using the work of Sen \cite{sen,sen2}, after a base change in $K$, Kisin relates the study of $G_K$-cohomology of $\mathbf{C}_p \hat{\otimes}_{K,\sigma} \mathfrak{M}$ to the action of a linear operator on a finite free module $\mathfrak{E}$ over $\mathfrak{R}$.  Kisin works under the hypothesis $E \subseteq K$, but we have instead reformulated his results for $E$ containing the normal closure of $K$, and a choice of embedding $\sigma: K \rightarrow E$.  To deduce the forms of his results given below, one sets Kisin's $E$ to be $K$ and regards the $E$-algebra $\mathfrak{R}$ as a $K$-algebra via the choice of $\sigma$.  In this paper, we will always fix $\sigma$ and regard $\mathfrak{R}$ as a $K$-algebra in this way.  We also fix a choice of topological generator $\gamma \in \Gamma_K$.

\begin{prop}[{\cite[\S2]{kisin}}] \label{prop:wstarprop}

We have the following facts regarding the $H_K$-invariants of $\mathbf{C}_p \hat{\otimes}_{K,\sigma} \mathfrak{M}$.
\begin{enumerate}[{\normalfont (a)}]
	\item After replacing $E$ and $K$ with sufficiently large finite Galois extensions, the $\widehat{K}_\infty \hat{\otimes}_{K,\sigma} \mathfrak{R}$-module $(\mathbf{C}_p \hat{\otimes}_{K,\sigma} \mathfrak{M})^{H_K}$
is free with a basis $\mathbf{e} = \set{e_1,\dots,e_n}$ such that the free $\mathfrak{R}$-module $\mathfrak{E}$ generated by $\mathbf{e}$ is stable by $\Gamma_K$.
	\item We may define a map $\phi \in \End_\mathfrak{R} \mathfrak{E}$ by $\phi = (\log\chi(\gamma^{p^r}))^{-1} \cdot \log(\gamma^{p^r}|_\mathfrak{E})$ for any sufficiently large integer $r$, where $\log(\gamma^{p^r}|_\mathfrak{E})$ is defined using the standard series expansion in $\gamma^{p^r}-1$.
	\item Denote also by $\phi$ the extension of scalars to $(\mathbf{C}_p \hat{\otimes}_{K,\sigma} \mathfrak{M})^{H_K}$, and call the characteristic polynomial of $\phi$ on this module $P_\sigma(T)\in \widehat{K}_\infty \hat{\otimes}_{K,\sigma} \mathfrak{R}[T]$.  Then in fact, we have $P_\sigma(T) \in \mathfrak{R}[T]$, even before increasing $E$ and $K$.  Moreover, this polynomial is canonically attached to $\mathfrak{M}$.
	\item The preceding constructions are all compatible with base change along a map of Noetherian $E$-Banach algebras $\mathfrak{R} \rightarrow \mathfrak{R}'$, without requiring a further extension of $K$.  In particular, we have
\[\mathfrak{E}_{\mathfrak{R}'} = \mathfrak{E} \otimes_{\mathfrak{R}} \mathfrak{R}'\textrm{ and }(\mathbf{C}_p \hat{\otimes}_{K,\sigma} \mathfrak{M})^{H_K} \hat{\otimes}_\mathfrak{R} \mathfrak{R}' \iso (\mathbf{C}_p \hat{\otimes}_{K,\sigma} (\mathfrak{M} \otimes_\mathfrak{R} \mathfrak{R}'))^{H_K},\]
and these are equipped with the Sen operator $\phi \hat{\otimes}_{\mathfrak{R}} \mathfrak{R}'$.
	\item If $\mathfrak{R}' = E'$ is a finite extension field of $E$, then $\phi \hat{\otimes}_{\mathfrak{R}} E'$ coincides with the Sen operator of \cite{sen3}.
\end{enumerate}
\end{prop}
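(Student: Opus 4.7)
The plan is to reduce the whole proposition to Kisin's construction in \cite[\S2]{kisin}, which assumes $E\subseteq K$, by using the fixed embedding $\sigma:K\rightarrow E$ to view $\mathfrak{R}$ as a Noetherian $K$-Banach algebra. Under this identification, $\mathbf{C}_p\hat{\otimes}_{K,\sigma}\mathfrak{M}$ is precisely Kisin's object $\mathbf{C}_p\hat{\otimes}_K\mathfrak{M}$ (taking his $E$ to be $K$). Parts (a) and (b) then follow from his construction without modification: after replacing $E$ and $K$ by a sufficiently large finite Galois extension, the invariants are $\widehat{K}_\infty\hat{\otimes}_{K,\sigma}\mathfrak{R}$-free with a basis $\mathbf{e}$ whose $\mathfrak{R}$-span $\mathfrak{E}$ is $\Gamma_K$-stable; for $r$ large, $\gamma^{p^r}-1$ is topologically nilpotent on $\mathfrak{E}$, so the log-series converges, and the normalization by $\log\chi(\gamma^{p^r})$ removes the dependence on $r$.

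For part (c), basis-independence of the characteristic polynomial is automatic, and independence of the enlargement of $E$ and $K$ follows by comparing the operators from two successive enlargements, which agree on a common $\Gamma_K$-stable $\mathfrak{R}$-lattice. The main content is that $P_\sigma(T)$ has coefficients in $\mathfrak{R}$ prior to enlarging, and I would prove this by Galois descent. Writing $K_0$ and $E_0$ for the original fields, the construction of $\phi$ commutes with the natural action of $\gal(K/K_0)\times\gal(E/E_0)$, so the coefficients of $P_\sigma(T)$ lie in the invariants of $\widehat{K}_\infty\hat{\otimes}_{K,\sigma}\mathfrak{R}$ under this finite group. By Lemma \ref{lem:galdes} and Ax--Sen--Tate (applied to the continuous $\gal(\widehat{K}_\infty/\widehat{(K_0)}_\infty)$-action on $\widehat{K}_\infty$, noting $\mathfrak{R}$ is $\mathbf{Q}_p$-flat), these invariants reduce to $\widehat{(K_0)}_\infty\hat{\otimes}_{K_0,\sigma}\mathfrak{R}$. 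A further descent, using that $P_\sigma(T)$ is an invariant of the $G_{K_0}$-representation $\mathfrak{M}$ itself and hence fixed by the continuous $\gal(\widehat{(K_0)}_\infty/K_0)$-action, places the coefficients in $K_0\otimes_{K_0,\sigma}\mathfrak{R}=\mathfrak{R}$.

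Part (d) is essentially functoriality: since $\mathfrak{E}$ is $\mathfrak{R}$-free and $\Gamma_K$-stable after the initial enlargement, base change along $\mathfrak{R}\rightarrow\mathfrak{R}'$ preserves $\Gamma_K$-stability and the topological nilpotence of $\gamma^{p^r}-1$, so the same log-series converges to $\phi\hat{\otimes}_\mathfrak{R}\mathfrak{R}'$ on $\mathfrak{E}\otimes_\mathfrak{R}\mathfrak{R}'$; the identification of the completed-tensor-product of $H_K$-invariants uses that these invariants are $\widehat{K}_\infty\hat{\otimes}_{K,\sigma}\mathfrak{R}$-free. Part (e) is a direct comparison: for $\mathfrak{R}'=E'$ a finite extension field, Sen's operator in \cite{sen3} applied to $\mathfrak{M}\otimes_\mathfrak{R} E'$ is defined by the same formula $(\log\chi(\gamma^{p^r}))^{-1}\log(\gamma^{p^r}|_{\mathfrak{E}\otimes_\mathfrak{R} E'})$, so the two constructions agree by inspection.

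The main obstacle is the descent step in part (c). Everything else is Kisin's machinery repackaged through the $\sigma$-twist, but descending from $\widehat{K}_\infty\hat{\otimes}_{K,\sigma}\mathfrak{R}$ all the way to $\mathfrak{R}$ (regarded as an $E$-algebra, not merely a $K_0$-algebra) requires that the characteristic polynomial be a genuine invariant of $\mathfrak{M}$ as a $G_{K_0}$-representation before the auxiliary enlargement; the subtlety is to verify that the $E$-structure on $\mathfrak{R}$, which is richer than its $K_0$-structure via $\sigma$, does not introduce any additional descent obstruction. This follows because the $E$-action on $\mathfrak{R}$ commutes with the Galois descent at every stage, so the fixed subring in $\widehat{K}_\infty\hat{\otimes}_{K,\sigma}\mathfrak{R}$ automatically recovers $\mathfrak{R}$ with its full $E$-algebra structure.
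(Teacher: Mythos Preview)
Your approach is exactly the paper's: the paper does not give a self-contained proof but simply explains (in the paragraph preceding the proposition) that ``one sets Kisin's $E$ to be $K$ and regards the $E$-algebra $\mathfrak{R}$ as a $K$-algebra via the choice of $\sigma$,'' which is precisely your reduction. Your elaboration of the descent in part (c) goes beyond what the paper records, but is consistent with the intended argument and with Kisin's original treatment.
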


We check that the kernel and cokernel of the Sen operator calculate $G_K$-cohomology of $\mathbf{C}_p \hat{\otimes}_{K,\sigma} \mathfrak{M}$.
\begin{prop}[{\cite[Proposition 2.3]{kisin}}] \label{prop:sencoh}

Write
\[\psi=\gamma-1:(\mathbf{C}_p \hat{\otimes}_{K,\sigma} \mathfrak{M})^{H_K} \rightarrow (\mathbf{C}_p \hat{\otimes}_{K,\sigma} \mathfrak{M})^{H_K}.\]
After replacing $E$ and $K$ with finite extensions, we have
\[\ker(\psi) = \ker(\phi|_\mathfrak{E}) \textrm{ and } \coker(\psi) = \coker(\phi|_\mathfrak{E}).\]
Moreover, if $\mathfrak{R} \rightarrow \mathfrak{R}'$ is a map of Noetherian $E$-Banach algebras, the same is true for $\mathfrak{R}'$ without a further extension of $K$.

\end{prop}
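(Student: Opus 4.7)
The plan is to use Sen--Tate's decomposition of $\widehat{K}_\infty$ to split $(\mathbf{C}_p\hat{\otimes}_{K,\sigma}\mathfrak{M})^{H_K}$ into a summand identified with $\mathfrak{E}$ on which $\psi$ differs from $\phi$ only by a unit, and a complementary summand on which $\psi=\gamma-1$ is bijective and therefore contributes nothing. By Tate's theorem, after possibly enlarging $K$, the inclusion $K\hookrightarrow\widehat{K}_\infty$ admits a continuous $\Gamma_K$-equivariant $K$-linear retraction whose kernel $X_K$ has $\gamma-1$ bijective with continuous inverse. Completing scalars produces a continuous $\Gamma_K$-equivariant projection $\widehat{K}_\infty\hat{\otimes}_{K,\sigma}\mathfrak{R}\twoheadrightarrow\mathfrak{R}$ whose kernel $X:=X_K\hat{\otimes}_{K,\sigma}\mathfrak{R}$ inherits invertibility of $\gamma-1$. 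Since by Proposition \ref{prop:wstarprop}(a) the module $(\mathbf{C}_p\hat{\otimes}_{K,\sigma}\mathfrak{M})^{H_K}$ is the extension of scalars of the finite free $\mathfrak{R}$-module $\mathfrak{E}$ along $\mathfrak{R}\rightarrow\widehat{K}_\infty\hat{\otimes}_{K,\sigma}\mathfrak{R}$, I obtain a $\Gamma_K$-stable decomposition
\[(\mathbf{C}_p\hat{\otimes}_{K,\sigma}\mathfrak{M})^{H_K}\;=\;\mathfrak{E}\,\oplus\,(X\otimes_{\mathfrak{R}}\mathfrak{E}).\]

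Next I analyze $\psi$ on each summand. After further enlarging $K$, i.e.\ replacing $\gamma$ by a sufficiently high $p$-power, the matrix $A$ of $\gamma|_\mathfrak{E}$ can be made as close to the identity as desired, while $\alpha:=\gamma|_X$ still has $(\alpha-1)^{-1}$ with controlled norm. On $X\otimes_\mathfrak{R}\mathfrak{E}$ the action of $\gamma$ is $\alpha\otimes A$, and I factor
\[\gamma-1\;=\;(\alpha-1)\otimes A \;+\; 1\otimes(A-1)\;=\;\bigl((\alpha-1)\otimes A\bigr)\cdot\bigl[1+(\alpha-1)^{-1}\otimes A^{-1}(A-1)\bigr].\]
Once $\|A-1\|$ is small enough compared to $\|(\alpha-1)^{-1}\|$, the bracket inverts by a Neumann series, so $\psi$ is bijective here and contributes $0$ to both $\ker\psi$ and $\coker\psi$. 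On the $\mathfrak{E}$ summand, Proposition \ref{prop:wstarprop}(b) gives $\gamma|_\mathfrak{E}=\exp(\log\chi(\gamma)\,\phi)$, hence
\[\gamma-1\;=\;\log\chi(\gamma)\,\phi\cdot u,\qquad u\;=\;\sum_{n\ge 0}\frac{(\log\chi(\gamma)\,\phi)^n}{(n+1)!}\,.\]
For the same shrinking of $\gamma$, the series $u$ converges in $\End_\mathfrak{R}\mathfrak{E}$, is $\equiv 1$ modulo a topologically nilpotent operator and therefore a unit by Neumann, and $\log\chi(\gamma)\in\mathbf{Q}_p^\times\subseteq\mathfrak{R}^\times$. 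Thus $\gamma-1$ and $\phi$ differ by a unit of $\End_\mathfrak{R}\mathfrak{E}$, yielding $\ker\psi=\ker(\phi|_\mathfrak{E})$ and $\coker\psi=\coker(\phi|_\mathfrak{E})$ at once.

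For base change along $\mathfrak{R}\rightarrow\mathfrak{R}'$, Proposition \ref{prop:wstarprop}(d) supplies compatibility of $\mathfrak{E}$, the basis, and $\phi$. Continuity of the Tate retraction lets us push the decomposition through $\hat{\otimes}_\mathfrak{R}\mathfrak{R}'$ to produce an analogous splitting over $\mathfrak{R}'$; the norms that control the two Neumann inversions only stay the same or improve under base change, so the same enlargement of $K$ suffices uniformly and no further extension is needed. The principal technical obstacle is the second step: coordinating the $p^r$-shrinking of $\gamma$ so that \emph{simultaneously} $\log/\exp$ converge on $\mathfrak{E}$ and $\|A-1\|\cdot\|(\alpha-1)^{-1}\|<1$ on $X\otimes_\mathfrak{R}\mathfrak{E}$, uniformly in $\mathfrak{R}'$. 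This reduces to explicit operator-norm control of Tate's normalized trace on $\widehat{K}_\infty$ together with the fact that passing from $\gamma$ to $\gamma^{p^r}$ multiplies $A-1$ by $\log\chi(\gamma^{p^r})\cdot(\text{unit})$, which can be made arbitrarily $p$-adically small while $\|(\alpha-1)^{-1}\|$ only grows polynomially in $p^r$.
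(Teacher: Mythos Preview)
Your proposal is correct and follows essentially the same approach as the paper (which in turn summarizes Kisin's argument): both use Tate's splitting $\widehat{K}_\infty = K \oplus X_0$ to decompose $(\mathbf{C}_p\hat{\otimes}_{K,\sigma}\mathfrak{M})^{H_K}$ as $\mathfrak{E}\oplus(X_0\hat{\otimes}_{K,\sigma}\mathfrak{E})$, show that $\gamma-1$ differs from $\phi$ by a unit on $\mathfrak{E}$ via the exponential, and factor $\gamma-1$ on the complementary summand as an invertible operator times a Neumann-invertible perturbation once $1-\gamma$ is made topologically nilpotent on $\mathfrak{E}$. Your factorization $\bigl((\alpha-1)\otimes A\bigr)\bigl[1+(\alpha-1)^{-1}\otimes A^{-1}(A-1)\bigr]$ is algebraically equivalent to Kisin's $(\gamma(1-\gamma)^{-1}\otimes(1-\gamma)+1)((1-\gamma)\otimes 1)$, and your base-change discussion matches the paper's observation that the relevant invertibilities persist under $\hat{\otimes}_\mathfrak{R}\mathfrak{R}'$.
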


\begin{proof}
In the proof of \cite[Proposition 2.3]{kisin}, Kisin shows that after increasing $K$ beyond that used in Proposition \ref{prop:wstarprop} if necessary, the kernel and cokernel of $\phi|_\mathfrak{E}$ are equal, respectively, to the kernel and cokernel of $\gamma-1$ on all of $(\mathbf{C}_p \hat{\otimes}_{K,\sigma} \mathfrak{M})^{H_K}$, so we are left to check the change of ring.

Kisin uses a construction by Tate of a topological isomorphism $\widehat{K}_\infty = K \oplus X_0$ such that $X_0$ is $\Gamma_K$-stable and $1-\gamma:X_0 \rightarrow X_0$ is a $\Gamma_K$-equivariant isomorphism \cite{tate}.  We then find that
\[(\mathbf{C}_p \hat{\otimes}_{K,\sigma} \mathfrak{M})^{H_K} \cong \widehat{K}_\infty \hat{\otimes}_{K,\sigma} \mathfrak{E} \cong \mathfrak{E} \oplus X_0 \hat{\otimes}_{K,\sigma} \mathfrak{E}.\]

Kisin shows that on the first factor, $-\psi = 1-\exp(\log(\chi(\gamma))\phi) = \log(\chi(\gamma))\phi G(\phi)$, where $G(\phi) \in \GL_n(\mathfrak{R})$ for sufficiently large $K$ (which replaces $\gamma$ by $\gamma^{p^r}$).  A base change will not alter this property.

On the second factor, $X_0 \hat{\otimes}_{K,\sigma} \mathfrak{E}$, Kisin rewrites the action of $-\psi=1-\gamma$ as
\[1-\gamma \otimes \gamma = (\gamma(1-\gamma)^{-1}\otimes (1-\gamma)+1)((1-\gamma)\otimes 1),\]
and shows that if we replace $K$ by a sufficiently large extension, the action of $1-\gamma$ is topologically nilpotent on $\mathfrak{E}$.  This calculation then implies that $1-\gamma$ acts as an isomorphism on $X_0 \hat{\otimes}_{K,\sigma} \mathfrak{E}$.  For a ring extension of Noetherian $E$-Banach algebras, $1-\gamma$ will remain an isomorphism, so there is no need to extend $K$ further.
\end{proof}

\begin{coro} \label{coro:sencoh}
For $E$ and $K$ sufficiently large, we have
\[H^0(G_K,\mathbf{C}_p \hat{\otimes}_{K,\sigma} \mathfrak{M}) \cong \ker(\phi|_\mathfrak{E}) \textrm{ and } H^1(G_K,\mathbf{C}_p \hat{\otimes}_{K,\sigma} \mathfrak{M}) \cong \coker(\phi|_\mathfrak{E}).\]
\end{coro}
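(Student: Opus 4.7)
The plan is to reduce the continuous $G_K$-cohomology to continuous $\Gamma_K$-cohomology on the $H_K$-invariants $N = (\mathbf{C}_p \hat{\otimes}_{K,\sigma} \mathfrak{M})^{H_K}$, and then invoke Proposition \ref{prop:sencoh}.

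For $r = 0$, the statement is immediate: $H^0(G_K, \mathbf{C}_p \hat{\otimes}_{K,\sigma} \mathfrak{M}) = N^{\Gamma_K} = \ker(\gamma - 1|_N)$, which by Proposition \ref{prop:sencoh} equals $\ker(\phi|_\mathfrak{E})$.

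For $r = 1$, I would use the inflation-restriction exact sequence
\[0 \to H^1(\Gamma_K, N) \to H^1(G_K, \mathbf{C}_p \hat{\otimes}_{K,\sigma} \mathfrak{M}) \to H^1(H_K, \mathbf{C}_p \hat{\otimes}_{K,\sigma} \mathfrak{M})^{\Gamma_K}\]
associated to $1 \to H_K \to G_K \to \Gamma_K \to 1$. The key ingredient is the vanishing $H^1(H_K, \mathbf{C}_p \hat{\otimes}_{K,\sigma} \mathfrak{M}) = 0$, a Banach-module generalization of Tate's classical result; granting this, inflation gives $H^1(G_K, \cdot) \cong H^1(\Gamma_K, N)$. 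I would then identify $H^1(\Gamma_K, N)$ with $\coker(\gamma - 1|_N)$ by an explicit cochain computation, exploiting the decomposition $N \cong \mathfrak{E} \oplus (X_0 \hat{\otimes}_{K,\sigma} \mathfrak{E})$ set up in the proof of Proposition \ref{prop:sencoh}. On the $X_0$-summand, $\gamma - 1$ is an isomorphism, so both its cokernel and $H^1$ vanish. On the $\mathfrak{E}$-summand, Kisin's argument shows that after enlarging $K$ the operator $\gamma - 1$ is topologically nilpotent, so for any $m \in \mathfrak{E}$ the partial sums $\sum_{j=0}^{k-1} \gamma^j m$ extend continuously to $s \in \mathbf{Z}_p$ via $\sum_{l \ge 0} \binom{s}{l+1} (\gamma-1)^l m$, producing a unique continuous 1-cocycle with prescribed value at $\gamma$; such a cocycle is a coboundary precisely when $m \in (\gamma - 1)\mathfrak{E}$, so $H^1(\Gamma_K, \mathfrak{E}) = \coker(\gamma - 1|_\mathfrak{E})$. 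Assembling the two summands and applying Proposition \ref{prop:sencoh} then yields $H^1(G_K, \mathbf{C}_p \hat{\otimes}_{K,\sigma} \mathfrak{M}) \cong \coker(\phi|_\mathfrak{E})$.

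The main obstacle is the vanishing of $H^1(H_K, \mathbf{C}_p \hat{\otimes}_{K,\sigma} \mathfrak{M})$ in the Banach setting, since classical Tate-Sen theory addresses only a single $\mathbf{Q}_p$-representation. The extension to families relies on the existence of the $\mathfrak{R}$-lattice $\mathfrak{E} \subset N$ from Proposition \ref{prop:wstarprop}, which effectively transports the Tate decomposition $\widehat{K}_\infty = K \oplus X_0$ into the family; once this structural input is in place, the cochain-level identifications above are routine given the topological nilpotence of $\gamma - 1|_\mathfrak{E}$ established within the proof of Proposition \ref{prop:sencoh}.
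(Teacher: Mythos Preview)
Your overall strategy matches the paper's: reduce via inflation--restriction and the vanishing $H^1(H_K,\mathbf{C}_p \hat{\otimes}_{K,\sigma} \mathfrak{M})=0$, then identify $H^1(\Gamma_K,N)$ with $\coker(\gamma-1)$ and invoke Proposition~\ref{prop:sencoh}. The difference lies in that last identification. The paper appeals to Lemma~\ref{lem:zpcoh}, which shows for any $\Gamma_K\cong\mathbf{Z}_p$-module that is an inverse limit of discrete $p^\infty$-torsion quotients that $H^1(\Gamma_K,M)\cong\coker(\gamma-1)$; this applies to $N$ directly (take $M_i$ to be the open balls of radius $p^{-i}$). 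You instead split $N=\mathfrak{E}\oplus(X_0\hat{\otimes}_{K,\sigma}\mathfrak{E})$ and compute by hand on each summand using the topological nilpotence of $\gamma-1$ on $\mathfrak{E}$. Both routes work; the paper's is shorter and more uniform, while yours makes the mechanism more explicit and avoids checking that $N$ satisfies the hypotheses of Lemma~\ref{lem:zpcoh}.

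One correction: the $H_K$-vanishing is not really an obstacle, and your explanation for it is off. Sen's result \cite[Proposition~2]{sen} already applies to any $\mathbf{C}_p$-Banach space with continuous semilinear $H_K$-action; it does not rely on the existence of the $\mathfrak{R}$-lattice $\mathfrak{E}$ or the Tate decomposition of $\widehat{K}_\infty$. The paper simply cites Sen here (and later, in Theorem~\ref{thm:hkvanish}, extends the same argument to $H^n$ for all $n\ge 1$).
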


\begin{proof}
The first follows from Proposition \ref{prop:sencoh}.  For the second, by Proposition \ref{prop:sencoh}, the inflation-restriction exact sequence, and the vanishing $H^1(H_K,\mathbf{C}_p \hat{\otimes}_{K,\sigma} \mathfrak{M})=0$ \cite[Proposition 2]{sen}, we are reduced to checking that $H^1(\Gamma_K,(\mathbf{C}_p \hat{\otimes}_{K,\sigma} \mathfrak{M})^{H_K}) \cong \coker (\gamma-1)$.  The result now follows from Lemma \ref{lem:zpcoh}.
\end{proof}

\begin{remark}
For the remainder of this paper, we always use $\phi$ for the map on $\mathfrak{E}$.  We also use $\phi$ only when $K$ is large enough so that the conclusions of Proposition \ref{prop:wstarprop}, Proposition \ref{prop:sencoh}, and Corollary \ref{coro:sencoh} hold.  However, we make use of $P_\sigma(T)$ more generally, since it is defined as an element of $\mathfrak{R}[T]$.
\end{remark}

Let $P_\sigma(T) \in \mathfrak{R}[T]$ denote the $\sigma$-factor of the Sen polynomial of $\mathfrak{M}$.  (The usual Sen polynomial is the product of the $P_\sigma(T)$ for all $\sigma \in \Sigma$.)

\begin{prop}[{\cite[Proposition 2.3]{kisin}}] \label{prop:htgap}

Fix $\sigma \in \Sigma$.  Then the $\mathfrak{R}$-modules
\[H^0(G_K,\mathbf{C}_p \hat{\otimes}_{K,\sigma} \mathfrak{M})\textrm{ and }H^1(G_K,\mathbf{C}_p \hat{\otimes}_{K,\sigma} \mathfrak{M})\]
are finitely generated and killed by $\det(\phi) = P_\sigma(0) \in \mathfrak{R}$.

\end{prop}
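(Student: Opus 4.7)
The plan is to combine the identification of Sen cohomology (Corollary \ref{coro:sencoh}) with Cayley-Hamilton applied to $\phi$, and then to descend the resulting statement from a sufficiently large extension of $K$ down to $K$ itself.

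First, pick a finite Galois extension $K'/K$ inside $\ol{\mathbf{Q}}_p$ and a corresponding finite extension $E'/E$ large enough that Proposition \ref{prop:wstarprop} and Corollary \ref{coro:sencoh} apply to $G_{K'}$; note that by Proposition \ref{prop:wstarprop}(c) the polynomial $P_\sigma(T) \in \mathfrak{R}[T]$ (and therefore $P_\sigma(0) = \det(\phi)$) is intrinsic to $\mathfrak{M}$ and does not change when we pass to this extension. On $\mathfrak{E}$, which is free of finite rank over $\mathfrak{R}$, Cayley-Hamilton gives $P_\sigma(\phi) = 0$. Writing $P_\sigma(T) = P_\sigma(0) + T \cdot Q(T)$ with $Q(T) \in \mathfrak{R}[T]$, we obtain $P_\sigma(0) = -\phi \cdot Q(\phi) = -Q(\phi)\cdot \phi$ in $\End_\mathfrak{R}\mathfrak{E}$ since $Q(\phi)$ commutes with $\phi$. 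The first equality shows $P_\sigma(0)$ annihilates $\coker(\phi|_\mathfrak{E})$, and the second shows it annihilates $\ker(\phi|_\mathfrak{E})$. Invoking Corollary \ref{coro:sencoh} over $K'$, the $\mathfrak{R}$-modules $H^0(G_{K'}, \mathbf{C}_p \hat{\otimes}_{K,\sigma} \mathfrak{M})$ and $H^1(G_{K'}, \mathbf{C}_p \hat{\otimes}_{K,\sigma} \mathfrak{M})$ are identified with submodules and quotients of the finitely generated module $\mathfrak{E}$, hence are finitely generated (using that $\mathfrak{R}$ is Noetherian), and are annihilated by $P_\sigma(0)$.

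It remains to descend from $G_{K'}$ to $G_K$. For $H^0$ this is immediate: $H^0(G_K, M) \subseteq H^0(G_{K'}, M)$ for $M = \mathbf{C}_p \hat{\otimes}_{K,\sigma} \mathfrak{M}$, so it inherits finite generation over the Noetherian ring $\mathfrak{R}$ and the annihilation by $P_\sigma(0)$. For $H^1$ we use the inflation-restriction sequence associated to $1 \to G_{K'} \to G_K \to \gal(K'/K) \to 1$. Because $\mathfrak{R}$ is a $\mathbf{Q}_p$-algebra and $|\gal(K'/K)|$ is invertible in $\mathbf{Q}_p$, the higher cohomology of the finite group $\gal(K'/K)$ with coefficients in any $\mathfrak{R}$-module vanishes (the standard averaging argument). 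Therefore inflation-restriction collapses to an isomorphism
\[H^1(G_K, M) \;\iso\; H^1(G_{K'}, M)^{\gal(K'/K)},\]
which exhibits $H^1(G_K, M)$ as a submodule of the finitely generated, $P_\sigma(0)$-torsion module $H^1(G_{K'}, M)$. This gives both finite generation and annihilation by $P_\sigma(0)$.

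There is no serious obstacle, only a bookkeeping subtlety: the Sen operator $\phi$ is only available after enlarging $K$, while the conclusion is to be stated over the original $K$. The key observation is that the annihilator $P_\sigma(0) \in \mathfrak{R}$ is canonical by Proposition \ref{prop:wstarprop}(c), so the annihilation statement is independent of the auxiliary extension; the inflation-restriction step above is what makes the transition formal.
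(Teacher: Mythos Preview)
The paper does not supply its own proof of this proposition; it simply attributes the result to Kisin.  Your argument is essentially the expected one and is correct in outline: Cayley--Hamilton on the finite free $\mathfrak{R}$-module $\mathfrak{E}$ gives $P_\sigma(0) = -\phi\,Q(\phi) = -Q(\phi)\,\phi$, which kills $\coker\phi$ and $\ker\phi$, and Corollary~\ref{coro:sencoh} identifies these with the desired cohomology groups after enlarging $K$ and $E$.

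One bookkeeping point is slightly underspecified.  When you invoke Corollary~\ref{coro:sencoh} ``over $K'$'', the output is $H^r(G_{K'},\mathbf{C}_p\hat{\otimes}_{K',\sigma'}(\mathfrak{M}\otimes_E E'))$, whereas your inflation--restriction argument is run on $M=\mathbf{C}_p\hat{\otimes}_{K,\sigma}\mathfrak{M}$ restricted to $G_{K'}$.  These are not literally the same module, and the identification (a direct sum over $\gal(K'/K)$, followed by descent in $E$) is exactly the content of Lemma~\ref{lem:changeek} and equations (\ref{eqn:switchfield})--(\ref{eqn:kdes1}).  Since you are already working inside this paper, it would be cleaner to replace your hand-rolled inflation--restriction step by a direct citation of Lemma~\ref{lem:changeek}, which gives $H^r(G_K,\mathbf{C}_p\hat{\otimes}_{K,\sigma}\mathfrak{M})\cong H^r(G_{K'},\mathbf{C}_p\hat{\otimes}_{K',\sigma'}\mathfrak{M})$ and $H^r(\cdot)\otimes_E E'\cong H^r(\cdot\otimes_E E')$ in one stroke; finite generation and the $P_\sigma(0)$-annihilation then descend immediately via Lemma~\ref{lem:galdes}.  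With that substitution the proof is complete.
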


\subsection{$\mathbf{C}_p$-periods in families} \label{subsec:htcase}

Kisin \cite[Proposition 2.4]{kisin} considers a family with a single fixed Hodge-Tate-Sen weight 0 of multiplicity 1, localizes to fix this multiplicity, and shows that the $\mathbf{C}_p$-periods form a finite flat module of rank 1 that is compatible with base change.  We generalize this result, though we require an extra hypothesis on the family if the multiplicity is greater than 1: there exist a dense set of specializations with $m$ Hodge-Tate weights equal to 0.  On the open set where the weight 0 has fixed multiplicity $m$, we show that both the $\mathbf{C}_p$-periods and 1-cocycles interpolate in a finite flat module and are compatible with base change.

In order to interpolate periods over $E$-Banach algebras that are possibly non-reduced, we will need to define a robust version of density that generalizes Zariski density.  For specializations $\xi_i: \mathfrak{R} \rightarrow \mathfrak{R}_i$, we want the map $\mathfrak{R} \rightarrow \prod_i \mathfrak{R}_i$ to be injective.  When each $\mathfrak{R}_i$ is a finite field extension of $E$, this is Zariski density of $\set{\ker \xi_i}$.  Limiting ourselves to this case would force $\mathfrak{R}$ to be reduced.  However, when the $\mathfrak{R}_i$ are permitted to be local Artinian rings of finite $E$-dimension, the condition that $\mathfrak{R} \inj \prod_i \mathfrak{R}_i$ is no longer stable under localization.  This motivates the following definition.

\begin{defin} \label{defin:stabdens}

For a local Artinian algebra $R$ with maximal ideal $\mathfrak{m}$, we define the \emph{breadth} of $R$, $\br(R) \in \mathbf{Z}_{\ge 1}$, by $\br(R) = \min\set{n:\mathfrak{m}^n =0}$.  We say that a set $\set{\xi_i}$ of specializations $\xi_i:R \rightarrow R_i$ of a ring $R$ to local Artinian rings $R_i$ is \emph{stably dense} if $R \inj \prod_i R_i$ is injective and $\set{\br(R_i)}$ is bounded above.

\end{defin}

The term ``stable'' is justified by the following observation.

\begin{lem} \label{lem:stabdens}

Suppose that $\set{\xi_i}$ is a stably dense set of specializations of the ring $R$ to local Artinian rings $R_i$.  Then for any $f \in R$, the localizations $\set{\xi_{i,f}}$ are stably dense in $R_f$.

\end{lem}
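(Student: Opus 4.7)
The plan is to use the fact that each $R_i$ is local Artinian to observe that the localization at any non-unit is zero, and then exploit the uniform breadth bound to clear denominators.

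First, I would analyze what $\xi_{i,f}:R_f \rightarrow (R_i)_{\xi_i(f)}$ looks like. Since each $R_i$ is local Artinian, its maximal ideal $\mathfrak{m}_i$ satisfies $\mathfrak{m}_i^{\br(R_i)}=0$, so every element of $\mathfrak{m}_i$ is nilpotent. Hence either $\xi_i(f) \in R_i^\times$, in which case $(R_i)_{\xi_i(f)}=R_i$, or $\xi_i(f)$ is nilpotent, in which case $(R_i)_{\xi_i(f)}=0$. Let $I' = \set{i \in I : \xi_i(f) \in R_i^\times}$; then the nonzero specializations in the family $\set{\xi_{i,f}}$ are indexed by $I'$, and their target rings are the $R_i$ themselves, so the breadth bound carries over verbatim from the hypothesis on $\set{\xi_i}$.

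Next, I would verify injectivity of $R_f \rightarrow \prod_{i \in I'} R_i$. Let $B$ be an upper bound for $\set{\br(R_i)}_{i \in I}$. Suppose $a/f^n \in R_f$ maps to zero in every $R_i$ with $i \in I'$. Then $\xi_i(a)=0$ for all $i \in I'$ since $\xi_i(f)^n$ is a unit there. For $i \in I \setminus I'$, the element $\xi_i(f)$ lies in $\mathfrak{m}_i$, so $\xi_i(f)^B=0$, and therefore $\xi_i(f^B a)=0$. Combining, $\xi_i(f^B a)=0$ for every $i \in I$, and the stable density of $\set{\xi_i}$ in $R$ (i.e.\ injectivity of $R \inj \prod_i R_i$) forces $f^B a=0$ in $R$. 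Hence $a/f^n=0$ in $R_f$, proving the desired injectivity.

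This finishes the argument, and the two parts together give stable density of $\set{\xi_{i,f}}$ in $R_f$. There is no serious obstacle: the key structural input is the nilpotence of maximal ideals in local Artinian rings, and the role of the uniform breadth bound is precisely to allow a single power $f^B$ to kill the contribution from all indices $i \notin I'$ simultaneously — without such a uniform bound, the argument would fail, which is exactly why the breadth condition is built into Definition \ref{defin:stabdens}.
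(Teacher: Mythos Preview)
Your proof is correct and follows essentially the same approach as the paper's: both split the index set according to whether $\xi_i(f)$ is a unit, observe that $(R_i)_f$ is either $R_i$ or $0$, and use the uniform breadth bound $b=\max\set{\br(R_i)}$ to conclude $f^b a=0$ in $R$ and hence $a=0$ in $R_f$.
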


\begin{proof}
Let $\mathfrak{m}_i$ be the maximal ideal of $R_i$.  Let $f$ be an element of $R$, and suppose that $\frac{a}{f^k} \in \ker (R_f \rightarrow \prod_i R_{i,f})$.  Then $a$ is also in the kernel.  We have $R_{i,f}=0$ if $\xi_i(f) \in \mathfrak{m}_i$ and $R_{i,f}= R_i$ if $\xi_i(f) \notin \mathfrak{m}_i$.  Write $\prod_i R_i = \prod_{\set{i: \xi_i(f) \in \mathfrak{m}_i}} R_i \times \prod_{\set{i: \xi_i(f) \notin \mathfrak{m}_i}} R_i.$  Then the image of $a$ is 0 in the second factor, while the image of $f$ is nilpotent in the first by the bounded breadth condition.  Therefore $f^ba=0$ for $b=\max\set{\br(R_i)}$, so $a=0\in R_f$.  The bounded breadth condition remains satisfied, so we conclude that $\set{\xi_{i,f}}$ is stably dense.
\end{proof}

We proceed to the proof of the main theorem of this section.

\begin{thm} \label{thm:htcase}

Maintain the notation of Section \ref{subsec:senkisin}.  Fix $\sigma \in \Sigma$.  Suppose that $P_\sigma(T) = T^mQ_{\sigma}(T)$.  If $m \ge 2$, assume that there exist maps $\xi_i: \mathfrak{R} \rightarrow \mathfrak{R}_i$ of Banach $E$-algebras for $i \in I$, where $\mathfrak{R}_i$ is local Artinian of finite $E$-dimension, such that the following hold.
\begin{enumerate}[{\normalfont (i)}]
	\item For each $i$, $\dim_E H^0(G_K, \mathbf{C}_p \otimes_{K,\sigma} (\mathfrak{M} \otimes_\mathfrak{R} \mathfrak{R}_i)) = m\dim_E \mathfrak{R}_i$.
	\item For each $i$, the image of $Q_\sigma(0)$ in $\mathfrak{R}_i$ is a unit.
	\item The set $\set{\xi_i}$ is stably dense in $\mathfrak{R}$.
\end{enumerate}
Then we have the following.  We use subscripts to denote localization.
\begin{enumerate}[{\normalfont (a)}]
	\item For $r \in \set{0,1}$, the $\mathfrak{R}_{Q_\sigma(0)}$ module $H^r(G_K,\mathbf{C}_p \hat{\otimes}_{K,\sigma} \mathfrak{M})_{Q_\sigma(0)}$ is finite flat of rank $m$.
	\item For any map $\xi:\mathfrak{R} \rightarrow \mathfrak{R}'$ of Noetherian $E$-Banach algebras, the cokernel and kernel of the map $H^r(G_K,\mathbf{C}_p \hat{\otimes}_{K,\sigma} \mathfrak{M}) \otimes_{\mathfrak{R}} \mathfrak{R}' \rightarrow H^r(G_K,\mathbf{C}_p \hat{\otimes}_{K,\sigma} (\mathfrak{M} \otimes_{\mathfrak{R}} \mathfrak{R}'))$ are killed by a power of $\xi(Q_\sigma(0))$ if $r=0$, and vanish if $r=1$.
\end{enumerate}

\end{thm}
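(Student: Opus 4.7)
The strategy is to reduce, via Corollary \ref{coro:sencoh}, to the study of the kernel and cokernel of the Sen operator $\phi \in \End_\mathfrak{R} \mathfrak{E}$. Since $P_\sigma(\phi) = 0$ by Cayley--Hamilton, after localizing at $Q_\sigma(0)$ I may apply Lemma \ref{lem:splitht} with $S(T) = T^m$, $r_1 = 0$, $k_1 = m$, $Q = Q_\sigma$ (the unit condition reduces to $Q_\sigma(0)$ being a unit) to obtain a $\phi$-compatible splitting
\[\mathfrak{E}_{Q_\sigma(0)} = \mathfrak{F}_0 \oplus \mathfrak{F}_1, \quad \mathfrak{F}_0 = \ker Q_\sigma(\phi), \quad \mathfrak{F}_1 = \ker \phi^m.\]
Both summands are finite projective as direct summands of a finite free module. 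On $\mathfrak{F}_0$, the relation $Q_\sigma(\phi) = 0$ with $Q_\sigma(0)$ a unit makes $\phi$ invertible (one solves for $\phi^{-1}$ as a polynomial in $\phi$), so $\ker\phi|_{\mathfrak{F}_0}$ and $\coker\phi|_{\mathfrak{F}_0}$ both vanish. Comparing characteristic polynomials at any closed point — the product of the two factors must equal $T^m Q_\sigma(T)$ — shows $\mathfrak{F}_1$ has rank $m$.

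The crux is to show $\phi|_{\mathfrak{F}_1} = 0$, which is automatic when $m = 1$ since then $\mathfrak{F}_1$ is a rank-$1$ projective module on which $\phi$ is nilpotent. For $m \ge 2$, I invoke the density hypotheses. Fix an index $i$: condition (ii) makes $\xi_i(Q_\sigma(0))$ a unit, so the decomposition base-changes to $\mathfrak{E}\otimes_\mathfrak{R}\mathfrak{R}_i$, compatibly with the Sen operator by Proposition \ref{prop:wstarprop}(d). Since $\phi$ is invertible on $\mathfrak{F}_0 \otimes \mathfrak{R}_i$, Corollary \ref{coro:sencoh} together with condition (i) yields
\[\dim_E \ker(\phi|_{\mathfrak{F}_1 \otimes \mathfrak{R}_i}) = \dim_E H^0(G_K, \mathbf{C}_p \otimes_{K,\sigma}(\mathfrak{M} \otimes_\mathfrak{R} \mathfrak{R}_i)) = m\dim_E \mathfrak{R}_i = \dim_E(\mathfrak{F}_1 \otimes \mathfrak{R}_i),\]
forcing $\phi|_{\mathfrak{F}_1 \otimes \mathfrak{R}_i} = 0$ for every $i$. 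The principal technical obstacle is to promote this pointwise vanishing to global vanishing on the projective (not necessarily free) module $\mathfrak{F}_1$. For this I embed $\mathfrak{F}_1$ as a direct summand of some $\mathfrak{R}_{Q_\sigma(0)}^N$, extend $\phi|_{\mathfrak{F}_1}$ by zero to a matrix in $M_N(\mathfrak{R}_{Q_\sigma(0)})$, and use stable density of $\set{\xi_i}$ in $\mathfrak{R}_{Q_\sigma(0)}$ (which holds by Lemma \ref{lem:stabdens}) to conclude that every matrix entry vanishes.

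With $\phi|_{\mathfrak{F}_1} = 0$ in hand, both $\ker\phi|_{\mathfrak{E}_{Q_\sigma(0)}}$ and $\coker\phi|_{\mathfrak{E}_{Q_\sigma(0)}}$ equal $\mathfrak{F}_1$, a finite projective module of rank $m$, proving (a). For (b), the case $r = 1$ is immediate from Lemma \ref{lem:cokerspec}, which gives an isomorphism even without any localization. For $r = 0$, performing the analogous splitting of $\mathfrak{E}_{\mathfrak{R}'_{\xi(Q_\sigma(0))}}$ (legitimate by the same unit argument after inverting $\xi(Q_\sigma(0))$) shows that the comparison map $\ker(\phi|_\mathfrak{E}) \otimes_\mathfrak{R} \mathfrak{R}' \to \ker(\phi|_{\mathfrak{E}_{\mathfrak{R}'}})$ becomes an isomorphism after inverting $\xi(Q_\sigma(0))$; since both sides are finitely generated $\mathfrak{R}'$-modules by Proposition \ref{prop:htgap}, the kernel and cokernel of this map are annihilated by some power of $\xi(Q_\sigma(0))$.
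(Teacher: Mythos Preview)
Your argument follows essentially the same route as the paper's proof: split $\mathfrak{E}_{Q_\sigma(0)}$ via Lemma \ref{lem:splitht}, show the nilpotent summand has rank $m$, use the stably dense family to force $\phi|_{\mathfrak{F}_1}=0$, and handle part (b) via Lemma \ref{lem:cokerspec} and flatness. Your device of embedding the projective module $\mathfrak{F}_1$ as a direct summand of a free module and extending $\phi$ by zero is a clean alternative to the paper's further Zariski localization to trivialize $\mathfrak{F}_1$.

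There is, however, a genuine gap at the very first step. The Sen operator $\phi$ and the module $\mathfrak{E}$ exist only after replacing $E$ and $K$ by sufficiently large finite Galois extensions (Proposition \ref{prop:wstarprop}(a) and the Remark after Corollary \ref{coro:sencoh}), and you have not addressed the descent back to the original $E,K$. The paper devotes a substantial portion of its proof to this reduction via Lemma \ref{lem:changeek} and Lemma \ref{lem:galdes}. The delicate point is hypothesis (i): after tensoring with $E'$, each $\mathfrak{R}_i \otimes_E E'$ may split as a product of local Artinian rings, and one must verify that $\dim_{E'} H^0 = m\dim_{E'}$ holds for \emph{each} local factor separately, not merely in aggregate. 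The paper handles this by first establishing (what you call) $\rank\mathfrak{F}_1 = m$ \emph{without} using (i), then observing that no factor can have $H^0$ of dimension exceeding $m$ times its $E'$-dimension, which forces equality on every factor. Your sketch invokes $\phi$ and (i) simultaneously without separating these steps, so as written the reduction is incomplete.
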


\begin{remark}

Whenever we localize a ring $\mathfrak{R}$ or $\mathfrak{R}$-module, we do so in the categories of rings and their modules, with no topology or norm structure.  When we write $(\cdot)^G_a$, we always mean the localization of $(\cdot)^G$ at $a$.

\end{remark}

\begin{remark}

The rings $\mathfrak{R}_i$ will usually be geometric points, or possibly thickened geometric points of the form $\mathfrak{R}/\mathfrak{m}^n$.

\end{remark}

To apply the results of Section \ref{subsec:senkisin}, we will need the following.

\begin{lem} \label{lem:changeek}
For a finite Galois extension $E'$ of $E$, a profinite group $G$, an integer $r \ge 0$, and a continuous $E[G]$-module $M$, we have a natural isomorphism
\begin{equation}\label{eqn:edescalc} H^r(G,M) \otimes_E E' \iso H^r(G,M \otimes_E E').\end{equation}

For a finite Galois extension $K'$ of $K$ whose normal closure is contained in $E$, a fixed choice of extension $\sigma': K'\rightarrow E$ of $\sigma$, and $r \in \set{0,1}$, we have a natural isomorphism
\begin{equation}\label{eqn:kdescalc} H^r(G_K,\mathbf{C}_p\hat{\otimes}_{K,\sigma} \mathfrak{M}) \iso H^r(G_{K'}, \mathbf{C}_p \hat{\otimes}_{K',\sigma'} \mathfrak{M}).\end{equation}
\end{lem}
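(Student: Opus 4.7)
The plan is to address parts (\ref{eqn:edescalc}) and (\ref{eqn:kdescalc}) separately; the first is a routine extension of scalars in the coefficient field, and the second is a Shapiro-type argument.

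For (\ref{eqn:edescalc}), the key observation is that $E'/E$ is finite, so $E'$ is a finite-dimensional (hence free) $E$-vector space on which $G$ acts trivially. Consequently, for any continuous $E[G]$-module $M$, the natural map of cochain complexes
\[
C^r_{\mathrm{cts}}(G, M) \otimes_E E' \longrightarrow C^r_{\mathrm{cts}}(G, M \otimes_E E')
\]
is an isomorphism: fixing an $E$-basis of $E'$, both sides become the same finite direct sum of copies of $C^r_{\mathrm{cts}}(G,M)$ and the differentials visibly agree. Passing to cohomology yields the claimed isomorphism, and $E'$-linearity is automatic from the construction.

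For (\ref{eqn:kdescalc}), I would use Shapiro's lemma for continuous cohomology of profinite groups. After first reducing to the case $K'/K$ Galois (by composing two applications of the statement across the Galois closure of $K'$ inside $E$), the main step is to produce an isomorphism of continuous $G_K$-modules
\[
\mathbf{C}_p \hat{\otimes}_{K,\sigma} \mathfrak{M} \;\cong\; \ind_{G_{K'}}^{G_K}\bigl(\mathbf{C}_p \hat{\otimes}_{K',\sigma'} \mathfrak{M}\bigr).
\]
Using the standard \'etale algebra decomposition $\mathbf{C}_p \otimes_K K' \cong \prod_{\tau} \mathbf{C}_p$ (a topological isomorphism, since all modules in sight are finite-dimensional over $\mathbf{C}_p$) indexed by $K$-embeddings $\tau : K' \hookrightarrow \mathbf{C}_p$ extending the inclusion of $K$ into $\mathbf{C}_p$ used in the left tensor factor, one rewrites
\[
\mathbf{C}_p \hat{\otimes}_{K,\sigma} \mathfrak{M} \;\cong\; (\mathbf{C}_p \otimes_K K') \hat{\otimes}_{K',\sigma'} \mathfrak{M} \;\cong\; \prod_{\tau} \bigl(\mathbf{C}_p \hat{\otimes}_{K',\tau} \mathfrak{M}\bigr).
\]
The Galois group $G_K$ permutes the factors via its natural action on the set of $\tau$, and the stabilizer of the distinguished factor corresponding to $\tau = \sigma'$ (composed with $E \hookrightarrow \mathbf{C}_p$) is exactly $G_{K'}$. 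This is the defining description of an induced module, so Shapiro's lemma concludes the argument.

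The main obstacle is the topological bookkeeping: one needs to check that the \'etale algebra decomposition and the resulting induced-module identification are topological isomorphisms compatible with the completed tensor products, and that the $G_K$-action genuinely matches the formal permutation action on the set of embeddings extending the fixed inclusion $K \hookrightarrow \mathbf{C}_p$. Once this is in place, Shapiro's lemma in degrees $r \in \{0,1\}$ is straightforward (in degree $0$ it is just the identification $W^{G_{K'}} = (\ind W)^{G_K}$, and in degree $1$ one explicitly restricts continuous cocycles to $G_{K'}$), yielding the desired isomorphism.
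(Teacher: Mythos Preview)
Your argument for (\ref{eqn:edescalc}) is identical to the paper's. For (\ref{eqn:kdescalc}) your approach is correct and rests on the same \'etale algebra decomposition $\mathbf{C}_p \otimes_K K' \cong \prod_\tau \mathbf{C}_p$ that the paper uses, but you package the conclusion differently: you recognize the product as an induced module and invoke Shapiro's lemma, whereas the paper instead applies inflation--restriction (using that $\gal(K'/K)$-cohomology vanishes in characteristic~$0$) to identify $H^r(G_K,\,\cdot\,)$ with $H^r(G_{K'},\,\cdot\,)^{\gal(K'/K)}$, and then computes those invariants of the product by hand via the diagonal map $\psi \mapsto (\tau\psi)_\tau$. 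The two arguments are formally equivalent for a normal subgroup of finite index; yours is more conceptual, the paper's is more self-contained and avoids citing Shapiro for continuous cohomology. One small point: your preliminary ``reduction to the Galois case'' is unnecessary, since the statement already assumes $K'/K$ Galois.
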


\begin{proof}
Let $C^\cdot(G,M)$ denote the complex of continuous cochains.  The natural map
\[C^\cdot(G,M) \otimes_E E' \iso C^\cdot(G,M \otimes_E E')\]
is an isomorphism since there is a topological isomorphism $E' \cong E^k$ for some $k$.  The isomorphism (\ref{eqn:edescalc}) follows from this and Lemma \ref{lem:niceend}, using the flat $E$-module $E'$ for $N$.

By inflation-restriction and the vanishing of finite group cohomology in characteristic 0, we have for any finite Galois extension $K'/K$ contained in $E$ and $r \in \set{0,1}$ a natural isomorphism
\[H^r(G_K,\mathbf{C}_p\hat{\otimes}_{K,\sigma} \mathfrak{M}) \iso H^r(G_{K'}, \mathbf{C}_p \hat{\otimes}_{K,\sigma} \mathfrak{M})^{\gal(K'/K)},\]
where $\gal(K'/K)$ acts on both $\mathbf{C}_p$ and $\mathfrak{M}$.

For clarity, we write $K'$ for the field with its usual $\gal(K'/K)$ action and $K_\mathrm{triv}'$ for $K'$ with the trivial action.  We claim that for any $\sigma': K_\mathrm{triv}' \rightarrow E$ extending $\sigma$, we have
\begin{equation}\label{eqn:switchfield}\mathbf{C}_p \hat{\otimes}_{K,\sigma} \mathfrak{R} = \mathbf{C}_p \hat{\otimes}_K (K_\mathrm{triv}' \hat{\otimes}_{K_\mathrm{triv}',\sigma'} \mathfrak{R}) = (\mathbf{C}_p \hat{\otimes}_K K_\mathrm{triv}') \hat{\otimes}_{K_\mathrm{triv}',\sigma'} \mathfrak{R} \cong \mathbf{C}_p^{\set{\tau| \tau \in \gal(K'/K)}} \hat{\otimes}_{K_\mathrm{triv}',\sigma'} \mathfrak{R},\end{equation}
where the action of $G_K$ on the right acts on each copy of $\mathbf{C}_p$ as well as by permutation via its quotient $\gal(K'/K)$ on the factors, and acts trivially on $\mathfrak{R}$.  In particular, $\sigma$ sends $(c_\tau)_\tau$ to $(\sigma(c_{\sigma^{-1}\tau}))_\tau$.  The last map is obtained from the $G_K$-equivariant map $\mathbf{C}_p \hat{\otimes}_K K_\mathrm{triv}' \rightarrow \mathbf{C}_p^{\set{\tau| \tau \in \gal(K'/K)}}$ defined by $c \otimes k\mapsto (c\cdot \tau(k))_\tau$ (where $\tau(k)$ denotes the usual action of $\tau$ on $k$).  It follows from independence of characters that if $\set{k_j}$ are a $K$-basis of $K'$ and $\set{\tau_i} = \gal(K'/K)$, the matrix $(\tau_i(k_j))_{i,j}$ is invertible, so this is an isomorphism.  We deduce
\begin{align*}
	H^r(G_{K'}, \mathbf{C}_p \hat{\otimes}_{K,\sigma} \mathfrak{M})^{\gal(K'/K)} &\cong H^r(G_{K'}, \mathbf{C}_p^{\set{\tau| \tau \in \gal(K'/K)}} \hat{\otimes}_{K_\mathrm{triv}',\sigma'} \mathfrak{M})^{\gal(K'/K)}\\
	&\cong [H^r(G_{K'}, \mathbf{C}_p \hat{\otimes}_{K_\mathrm{triv}',\sigma'} \mathfrak{M})^{\set{\tau| \tau \in \gal(K'/K)}}]^{\gal(K'/K)}.
\end{align*}
The second map is induced by the natural $G_K$-equivariant Banach space isomorphism
\begin{equation} \label{eqn:kdes1} \mathbf{C}_p^{\set{\tau \in \gal(K'/K)}} \hat{\otimes}_{K_\mathrm{triv}',\sigma'} \mathfrak{M} \iso (\mathbf{C}_p \hat{\otimes}_{K_\mathrm{triv}',\sigma'}\mathfrak{M})^{\set{\tau \in \gal(K'/K)}}\end{equation}
defined by sending $(c_\tau)_\tau \otimes m$ to $(c_\tau \otimes m)_\tau$ and extending by continuity.

We construct an isomorphism
\[ H^r(G_{K'}, \mathbf{C}_p \hat{\otimes}_{K_\mathrm{triv}',\sigma'} \mathfrak{M}) \iso [H^r(G_{K'}, \mathbf{C}_p \hat{\otimes}_{K_\mathrm{triv}',\sigma'} \mathfrak{M})^{\set{\tau| \tau \in \gal(K'/K)}}]^{\gal(K'/K)}\]
by mapping the cocycle $\psi$ to $(\tau\psi)_{\tau \in \gal(K'/K)}$.  Combining the above, we obtain (\ref{eqn:kdescalc}); we write $K'$ on the right in that expression since there is no $\gal(K'/K)$-action.
\end{proof}

\begin{proof}[Proof of Theorem \ref{thm:htcase}]

We claim that may replace the pair $K,E$ with any Galois extensions $K',E'$ such that $E'$ contains the normal closure of $K'$.  More precisely, if we replace $\mathfrak{R}$ with $\mathfrak{R} \otimes_E E'$, $\mathfrak{M}$ with $\mathfrak{M} \otimes_E E'$, $E$ with $E'$, and $K$ with $K'$ in Theorem \ref{thm:htcase}, the old conditions (i)-(iii) imply the new conditions and the new conclusions (a) and (b) imply the original (a) and (b).  In fact, for all these conditions and conclusions, the ability to swap $K$ for $K'$ and vice-versa follows immediately from (\ref{eqn:kdescalc}), so we need only check the change in $E$.

If we replace $\mathfrak{R}_i \otimes_E E'$ with the set of its local Artinian factors, the claims regarding $E$ and conditions (ii) and (iii) are clear.  For (i), we apply Lemma \ref{lem:changeek} to calculate the new $E$-dimension for the base change to $\mathfrak{R}_i \otimes_E E'$.  We will obtain (i) for each local Artinian factor by showing in Claim \ref{claim:rankcalc} below that no Artinian local $\mathfrak{R} \otimes_E E'$-algebra $\mathfrak{R}'$ of finite $E'$-dimension with unit image of $Q_\sigma(0)$ can have $\dim_{E'} H^0(G_K, \mathbf{C}_p \hat{\otimes}_{K,\sigma} (\mathfrak{M} \otimes_\mathfrak{R} \mathfrak{R}')) > m\dim_{E'} \mathfrak{R}'$.  Note that will not make use of condition (i) before proving this claim.

Suppose that $E'/E$ is a finite Galois extension and $M$ is a $\mathfrak{R}$-module.  By Lemma \ref{lem:galdes}.(b), $(M \otimes_E E')^{\gal(E'/E)} =  M$, where $\gal(E'/E)$ acts only on $E'$.  If $M \otimes_E E'$ is finite flat (when viewed as a $\mathfrak{R} \otimes_E E'$-module), it is finite flat over $\mathfrak{R}$, and so by Lemma \ref{lem:galdes}.(a), $M$ is finite flat over $\mathfrak{R}$ as well.  Moreover, we have $\rank_{\mathfrak{R}} M = \rank_{\mathfrak{R}\otimes_E E'} M\otimes_E E'$.  We set $M = H^r(G_K,\mathbf{C}_p \hat{\otimes}_{K,\sigma} \mathfrak{M})_{Q_\sigma(0)}.$  By (\ref{eqn:edescalc}),
\begin{align*}
	H^r(G_K,\mathbf{C}_p \hat{\otimes}_{K,\sigma} \mathfrak{M})_{Q_\sigma(0)} \otimes_E E' &\cong (H^r(G_K,\mathbf{C}_p \hat{\otimes}_{K,\sigma} \mathfrak{M}) \otimes_E E')_{Q_\sigma(0)}\\
	&\cong H^r(G_K,\mathbf{C}_p \hat{\otimes}_{K,\sigma} (\mathfrak{M} \otimes_E E'))_{Q_\sigma(0)}.
\end{align*}
The reduction of Theorem \ref{thm:htcase}.(a) from $E$ to $E'$ follows.

For the reduction of Theorem \ref{thm:htcase}.(b), we suppose a change of ring $\mathfrak{R} \rightarrow \mathfrak{R}'$ is given, and consider the exact sequence
\begin{align}
\label{eqn:redbexact} 0\rightarrow \ker \psi &\rightarrow H^r(G_K,\mathbf{C}_p \hat{\otimes}_{K,\sigma} (\mathfrak{M} \otimes_E E')) \otimes_{\mathfrak{R} \otimes_E E'} (\mathfrak{R}' \otimes_E E')\\
\nonumber &\stackrel{\psi}{\rightarrow} H^r(G_K,\mathbf{C}_p \hat{\otimes}_{K,\sigma} ((\mathfrak{M} \otimes_E E') \otimes_{\mathfrak{R}\otimes_E E'} \mathfrak{R}'\otimes_E E')) \rightarrow \coker \psi\rightarrow 0.
\end{align}
For any $\mathfrak{R}$-module $P$, there is a natural identification
\begin{equation} \label{eqn:eswitch} (P \otimes_E E') \otimes_{\mathfrak{R} \otimes_E E'} (\mathfrak{R}' \otimes_E E') = (P \otimes_\mathfrak{R} \mathfrak{R}') \otimes_E E',\end{equation}
so by using this and (\ref{eqn:edescalc}) we may rewrite $\psi$ as
\[(H^r(G_K,\mathbf{C}_p \hat{\otimes}_{K,\sigma} \mathfrak{M}) \otimes_\mathfrak{R} \mathfrak{R}') \otimes_E E' \rightarrow H^r(G_K,\mathbf{C}_p \hat{\otimes}_{K,\sigma} (\mathfrak{M} \otimes_{\mathfrak{R}} \mathfrak{R}'))\otimes_E E'.\]
Take $\gal(E'/E)$-invariants of (\ref{eqn:redbexact}) and apply Lemma \ref{lem:galdes}.(b) to obtain
\begin{align}
\label{eqn:redbexact2} 0\rightarrow (\ker \psi)^{\gal(E'/E)} \rightarrow H^r(G_K,\mathbf{C}_p \hat{\otimes}_{K,\sigma} \mathfrak{M}) \otimes_\mathfrak{R} \mathfrak{R}'&\stackrel{\psi^{\gal(E'/E)}}{\rightarrow} H^r(G_K,\mathbf{C}_p \hat{\otimes}_{K,\sigma} (\mathfrak{M} \otimes_{\mathfrak{R}} \mathfrak{R}'))\\
\nonumber &\rightarrow (\coker \psi)^{\gal(E'/E)}\rightarrow 0.
\end{align}
By vanishing of cohomology in characteristic 0, the sequence remains exact.  Finally, if a finite power of $\xi(Q_\sigma(0))$ kills $\ker \psi$ and $\coker\psi$, it kills their $\gal(E'/E)$-invariants.

Replace $K$ and $E$ by extensions so that we may use Proposition \ref{prop:wstarprop} and Corollary \ref{coro:sencoh}.  We have a map $\phi: \mathfrak{E} \rightarrow \mathfrak{E}$ of finite free $\mathfrak{R}$-modules such that $\ker \phi \cong H^0(G_K,\mathbf{C}_p \hat{\otimes}_{K,\sigma} \mathfrak{M})$ and $\coker \phi \cong H^1(G_K,\mathbf{C}_p \hat{\otimes}_{K,\sigma} \mathfrak{M}).$  Moreover, the operator $P_\sigma(\phi)$ annihilates $\mathfrak{E}$.

By Lemma \ref{lem:splitht}, there is a $\phi_{Q_\sigma(0)}$-compatible splitting
\begin{equation} \label{eqn:esplit} \mathfrak{E}_{Q_\sigma(0)} = \ker(\phi_{Q_\sigma(0)}^m) \oplus \ker(Q_\sigma(\phi_{Q_\sigma(0)})),\end{equation}
so both of these modules are finite flat over the (non-topological) ring $\mathfrak{R}_{Q_\sigma(0)}$.

For any $\mathfrak{R}$-module $P$ and $\mathfrak{R}$-algebra $\mathfrak{R}'$, we have natural identifications
\begin{equation} \label{eqn:bclocal}
	P_{Q_\sigma(0)} \otimes_{\mathfrak{R}_{Q_\sigma(0)}} \mathfrak{R}'_{Q_\sigma(0)} = P \otimes_{\mathfrak{R}} \mathfrak{R}_{Q_\sigma(0)} \otimes_{\mathfrak{R}_{Q_\sigma(0)}} \mathfrak{R}'_{Q_\sigma(0)} = P \otimes_{\mathfrak{R}} \mathfrak{R}'_{Q_\sigma(0)}.
\end{equation}
In particular, for $\mathfrak{R}'$ in which $Q_\sigma(0)$ is a unit, we have
\begin{equation}\label{eqn:localswitch} \mathfrak{E} \otimes_{\mathfrak{R}} \mathfrak{R}' = \mathfrak{E}_{Q_\sigma(0)} \otimes_{\mathfrak{R}_{Q_\sigma(0)}} \mathfrak{R}' \textrm{ and } \phi \otimes_{\mathfrak{R}} \mathfrak{R}' = \phi_{Q_\sigma(0)} \otimes_{\mathfrak{R}_{Q_\sigma(0)}} \mathfrak{R}'.\end{equation}
We will use this to switch between the specialization of the localized module (which allows us to use its flatness) and the specialization of the original module (for which the properties in Proposition \ref{prop:wstarprop} hold).

\begin{claim}\label{claim:rankcalc}
We have $\rank_{\mathfrak{R}'} \ker((\phi \otimes_{\mathfrak{R}} \mathfrak{R}')^m)=m$ for any $\mathfrak{R}$-algebra $\mathfrak{R}'$ such that $Q_\sigma(0)$ is a unit in $\mathfrak{R}'$.
\end{claim}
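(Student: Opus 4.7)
The plan is first to establish the claim over $\mathfrak{R}_{Q_\sigma(0)}$ itself---that $\mathfrak{N}:=\ker(\phi_{Q_\sigma(0)}^m)$ is finite flat of rank $m$ over $\mathfrak{R}_{Q_\sigma(0)}$---and then deduce the general base-change statement from Lemma \ref{lem:niceend}. The splitting (\ref{eqn:esplit}) from Lemma \ref{lem:splitht} already gives $\mathfrak{N}$ as a finite flat summand of $\mathfrak{E}_{Q_\sigma(0)}$, complementary to $\mathfrak{N}':=\ker Q_\sigma(\phi_{Q_\sigma(0)})$. Writing $Q_\sigma(T)=Q_\sigma(0)+T\tilde Q(T)$, the relation $Q_\sigma(\phi)=0$ on $\mathfrak{N}'$ yields $\phi\cdot\tilde Q(\phi)=-Q_\sigma(0)$, a unit, so $\phi$ restricts to an automorphism of $\mathfrak{N}'$. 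Consequently $\phi^m$ kills $\mathfrak{N}$ and is an automorphism of $\mathfrak{N}'$, so its image on $\mathfrak{E}_{Q_\sigma(0)}$ is $\mathfrak{N}'$ and its cokernel is $\mathfrak{N}$, which is flat.

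With $\psi=\phi^m$ having flat cokernel, Lemma \ref{lem:niceend} says that the formation of $\mathfrak{N}=\ker\psi$ commutes with arbitrary base change from $\mathfrak{R}_{Q_\sigma(0)}$, so it suffices to check that the fiber dimension at every $x\in\spec\mathfrak{R}_{Q_\sigma(0)}$ equals $m$. Over the residue field $\kappa(x)$, the specialized decomposition $\mathfrak{E}\otimes\kappa(x)=\mathfrak{N}_{\kappa(x)}\oplus\mathfrak{N}'_{\kappa(x)}$ factors the characteristic polynomial $T^mQ_\sigma(T)$ of $\phi$ as a product of the characteristic polynomials of $\phi$ on each summand. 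On $\mathfrak{N}_{\kappa(x)}$, $\phi$ is nilpotent over a field, so its characteristic polynomial is $T^a$ with $a=\dim\mathfrak{N}_{\kappa(x)}$; on $\mathfrak{N}'_{\kappa(x)}$, $\phi$ is invertible by the same computation as above, so its characteristic polynomial $c(T)$ has $c(0)\neq 0$. Comparing the lowest nonzero coefficients in the factorization $T^a\cdot c(T)=T^mQ_\sigma(T)$, with $c(0)$ and $Q_\sigma(0)$ both units, forces $a=m$. Hence $\mathfrak{N}$ is finite flat of constant fiber dimension $m$, i.e., locally free of rank $m$ over $\mathfrak{R}_{Q_\sigma(0)}$.

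Finally, for any $\mathfrak{R}$-algebra $\mathfrak{R}'$ in which $Q_\sigma(0)$ is a unit, the map $\mathfrak{R}\to\mathfrak{R}'$ factors uniquely through $\mathfrak{R}_{Q_\sigma(0)}$; applying Lemma \ref{lem:niceend} once more with $N=\mathfrak{R}'$ yields $\ker((\phi\otimes_\mathfrak{R}\mathfrak{R}')^m)\cong\mathfrak{N}\otimes_{\mathfrak{R}_{Q_\sigma(0)}}\mathfrak{R}'$, finite flat of rank $m$ over $\mathfrak{R}'$. The only real content is the fiber-dimension computation; everything else is formal base change. I expect the main subtlety to be remembering that, over a non-reduced base, a nilpotent endomorphism need not have characteristic polynomial equal to $T^{\text{rank}}$---this is precisely why the rank calculation is first carried out fiberwise over the residue fields $\kappa(x)$ before being promoted to $\mathfrak{R}_{Q_\sigma(0)}$ via flatness and constancy of fiber dimension.
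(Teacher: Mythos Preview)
Your proof is correct and takes a somewhat different route from the paper's. The paper computes the rank by picking, for each connected component of $\spec\mathfrak{R}_{Q_\sigma(0)}$, one of the given specializations $\xi_i:\mathfrak{R}\to\mathfrak{R}_i$ from hypotheses (ii)--(iii), passing to its residue field $E_i$, and invoking Proposition~\ref{prop:wstarprop}.(e) to identify $\phi\otimes_\mathfrak{R} E_i$ with the classical Sen operator; the generalized $0$-eigenspace then has dimension $m$ because $\xi_i'(P_\sigma(T))$ has a zero of order exactly $m$ at $T=0$. You instead work at \emph{every} residue field $\kappa(x)$ of $\spec\mathfrak{R}_{Q_\sigma(0)}$ and use the multiplicativity of the characteristic polynomial on the direct-sum decomposition $\mathfrak{E}\otimes\kappa(x)=\mathfrak{N}_{\kappa(x)}\oplus\mathfrak{N}'_{\kappa(x)}$: since $\phi$ is nilpotent on the first factor and invertible on the second, matching $T$-adic valuations in $T^a c(T)=T^m\bar{Q}_\sigma(T)$ forces $a=m$.

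The upshot is that your argument never touches hypotheses (i)--(iii) or the comparison with classical Sen theory; it is a purely linear-algebra computation with the characteristic polynomial. This is slightly more economical and shows the claim holds unconditionally (given only the factorization $P_\sigma(T)=T^mQ_\sigma(T)$), whereas the paper's version tacitly uses stable density to guarantee that each connected component of $\spec\mathfrak{R}_{Q_\sigma(0)}$ actually receives some $\spec\mathfrak{R}_i$. Both approaches finish in the same way, applying Lemma~\ref{lem:niceend} to propagate the rank-$m$ conclusion to an arbitrary base change $\mathfrak{R}'$ in which $Q_\sigma(0)$ is a unit.
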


\begin{proof}
For each connected component of $\spec \mathfrak{R}_{Q_\sigma(0)}$, fix one of the specializations $\mathfrak{R}_i$ whose spectrum maps to it and consider the further specialization $\xi_i':\mathfrak{R}\rightarrow E_i$ to its residue field, which is a finite extension of $E$.  By Proposition \ref{prop:wstarprop}.(e), the specialization $\phi \otimes_{\mathfrak{R}} E_i$ is the classical Sen operator, and the $\sigma$-factor of its Sen polynomial is $\xi_i'(P_\sigma(T)) \in E_i[T]$.  By condition (ii), $\xi'_i(P_\sigma(T))$ has a root of order exactly $m$ at 0.  Therefore the generalized eigenspace of $\phi \otimes_{\mathfrak{R}} E_i$ with eigenvalue 0 has dimension exactly equal to $m$, so $\dim_{E_i} \ker((\phi \otimes_{\mathfrak{R}} E_i)^m) = m$.  By Lemma \ref{lem:splitht}, $\coker(\phi_{Q_\sigma(0)}^m)$ is flat.  Applying Lemma \ref{lem:niceend} to the base change $\phi_{Q_\sigma(0)}^m \otimes_{\mathfrak{R}_{Q_\sigma(0)}} E_i = \phi^m \otimes_\mathfrak{R} E_i$ of $\phi_{Q_\sigma(0)}^m$ for the chosen $E_i$ in each connected component of $\spec \mathfrak{R}_{Q_\sigma(0)}$, we find $\rank_{\mathfrak{R}_{Q_\sigma(0)}} \ker(\phi_{Q_\sigma(0)}^m)=\dim_{E_i} \ker((\phi \otimes_\mathfrak{R} E_i)^m)=m.$
We apply Lemma \ref{lem:niceend} again with $M = \mathfrak{E}_{Q_\sigma(0)}$, $\varphi = \phi^m_{Q_\sigma(0)}$, and $N=\mathfrak{R}'$ to obtain the claim.
\end{proof}

\begin{claim} \label{claim:inclusioneq}
The inclusion $\ker(\phi_{Q_\sigma(0)}) \subseteq \ker(\phi^m_{Q_\sigma(0)})$ is an equality.
\end{claim}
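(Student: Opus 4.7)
The plan is to show directly that $\phi|_{\mathfrak{F}} = 0$ for $\mathfrak{F} := \ker(\phi_{Q_\sigma(0)}^m)$, which is visibly equivalent to the nontrivial inclusion. From the $\phi_{Q_\sigma(0)}$-compatible splitting (\ref{eqn:esplit}) and the finite flatness of $\mathfrak{E}_{Q_\sigma(0)}$ over $\mathfrak{R}_{Q_\sigma(0)}$, the module $\mathfrak{F}$ is finitely generated projective, and Claim \ref{claim:rankcalc} (applied with $\mathfrak{R}' = \mathfrak{R}_{Q_\sigma(0)}$) shows that it has rank $m$. Thus $\phi|_\mathfrak{F}$ is an element of the finitely generated projective module $\End_{\mathfrak{R}_{Q_\sigma(0)}}(\mathfrak{F})$, and it suffices to prove that this element vanishes.

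First I would exploit the specializations pointwise. By hypothesis (ii), each $\xi_i:\mathfrak{R}\to\mathfrak{R}_i$ factors uniquely through $\mathfrak{R}_{Q_\sigma(0)}$. Using (\ref{eqn:localswitch}) together with Lemma \ref{lem:niceend} (which applies because $\phi^m_{Q_\sigma(0)}$ has flat cokernel, again from the splitting), the base change $\mathfrak{F}_i := \mathfrak{F} \otimes_{\mathfrak{R}_{Q_\sigma(0)}} \mathfrak{R}_i$ is identified with $\ker(\phi^m \otimes_\mathfrak{R} \mathfrak{R}_i)$. Since $\mathfrak{R}_i$ is local Artinian and $\mathfrak{F}_i$ is flat, $\mathfrak{F}_i$ is free of rank $m$ by Claim \ref{claim:rankcalc}, so $\dim_E \mathfrak{F}_i = m\dim_E \mathfrak{R}_i$. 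On the other hand, $\ker(\phi \otimes_\mathfrak{R} \mathfrak{R}_i) \subseteq \mathfrak{F}_i$, and by Proposition \ref{prop:wstarprop}.(d) and Corollary \ref{coro:sencoh} this smaller module is canonically isomorphic to $H^0(G_K, \mathbf{C}_p\hat{\otimes}_{K,\sigma} (\mathfrak{M}\otimes_\mathfrak{R}\mathfrak{R}_i))$, whose $E$-dimension equals $m\dim_E \mathfrak{R}_i$ by hypothesis (i) (in the form established during the reduction at the start of the proof, where $E$ and $K$ were enlarged). The two $E$-vector spaces therefore coincide, forcing $\phi|_\mathfrak{F} \otimes_{\mathfrak{R}_{Q_\sigma(0)}} \mathfrak{R}_i = 0$ as an endomorphism of $\mathfrak{F}_i$ for every $i$.

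To globalize, I would invoke stable density. By Lemma \ref{lem:stabdens} together with hypothesis (ii) (which makes each $\xi_i$ coincide with its localization at $Q_\sigma(0)$), the map $\mathfrak{R}_{Q_\sigma(0)} \hookrightarrow \prod_i \mathfrak{R}_i$ is injective. Since $\mathfrak{F}$ is finitely generated projective, choose a complement $\mathfrak{F} \oplus \mathfrak{G} \cong \mathfrak{R}_{Q_\sigma(0)}^N$, realizing $\End_{\mathfrak{R}_{Q_\sigma(0)}}(\mathfrak{F})$ as a direct summand of $M_N(\mathfrak{R}_{Q_\sigma(0)})$. The entry-by-entry injection of $\mathfrak{R}_{Q_\sigma(0)}$ into $\prod_i \mathfrak{R}_i$ then yields an injection $\End_{\mathfrak{R}_{Q_\sigma(0)}}(\mathfrak{F}) \hookrightarrow \prod_i \End_{\mathfrak{R}_i}(\mathfrak{F}_i)$, under which $\phi|_\mathfrak{F}$ maps to zero by the previous paragraph, so $\phi|_\mathfrak{F} = 0$.

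The main subtlety to check carefully is the chain of base-change compatibilities used to identify the purely algebraic $\ker(\phi \otimes_\mathfrak{R} \mathfrak{R}_i)$ with the cohomological $H^0$ of hypothesis (i); all the necessary tools have been assembled in Proposition \ref{prop:wstarprop}.(d), Lemma \ref{lem:niceend}, and Corollary \ref{coro:sencoh}, but one must also confirm that the enlargement of $E$ and $K$ at the outset of the proof leaves the dimension formula in (i) intact after decomposing $\mathfrak{R}_i \otimes_E E'$ into its local Artinian factors. Granted this, the argument above is essentially a dimension count plus density, and no further delicate input is required.
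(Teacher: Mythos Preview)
Your argument is correct and follows essentially the same approach as the paper: show that $\phi$ restricted to $\ker(\phi_{Q_\sigma(0)}^m)$ vanishes by checking its specialization to each $\mathfrak{R}_i$ is zero (via the dimension count using hypothesis (i), Claim~\ref{claim:rankcalc}, and the identification of $\ker(\phi\otimes_\mathfrak{R}\mathfrak{R}_i)$ with the cohomological $H^0$), and then invoke stable density. The only cosmetic difference is that the paper passes to a Zariski cover $\spec\mathfrak{R}_{Q_\sigma(0)f_j}$ on which $N=\ker(\phi_{Q_\sigma(0)}^m)$ becomes free and writes $\phi_N$ as a matrix there, whereas you embed the projective module $\mathfrak{F}$ as a summand of a free module to get the same injectivity of $\End(\mathfrak{F})\hookrightarrow\prod_i\End(\mathfrak{F}_i)$; these are equivalent packaging of the same step.
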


\begin{proof}

This is tautological if $m=1$, so assume $m \ge 2$.  Let $N = \ker(\phi_{Q_\sigma(0)}^m)$ and let $N' = \ker(\phi_{Q_\sigma(0)})$.  By $\phi_{Q_\sigma(0)}$-equivariance of (\ref{eqn:esplit}), we obtain a map $\phi_N:N \rightarrow N$ by restricting $\phi_{Q_\sigma(0)}$ to $N$.  Moreover, we have $N' = \ker \phi_N$, since $\phi_{Q_\sigma(0)}^m|_{\ker(Q_\sigma(\phi_{Q_\sigma(0)}))}$ is an isomorphism by Lemma \ref{lem:splitht} and therefore $\phi_{Q_\sigma(0)}|_{\ker(Q_\sigma(\phi_{Q_\sigma(0)}))}$ is one as well.

Since $N$ is a locally free $\mathfrak{R}_{Q_\sigma(0)}$-module, we can pick $f_1,\dots,f_n \in \mathfrak{R}_{Q_\sigma(0)}$ such that the Zariski opens $\spec\mathfrak{R}_{Q_\sigma(0)f_j}$ cover $\spec\mathfrak{R}_{Q_\sigma(0)}$ and $N_{f_j}$ is free for each $j$.  Write $\phi_{N_{f_j}} = (\phi_N)_{f_j}$.  Then $\phi_{N_{f_j}}$ can be written as an $n\times n$ matrix over $\mathfrak{R}_{Q_\sigma(0)f_j}$.  Let $\mathfrak{m}_i$ be the maximal ideal of $\mathfrak{R}_i$ and let $I_j=\set{i: \xi_i(f_j) \notin\mathfrak{m}_i}$.  Fix $j$ and $i \in I_j$.  We have identifications
\begin{align}
	\nonumber \ker(\phi \otimes_\mathfrak{R} \mathfrak{R}_i) &= \ker(\phi_{Q_\sigma(0)} \otimes_{\mathfrak{R}_{Q_\sigma(0)}} \mathfrak{R}_i) = \ker(\phi_{Q_\sigma(0)} \otimes_{\mathfrak{R}_{Q_\sigma(0)}} \mathfrak{R}_i|_{\ker (\phi_{Q_\sigma(0)}^m \otimes_{\mathfrak{R}_{Q_\sigma(0)}} \mathfrak{R}_i)})\\
\label{eqn:bcri} 	&= \ker(\phi_{Q_\sigma(0)} \otimes_{\mathfrak{R}_{Q_\sigma(0)}} \mathfrak{R}_i|_{(\ker \phi_{Q_\sigma(0)}^m) \otimes_{\mathfrak{R}_{Q_\sigma(0)}} \mathfrak{R}_i}) = \ker(\phi_{Q_\sigma(0)}|_{\ker(\phi_{Q_\sigma(0)}^m)} \otimes_{\mathfrak{R}_{Q_\sigma(0)}} \mathfrak{R}_i)\\
\nonumber	&= \ker(\phi_N \otimes_{\mathfrak{R}_{Q_\sigma(0)}} \mathfrak{R}_i) = \ker(\phi_{N_{f_j}} \otimes_{\mathfrak{R}_{Q_\sigma(0)f_j}} \mathfrak{R}_i),
\end{align}
where we use (\ref{eqn:localswitch}) for the first identification, Lemma \ref{lem:niceend} with $M = \mathfrak{E}_{Q_\sigma(0)}$, $\varphi = \phi_{Q_\sigma(0)}$, and $N=\mathfrak{R}_i$ for the third, the fact that $\phi_{Q_\sigma(0)}$ is an isomorphism on the second factor of the $\phi_{Q_\sigma(0)}$-compatible splitting (\ref{eqn:esplit}) for the fourth, and (\ref{eqn:bclocal}) for the sixth.

By Proposition \ref{prop:wstarprop} and Corollary \ref{coro:sencoh}, $\ker(\phi \otimes_{\mathfrak{R}} \mathfrak{R}_i) = H^0(G_K,\mathbf{C}_p \hat{\otimes}_{K,\sigma} (\mathfrak{M} \otimes_\mathfrak{R} \mathfrak{R}_i)).$  We have $\ker(\phi \otimes_{\mathfrak{R}} \mathfrak{R}_i) \subseteq \ker((\phi \otimes_{\mathfrak{R}} \mathfrak{R}_i)^m)$, and the latter has dimension exactly $m\dim_E\mathfrak{R}_i$ by Claim \ref{claim:rankcalc}.  By condition (i) above, we must have $\ker(\phi\otimes_\mathfrak{R} \mathfrak{R}_i) = \ker((\phi\otimes_\mathfrak{R} \mathfrak{R}_i)^m)$, so by (\ref{eqn:bcri}) the specialization of $\phi_{N_{f_j}}$ to $\mathfrak{R}_i$ vanishes.

By Lemma \ref{lem:stabdens}, there is an injection $\mathfrak{R}_{Q_\sigma(0)f_j} \rightarrow \prod_{i \in I_j} \mathfrak{R}_i$.  We deduce that $\phi_{N_{f_j}}=0$ for each $j$ and thus that $\phi_N = 0$.
\end{proof}

We therefore have $\mathfrak{E}_{Q_\sigma(0)} = \ker(\phi_{Q_\sigma(0)}) \oplus \ker(Q_\sigma(\phi_{Q_\sigma(0)}))$ and, using Lemma \ref{lem:splitht},
\[\im(\phi_{Q_\sigma(0)})= \im(\phi_{Q_\sigma(0)}^m) = \ker(Q_\sigma(\phi_{Q_\sigma(0)})).\]
We deduce that $\ker(\phi_{Q_\sigma(0)}) \cong \coker(\phi_{Q_\sigma(0)})$ and obtain Theorem \ref{thm:htcase}.(a) from Claims \ref{claim:rankcalc} and \ref{claim:inclusioneq}.

For Theorem \ref{thm:htcase}.(b), we observe that if $\mathfrak{R}\rightarrow \mathfrak{R}'$ is a map of Noetherian $E$-Banach algebras, Proposition \ref{prop:wstarprop}.(d) and Corollary \ref{coro:sencoh} imply that
\[\ker(\phi \otimes_\mathfrak{R} \mathfrak{R}') \cong H^0(G_K,\mathbf{C}_p \hat{\otimes}_{K,\sigma} (\mathfrak{M} \otimes_{\mathfrak{R}} \mathfrak{R}')) \textrm{ and } \coker(\phi \otimes_\mathfrak{R} \mathfrak{R}') \cong H^1(G_K,\mathbf{C}_p \hat{\otimes}_{K,\sigma} (\mathfrak{M} \otimes_{\mathfrak{R}} \mathfrak{R}')).\]
The $r=1$ case of Theorem \ref{thm:htcase}.(b) follows from Lemma \ref{lem:cokerspec}, so we are left to check the $r=0$ case.

Using Theorem \ref{thm:htcase}.(a) with Lemma \ref{lem:niceend} for $\psi=\phi_{Q_\sigma(0)}$ and $N=\mathfrak{R}'_{Q_\sigma(0)}$, we have a natural isomorphism
\begin{equation} \label{eqn:htcaseb1} \ker(\phi_{Q_\sigma(0)}) \otimes_{\mathfrak{R}_{Q_\sigma(0)}} \mathfrak{R}'_{Q_\sigma(0)}  \iso \ker(\phi_{Q_\sigma(0)} \otimes_{\mathfrak{R}_{Q_\sigma(0)}} \mathfrak{R}'_{Q_\sigma(0)}).\end{equation}
Using Lemma \ref{lem:niceend} with the flat $\mathfrak{R}$-module $N=\mathfrak{R}_{Q_\sigma(0)}$, we obtain a natural isomorphism
\begin{equation} \label{eqn:htcaseb2} \ker(\phi)_{Q_\sigma(0)} \otimes_{\mathfrak{R}_{Q_\sigma(0)}} \mathfrak{R}'_{Q_\sigma(0)} \iso \ker(\phi_{Q_\sigma(0)}) \otimes_{\mathfrak{R}_{Q_\sigma(0)}} \mathfrak{R}'_{Q_\sigma(0)}.\end{equation}
Composing (\ref{eqn:htcaseb1}) and (\ref{eqn:htcaseb2}) and reasoning as in (\ref{eqn:bclocal}), we obtain
\begin{equation} \label{eqn:htcaseb3} \ker(\phi) \otimes_\mathfrak{R} \mathfrak{R}'_{Q_\sigma(0)} \iso \ker(\phi \otimes_\mathfrak{R} \mathfrak{R}'_{Q_\sigma(0)}).\end{equation}
Using Lemma \ref{lem:niceend} again with $\psi = \phi \otimes_\mathfrak{R} \mathfrak{R}'$ and $N=\mathfrak{R}_{Q_\sigma(0)}'$ on the right-hand side of (\ref{eqn:htcaseb3}), we have a natural isomorphism
\begin{equation} \label{eqn:htcaseb4} \ker(\phi \otimes_\mathfrak{R} \mathfrak{R}')_{Q_\sigma(0)} \iso \ker(\phi \otimes_\mathfrak{R} \mathfrak{R}'_{Q_\sigma(0)}). \end{equation}
The composition of (\ref{eqn:htcaseb3}) with the inverse of (\ref{eqn:htcaseb4}) gives Theorem \ref{thm:htcase}.(b) for $r=0$ by the finite generation in Proposition \ref{prop:htgap}.
\end{proof}

\subsection{Bounded de Rham periods} \label{subsec:drcase}

After localizing the modules $(\bdrk{k} \hat{\otimes}_{K,\sigma} \mathfrak{M})^{G_K}$ and $(\mathbf{C}_p \hat{\otimes}_{K,\sigma} \mathfrak{M})^{G_K}$ to disallow all integral Hodge-Tate-Sen weights up to $k-1$, and also localizing to force multiplicity 1 at weight 0, Kisin \cite[Proposition 2.5 and Corollary 2.6]{kisin} proves an isomorphism between these modules, which reduces a rank 1 de Rham version of Theorem \ref{thm:htcase} to the Hodge-Tate case.  This is possible only in families with a unique fixed Hodge-Tate-Sen weight of multiplicity 1.  Our results will require a dense set of points with de Rham periods for the fixed Hodge-Tate weights.

Let $B_\mathrm{dR}^+$ be equipped with its canonical topology, and let $t \in \bdr^+$ denote the usual choice of uniformizer.  Fix a positive integer $n$, an element $\sigma \in \Sigma$, and a multiset $\set{w_{1,\sigma}, \dots, w_{n,\sigma}}$ of nonnegative integers.  We require that the $\sigma$-factor of the Sen polynomial of $\mathfrak{M}$ factors as $P_\sigma(T) = S_\sigma(T)Q_\sigma(T)$, where $S_\sigma(T) = \prod_{i=1}^n (T+w_{i,\sigma}) \in \mathbf{Z}[T]$ and $Q_\sigma(T)$ is arbitrary.  Define $Q_{\sigma,k} = \prod_{j=0}^{k-1}Q_\sigma(-j)$.  Notice that localization at $Q_{\sigma,k}$ fixes the multiplicity of each integral Hodge-Tate-Sen weight in the interval $[0,k-1]$.

\begin{thm} \label{thm:drcase}

Fix a positive integer $k$ and let $d_k=\#\set{w_{j,\sigma}|w_{j,\sigma} < k}$.  Suppose that there exists a collection $\set{\xi_i}_{i \in I}$ of maps $\xi_i: \mathfrak{R}\rightarrow \mathfrak{R}_i$ of $E$-Banach algebras, where $\mathfrak{R}_i$ is local Artinian of finite dimension over $E$, satisfying the following properties.
\begin{enumerate}[{\normalfont (i)}]
  \item For $i \in I$, $\dim_E H^0(G_K,\bdrk{k} \otimes_{K,\sigma} (\mathfrak{M} \otimes_{\mathfrak{R}} \mathfrak{R}_i))=d_k\dim_E \mathfrak{R}_i$.
  \item For $i \in I$, the image of $Q_{\sigma,k}$ in $\mathfrak{R}_i$ is a unit.
  \item The collection $\set{\xi_i}$ is stably dense in $\mathfrak{R}$.
\end{enumerate}
Then we have the following.
\begin{enumerate}[{\normalfont (a)}]
	\item For $r \in \set{0,1}$, the $\mathfrak{R}_{Q_{\sigma,k}}$-module $H^r(G_K,\bdrk{k} \hat{\otimes}_{K,\sigma} \mathfrak{M})_{Q_{\sigma,k}}$ is finite flat of rank $d_k$.
	\item For any map $\xi:\mathfrak{R} \rightarrow \mathfrak{R}'$ of Noetherian $E$-Banach algebras, the cokernel and kernel of the map
\begin{equation}\label{eqn:bdrspec}
H^r(G_K,\bdrk{k} \hat{\otimes}_{K,\sigma} \mathfrak{M}) \otimes_{\mathfrak{R}} \mathfrak{R}' \rightarrow H^r(G_K,\bdrk{k} \hat{\otimes}_{K,\sigma} (\mathfrak{M} \otimes_{\mathfrak{R}} \mathfrak{R}'))
\end{equation}
are killed by a power of $\xi(Q_{\sigma,k})$ for $r \in \set{0,1}$.
\end{enumerate}

\end{thm}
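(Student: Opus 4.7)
The plan is to induct on $k$, reducing the $\bdrk{k}$ case to the Hodge-Tate case (Theorem \ref{thm:htcase}) for the Tate twist $\mathfrak{M}(k-1)$ together with the inductive hypothesis for $\bdrk{k-1}$. The base case $k=1$ follows from Theorem \ref{thm:htcase} directly once we absorb the factors $(T+w_{i,\sigma})$ with $w_{i,\sigma}>0$ of $S_\sigma(T)$ into $Q_\sigma(T)$; their values at $T=0$ are nonzero integers, hence units in any $E$-algebra, so the resulting $\tilde Q_\sigma(0)$ is a unit multiple of $Q_{\sigma,1}=Q_\sigma(0)$ and Theorem \ref{thm:htcase} applies with $m=d_1$.

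For the inductive step, I would apply $H^*(G_K,-\hat\otimes_{K,\sigma}\mathfrak{M})$ to the short exact sequence of continuous $G_K$-modules
\[0 \to \mathbf{C}_p(k-1) \xrightarrow{\cdot t^{k-1}} \bdrk{k} \to \bdrk{k-1} \to 0.\]
The left term after $\hat\otimes_{K,\sigma}\mathfrak{M}$ is $\mathbf{C}_p\hat\otimes_{K,\sigma}\mathfrak{M}(k-1)$, whose Sen polynomial is $P_\sigma(T-(k-1))$, with multiplicity $m_{k-1}:=\#\{i:w_{i,\sigma}=k-1\}$ at $T=0$ and complementary factor evaluating at $0$ to a unit multiple of $Q_\sigma(-(k-1))$, itself a factor of $Q_{\sigma,k}$ and thus a unit in each $\mathfrak{R}_i$ by hypothesis (ii). To verify hypothesis (i) of Theorem \ref{thm:htcase} for $\mathfrak{M}(k-1)$, I would combine Proposition \ref{prop:sencoh} (identifying $H^0$ with a kernel of $\phi$ on a finite free Sen module with known characteristic polynomial) with Lemma \ref{lem:splitht} to obtain the upper bound $\dim_E H^0(G_K,\mathbf{C}_p\otimes\mathfrak{M}(j)\otimes\mathfrak{R}_i)\le m_j\dim_E\mathfrak{R}_i$ for each $0\le j<k$, where $m_j=\#\{i:w_{i,\sigma}=j\}$. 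Iterating the long exact sequence along the full filtration $t^j\bdr^+/t^k\bdr^+$ of $\bdrk{k}$ yields $\dim_E H^0(G_K,\bdrk{k}\otimes\mathfrak{M}\otimes\mathfrak{R}_i)\le\sum_j m_j\dim_E\mathfrak{R}_i=d_k\dim_E\mathfrak{R}_i$, an equality by hypothesis (i). This forces all intermediate upper bounds to be attained and all connecting maps on $\xi_i$-specializations to vanish, which simultaneously supplies hypothesis (i) for Theorem \ref{thm:htcase} applied to $\mathfrak{M}(k-1)$ and for the inductive hypothesis applied to $\bdrk{k-1}$.

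Localizing the long exact sequence at $Q_{\sigma,k}=Q_{\sigma,k-1}\cdot Q_\sigma(-(k-1))$, Theorem \ref{thm:htcase}(a) and the inductive hypothesis then render the outer terms finite flat of ranks $m_{k-1}$ and $d_{k-1}$ respectively. Collapsing the sequence into short exact sequences amounts to showing that the connecting map $H^0(\bdrk{k-1}\hat\otimes\mathfrak{M})_{Q_{\sigma,k}}\to H^1(\mathbf{C}_p(k-1)\hat\otimes\mathfrak{M})_{Q_{\sigma,k}}$ vanishes; by parts (b) of both Theorem \ref{thm:htcase} and the inductive hypothesis, formation of $H^r$ of both outer terms commutes with $\xi_i$-specialization (since $\xi_i(Q_{\sigma,k})\in\mathfrak{R}_i^\times$), and combined with the vanishing of connecting maps on specializations already established, stable density (Lemma \ref{lem:stabdens}) forces the $\mathfrak{R}_{Q_{\sigma,k}}$-linear map itself to vanish. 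The resulting short exact sequences exhibit $H^r(\bdrk{k}\hat\otimes\mathfrak{M})_{Q_{\sigma,k}}$ for $r\in\{0,1\}$ as a finite flat extension of rank $m_{k-1}+d_{k-1}=d_k$, proving (a). Part (b) follows by the same framework applied to an arbitrary base change $\xi:\mathfrak{R}\to\mathfrak{R}'$: a snake-lemma diagram chase in the tensored long exact sequence propagates the annihilation by powers of $\xi(Q_{\sigma,k})$ from the outer kernels and cokernels (controlled by Theorem \ref{thm:htcase}(b) and the inductive hypothesis) to the middle term.

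The main obstacle is the simultaneous control of the long exact sequence's connecting maps after localization, specifically the delicate argument that truncating at $H^1(\bdrk{k-1})$ is valid; this requires either the vanishing $H^2(G_K,\mathbf{C}_p(k-1)\hat\otimes\mathfrak{M})=0$ (in the spirit of Sen's \cite[Proposition 2]{sen} and foreshadowing Theorem \ref{thm:gkvanishdr}), or a rank-matching argument using the identity $\ker\phi\cong\coker\phi$ emerging from Theorem \ref{thm:htcase}'s proof to propagate the vanishing of connecting maps simultaneously in degrees $0$ and $1$ along the filtration.
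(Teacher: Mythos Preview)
Your proposal is correct and follows essentially the same inductive strategy as the paper: the short exact sequence $0\to\mathbf{C}_p(k-1)\to\bdrk{k}\to\bdrk{k-1}\to 0$, dimension counting at the $\xi_i$ to propagate condition (i) down the filtration, stable density to kill the localized connecting map, and the 5-lemma for part (b). The obstacle you flag at the end is resolved in the paper not by invoking $H^2$-vanishing but by applying the snake lemma to the action of $\gamma-1$ on the short exact sequence of $H_K$-invariants (which is exact by Sen's $H^1(H_K,\mathbf{C}_p\hat\otimes\cdot)=0$, Claim \ref{claim:hkvanish}), together with Lemma \ref{lem:zpcoh} and inflation-restriction to identify the resulting kernels and cokernels with $H^0(G_K,\cdot)$ and $H^1(G_K,\cdot)$; this produces exactly the 6-term exact sequence (\ref{eqn:exactsnake}) without ever touching $H^2$.
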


\begin{proof}

We prove Theorem \ref{thm:drcase}.(a) and (b) by induction on $k$; we will also need the $k$ case of (a) to prove the $k$ case of (b).  If $k=1$, the results follow from Theorem \ref{thm:htcase}.

\begin{claim}\label{claim:hkvanish}
We have $H^1(H_K,\bdrk{k}\hat{\otimes}_{K,\sigma} \mathfrak{M})=0$.
\end{claim}

\begin{proof}
This follows from the result of Sen \cite[Proposition 2]{sen} mentioned in Corollary \ref{coro:sencoh} and induction using the exact sequence
\begin{equation} \label{eqn:mainexact} 0 \rightarrow \mathbf{C}_p(k-1)\hat{\otimes}_{K,\sigma} \mathfrak{M} \rightarrow \bdrk{k}\hat{\otimes}_{K,\sigma} \mathfrak{M} \rightarrow \bdrk{k-1}\hat{\otimes}_{K,\sigma} \mathfrak{M} \rightarrow 0.\end{equation}
\end{proof}

\begin{remark} \label{remark:hkvanish}
We record for future reference that a continuous additive section to the surjection in the exact sequence (\ref{eqn:mainexact}) exists by \cite[Remark 3.2]{iz}.
\end{remark}

Fix a topological generator $\gamma \in \Gamma_K$.  The diagram
\[\xymatrix@C=10pt@R=15pt{ 0 \ar[r] & (\mathbf{C}_p(k-1)\hat{\otimes}_{K,\sigma} \mathfrak{M})^{H_K} \ar[r] \ar[d]^{\gamma-1} & (\bdrk{k} \hat{\otimes}_{K,\sigma} \mathfrak{M})^{H_K} \ar[r] \ar[d]^{\gamma-1}\ar[r]\ar[d]^{\gamma-1} & (\bdrk{k-1}\hat{\otimes}_{K,\sigma} \mathfrak{M})^{H_K} \ar[r]\ar[d]^{\gamma-1} & 0 \\ 0 \ar[r] & (\mathbf{C}_p(k-1)\hat{\otimes}_{K,\sigma} \mathfrak{M})^{H_K} \ar[r] & (\bdrk{k} \hat{\otimes}_{K,\sigma} \mathfrak{M})^{H_K} \ar[r] \ar[r] & (\bdrk{k-1}\hat{\otimes}_{K,\sigma} \mathfrak{M})^{H_K} \ar[r] & 0}\]
has exact rows by the vanishing of $H^1(H_K,\mathbf{C}_p(k-1)\hat{\otimes}_{K,\sigma} \mathfrak{M})$.  By inflation-restriction, Claim \ref{claim:hkvanish}, and Lemma \ref{lem:zpcoh}, the exact sequence associated to the above diagram using the snake lemma is
\begin{align}
	\label{eqn:exactsnake} 0 &\rightarrow (\mathbf{C}_p(k-1) \hat{\otimes}_{K,\sigma} \mathfrak{M})^{G_K} \rightarrow (\bdrk{k} \hat{\otimes}_{K,\sigma} \mathfrak{M})^{G_K}\\
	\nonumber &\rightarrow (\bdrk{k-1} \hat{\otimes}_{K,\sigma} \mathfrak{M})^{G_K} \stackrel{\psi}{\rightarrow} H^1(G_K,\mathbf{C}_p(k-1) \hat{\otimes}_{K,\sigma} \mathfrak{M})\\
	\nonumber &\rightarrow H^1(G_K,\bdrk{k} \hat{\otimes}_{K,\sigma} \mathfrak{M}) \rightarrow H^1(G_K,\bdrk{k-1} \hat{\otimes}_{K,\sigma} \mathfrak{M}) \rightarrow 0.
\end{align}

\begin{claim} \label{claim:0map}

The map $\psi_{Q_{\sigma,k}}$ is 0.

\end{claim}

\begin{proof}

Consider the 6-term exact sequence associated to
\[0 \rightarrow \mathbf{C}_p(k-1)\otimes_{K,\sigma} (\mathfrak{M} \otimes_\mathfrak{R} \mathfrak{R}_i) \rightarrow \bdrk{k}\otimes_{K,\sigma} (\mathfrak{M} \otimes_\mathfrak{R} \mathfrak{R}_i) \rightarrow \bdrk{k-1}\otimes_{K,\sigma} (\mathfrak{M} \otimes_\mathfrak{R} \mathfrak{R}_i) \rightarrow 0\]
by the method that produced (\ref{eqn:exactsnake}).  Dimension counting over $E$ using condition (i) above,
\[(\bdrk{k} \otimes_{K,\sigma} (\mathfrak{M} \otimes_\mathfrak{R} \mathfrak{R}_i))^{G_K} \rightarrow (\bdrk{k-1} \otimes_{K,\sigma} (\mathfrak{M} \otimes_\mathfrak{R} \mathfrak{R}_i))^{G_K}\]
is surjective, so
\[(\bdrk{k-1} \otimes_{K,\sigma} (\mathfrak{M} \otimes_\mathfrak{R} \mathfrak{R}_i))^{G_K} \stackrel{\psi_i}{\rightarrow} H^1(G_K,\mathbf{C}_p(k-1) \otimes_{K,\sigma} (\mathfrak{M} \otimes_\mathfrak{R} \mathfrak{R}_i))\]
is the 0 map.

The commutative diagram
\[ \xymatrix@R=15pt{ (\bdrk{k-1} \hat{\otimes}_{K,\sigma} \mathfrak{M})^{G_K}_{Q_{\sigma,k}}\ar[r]^{\psi_{Q_{\sigma,k}}}\ar[d] & H^1(G_K,\mathbf{C}_p(k-1) \hat{\otimes}_{K,\sigma} \mathfrak{M})_{Q_{\sigma,k}}\ar[d]\\
(\bdrk{k-1} \hat{\otimes}_{K,\sigma} \mathfrak{M})^{G_K}\otimes_\mathfrak{R} \mathfrak{R}_i\ar[r]\ar[d]^{\begin{sideways}$\sim$\end{sideways}} & H^1(G_K,\mathbf{C}_p(k-1) \hat{\otimes}_{K,\sigma} \mathfrak{M}) \otimes_\mathfrak{R} \mathfrak{R}_i\ar[d]^{\begin{sideways}$\sim$\end{sideways}}\\
(\bdrk{k-1} \otimes_{K,\sigma} (\mathfrak{M} \otimes_\mathfrak{R} \mathfrak{R}_i))^{G_K}\ar[r]^{\psi_i} & H^1(G_K,\mathbf{C}_p(k-1) \otimes_{K,\sigma} (\mathfrak{M} \otimes_\mathfrak{R} \mathfrak{R}_i))}\]
has the marked downward isomorphisms by the inductive hypothesis and Theorem \ref{thm:htcase}.(b).  By Theorem \ref{thm:htcase}.(a), the $\mathfrak{R}_{Q_{\sigma,k}}$-module $H^1(G_K,\mathbf{C}_p(k-1) \hat{\otimes}_{K,\sigma} \mathfrak{M})_{Q_{\sigma,k}}$ is locally free, so we may pick elements $f_j \in \mathfrak{R}_{Q_{\sigma,k}}$ such that $H^1(G_K,\mathbf{C}_p(k-1) \hat{\otimes}_{K,\sigma} \mathfrak{M})_{Q_{\sigma,k}f_j}$ is free for each $j$, and such that the $\spec \mathfrak{R}_{Q_{\sigma,k}f_j}$ cover $\spec\mathfrak{R}_{Q_{\sigma,k}}$.

Let $i$ be such that $f_j \notin \mathfrak{m}_i$, where $\mathfrak{m}_i$ denotes the maximal ideal of $\mathfrak{R}_i$.  Looking at the diagram localized at $f_j$ (which only affects the top row), the image of $\psi_{f_j Q_{\sigma,k}}$ lies in the submodule $\ker\xi_i\cdot H^1(G_K,\mathbf{C}_p(k-1) \hat{\otimes}_{K,\sigma} \mathfrak{M})_{f_jQ_{\sigma,k}}$.  By Lemma \ref{lem:stabdens}, there is an injection $\mathfrak{R}_{Q_{\sigma,k}f_j} \rightarrow \prod_{\set{i:f_j \notin \mathfrak{m}_i}} \mathfrak{R}_i$.  By this and freeness, $\bigcap_{\set{i:f_j \notin \mathfrak{m}_i}}\ker\xi_i\cdot H^1(G_K,\mathbf{C}_p(k-1) \hat{\otimes}_{K,\sigma} \mathfrak{M})_{f_jQ_{\sigma,k}} = 0,$ so $\psi_{f_j Q_{\sigma,k}}=0$ for all $j$ and thus $\psi_{Q_{\sigma,k}}=0$.
\end{proof}

From the claim, we have exact sequences
\begin{align}\label{eqn:h0exact} 0 \rightarrow (\mathbf{C}_p(k-1) \hat{\otimes}_{K,\sigma} \mathfrak{M})^{G_K}_{Q_{\sigma,k}} &\rightarrow (\bdrk{k} \hat{\otimes}_{K,\sigma} \mathfrak{M})^{G_K}_{Q_{\sigma,k}}\\
	\nonumber &\rightarrow (\bdrk{k-1} \hat{\otimes}_{K,\sigma} \mathfrak{M})^{G_K}_{Q_{\sigma,k}} \rightarrow 0\\
\label{eqn:h1exact}\textrm{and}\quad 0 \rightarrow H^1(G_K,\mathbf{C}_p(k-1) \hat{\otimes}_{K,\sigma} \mathfrak{M})_{Q_{\sigma,k}} &\rightarrow H^1(G_K,\bdrk{k} \hat{\otimes}_{K,\sigma} \mathfrak{M})_{Q_{\sigma,k}}\\
	\nonumber &\rightarrow H^1(G_K,\bdrk{k-1} \hat{\otimes}_{K,\sigma} \mathfrak{M})_{Q_{\sigma,k}} \rightarrow 0.
\end{align}
The $k$ case of Theorem \ref{thm:drcase}.(a) follows immediately from these, Theorem \ref{thm:htcase}.(a), and the inductive hypothesis.

For Theorem \ref{thm:drcase}.(b), we begin by observing that by Proposition \ref{prop:htgap}, the fact that $\mathfrak{R}$ is Noetherian, and induction using (\ref{eqn:mainexact}), both sides of (\ref{eqn:bdrspec}) are finitely generated, so it suffices to prove that (\ref{eqn:bdrspec}) becomes an isomorphism after localization at $\xi(Q_{\sigma,k})$.

We have a natural map between 6-term complexes of finitely generated $\mathfrak{R}$-modules
\begin{align}
	\label{eqn:bcexactsnake1} 0 &\rightarrow (\mathbf{C}_p(k-1) \hat{\otimes}_{K,\sigma} \mathfrak{M})^{G_K} \otimes_\mathfrak{R} \mathfrak{R}' \rightarrow (\bdrk{k} \hat{\otimes}_{K,\sigma} \mathfrak{M})^{G_K} \otimes_\mathfrak{R} \mathfrak{R}'\\
	\nonumber &\rightarrow (\bdrk{k-1} \hat{\otimes}_{K,\sigma} \mathfrak{M})^{G_K}  \otimes_\mathfrak{R} \mathfrak{R}' \stackrel{\psi_1}{\rightarrow} H^1(G_K,\mathbf{C}_p(k-1) \hat{\otimes}_{K,\sigma} \mathfrak{M}) \otimes_\mathfrak{R} \mathfrak{R}'\\
	\nonumber &\rightarrow H^1(G_K,\bdrk{k} \hat{\otimes}_{K,\sigma} \mathfrak{M}) \otimes_\mathfrak{R} \mathfrak{R}' \rightarrow H^1(G_K,\bdrk{k-1} \hat{\otimes}_{K,\sigma} \mathfrak{M}) \otimes_\mathfrak{R} \mathfrak{R}' \rightarrow 0
\end{align}
and
\begin{align}
	\label{eqn:bcexactsnake2} 0 &\rightarrow (\mathbf{C}_p(k-1) \hat{\otimes}_{K,\sigma} (\mathfrak{M} \otimes_\mathfrak{R} \mathfrak{R}'))^{G_K} \rightarrow (\bdrk{k} \hat{\otimes}_{K,\sigma} (\mathfrak{M} \otimes_\mathfrak{R} \mathfrak{R}'))^{G_K}\\
	\nonumber &\rightarrow (\bdrk{k-1} \hat{\otimes}_{K,\sigma} (\mathfrak{M} \otimes_\mathfrak{R} \mathfrak{R}'))^{G_K} \stackrel{\psi_2}{\rightarrow} H^1(G_K,\mathbf{C}_p(k-1) \hat{\otimes}_{K,\sigma} (\mathfrak{M} \otimes_\mathfrak{R} \mathfrak{R}'))\\
	\nonumber &\rightarrow H^1(G_K,\bdrk{k} \hat{\otimes}_{K,\sigma} (\mathfrak{M} \otimes_\mathfrak{R} \mathfrak{R}')) \rightarrow H^1(G_K,\bdrk{k-1} \hat{\otimes}_{K,\sigma} (\mathfrak{M} \otimes_\mathfrak{R} \mathfrak{R}')) \rightarrow 0,
\end{align}
where the second complex is exact. Localizing at $Q_{\sigma,k}$ and applying part (a) and (\ref{eqn:bclocal}), the first complex becomes exact as well, and by Claim \ref{claim:0map}, the map $\psi_{1,Q_{\sigma,k}}$ is 0.  By the inductive hypothesis, the first, third, fourth, and sixth downward maps are isomorphisms.  It follows that $\psi_{2,Q_{\sigma,k}}=0$, and by the 5-lemma, we obtain (\ref{eqn:bdrspec}).
\end{proof}

\subsection{Geometric specializations} \label{subsec:special}

We can improve Theorem \ref{thm:drcase}.(b).  The key step is to show that the formation of the cohomology group $H^1(G_K,\bdrk{k} \hat{\otimes}_{K,\sigma} \mathfrak{M})$ is compatible with finite base change.  We need the following lemma.

\begin{lem} \label{lem:dimh0h1}

Maintain the notation of Section \ref{subsec:senkisin}. If $\mathfrak{R}$ is an Artinian $E$-Banach algebra of finite dimension over $E$, then for any $k$,
\begin{equation} \label{eqn:bdrk} \dim_E H^0(G_K,\bdrk{k} \otimes_{K,\sigma} \mathfrak{M}) = \dim_E H^1(G_K,\bdrk{k} \otimes_{K,\sigma} \mathfrak{M}).\end{equation}

\end{lem}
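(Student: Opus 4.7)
The plan is an induction on $k$. Since $\mathfrak{R}$ is Artinian of finite dimension over $E$, the module $\mathfrak{M}$ is a finite-dimensional $\mathbf{Q}_p$-vector space, and all of the cohomology groups in question will be finite-dimensional over $E$. In this setting the completed and ordinary tensor products of $\bdrk{k}$ with $\mathfrak{M}$ agree, so it is harmless to move freely between the $\otimes$ of the statement and the $\hat{\otimes}$ used elsewhere.

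For the base case $k=1$ we have $\bdrk{1} = \mathbf{C}_p$. After the enlargements of $E$ and $K$ permitted by Lemma \ref{lem:changeek} (which multiply both sides of the claimed equality by the same factor), Corollary \ref{coro:sencoh} identifies $H^0(G_K, \mathbf{C}_p \otimes_{K,\sigma} \mathfrak{M})$ and $H^1(G_K, \mathbf{C}_p \otimes_{K,\sigma} \mathfrak{M})$ with $\ker \phi$ and $\coker \phi$ for the $\mathfrak{R}$-linear endomorphism $\phi$ of the finite free $\mathfrak{R}$-module $\mathfrak{E}$. Since $\mathfrak{E}$ is then finite-dimensional over $E$, rank–nullity gives $\dim_E \ker \phi = \dim_E \coker \phi$.

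For the inductive step I would apply (\ref{eqn:mainexact}). By Remark \ref{remark:hkvanish} this short exact sequence admits a continuous additive section, so it induces an honest long exact sequence in continuous $G_K$-cohomology. Assuming the sequence terminates at degree one (addressed below), we obtain a six-term exact sequence whose alternating sum of $E$-dimensions vanishes, and rearranging gives
\[\dim_E H^0(\bdrk{k} \otimes \mathfrak{M}) - \dim_E H^1(\bdrk{k} \otimes \mathfrak{M}) = \bigl[\dim_E H^0(\mathbf{C}_p(k-1) \otimes \mathfrak{M}) - \dim_E H^1(\mathbf{C}_p(k-1) \otimes \mathfrak{M})\bigr] + \bigl[\dim_E H^0(\bdrk{k-1} \otimes \mathfrak{M}) - \dim_E H^1(\bdrk{k-1} \otimes \mathfrak{M})\bigr].\]
The first bracket vanishes by the base case applied to the Tate twist $\mathfrak{M}(k-1)$ — twisting only shifts the spectrum of the Sen operator without altering the underlying vector space $\mathfrak{E}$ — and the second vanishes by the inductive hypothesis.

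The main obstacle is justifying the truncation of the long exact sequence, which amounts to showing $H^2(G_K, \mathbf{C}_p(k-1) \otimes_{K,\sigma} \mathfrak{M}) = 0$. Via Hochschild–Serre for $H_K \triangleleft G_K$ and the fact that $\Gamma_K \cong \mathbf{Z}_p$ has continuous cohomological dimension $1$ on the Banach-module coefficients that arise, this reduces to the vanishing $H^i(H_K, \mathbf{C}_p(k-1) \otimes_{K,\sigma} \mathfrak{M}) = 0$ for $i \ge 1$. The $i = 1$ case is Sen's Proposition 2 (already used in the proof of Corollary \ref{coro:sencoh}); for $i \ge 2$ I would descend to a finite extension $L \supseteq \widehat{K}_\infty$ over which the action on $\mathfrak{M}$ becomes diagonalizable, reduce to Sen's original statement $H^i(H_K, \mathbf{C}_p) = 0$ for all $i \ge 1$, and then descend back using the vanishing of finite-group cohomology in characteristic zero. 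This descent step is the one piece of the argument I expect to require the most care to spell out cleanly, and if expedient one could instead cite the stronger higher-cohomology vanishing proved later in Section \ref{sec:highercoh}.
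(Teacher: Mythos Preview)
Your base case is exactly the paper's: enlarge $E$ and $K$ via Lemma \ref{lem:changeek}, invoke Corollary \ref{coro:sencoh}, and apply rank--nullity to $\phi:\mathfrak{E}\to\mathfrak{E}$.

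The inductive step is where you diverge. The paper does \emph{not} use the long exact sequence in $G_K$-cohomology of (\ref{eqn:mainexact}); instead it uses the six-term snake-lemma sequence (\ref{eqn:exactsnake}), obtained by applying $\gamma-1$ to the $H_K$-invariants of (\ref{eqn:mainexact}) and identifying kernels and cokernels with $H^0(G_K,-)$ and $H^1(G_K,-)$ via Lemma \ref{lem:zpcoh} and inflation--restriction. This sequence terminates at $H^1$ by construction, so no $H^2$-vanishing is needed and the proof is entirely self-contained at this point in the paper. Your route via the long exact sequence is also valid, and your suggestion to cite Theorem \ref{thm:gkvanishbdd} for $H^2(G_K,\mathbf{C}_p(k-1)\otimes_{K,\sigma}\mathfrak{M})=0$ is legitimate and non-circular (nothing in Section \ref{sec:highercoh} depends on Lemma \ref{lem:dimh0h1}). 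What the paper's approach buys is independence from that forward reference; what yours buys is that you never have to unwind the snake-lemma description of $H^1$.

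One caution: your sketched \emph{direct} argument for the $H^2$-vanishing---descending to an extension over which the action becomes diagonalizable and reducing to Sen's computation for $\mathbf{C}_p$ itself---is not convincing as written. The $H_K$-action on $\mathbf{C}_p(k-1)\otimes_{K,\sigma}\mathfrak{M}$ is genuinely $\mathbf{C}_p$-semilinear (acting on both factors), and there is no finite extension over which this becomes a sum of copies of $\mathbf{C}_p$. The actual argument in Section \ref{sec:highercoh} (Theorem \ref{thm:hkvanish}) is a direct generalization of Sen's shrinking-cocycle method to arbitrary semilinear $\mathbf{C}_p$-Banach spaces, not a reduction to the one-dimensional case. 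So if you go your route, cite Theorem \ref{thm:gkvanishbdd} rather than attempting the ad hoc descent.
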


\begin{proof}
If $k=1$, using Lemma \ref{lem:changeek}, we may replace $E,K$ with finite Galois extensions $E',K'$ so that Corollary \ref{coro:sencoh} holds and deduce the original result.  After doing so, we have $H^0(G_K,\mathbf{C}_p \otimes_{K,\sigma} \mathfrak{M}) \cong \ker\phi \textrm{ and } H^1(G_K,\mathbf{C}_p \otimes_{K,\sigma} \mathfrak{M}) \cong \coker\phi$ for a map $\phi:\mathfrak{E} \rightarrow \mathfrak{E}$ of finite dimensional $E$-vector spaces. Lemma \ref{lem:niceend} implies that $\dim_E \ker\phi = \dim_E \coker\phi$ as needed.  We obtain (\ref{eqn:bdrk}) by induction on $k$ using (\ref{eqn:exactsnake}) and dimension counting over $E$.
\end{proof}

In the remainder of this section, we will always assume that $\mathfrak{R}$ is an affinoid $E$-algebra.

\begin{lem} \label{lem:ctp}

Maintain the notation of Section \ref{subsec:senkisin}, and assume that $\mathfrak{R}$ is $E$-affinoid.  For any finite $\mathfrak{R}$-module $\mathfrak{N}$, the tensor product $(\mathbf{C}_p \hat{\otimes}_{K,\sigma} \mathfrak{M})^{H_K} \otimes_\mathfrak{R} \mathfrak{N}$ is already complete.

\end{lem}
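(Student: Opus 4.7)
The strategy is to use Proposition \ref{prop:wstarprop}.(a) to identify $(\mathbf{C}_p \hat{\otimes}_{K,\sigma} \mathfrak{M})^{H_K}$ with a finite free module over $A := \widehat{K}_\infty \hat{\otimes}_{K,\sigma} \mathfrak{R}$, then to finitely present $\mathfrak{N}$ and reduce the claim to closedness of a finitely generated submodule in $A^p$, which will follow from the Noetherian hypothesis on $A$. I would begin by enlarging $E$ and $K$ to finite Galois extensions so that Proposition \ref{prop:wstarprop}.(a) gives $(\mathbf{C}_p \hat{\otimes}_{K,\sigma} \mathfrak{M})^{H_K} \cong A^n$ for some $n$. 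As in the reductions opening the proof of Theorem \ref{thm:htcase}, this enlargement is harmless for the present claim: via Lemma \ref{lem:changeek} and Lemma \ref{lem:galdes}.(b), the original tensor product sits inside the enlarged one as a $\gal(E'/E)$-invariant direct summand, and a direct summand of a complete Banach space is again complete. Distributivity of the algebraic tensor product over finite direct sums then reduces the claim to showing that $A \otimes_\mathfrak{R} \mathfrak{N}$ is already complete.

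Next, since $\mathfrak{R}$ is affinoid (in particular Noetherian) and $\mathfrak{N}$ is finite, I fix a finite presentation $\mathfrak{R}^q \to \mathfrak{R}^p \to \mathfrak{N} \to 0$. Tensoring over $\mathfrak{R}$ with $A$ yields the right-exact sequence $A^q \to A^p \to A \otimes_\mathfrak{R} \mathfrak{N} \to 0$, so $A \otimes_\mathfrak{R} \mathfrak{N}$ is identified with the algebraic quotient $A^p / I$, where $I \subseteq A^p$ is the finitely generated $A$-submodule given by the image of $A^q \to A^p$. The lemma therefore reduces to showing that $I$ is closed in $A^p$, for then $A^p / I$ carries a Banach structure in the quotient topology and is in particular complete.

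The main obstacle is precisely this closedness assertion, and it is where the Noetherian hypothesis on $A = \widehat{K}_\infty \hat{\otimes}_{K,\sigma} \mathfrak{R}$ enters essentially. I would argue in the spirit of Tate's classical theorem that finitely generated submodules of free Banach modules over an affinoid algebra are closed; the proof, which relies on the open mapping theorem together with Noetherianity to rule out pathologies in iterated approximations by generators, extends to any Noetherian Banach algebra in this style, which is precisely the reason the author imposes Noetherianity of $\widehat{K}_\infty \hat{\otimes}_{K,\sigma} \mathfrak{R}$ rather than a stronger affinoid-type hypothesis. Once the closedness of $I$ in $A^p$ is established, the quotient $A^p / I$ is a Banach $A$-module in the quotient topology, and therefore the algebraic tensor product $A \otimes_\mathfrak{R} \mathfrak{N}$, and hence $(\mathbf{C}_p \hat{\otimes}_{K,\sigma} \mathfrak{M})^{H_K} \otimes_\mathfrak{R} \mathfrak{N}$, is already complete.
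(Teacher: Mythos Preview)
Your approach is essentially the paper's: both arguments reduce to the fact that $A=\widehat{K}_\infty\hat{\otimes}_{K,\sigma}\mathfrak{R}$ is a Noetherian Banach algebra (which is exactly where the affinoid hypothesis enters, via \cite[6.1.1/9]{bgr}), so that a finitely generated $A$-submodule of a finite free $A$-module is closed and the quotient is complete. The paper phrases the endgame as ``$\psi$ is strict'' and finishes with a five-lemma comparison of $\otimes$ and $\hat\otimes$; your phrasing as ``$I$ is closed in $A^p$'' is equivalent.

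The one place where your write-up is thinner than the paper's is the descent step. You invoke Lemma~\ref{lem:changeek} and Lemma~\ref{lem:galdes}.(b) to say the enlargement of $E$ and $K$ is harmless, but Lemma~\ref{lem:changeek} only treats $G_K$-cohomology: its $K$-change isomorphism (\ref{eqn:kdescalc}) is specific to $G_K$, not $H_K$. To descend through a change of $K$ for $H_K$-invariants one needs the explicit identification
\[
(\mathbf{C}_p\hat{\otimes}_{K,\sigma}\mathfrak{M})^{H_K}\;\cong\;\bigl(((\mathbf{C}_p\hat{\otimes}_{K',\sigma'}\mathfrak{M})^{H_{K'}})^{\gal(K'/K)}\bigr)^{H_K/H_{K'}},
\]
which the paper derives inside this very proof (equation~(\ref{eqn:cpflatcalc})), and then Lemma~\ref{lem:galdes} to pass to a direct summand. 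The paper uses this to descend the \emph{finiteness} of $(\mathbf{C}_p\hat{\otimes}_{K,\sigma}\mathfrak{M})^{H_K}$ over $A$ to the original $K$ first, and only then runs the presentation argument once; you instead enlarge, run the presentation argument over the bigger $A'$, and descend completeness at the end. Both orders work, but your descent needs (\ref{eqn:cpflatcalc}) rather than Lemma~\ref{lem:changeek}.
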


\begin{proof}
By Proposition \ref{prop:wstarprop}, if $E$ and $K$ are sufficiently large, $(\mathbf{C}_p \hat{\otimes}_{K,\sigma} \mathfrak{M})^{H_K}$ is a finite $\widehat{K}_\infty \hat{\otimes}_{K,\sigma} \mathfrak{R}$-module.  We claim that the same holds for the original $E$ and $K$.  Using (\ref{eqn:edescalc}) and Lemma \ref{lem:galdes}.(a), one sees that finiteness holds over the original $E$ if it holds over an extension $E'$.

Assume that $K'$ is a Galois extension of $K$ and $\sigma': K' \rightarrow E$ extends $\sigma$.  Observe that $\mathbf{C}_p \hat{\otimes}_{K,\sigma} \mathfrak{M} \cong \mathbf{C}_p^{\set{\tau|\tau\in\gal(K'/K)}} \hat{\otimes}_{K_\mathrm{triv}',\sigma'} \mathfrak{M}$ by (\ref{eqn:switchfield}), where the action of $H_{K'}$ on the right is trivial on the $\tau$, so
\begin{align}
	\label{eqn:cpflatcalc} (\mathbf{C}_p \hat{\otimes}_{K,\sigma} \mathfrak{M})^{H_K} &\cong((\mathbf{C}_p \hat{\otimes}_{K,\sigma} \mathfrak{M})^{H_{K'}})^{H_K/H_{K'}}\cong (({\mathbf{C}_p}^{\set{\tau|\tau\in\gal(K'/K)}} \hat{\otimes}_{K_\mathrm{triv}',\sigma'} \mathfrak{M})^{H_{K'}})^{H_K/H_{K'}}\\
	\nonumber &\cong (((\mathbf{C}_p \hat{\otimes}_{K_\mathrm{triv}',\sigma'} \mathfrak{M})^{H_{K'}})^{\set{\tau|\tau\in\gal(K'/K)}})^{H_K/H_{K'}}.
\end{align}
One may similarly identify $\widehat{K}_\infty \hat{\otimes}_{K,\sigma} \mathfrak{R}$ with $((\widehat{K}_\infty' \hat{\otimes}_{K_\mathrm{triv}',\sigma'} \mathfrak{R})^{\set{\tau|\tau\in\gal(K'/K)}})^{H_K/H_{K'}}$.  It now follows from the finiteness of $(\mathbf{C}_p \hat{\otimes}_{K_\mathrm{triv}',\sigma'} \mathfrak{M})^{H_{K'}}$ over $\widehat{K}_\infty' \hat{\otimes}_{K_\mathrm{triv}',\sigma'} \mathfrak{R}$ and thus finiteness (by Lemma \ref{lem:galdes}.(a)) of $((\mathbf{C}_p \hat{\otimes}_{K_\mathrm{triv}',\sigma'} \mathfrak{M})^{H_{K'}})^{\set{\tau|\tau\in\gal(K'/K)}}$ over $\widehat{K}_\infty \hat{\otimes}_{K,\sigma} \mathfrak{R}$ that $(\mathbf{C}_p \hat{\otimes}_{K,\sigma} \mathfrak{M})^{H_K}$ is a finite $\widehat{K}_\infty \hat{\otimes}_{K,\sigma} \mathfrak{R}$-module.

We have a presentation
\begin{equation}\label{eqn:presentationn}\mathfrak{R}^n \stackrel{\psi_\mathfrak{N}}{\rightarrow} \mathfrak{R}^m \rightarrow \mathfrak{N} \rightarrow 0.\end{equation}
Writing $\psi = (\mathbf{C}_p \hat{\otimes}_{K,\sigma} \mathfrak{M})^{H_K} \otimes_\mathfrak{R} \psi_\mathfrak{N}$, the morphism $((\mathbf{C}_p \hat{\otimes}_{K,\sigma} \mathfrak{M})^{H_K})^n \stackrel{\psi}{\rightarrow} ((\mathbf{C}_p \hat{\otimes}_{K,\sigma} \mathfrak{M})^{H_K})^m$ is strict, since it is a $\widehat{K}_\infty \hat{\otimes}_{K,\sigma} \mathfrak{R}$-linear homomorphism of finite $\widehat{K}_\infty \hat{\otimes}_{K,\sigma} \mathfrak{R}$-modules, where the ring $\widehat{K}_\infty \hat{\otimes}_{K,\sigma} \mathfrak{R}$ is Noetherian since $\mathfrak{R}$ is affinoid \cite[Proposition 3.7.3/5 and Corollary 6.1.1/9]{bgr}.

By applying the completion functor of \cite[Proposition 1.1.7/6]{bgr} to the tensor product of (\ref{eqn:presentationn}) with $(\mathbf{C}_p \hat{\otimes}_{K,\sigma} \mathfrak{M})^{H_K}$, we obtain the commutative diagram
\[\xymatrix@C=9pt@R=15pt{  ((\mathbf{C}_p \hat{\otimes}_{K,\sigma} \mathfrak{M})^{H_K})^n \ar[r]^\psi \ar[d]^{\begin{sideways}=\end{sideways}} &  ((\mathbf{C}_p \hat{\otimes}_{K,\sigma} \mathfrak{M})^{H_K})^m \ar[d]^{\begin{sideways}=\end{sideways}} \ar[r] &  (\mathbf{C}_p \hat{\otimes}_{K,\sigma} \mathfrak{M})^{H_K} \otimes_\mathfrak{R} \mathfrak{N} \ar[d]\ar[r] & 0 \\((\mathbf{C}_p \hat{\otimes}_{K,\sigma} \mathfrak{M})^{H_K})^n \ar[r]^{\hat{\psi}} &  ((\mathbf{C}_p \hat{\otimes}_{K,\sigma} \mathfrak{M})^{H_K})^m \ar[r] &  (\mathbf{C}_p \hat{\otimes}_{K,\sigma} \mathfrak{M})^{H_K} \hat{\otimes}_\mathfrak{R} \mathfrak{N} \ar[r] & 0}\]
with downwards maps given by mapping each term its completion.  Note that $\hat{\psi} = \psi$ since $\psi$ is already a map of complete spaces.  By right-exactness of the tensor product, the top row of the diagram is exact.  We just checked that the first map of the top row is strict, and the second is strict by \cite[Proposition 2.1.8/6]{bgr}.  The bottom row is exact by \cite[Corollary 1.1.9/6]{bgr}, giving the needed isomorphism by the 5-lemma.
\end{proof}

We use this to prove a base change result for $H_K$-invariants.

\begin{lem}\label{lem:drbchk}
Maintain the notation of Section \ref{subsec:senkisin}, and assume that $\mathfrak{R}$ is $E$-affinoid.  Let $\mathfrak{R}'$ be a finite $\mathfrak{R}$-algebra.  We have a natural $\Gamma_K$-equivariant isomorphism of topological $\mathfrak{R}'$-modules
\begin{equation}\label{eqn:drbchk}(\bdrk{k} \hat{\otimes}_{K,\sigma} \mathfrak{M})^{H_K} \otimes_{\mathfrak{R}} \mathfrak{R}' \rightarrow (\bdrk{k} \hat{\otimes}_{K,\sigma} (\mathfrak{M}\otimes_{\mathfrak{R}} \mathfrak{R}'))^{H_K}.\end{equation}

\end{lem}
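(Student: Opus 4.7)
The plan is to prove the isomorphism (\ref{eqn:drbchk}) by induction on $k$, using the short exact sequence (\ref{eqn:mainexact}) to pass from $k-1$ to $k$.

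For the base case $k=1$, combine Proposition \ref{prop:wstarprop}.(d) --- which gives the natural isomorphism
\[(\mathbf{C}_p \hat{\otimes}_{K,\sigma} \mathfrak{M})^{H_K} \hat{\otimes}_\mathfrak{R} \mathfrak{R}' \iso (\mathbf{C}_p \hat{\otimes}_{K,\sigma} (\mathfrak{M} \otimes_\mathfrak{R} \mathfrak{R}'))^{H_K},\]
valid because a finite algebra over an affinoid base is a Noetherian $E$-Banach algebra --- with Lemma \ref{lem:ctp}, which identifies this completed tensor product with the ordinary one, since $\mathfrak{R}'$ is in particular a finite $\mathfrak{R}$-module.

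For the inductive step, apply $H_K$-invariants to (\ref{eqn:mainexact}) --- both for $\mathfrak{M}$ and for $\mathfrak{M} \otimes_\mathfrak{R} \mathfrak{R}'$ --- invoking Claim \ref{claim:hkvanish} in each case to obtain short exact sequences. Tensoring the first sequence with $\mathfrak{R}'$ over $\mathfrak{R}$ gives the (a priori only right-exact) top row of a commutative diagram whose bottom row is fully exact, linked by the natural base-change maps. The leftmost vertical map is an isomorphism by the base case applied to the Tate twist $\mathfrak{M}(k-1) = \mathfrak{M} \otimes \chi_K^{k-1}$, and the rightmost vertical is an isomorphism by the inductive hypothesis at level $k-1$. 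A standard four-lemma diagram chase --- which needs only right-exactness of the top row --- then forces the middle vertical, which is precisely the map (\ref{eqn:drbchk}), to be an isomorphism. The $\Gamma_K$-equivariance and $\mathfrak{R}'$-linearity are immediate from the naturality of all constructions.

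The main subtlety is that the top row may fail to be left exact when $\mathfrak{R}'$ is not flat over $\mathfrak{R}$; however, this does not obstruct the four-lemma chase, which yields both injectivity and surjectivity of the middle vertical from the given exactness data alone. Beyond Proposition \ref{prop:wstarprop}.(d), Claim \ref{claim:hkvanish}, and Lemma \ref{lem:ctp}, no further analytic input is required.
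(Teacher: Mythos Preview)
Your inductive step is exactly the paper's argument, and your diagram chase is correct: with the top row right exact and the bottom row short exact, isomorphisms on the outer verticals force the middle vertical to be an isomorphism.

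The gap is in the base case. Proposition \ref{prop:wstarprop}.(d) is stated under the standing hypothesis of part (a), namely \emph{after} $E$ and $K$ have been replaced by sufficiently large finite Galois extensions; the displayed isomorphism
\[(\mathbf{C}_p \hat{\otimes}_{K,\sigma} \mathfrak{M})^{H_K} \hat{\otimes}_\mathfrak{R} \mathfrak{R}' \iso (\mathbf{C}_p \hat{\otimes}_{K,\sigma} (\mathfrak{M} \otimes_\mathfrak{R} \mathfrak{R}'))^{H_K}\]
is not asserted for the original $E$ and $K$. (Compare the remark after Corollary \ref{coro:sencoh}.) So you cannot simply invoke (d) and then apply Lemma \ref{lem:ctp}; you must first enlarge $E$ and $K$, obtain the isomorphism there, and then descend.

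The paper carries out precisely this descent: it first shows (via the $\gal(E'/E)$-invariant argument around (\ref{eqn:redbexact})) that one may freely enlarge $E$; then it enlarges $K$ to $K'$ so that Proposition \ref{prop:wstarprop} applies, uses (d) together with Lemma \ref{lem:ctp} at the level of $H_{K'}$-invariants, and finally recovers the statement for $H_K$ by taking a direct sum over $\gal(K'/K)$ (as in (\ref{eqn:bcdes1})--(\ref{eqn:bcdes2})), identifying $(\mathbf{C}_p \hat{\otimes}_{K,\sigma} \mathfrak{M})^{H_K}$ with the $H_K/H_{K'}$-invariants of $((\mathbf{C}_p \hat{\otimes}_{K',\sigma'} \mathfrak{M})^{H_{K'}})^{\gal(K'/K)}$ via (\ref{eqn:cpflatcalc}), and applying Lemma \ref{lem:galdes}.(b). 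This descent is the genuine content you are missing; once it is supplied, your induction goes through exactly as you wrote.
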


\begin{proof}
By using the analysis of (\ref{eqn:redbexact}) in the proof of Theorem \ref{thm:htcase} with $G_K$ replaced by $H_K$ and $r=0$, we can replace $E$ by any finite Galois extension and deduce the original result.

We may therefore replace $K$ by a sufficiently large finite Galois extension $K'$, equipped with an extension of $\sigma$ to $\sigma':K' \rightarrow E$, so that Proposition \ref{prop:wstarprop} holds.  By Proposition \ref{prop:wstarprop}.(d), we have an isomorphism
\[(\mathbf{C}_p \hat{\otimes}_{K_\mathrm{triv}',\sigma'} \mathfrak{M})^{H_{K'}} \hat{\otimes}_{\mathfrak{R}} \mathfrak{R}' \iso (\mathbf{C}_p \hat{\otimes}_{K_\mathrm{triv}',\sigma'} (\mathfrak{M}\otimes_{\mathfrak{R}} \mathfrak{R}'))^{H_{K'}}.\]
Using Lemma \ref{lem:ctp}, we can replace the left hand side by $(\mathbf{C}_p \hat{\otimes}_{K_\mathrm{triv}',\sigma'} \mathfrak{M})^{H_{K'}} \otimes_{\mathfrak{R}} \mathfrak{R}'$.  We take a direct sum over $\tau \in \gal(K'/K)$ to obtain a $G_K$-equivariant isomorphism
\begin{equation} \label{eqn:bcdes1} ((\mathbf{C}_p \hat{\otimes}_{K_\mathrm{triv}',\sigma'} \mathfrak{M})^{H_{K'}} \otimes_{\mathfrak{R}} \mathfrak{R}')^{\set{\tau \in \gal(K'/K)}} \iso ((\mathbf{C}_p \hat{\otimes}_{K_\mathrm{triv}',\sigma'} (\mathfrak{M}\otimes_{\mathfrak{R}} \mathfrak{R}'))^{H_{K'}})^{\set{\tau \in \gal(K'/K)}}.\end{equation}
We define a $G_K$-equivariant isomorphism
\begin{equation} \label{eqn:bcdes2} ((\mathbf{C}_p \hat{\otimes}_{K_\mathrm{triv}',\sigma'} \mathfrak{M})^{H_{K'}})^{\set{\tau \in \gal(K'/K)}} \otimes_{\mathfrak{R}} \mathfrak{R}' \iso ((\mathbf{C}_p \hat{\otimes}_{K_\mathrm{triv}',\sigma'} \mathfrak{M})^{H_{K'}} \otimes_{\mathfrak{R}} \mathfrak{R}')^{\set{\tau \in \gal(K'/K)}}\end{equation}
as in (\ref{eqn:kdes1}).  By (\ref{eqn:cpflatcalc}), $(\mathbf{C}_p \hat{\otimes}_{K,\sigma} \mathfrak{M})^{H_K} \cong (((\mathbf{C}_p \hat{\otimes}_{K_\mathrm{triv}',\sigma'} \mathfrak{M})^{H_{K'}})^{\set{\tau|\tau\in\gal(K'/K)}})^{H_K/H_{K'}}$.  We finish the descent to $K$ using Lemma \ref{lem:galdes}.(b) on the $H_K/H_{K'}$ invariants of the composition of (\ref{eqn:bcdes1}) and (\ref{eqn:bcdes2}).

Let $H_K'$ denote the intersection of $H_K$ with the kernel of the cyclotomic character and write $L_\mathrm{dR}^+ = (\bdr^+)^{H_K'}$.  For general $k$, there is a construction (after replacing $K$ with a sufficiently large finite Galois extension, which in the notation of \cite[Lemma 4.3.1]{bc} corresponds to taking $L$ and $n$ sufficiently large) of a finite free module $D_\mathrm{dif}^+(\mathfrak{M})$ over $K[[t]] \hat{\otimes}_{K,\sigma} \mathfrak{R}$ such that
\begin{align*}
	(D_\mathrm{dif}^+(\mathfrak{M}) \otimes_{K[[t]] \hat{\otimes}_{K,\sigma} \mathfrak{R}} B_\mathrm{dR}^+ \hat{\otimes}_{K,\sigma} \mathfrak{R})^{H_K'} &= D_\mathrm{dif}^+(\mathfrak{M}) \otimes_{K[[t]] \hat{\otimes}_{K,\sigma} \mathfrak{R}} (L_\mathrm{dR}^+ \hat{\otimes}_{K,\sigma} \mathfrak{R})\\
	&= (\bdr^+ \hat{\otimes}_{K,\sigma} \mathfrak{M})^{H_K'}.
\end{align*}
Here the first equality uses the fact that $D_\mathrm{dif}^+(\mathfrak{M})$ is free and has no $H_K'$ action to move the $H_K'$ invariants to $B_\mathrm{dR}^+ \hat{\otimes}_{K,\sigma} \mathfrak{R}$ and a standard Schauder basis argument (c.f.\ \cite[Lemma 2.6]{kliu}) to move the $H_K'$ invariants onto $\bdr^+$.  For simplicity, we have dropped the $n$ from the notation of \cite[Lemma 4.3.1]{bc} and substituted $K$ for $L_n$.  In particular, $(\bdrk{k}\hat{\otimes}_{K,\sigma} \mathfrak{M})^{H_K'}$ is identified with
\[D_\mathrm{dif}^+(\mathfrak{M})/t^k D_\mathrm{dif}^+(\mathfrak{M}) \otimes_{K_\infty[t]/(t^k) \hat{\otimes}_{K,\sigma} \mathfrak{R}} (L_\mathrm{dR}^+/t^kL_\mathrm{dR}^+ \hat{\otimes}_{K,\sigma} \mathfrak{R}).\]
The construction of $D_\mathrm{dif}^+(\mathfrak{M})$ is compatible with base change along any morphism $\mathfrak{R} \rightarrow \mathfrak{R}'$ of affinoid algebras with respect to taking completed tensor products (as follows, for instance, from \cite[Theorem 3.11.(4)]{kliu}).  We obtain an isomorphism
\begin{equation} \label{eqn:drbchk1}(\bdrk{k} \hat{\otimes}_{K,\sigma} \mathfrak{M})^{H_K'} \hat{\otimes}_{\mathfrak{R}} \mathfrak{R}' \rightarrow (\bdrk{k} \hat{\otimes}_{K,\sigma} (\mathfrak{M}\otimes_{\mathfrak{R}} \mathfrak{R}'))^{H_K'},\end{equation}
which by descent arguments as before holds for the original $K$ and $H_K$.

We now show that (\ref{eqn:drbchk}) is an isomorphism by induction on $k$.  Consider the natural morphism of complexes between
\begin{align*}
	0 \rightarrow (\mathbf{C}_p(k-1) \hat{\otimes}_{K,\sigma} \mathfrak{M})^{H_K} \otimes_{\mathfrak{R}} \mathfrak{R}' &\rightarrow (\bdrk{k} \hat{\otimes}_{K,\sigma} \mathfrak{M})^{H_K}\otimes_{\mathfrak{R}} \mathfrak{R}'\\&
	\rightarrow (\bdrk{k-1} \hat{\otimes}_{K,\sigma} \mathfrak{M})^{H_K}\otimes_{\mathfrak{R}} \mathfrak{R}' \rightarrow 0\\
	\textrm{and}\quad 0 \rightarrow (\mathbf{C}_p(k-1) \hat{\otimes}_{K,\sigma} (\mathfrak{M}\otimes_{\mathfrak{R}} \mathfrak{R}'))^{H_K} &\rightarrow (\bdrk{k} \hat{\otimes}_{K,\sigma} (\mathfrak{M}\otimes_{\mathfrak{R}} \mathfrak{R}'))^{H_K}\\
	&\rightarrow (\bdrk{k-1} \hat{\otimes}_{K,\sigma} (\mathfrak{M}\otimes_{\mathfrak{R}} \mathfrak{R}'))^{H_K} \rightarrow 0.
\end{align*}
The first row is right exact and the second is left exact by Claim \ref{claim:hkvanish} and right-exactness of the tensor product, and the outer maps are isomorphisms by the inductive hypothesis and $k=1$ case above.  The middle map is an algebraic isomorphism by the 5-lemma.  By the universal property of completion, the middle map factors through the isomorphism (\ref{eqn:drbchk1}).  Since the map from a space to its completion is homeomorphic onto its image, the map (\ref{eqn:drbchk}) is also a topological isomorphism.
\end{proof}

We get the de Rham case of Lemma \ref{lem:ctp} as a corollary.  We use these to obtain a base change result for cohomology.

\begin{prop} \label{prop:dr1cocycle}

Maintain the notation of Section \ref{subsec:senkisin}, and assume that $\mathfrak{R}$ is $E$-affinoid.  Let $\mathfrak{R}'$ be a finite $\mathfrak{R}$-algebra.  The natural map
\begin{equation}\label{eqn:dr1cocycle} H^1(G_K,\bdrk{k} \hat{\otimes}_{K,\sigma} \mathfrak{M})\otimes_{\mathfrak{R}} \mathfrak{R}' \rightarrow H^1(G_K,\bdrk{k} \hat{\otimes}_{K,\sigma} (\mathfrak{M}\otimes_{\mathfrak{R}} \mathfrak{R}'))\end{equation}
is an isomorphism of (non-topological) $\mathfrak{R}$-modules.

\end{prop}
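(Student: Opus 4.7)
The strategy is to reduce the $G_K$-cohomology to a simpler cokernel computation on the $H_K$-invariants, where Lemma \ref{lem:drbchk} has already done the essential work of identifying the base change.

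First, I would apply the inflation-restriction exact sequence for the normal subgroup $H_K \subset G_K$ with quotient $\Gamma_K$, using Claim \ref{claim:hkvanish} (which states $H^1(H_K, \bdrk{k} \hat{\otimes}_{K,\sigma} \mathfrak{M}) = 0$ and applies equally well to $\mathfrak{M} \otimes_\mathfrak{R} \mathfrak{R}'$ since $\mathfrak{R}'$ is a finite $\mathfrak{R}$-algebra and hence Noetherian affinoid). This yields natural isomorphisms
\[H^1(G_K, \bdrk{k} \hat{\otimes}_{K,\sigma} \mathfrak{M}) \iso H^1(\Gamma_K, (\bdrk{k} \hat{\otimes}_{K,\sigma} \mathfrak{M})^{H_K}),\]
and analogously for $\mathfrak{M} \otimes_\mathfrak{R} \mathfrak{R}'$, and moreover this identification is natural in the input module.

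Second, I would identify each right-hand side as a cokernel of $\gamma - 1$ via Lemma \ref{lem:zpcoh}. This requires producing a $\Gamma_K$-stable filtration of $(\bdrk{k} \hat{\otimes}_{K,\sigma} \mathfrak{M})^{H_K}$ with $p^\infty$-torsion discrete quotients. One obtains this by combining the standard $p^i$-times-unit-ball filtration on a $\mathbf{Q}_p$-Banach space with the finite filtration on $\bdrk{k}$ whose graded pieces are $\mathbf{C}_p(j)$'s; the induced filtration on $H_K$-invariants remains exact on graded pieces thanks to the vanishing used in Claim \ref{claim:hkvanish}. The same filtration construction works for $\mathfrak{M} \otimes_\mathfrak{R} \mathfrak{R}'$.

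Third, I would feed in Lemma \ref{lem:drbchk}, which gives a $\Gamma_K$-equivariant $\mathfrak{R}'$-module isomorphism
\[(\bdrk{k} \hat{\otimes}_{K,\sigma} \mathfrak{M})^{H_K} \otimes_\mathfrak{R} \mathfrak{R}' \iso (\bdrk{k} \hat{\otimes}_{K,\sigma} (\mathfrak{M}\otimes_\mathfrak{R} \mathfrak{R}'))^{H_K},\]
and combine with Lemma \ref{lem:cokerspec} (cokernels commute with tensor products) to conclude
\[\coker(\gamma - 1) \otimes_\mathfrak{R} \mathfrak{R}' \iso \coker\bigl(\gamma - 1 \text{ on } (\bdrk{k} \hat{\otimes}_{K,\sigma} (\mathfrak{M}\otimes_\mathfrak{R} \mathfrak{R}'))^{H_K}\bigr).\]
Assembling the three steps delivers the desired isomorphism (\ref{eqn:dr1cocycle}).

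The main obstacle I anticipate is the verification of the hypotheses of Lemma \ref{lem:zpcoh} for $(\bdrk{k} \hat{\otimes}_{K,\sigma} \mathfrak{M})^{H_K}$: one must ensure the filtration chosen is simultaneously $\Gamma_K$-stable, compatible with the inverse limit topology coming from completed tensor products, and yields discrete $p^\infty$-torsion quotients. Everything else is essentially bookkeeping once Lemma \ref{lem:drbchk} is in hand, since that lemma already encapsulates the hard functional-analytic input (that $(\mathbf{C}_p \hat{\otimes}_{K,\sigma} \mathfrak{M})^{H_K} \otimes_\mathfrak{R} \mathfrak{R}'$ is already complete, via Lemma \ref{lem:ctp}).
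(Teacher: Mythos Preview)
Your proposal is correct and follows essentially the same approach as the paper: reduce to $\Gamma_K$-cohomology of the $H_K$-invariants via inflation--restriction and Claim~\ref{claim:hkvanish}, identify $H^1(\Gamma_K,-)$ with a cokernel of $\gamma-1$ via Lemma~\ref{lem:zpcoh}, then invoke Lemma~\ref{lem:drbchk} together with Lemma~\ref{lem:cokerspec}. The paper's proof is organized in a slightly different order but uses exactly these ingredients; your discussion of the filtration needed for Lemma~\ref{lem:zpcoh} is in fact more explicit than the paper's (which handles this point tacitly here, and only spells out the $p^{-i}$-ball filtration later in the proof of Theorem~\ref{thm:gkvanishbdd}).
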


\begin{proof}
Write $\psi = \gamma-1: (\bdrk{k} \hat{\otimes}_{K,\sigma} \mathfrak{M})^{H_K} \rightarrow (\bdrk{k} \hat{\otimes}_{K,\sigma} \mathfrak{M})^{H_K}.$  The $\mathfrak{R}'$-module isomorphism
\[(\bdrk{k} \hat{\otimes}_{K,\sigma} \mathfrak{M})^{H_K} \otimes_\mathfrak{R} \mathfrak{R}' \iso (\bdrk{k} \hat{\otimes}_{K,\sigma} (\mathfrak{M}\otimes_\mathfrak{R} \mathfrak{R}'))^{H_K}\]
of Lemma \ref{lem:drbchk} identifies $\psi \otimes_\mathfrak{R} \mathfrak{R}'$ with the natural action of $\gamma-1$ on the right hand side.  Observe that by this and Lemma \ref{lem:zpcoh}, we have identifications
\begin{align*} \coker(\psi) &\cong H^1(\Gamma_K,(\bdrk{k} \hat{\otimes}_{K,\sigma} \mathfrak{M})^{H_K})\\ \textrm{and}\quad \coker(\psi \otimes_\mathfrak{R} \mathfrak{R}') &\cong H^1(\Gamma_K,(\bdrk{k} \hat{\otimes}_{K,\sigma} (\mathfrak{M}\otimes_{\mathfrak{R}} \mathfrak{R}'))^{H_K}).\end{align*}
From the inflation-restriction exact sequence with respect to the subgroup $H_K$ of $G_K$ and Claim \ref{claim:hkvanish}, we obtain the isomorphisms
\begin{align*} \coker(\psi) &\cong H^1(G_K,\bdrk{k} \hat{\otimes}_{K,\sigma} \mathfrak{M})\\ \textrm{and}\quad \coker(\psi \otimes_\mathfrak{R} \mathfrak{R}') &\cong H^1(G_K,\bdrk{k} \hat{\otimes}_{K,\sigma} (\mathfrak{M}\otimes_{\mathfrak{R}} \mathfrak{R}')).\end{align*}
We now obtain (\ref{eqn:dr1cocycle}) from the natural isomorphism $\coker(\psi) \otimes_\mathfrak{R} \mathfrak{R}' \iso \coker(\psi \otimes_\mathfrak{R} \mathfrak{R}')$ of Lemma \ref{lem:cokerspec}.
\end{proof}

We obtain de Rham periods at all geometric specializations.

\begin{thm} \label{thm:dr1cocycle}

Maintain the notation and hypotheses of Theorem \ref{thm:drcase} and assume that either $k=1$ or $\mathfrak{R}$ is $E$-affinoid.  If $\xi:\mathfrak{R} \rightarrow E'$ is a specialization to a finite field extension of $E$, then we have $\dim_{E'} H^r(G_K,\bdrk{k} \hat{\otimes}_{K,\sigma} (\mathfrak{M}\otimes_\mathfrak{R} E')) \ge d_k$ for $r \in \set{0,1}$.

\end{thm}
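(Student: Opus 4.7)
The plan is to first establish the $r=1$ case by reducing it to a fiber-dimension lower bound for the finitely generated $\mathfrak{R}$-module $M := H^1(G_K, \bdrk{k}\hat{\otimes}_{K,\sigma}\mathfrak{M})$, then deduce the $r=0$ case from Lemma \ref{lem:dimh0h1}.

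For $r=1$, I would combine either Theorem \ref{thm:htcase}.(b) (when $k=1$) or Proposition \ref{prop:dr1cocycle} (when $\mathfrak{R}$ is affinoid) to obtain a natural identification $M\otimes_\mathfrak{R} E' \cong H^1(G_K, \bdrk{k}\hat{\otimes}_{K,\sigma}(\mathfrak{M}\otimes_\mathfrak{R} E'))$. Finite generation of $M$ over $\mathfrak{R}$ follows from Proposition \ref{prop:htgap} together with induction on $k$ using the exact sequence (\ref{eqn:mainexact}). Let $\mathfrak{m}=\ker\xi$; since $\mathfrak{R}/\mathfrak{m}$ is an integral domain that is a finite $E$-subalgebra of $E'$, it is itself a field, so $\mathfrak{m}$ is maximal and $E'$ is a field extension of $\kappa(\mathfrak{m})=\mathfrak{R}/\mathfrak{m}$. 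Thus the desired bound $\dim_{E'} M\otimes_\mathfrak{R} E' \ge d_k$ reduces to showing $\dim_{\kappa(\mathfrak{m})} M/\mathfrak{m}M \ge d_k$. The essential step is to show that $Q_{\sigma,k}$ is a non-zerodivisor on $\mathfrak{R}$: if $aQ_{\sigma,k}=0$, then for each $i$, $\xi_i(a)\xi_i(Q_{\sigma,k})=0$, forcing $\xi_i(a)=0$ by hypothesis (ii) and hence $a=0$ by the stable density hypothesis (iii). The image of $Q_{\sigma,k}$ in $\mathfrak{R}_\mathfrak{m}$ is then a non-zerodivisor, so it avoids every associated---in particular, every minimal---prime of $\mathfrak{R}_\mathfrak{m}$, producing a minimal prime $\mathfrak{p}\subseteq \mathfrak{m}$ of $\mathfrak{R}$ with $Q_{\sigma,k}\notin\mathfrak{p}$. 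By Theorem \ref{thm:drcase}.(a), $M_\mathfrak{p}$ is free of rank $d_k$ over $\mathfrak{R}_\mathfrak{p}$, so $\dim_{\kappa(\mathfrak{p})} M\otimes_\mathfrak{R} \kappa(\mathfrak{p}) = d_k$, and upper semi-continuity of fiber dimensions for the finite $\mathfrak{R}$-module $M$ over the Noetherian ring $\mathfrak{R}$ gives $\dim_{\kappa(\mathfrak{m})} M/\mathfrak{m}M \ge d_k$, as needed.

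For $r=0$, Lemma \ref{lem:dimh0h1} applied to the Artinian finite $E$-algebra $E'$ and the representation $\mathfrak{M}\otimes_\mathfrak{R} E'$ yields $\dim_{E'} H^0 = \dim_{E'} H^1$, reducing this case to the one just handled. The main obstacle is the $r=1$ step: the specialization point $\mathfrak{m}$ may itself contain $Q_{\sigma,k}$, so one cannot simply read off the rank from Theorem \ref{thm:drcase}.(a); producing a generization of $\mathfrak{m}$ at which the generic rank $d_k$ is achieved is exactly where the non-zerodivisor property of $Q_{\sigma,k}$---and hence the combined force of hypotheses (ii) and (iii)---becomes crucial. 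The remaining ingredients (the natural isomorphism on $H^1$, finite generation of $M$, and the dimension equality $H^0=H^1$ over $E'$) are already available as established tools.
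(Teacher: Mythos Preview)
Your proof is correct and follows essentially the same strategy as the paper: reduce to $r=1$ via Lemma~\ref{lem:dimh0h1}, use the $H^1$ base-change isomorphism (Theorem~\ref{thm:htcase}.(b) or Proposition~\ref{prop:dr1cocycle}) to identify the target with $M\otimes_\mathfrak{R} E'$, locate a minimal prime $\mathfrak{p}\subseteq\mathfrak{m}$ avoiding $Q_{\sigma,k}$, read off the generic rank $d_k$ from Theorem~\ref{thm:drcase}.(a), and conclude by Nakayama/semicontinuity. Your route to $Q_{\sigma,k}\notin\mathfrak{p}$ via showing $Q_{\sigma,k}$ is a non-zerodivisor (using (ii) and (iii) directly) is in fact a bit cleaner than the paper's formulation, and working with $M$ over $\mathfrak{R}$ rather than passing first to $\mathfrak{R}/\mathfrak{p}$ streamlines the bookkeeping, but the substance is the same.
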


\begin{proof}

Let $\mathfrak{m} = \ker\xi$ and $\mathfrak{p}\subseteq \mathfrak{m}$ be a minimal prime.  By Proposition \ref{prop:dr1cocycle} (or Theorem \ref{thm:htcase}.(b) if $k=1$), we have identifications
\begin{align} \label{eqn:minprime} H^1(G_K,\bdrk{k} \hat{\otimes}_{K,\sigma} \mathfrak{M})\otimes_{\mathfrak{R}} \mathfrak{R}/\mathfrak{p} &\iso H^1(G_K,\bdrk{k} \hat{\otimes}_{K,\sigma} \mathfrak{M}/\mathfrak{p}\mathfrak{M})\\
\label{eqn:minprimespecial} \textrm{and}\quad H^1(G_K,\bdrk{k} \hat{\otimes}_{K,\sigma} \mathfrak{M}/\mathfrak{p}\mathfrak{M})\otimes_{\mathfrak{R}/\mathfrak{p}} E' &\iso H^1(G_K,\bdrk{k} \hat{\otimes}_{K,\sigma} (\mathfrak{M}\otimes_\mathfrak{R} E')).\end{align}

Picking $f$ in every minimal prime of $\mathfrak{R}$ except $\mathfrak{p}$, we obtain injections $\mathfrak{R}/\mathfrak{p} \inj \mathfrak{R}_f \inj \prod_{\set{i: f \notin \mathfrak{m}_i}} \mathfrak{R}_i$ by condition (iii) of Theorem \ref{thm:drcase}.  By condition (ii) of Theorem \ref{thm:drcase}, the image of $Q_{\sigma,k}$ in $\mathfrak{R}/\mathfrak{p}$ is nonzero, so $H^1(G_K,\bdrk{k} \hat{\otimes}_{K,\sigma} \mathfrak{M}/\mathfrak{p}\mathfrak{M})_{Q_{\sigma,k}}$ is finite flat of rank $d_k$ by Theorem \ref{thm:drcase}.(a), (\ref{eqn:bclocal}), and (\ref{eqn:minprime}).  Let $L =\Frac(\mathfrak{R}/\mathfrak{p}) = \Frac((\mathfrak{R}/\mathfrak{p})_{\mathfrak{m}/\mathfrak{p}})$.  It follows that
\[\dim_L H^1(G_K,\bdrk{k} \hat{\otimes}_{K,\sigma} \mathfrak{M}/\mathfrak{p}\mathfrak{M})_{\mathfrak{m}/\mathfrak{p}} \otimes_{(\mathfrak{R}/\mathfrak{p})_{\mathfrak{m}/\mathfrak{p}}} L = d_k.\]
By Nakayama's lemma, $H^1(G_K,\bdrk{k} \hat{\otimes}_{K,\sigma} \mathfrak{M}/\mathfrak{p}\mathfrak{M})_{\mathfrak{m}/\mathfrak{p}}$ can be generated by
\[\dim_{E'} H^1(G_K,\bdrk{k} \hat{\otimes}_{K,\sigma} \mathfrak{M}/\mathfrak{p}\mathfrak{M})\otimes_{\mathfrak{R}/\mathfrak{p}} E'\]
elements, so this dimension must be at least $d_k$, giving the desired bound for $r=1$ by (\ref{eqn:minprimespecial}).  Lemma \ref{lem:dimh0h1} implies the $r=0$ case.
\end{proof}

\subsection{Unbounded periods} \label{subsec:unbounded}

To pass from bounded to unbounded de Rham periods, we will use exact sequences exchanging projective limits with cohomology.  Proposition \ref{prop:profcoh} has the appearance of \cite[Theorem 2.7.5]{nsw}, but their result makes a profiniteness assumption on the module.  We instead consider a module $M$ that is the projective limit of topological modules $M/M_i$ such that the transition maps $M/M_i \rightarrow M/M_{i-1}$ have continuous topological sections.  The category of topological abelian groups is not abelian, so we cannot use the spectral sequence for a composition of functors as they do.  We instead work with the groups of cochains, cocycles, and coboundaries ``by hand'' to obtain a similar result.

\begin{prop} \label{prop:profcoh}

Let $G$ be a profinite group.  Let $M$ be a continuous $G$-module, and suppose that there exist $G$-submodules $M_i \subseteq M$ such that $M \cong \varprojlim_i M/M_i$ as topological $G$-modules, and such that there exist continuous sections $M/M_{i-1} \rightarrow M/M_i$.  Then for $n \ge 1$ there exists an exact sequence
\begin{equation}\label{eqn:h2gammaexact}\ses{{\varprojlim_i}^1 H^{n-1}(G,M/M_i)}{H^n(G,M)}{\varprojlim_i H^n(G,M/M_i)}.\end{equation}
\end{prop}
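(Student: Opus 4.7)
The plan is to mimic the standard Milnor $\varprojlim^1$ sequence at the level of continuous cochain complexes, working with ordinary abelian groups (never with topological abelian groups) so that the non-abelian obstruction the author flags does not arise. Write $C^n_i := C^n(G, M/M_i)$ for continuous cochains, and let $Z^n_i, B^n_i, H^n_i$ denote the continuous cocycles, coboundaries, and cohomology in degree $n$, respectively.

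The first preparatory step is to identify $C^n(G,M) = \varprojlim_i C^n_i$ as abelian groups: since $M \cong \varprojlim_i M/M_i$ as topological $G$-modules, a continuous map $G^n \to M$ is the same data as a compatible family of continuous maps $G^n \to M/M_i$, and the differentials are compatible with the limit. The second preparatory step is to deploy the given section $s: M/M_{i-1} \to M/M_i$: composing with $s$ lifts any $\varphi \in C^n_{i-1}$ to $s \circ \varphi \in C^n_i$, so each transition map is surjective and hence $\varprojlim^1 C^n_i = 0$ for all $n$.

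Next I would apply the $\varprojlim/\varprojlim^1$ six-term exact sequence to the two short exact sequences of inverse systems
\[0 \to Z^{n-1}_i \to C^{n-1}_i \to B^n_i \to 0 \quad \text{and} \quad 0 \to B^n_i \to Z^n_i \to H^n_i \to 0.\]
From the first, using $\varprojlim^1 C^{n-1}_i = 0$, I obtain $\varprojlim^1 B^n_i = 0$ and an isomorphism $\varprojlim B^n_i / B^n \cong \varprojlim^1 Z^{n-1}_i$, where $B^n := d(\varprojlim C^{n-1}_i)$ is the image of the differential on the limit complex. Feeding $\varprojlim^1 B^{n-1}_i = 0$ into the second sequence at index $n-1$ yields $\varprojlim^1 Z^{n-1}_i \cong \varprojlim^1 H^{n-1}_i$, and the same sequence at index $n$ gives $Z^n(G,M)/\varprojlim B^n_i \cong \varprojlim H^n_i$, using $Z^n(G,M) = \varprojlim Z^n_i$ (since kernels commute with inverse limits).

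Splicing along the chain $B^n \subseteq \varprojlim B^n_i \subseteq Z^n(G,M)$ and invoking $H^n(G,M) = Z^n(G,M)/B^n$ then yields the desired exact sequence. The main subtlety is that in general $d(\varprojlim C^{n-1}_i)$ is strictly smaller than $\varprojlim d(C^{n-1}_i) = \varprojlim B^n_i$, and the resulting discrepancy is exactly what produces the $\varprojlim^1$ term; pinning this down via the six-term sequences is the principal bookkeeping task. The continuous sections enter only to secure Mittag-Leffler vanishing at the cochain level, after which the argument is purely formal homological algebra.
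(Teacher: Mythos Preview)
Your proposal is correct and follows essentially the same approach as the paper: both identify $\varprojlim_i C^n(G,M/M_i) = C^n(G,M)$, use the continuous sections to obtain Mittag-Leffler for the cochain systems (hence $\varprojlim^1 C^n_i = 0$ and $\varprojlim^1 B^n_i = 0$), apply the six-term $\varprojlim/\varprojlim^1$ sequences to the two short exact sequences $0 \to Z^{n-1}_i \to C^{n-1}_i \to B^n_i \to 0$ and $0 \to B^n_i \to Z^n_i \to H^n_i \to 0$, and splice the resulting identifications along $B^n \subseteq \varprojlim_i B^n_i \subseteq Z^n(G,M)$. The paper writes out the limit sequences a bit more explicitly rather than invoking the six-term sequence by name at each step, but the content is identical.
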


\begin{remark}
Note that $\bdr^+ = \varprojlim_k \bdrk{k}$, where the left-hand side has the inverse limit topology \cite[Remark 1.1]{iz2}.  As mentioned in Remark \ref{remark:hkvanish}, sections to the transition maps exist.  We will primarily use Proposition \ref{prop:profcoh} to pass from modules over $\bdrk{k}$ to modules over $\bdr^+$.
\end{remark}

\begin{proof}[Proof of Proposition \ref{prop:profcoh}]

We first note that there exists an exact sequence
\begin{equation} \label{eqn:zcbexact} 0 \rightarrow (Z^{n-1}(G,M/M_i))_i \rightarrow (C^{n-1}(G,M/M_i))_i \stackrel{d}{\rightarrow} (B^n(G,M/M_i))_i \rightarrow 0\end{equation}
of projective systems of abelian groups.  Here $Z^{n-1},C^{n-1},$ and $B^n$ denote the usual groups of continuous cocycles, cochains, and coboundaries.  By the hypothesis that continuous sections exist, the transition maps of the middle term and therefore (by commutativity) the last term are surjective, so they satisfy the Mittag-Leffler condition.  Moreover, it follows from the definition of the cochains and cocycles that
\begin{equation} \label{eqn:limchaincycle}\varprojlim_i C^n(G,M/M_i) = C^n(G,M) \textrm{ and }\varprojlim_i Z^n(G,M/M_i) = Z^n(G,M).\end{equation}
Passing to the limit in (\ref{eqn:zcbexact}) and using (\ref{eqn:limchaincycle}) to simplify the expressions, we obtain
\[0 \rightarrow Z^{n-1}(G,M) \rightarrow C^{n-1}(G,M) \stackrel{d}{\rightarrow} \varprojlim_i B^n(G,M/M_i) \rightarrow {\varprojlim_i}^1 Z^{n-1}(G,M/M_i)\rightarrow 0,\]
where the 0 on the right is by the Mittag-Leffler property of $(C^{n-1}(G,M/M_i))_i$.  If we define $(C/Z)^{n-1}(G,M) = C^{n-1}(G,M)/Z^{n-1}(G,M)$, we deduce an isomorphism
\begin{equation} \label{eqn:profcohiso} (\varprojlim_i B^n(G,M/M_i))/d((C/Z)^{n-1}(G,M)) \cong {\varprojlim_i}^1 Z^{n-1}(G,M/M_i).\end{equation}

By (\ref{eqn:limchaincycle}), the limit of the exact sequence
\begin{equation} \label{eqn:bzhexact} 0 \rightarrow (B^n(G,M/M_i))_i \rightarrow (Z^n(G,M/M_i))_i \rightarrow (H^n(G,M/M_i))_i \rightarrow 0\end{equation}
of projective systems of abelian groups is
\[0 \rightarrow \varprojlim_i B^n(G,M/M_i) \rightarrow Z^n(G,M) \rightarrow  \varprojlim_i H^n(G,M/M_i) \rightarrow 0,\]
where we have used the surjectivity of the transition maps of $(B^n(G,M/M_i))_i$ to see that the last term vanishes.  We obtain an exact sequence
\begin{align*}
  0 \rightarrow (\varprojlim_i B^n(G,M/M_i))/d((C/Z)^{n-1}(G,M)) &\rightarrow Z^n(G,M)/d((C/Z)^{n-1}(G,M))\\
  &\rightarrow  \varprojlim_i H^n(G,M/M_i) \rightarrow 0,
\end{align*}
which can be rewritten by (\ref{eqn:profcohiso}) and the definition of cohomology as
\[0 \rightarrow {\varprojlim_i}^1 Z^{n-1}(G,M/M_i) \rightarrow H^n(G,M) \rightarrow  \varprojlim_i H^n(G,M/M_i) \rightarrow 0.\]

By  \cite[Proposition 2.7.4]{nsw}, the (not very) long exact sequence of (\ref{eqn:bzhexact}) for $n-1$ in place of $n$ gives an isomorphism $\varprojlim_i^1 Z^{n-1}(G,M/M_i) \cong \varprojlim_i^1 H^{n-1}(G,M/M_i),$ which yields (\ref{eqn:h2gammaexact}) above.
\end{proof}

\begin{remark} \label{remark:topology}
The definition of the ring $\bdr^+$ and its topology can be found in \cite[\S 1.5.3]{fontaine}.  For a very explicit definition more adapted to the functional analytic discussion of this section, we refer the reader to \cite[Remark 3.2]{iz} and the definition of $w_n$ preceding that remark, which precisely exhibits $\bdr^+$ as a Fr\'echet space by giving a family of norms whose kernels are the $t^n\bdr^+$.

In this paper, we equip $\bdr$ with the locally convex final topology as the strict inductive limit of its $K$-locally convex subspaces $t^k\bdr^+$ \cite[\S I.5.E2]{schneider}.  By definition, the topology on $\bdr$ is induced by the seminorms $\bdr \rightarrow \mathbf{R}$ with continuous restriction to each $t^k\bdr^+$.  (Also see the equivalent definitions in \cite[Theorem 11.1.1]{gs}.)  With respect to this topology, $\bdr$ is $K$-locally convex, complete (with respect to Cauchy nets), bornological, barrelled, and Hausdorff \cite[Proposition 5.5.(ii), \S 6, and Lemma 7.9]{schneider}.  Moreover, $t^k\bdr^+$ is closed in $\bdr$ and inherits its original topology from that of $\bdr$ \cite[Proposition 5.5.(i) and (iii)]{schneider}.  It follows from the definition that the $G_K$-action is continuous.  Although the $t^k\bdr^+$ are Fr\'echet spaces, this topology on $\bdr$ is likely not metrizable; see the discussion in \cite[\S 11.2]{gs}.

We always use $\mathfrak{A} \otimes_K \mathfrak{B}$ to denote the \emph{inductive} tensor product of two locally convex $K$-vector spaces, which is induced by those seminorms $p:\mathfrak{A} \otimes_K \mathfrak{B} \rightarrow\mathbf{R}$ such that the induced map $\mathfrak{A} \times \mathfrak{B} \rightarrow \mathbf{R}$ is separately continuous, i.e. continuous when restricted to $\set{a} \times \mathfrak{B}$ or $\mathfrak{A} \times \set{b}$ for any $a \in \mathfrak{A}$ or $b \in \mathfrak{B}$.  It follows from the definition of this topology and the universal property of completion that if $\mathfrak{A}$ and $\mathfrak{B}$ are equipped with continuous actions of $G_K$, then $\mathfrak{A} \otimes_K \mathfrak{B}$ and $\mathfrak{A} \hat{\otimes}_K \mathfrak{B}$ are as well \cite[Proposition 7.5]{schneider}.

Note that since $t^k\bdr^+$ is a Fr\'echet space, by \cite[Proposition 17.6]{schneider} the inductive and projective topologies coincide on $t^k\bdr^+ \otimes_K \mathfrak{M}$ for any $K$-Banach (or even Fr\'echet) space $\mathfrak{M}$.

\end{remark}

Completed tensor products are not needed in the Artinian case; the following result is clear.

\begin{lem} \label{lem:artcomplete}

Maintain the notation of Section \ref{subsec:senkisin}, and suppose that $\mathfrak{R}$ is Artinian of finite dimension over $E$.  For $B \in \set{\bdr,t^k\bdr^+,t^k\bdrk{\ell}}$, where $k \le \ell \in \mathbf{Z}$, the natural map $B \otimes_{K,\sigma} \mathfrak{M}\rightarrow B \hat{\otimes}_{K,\sigma} \mathfrak{M}$ is an isomorphism.

\end{lem}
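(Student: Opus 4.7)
The plan is to reduce the lemma to the elementary observation that finite direct sums of complete spaces are complete. Since $\mathfrak{R}$ is Artinian of finite dimension over $E$ and $\mathfrak{M}$ is finite free over $\mathfrak{R}$, we have $\dim_E \mathfrak{M} < \infty$. Because $\sigma$ exhibits $K$ as a subfield of the finite extension $E$ of $\mathbf{Q}_p$, we have $[E:K] < \infty$, so $N := \dim_K \mathfrak{M} < \infty$ when $\mathfrak{M}$ is viewed as a $K$-vector space via $\sigma$. The entire difficulty of working with completed tensor products disappears in this setting because the ``second factor'' $\mathfrak{M}$ is finite-dimensional over $K$.

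The first main step is to fix a $K$-basis $e_1,\dots,e_N$ of $\mathfrak{M}$ and identify $B \otimes_{K,\sigma}\mathfrak{M}$ topologically with $B^N$. Since $K$ is a complete nonarchimedean field, any Hausdorff locally convex $K$-vector space structure on a finite-dimensional space is unique (this is a standard fact in nonarchimedean functional analysis), so $\mathfrak{M} \cong K^N$ as topological $K$-vector spaces. The inductive tensor product commutes with finite direct sums and satisfies $B \otimes_K K \cong B$, yielding a topological isomorphism $B \otimes_{K,\sigma}\mathfrak{M} \cong B^N$ where the right-hand side carries the product topology.

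The second step is to verify that $B$ is complete in each of the three cases. For $B = \bdr$, this is recorded in Remark \ref{remark:topology}. For $B = t^k\bdr^+$, Remark \ref{remark:topology} states that $t^k\bdr^+$ is closed in $\bdr$ and inherits its original (Fr\'echet) topology from $\bdr$, and a closed subspace of a complete Hausdorff locally convex space is complete. For $B = t^k\bdrk{\ell}$, this space is a free $\mathbf{C}_p$-module of finite rank (with basis $t^k,t^{k+1},\dots,t^{\ell-1}$), hence a $\mathbf{C}_p$-Banach space, hence complete.

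Since finite products of complete Hausdorff locally convex spaces are complete, $B^N$ is complete, so $B \otimes_{K,\sigma}\mathfrak{M}$ is already complete and the canonical map to $B \hat{\otimes}_{K,\sigma}\mathfrak{M}$ is an isomorphism. There is no real obstacle here; the whole content of the lemma is the reduction $\dim_E \mathfrak{R} < \infty \Rightarrow \dim_K\mathfrak{M} < \infty$, after which the sophisticated topological machinery collapses to a finite-dimensional statement.
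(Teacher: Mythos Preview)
Your proof is correct and follows essentially the same approach as the paper: fix a $K$-basis of the finite-dimensional $K$-vector space $\mathfrak{M}$, identify $B \otimes_{K,\sigma}\mathfrak{M}$ with a finite direct sum of copies of $B$, and use that $B$ is complete Hausdorff. The paper is terser, simply asserting that $B$ is complete Hausdorff locally convex and citing \cite[Lemma 5.2(ii) and Lemma 7.8]{schneider} for the identification of the product and direct-sum topologies and for completeness of the finite direct sum, whereas you spell out the case analysis for completeness of $B$ and invoke uniqueness of the topology on finite-dimensional $K$-spaces; but the substance is identical.
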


We will also need the following fact.

\begin{lem} \label{lem:tensorprodlim}

Let $K$ be a finite extension of $\mathbf{Q}_p$. Suppose that $\mathfrak{A}$ is a strict inductive limit of locally convex $K$-vector spaces $\mathfrak{A}_i$ for $i \in \mathbf{Z}_{\ge 0}$ and $\mathfrak{B}$ is any locally convex $K$-vector space.  Then the natural map $\varinjlim_i (\mathfrak{A}_i \otimes_K \mathfrak{B}) \rightarrow \mathfrak{A} \otimes_K \mathfrak{B}$, where the left-hand side has the locally convex final topology, is an isomorphism of locally convex $K$-vector spaces.

\end{lem}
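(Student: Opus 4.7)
The plan is to verify that the natural map is an isomorphism of underlying $K$-vector spaces whose topologies agree, where the topologies on both sides are defined by universal properties (the locally convex final topology on the left and the inductive tensor product topology on the right, both recalled in Remark \ref{remark:topology}). The underlying vector space identification $(\varinjlim_i \mathfrak{A}_i) \otimes_K \mathfrak{B} = \varinjlim_i (\mathfrak{A}_i \otimes_K \mathfrak{B})$ is the standard algebraic fact that tensor products commute with colimits. All the content is in comparing the two topologies, and the whole argument reduces to chasing universal properties.

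For continuity of the forward map $\Phi: \varinjlim_i (\mathfrak{A}_i \otimes_K \mathfrak{B}) \to \mathfrak{A} \otimes_K \mathfrak{B}$, by the universal property of the locally convex final topology it suffices to show each composite $\mathfrak{A}_i \otimes_K \mathfrak{B} \to \mathfrak{A} \otimes_K \mathfrak{B}$ is continuous. Composing the continuous inclusion $\mathfrak{A}_i \hookrightarrow \mathfrak{A}$ with the canonical bilinear map $\mathfrak{A} \times \mathfrak{B} \to \mathfrak{A} \otimes_K \mathfrak{B}$ (separately continuous by the very definition of the inductive tensor product) yields a separately continuous bilinear map $\mathfrak{A}_i \times \mathfrak{B} \to \mathfrak{A} \otimes_K \mathfrak{B}$, and by the universal property of $\mathfrak{A}_i \otimes_K \mathfrak{B}$ this factors through a continuous linear map.

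For continuity of the inverse, I would take a seminorm $p$ on the common underlying vector space whose restriction to each $\mathfrak{A}_i \otimes_K \mathfrak{B}$ is continuous in the inductive tensor product topology, and show $p$ is continuous in the inductive tensor product topology on $\mathfrak{A} \otimes_K \mathfrak{B}$. Equivalently, I would show the associated bilinear form $b_p(a,b) = p(a\otimes b)$ is separately continuous on $\mathfrak{A} \times \mathfrak{B}$. For fixed $a \in \mathfrak{A}$, since $\mathfrak{A} = \bigcup_i \mathfrak{A}_i$ as a vector space, pick $i$ with $a \in \mathfrak{A}_i$; then continuity of $b_p(a, \cdot)$ on $\mathfrak{B}$ follows from separate continuity of $b_p|_{\mathfrak{A}_i \times \mathfrak{B}}$. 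For fixed $b \in \mathfrak{B}$, continuity of $b_p(\cdot,b):\mathfrak{A} \to \mathbf{R}$ follows by checking on each $\mathfrak{A}_i$ (using the universal property of the final topology on $\mathfrak{A} = \varinjlim_i \mathfrak{A}_i$) and invoking once more the separate continuity of $b_p|_{\mathfrak{A}_i \times \mathfrak{B}}$.

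The main obstacle---really the only one---is bookkeeping: one must keep straight the two distinct ``inductive'' structures (the locally convex final topology versus the inductive tensor product topology) and invoke the correct universal property in each direction. No functional analytic depth beyond these universal properties is needed, and in particular the strictness of the inductive limit enters only through the facts that $\mathfrak{A} = \bigcup_i \mathfrak{A}_i$ at the vector space level and that each $\mathfrak{A}_i \hookrightarrow \mathfrak{A}$ is continuous.
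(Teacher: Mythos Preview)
Your proof is correct and follows essentially the same approach as the paper. The paper organizes the argument as a direct comparison of the defining seminorms for the two topologies, showing they impose identical conditions, while you split it into forward and backward continuity; the underlying checks (separate continuity on $\mathfrak{A}_i \times \mathfrak{B}$ for each $i$ versus on $\mathfrak{A} \times \mathfrak{B}$, matched via the final topology on $\mathfrak{A}$) are the same. Your closing remark that strictness is not really used is also accurate---the paper's proof likewise invokes only that $\mathfrak{A} = \bigcup_i \mathfrak{A}_i$ and that the inclusions $\mathfrak{A}_i \hookrightarrow \mathfrak{A}$ are continuous.
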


\begin{proof}

Note that the map is an isomorphism of $K$-vector spaces, so we need only check that it is a homeomorphism.

The topology on $\varinjlim_i (\mathfrak{A}_i \otimes_K \mathfrak{B})$ is defined by all seminorms $p$ such that the composite map $\mathfrak{A}_i \otimes_K \mathfrak{B} \rightarrow \mathfrak{A} \otimes_K \mathfrak{B} \rightarrow \mathbf{R}$ is continuous for all $i$.  By definition of the inductive topology, this holds if and only if for each $i \in \mathbf{Z}_{\ge 0}$, $a \in \mathfrak{A}_i$, and $b \in \mathfrak{B}$, the composite maps $\set{a} \times \mathfrak{B} \rightarrow \mathfrak{A}_i \otimes_K \mathfrak{B} \rightarrow \mathbf{R}$ and $\mathfrak{A}_i \times \set{b} \rightarrow \mathfrak{A}_i \otimes_K \mathfrak{B} \rightarrow \mathbf{R}$ are continuous.

The topology of $\mathfrak{A} \otimes_K \mathfrak{B}$ is defined by those seminorms $p$ such that for any $a \in \mathfrak{A}$ and $b \in \mathfrak{B}$, the induced maps $\set{a} \times \mathfrak{B} \rightarrow \mathbf{R}$ and $\mathfrak{A} \times \set{b} \rightarrow \mathbf{R}$ are continuous.  By definition, the latter is equivalent to saying that for each $b \in \mathfrak{B}$ and $i \in \mathbf{Z}_{\ge 0}$, the induced map $\mathfrak{A}_i \times \set{b} \rightarrow \mathbf{R}$ is continuous, which matches the second condition on $p$ for $\varinjlim_i (\mathfrak{A}_i \otimes_K \mathfrak{B})$.  For every $i$ and $a' \in \mathfrak{A}_i$ mapping to $a$, there are maps $\set{a'} \times \mathfrak{B} \rightarrow \set{a} \times \mathfrak{B} \stackrel{p}{\rightarrow} \mathbf{R}$, where the first map is a homeomorphism.  In particular, the composite map is continuous if and only if the second map is continuous.

Since $p$ needs to satisfy the same conditions for continuity in either case, we deduce that the topologies are the same.
\end{proof}

As a consequence, we can identify the topology on $\bdr \hat{\otimes}_{K,\sigma}\mathfrak{M}$ with a strict inductive limit topology.

\begin{lem} \label{eqn:bdrtensortop}

Maintain the notation of Section \ref{subsec:senkisin}.  There is a natural isomorphism of locally convex $K$-vector spaces $\varinjlim_k (t^k\bdr^+ \hat{\otimes}_{K,\sigma} \mathfrak{M}) \iso \bdr \hat{\otimes}_{K,\sigma} \mathfrak{M}$.  Moreover, the limit is strict with closed subspaces $t^k\bdr^+ \hat{\otimes}_{K,\sigma} \mathfrak{M}$.

\end{lem}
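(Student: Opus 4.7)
My plan is to deduce the isomorphism from Lemma \ref{lem:tensorprodlim} together with a verification that completion commutes with the strict inductive limit in this setting. Throughout, I interpret the stated $\varprojlim$ as $\varinjlim$, with $t^k\bdr^+$ increasing as $k$ decreases; this reading is dictated by Remark \ref{remark:topology} and by the phrase ``closed subspaces.''

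First, Remark \ref{remark:topology} realises $\bdr$ as the strict inductive limit of its Fr\'echet subspaces $t^k\bdr^+$. Applying Lemma \ref{lem:tensorprodlim} with $\mathfrak{A}=\bdr$, $\mathfrak{A}_i=t^{-i}\bdr^+$, and $\mathfrak{B}=\mathfrak{M}$ produces a natural isomorphism of locally convex $K$-vector spaces
\begin{equation*}
\varinjlim_k \bigl(t^k\bdr^+ \otimes_{K,\sigma} \mathfrak{M}\bigr) \iso \bdr \otimes_{K,\sigma} \mathfrak{M}
\end{equation*}
before completion. The remaining issue is whether this isomorphism persists after passing to completed tensor products.

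Next, I would verify strictness after completion. Since $\mathfrak{M}$ is a $K$-Banach space and $t^k\bdr^+$ is Fr\'echet, the last assertion of Remark \ref{remark:topology} forces the inductive and projective tensor topologies to agree, so $t^k\bdr^+ \hat{\otimes}_{K,\sigma} \mathfrak{M}$ is itself Fr\'echet. To exhibit $t^{k+1}\bdr^+ \hat{\otimes}_{K,\sigma} \mathfrak{M}$ as a closed topological subspace of $t^k\bdr^+ \hat{\otimes}_{K,\sigma} \mathfrak{M}$, I would apply Remark \ref{remark:hkvanish} (after rescaling by a power of $t$) to see that the short exact sequence
\begin{equation*}
\ses{t^{k+1}\bdr^+}{t^k\bdr^+}{\mathbf{C}_p(k)}
\end{equation*}
admits a continuous $K$-linear section. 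Tensoring through $\sigma$ with $\mathfrak{M}$ and completing preserves the splitting, so the corresponding sequence of completed tensor products is topologically split, giving the required strict inclusion at each level.

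Finally, I would invoke the fact that a strict inductive limit of Fr\'echet spaces (a strict LF-space) is Hausdorff and complete; call this limit $\mathfrak{L}$. The image of the uncompleted inductive limit is dense in $\mathfrak{L}$ because term-by-term density of $t^k\bdr^+ \otimes_{K,\sigma} \mathfrak{M}$ in its completion survives passage to the inductive limit. Composing with the first isomorphism, $\mathfrak{L}$ is a complete Hausdorff locally convex space containing $\bdr \otimes_{K,\sigma} \mathfrak{M}$ as a dense subspace, so the universal property of completion yields the canonical isomorphism $\mathfrak{L} \iso \bdr \hat{\otimes}_{K,\sigma} \mathfrak{M}$. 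The principal obstacle is the non-archimedean functional-analytic input that a strict LF-space is complete and Hausdorff; in the archimedean setting this is classical, and it must be transcribed carefully from the $p$-adic framework of Schneider's treatment already cited in Remark \ref{remark:topology}.
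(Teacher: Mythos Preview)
Your overall strategy matches the paper's: both arguments start from Lemma \ref{lem:tensorprodlim}, establish that the completed tensor products $t^k\bdr^+ \hat{\otimes}_{K,\sigma} \mathfrak{M}$ form a strict inductive system with closed transition images, and then identify the resulting strict LF-space with $\bdr \hat{\otimes}_{K,\sigma} \mathfrak{M}$ via completeness and universal properties. Two points of execution differ and deserve attention.

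For strictness and closedness of $t^{k+1}\bdr^+ \hat{\otimes}_{K,\sigma} \mathfrak{M} \hookrightarrow t^k\bdr^+ \hat{\otimes}_{K,\sigma} \mathfrak{M}$, you invoke Remark \ref{remark:hkvanish} to split the sequence $0 \to t^{k+1}\bdr^+ \to t^k\bdr^+ \to \mathbf{C}_p(k) \to 0$. But that remark records a section to $\bdrk{k} \to \bdrk{k-1}$, not to $t^k\bdr^+ \to \mathbf{C}_p(k)$; ``rescaling by a power of $t$'' does not convert one into the other. One can manufacture the desired section by assembling the given sections into a compatible system and passing to the inverse limit, but this is extra work you have not done. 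The paper avoids this entirely: it uses Schneider's results (\cite[Corollary 17.5.(ii), Propositions 7.5 and 17.6, Remark 7.1.(v)]{schneider}) to show directly that the strict closed inclusion $t^k\bdr^+ \hookrightarrow t^{k-1}\bdr^+$ survives tensoring with the Banach space $\mathfrak{M}$ and then completion, without any splitting.

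Your final step is also not quite complete. Knowing that $\bdr \otimes_{K,\sigma} \mathfrak{M}$ sits densely inside the complete Hausdorff space $\mathfrak{L}$ gives, by the universal property, a continuous map $(\bdr \otimes_{K,\sigma} \mathfrak{M})^\wedge \to \mathfrak{L}$, but not automatically an isomorphism of locally convex spaces: that would require the inclusion to be a topological embedding, which you have not checked. The paper handles this by constructing the inverse explicitly: each $t^k\bdr^+ \otimes_{K,\sigma} \mathfrak{M}$ maps to the complete Hausdorff space $(\varinjlim_k t^k\bdr^+ \otimes_{K,\sigma} \mathfrak{M})^\wedge$, hence so does its completion, and the universal property of the inductive limit assembles these into a continuous inverse to $\psi$. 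Adding this step would close the gap in your argument.
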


\begin{proof}

The inclusion $t^k\bdr^+ \otimes_{K,\sigma} \mathfrak{M} \rightarrow t^{k-1}\bdr^+ \otimes_{K,\sigma} \mathfrak{M}$ is strict by \cite[Corollary 17.5.(ii) and Proposition 17.6]{schneider}.  We obtain strict injections $t^{k-1}\bdr^+ \otimes_{K,\sigma} \mathfrak{M} \rightarrow t^{k-1}\bdr^+ \hat{\otimes}_{K,\sigma} \mathfrak{M}$ and $t^{k-1}\bdr^+ \hat{\otimes}_{K,\sigma} \mathfrak{M} \rightarrow t^k\bdr^+ \hat{\otimes}_{K,\sigma} \mathfrak{M}$ from two applications of \cite[Proposition 7.5]{schneider}.  The image of the latter map is closed \cite[Remark 7.1.(v)]{schneider}.  The second claim now follows from \cite[Proposition 5.5.(iii)]{schneider}.

By Lemma \ref{lem:tensorprodlim}, there is a natural isomorphism $(\varinjlim_k (t^k\bdr^+ \otimes_{K,\sigma} \mathfrak{M}))^\wedge \iso \bdr \hat{\otimes}_{K,\sigma}  \mathfrak{M}$ of locally convex $K$-vector spaces, where $\wedge$ denotes completion.  By the universal property, we have a continuous map $\varinjlim_k (t^k\bdr^+ \otimes_{K,\sigma} \mathfrak{M}) \rightarrow \varinjlim_k (t^k\bdr^+ \hat{\otimes}_{K,\sigma} \mathfrak{M})$.  Since the range is complete and Hausdorff by \cite[Proposition 5.5.(ii) and Lemma 7.9]{schneider}, this uniquely extends to a continuous map $\psi: (\varinjlim_k (t^k\bdr^+ \otimes_{K,\sigma} \mathfrak{M}))^\wedge \rightarrow \varinjlim_k (t^k\bdr^+ \hat{\otimes}_{K,\sigma} \mathfrak{M})$ by \cite[Lemma 7.3]{schneider}.  For each $k$, we have a map $t^k\bdr^+ \otimes_{K,\sigma} \mathfrak{M} \rightarrow (\varinjlim_k (t^k\bdr^+ \otimes_{K,\sigma} \mathfrak{M}))^\wedge$, where the image is Hausdorff and complete, so this extends to a map $t^k\bdr^+ \hat{\otimes}_{K,\sigma} \mathfrak{M} \rightarrow (\varinjlim_k (t^k\bdr^+ \otimes_{K,\sigma} \mathfrak{M}))^\wedge$.  Using the universal property again, we obtain a continuous inverse to $\psi$.  The needed isomorphism follows.
\end{proof}

To calculate higher cohomology of $\bdr$, we will use the following observation.

\begin{lem} \label{lem:indlim}

Maintain the notation of Section \ref{subsec:senkisin}.  For $r \ge 0$, the natural maps $t^k\bdr^+ \hat{\otimes}_{K,\sigma} \mathfrak{M} \rightarrow \bdr \hat{\otimes}_{K,\sigma} \mathfrak{M}$ induce an isomorphism
\[\varinjlim_k H^r(G_K,t^k\bdr^+ \hat{\otimes}_{K,\sigma} \mathfrak{M}) \rightarrow H^r(G_K,\bdr \hat{\otimes}_{K,\sigma} \mathfrak{M}).\]

\end{lem}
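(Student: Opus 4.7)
The plan is to reduce the claim about cohomology to an identification of continuous cochain complexes, which will follow from compactness of $G_K^r$ together with the strict inductive-limit description of $\bdr \hat{\otimes}_{K,\sigma} \mathfrak{M}$ supplied by Lemma \ref{eqn:bdrtensortop}. First I would use that lemma to identify $\bdr \hat{\otimes}_{K,\sigma} \mathfrak{M}$ with the strict locally convex colimit of its closed $G_K$-stable subspaces $t^k \bdr^+ \hat{\otimes}_{K,\sigma} \mathfrak{M}$ as $k \to -\infty$. Because each such inclusion is a $G_K$-equivariant closed embedding that carries the subspace topology, it induces a continuous map on cochain complexes compatible with the transition maps, and hence produces a canonical map from $\varinjlim_k H^r(G_K, t^k\bdr^+ \hat{\otimes}_{K,\sigma} \mathfrak{M})$ to $H^r(G_K, \bdr \hat{\otimes}_{K,\sigma} \mathfrak{M})$.

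The heart of the proof is to show that for each $r$ the natural map
\[\varinjlim_k C^r(G_K, t^k\bdr^+ \hat{\otimes}_{K,\sigma} \mathfrak{M}) \longrightarrow C^r(G_K, \bdr \hat{\otimes}_{K,\sigma} \mathfrak{M})\]
is an isomorphism of abelian groups, after which the lemma follows because cohomology of a complex commutes with filtered colimits of abelian groups. Injectivity is immediate from injectivity of the transition maps. For surjectivity, take a continuous cochain $\varphi\colon G_K^r \to \bdr \hat{\otimes}_{K,\sigma} \mathfrak{M}$. Since $G_K$ is profinite, $G_K^r$ is compact, so $\varphi(G_K^r)$ is a compact subset of the strict inductive limit. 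The key input is the non-archimedean analogue of the Dieudonn\'e--Schwartz theorem: in a strict inductive limit of locally convex spaces with closed embeddings, every compact (indeed, every bounded) subset is contained in some step. This gives $k$ with $\varphi(G_K^r) \subseteq t^k\bdr^+ \hat{\otimes}_{K,\sigma} \mathfrak{M}$, and because this subspace carries its original topology from the colimit, $\varphi$ is automatically continuous as a map into it. This exhibits $\varphi$ in the image of the colimit, proving surjectivity.

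The main obstacle is the compact-set-lies-in-a-step assertion in the non-archimedean locally convex setting. One would justify it as follows: if no such $k$ existed, then for each $k$ the image of $\varphi(G_K^r)$ in the locally convex quotient $(\bdr \hat{\otimes}_{K,\sigma} \mathfrak{M}) / (t^k\bdr^+ \hat{\otimes}_{K,\sigma} \mathfrak{M})$ is nonzero (using that $t^k\bdr^+ \hat{\otimes}_{K,\sigma} \mathfrak{M}$ is closed by Lemma \ref{eqn:bdrtensortop}), so one can pick seminorms whose restrictions to each step are continuous but whose supremum on $\varphi(G_K^r)$ grows without bound, contradicting compactness. This step relies critically on the strictness and closedness assertions of Lemma \ref{eqn:bdrtensortop}; all other pieces of the argument are routine.
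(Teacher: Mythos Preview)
Your proposal is correct and follows essentially the same approach as the paper: reduce to showing that every continuous cochain $G_K^r \to \bdr \hat{\otimes}_{K,\sigma} \mathfrak{M}$ factors through some $t^k\bdr^+ \hat{\otimes}_{K,\sigma} \mathfrak{M}$, using compactness of $G_K^r$ together with the strict inductive limit description from Lemma~\ref{eqn:bdrtensortop}. The only cosmetic difference is in justifying the key step: the paper first shows that the compact image is bounded (via a short covering argument with open lattices) and then invokes \cite[Proposition~5.6]{schneider}, which states that bounded subsets of a strict inductive limit with closed steps lie in a single step; this is cleaner than your seminorm-construction sketch and is the standard reference you should cite.
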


\begin{proof}

We claim that every continuous map $\psi: G_K^r \rightarrow \bdr \hat{\otimes}_{K,\sigma} \mathfrak{M}$ has image contained in $t^k\bdr^+$ for some $k$.  Both injectivity and surjectivity will follow from this.

A bounded subset of a locally convex $K$-vector space $V$ is one such that for any open lattice $L\subseteq V$, there is $a\in K$ such that $B \subseteq aL$.  Any finite set is bounded \cite[\S I.4]{schneider}.  It follows that for any open lattice $L$, each point of $\psi(G_K^r)$ is contained in some $aL$.  Since $\psi(G_K^r)$ is compact, it is contained in a single $aL$, so $\psi(G_K^r)$ is bounded.

By Lemma \ref{eqn:bdrtensortop}, the completed tensor product $\bdr \hat{\otimes}_{K,\sigma} \mathfrak{M}$ is the strict inductive limit of its closed subspaces $t^k\bdr^+ \hat{\otimes}_{K,\sigma} \mathfrak{M}$.  The result now follows from \cite[Proposition 5.6]{schneider}, which shows that any bounded subset of $\bdr \hat{\otimes}_{K,\sigma} \mathfrak{M}$ is contained in some $t^k\bdr^+ \hat{\otimes}_{K,\sigma} \mathfrak{M}$.
\end{proof}

We can deduce the following result.

\begin{prop}\label{prop:bdrdimh0h1}

Maintain the notation of Section \ref{subsec:senkisin}. If $\mathfrak{R}$ is an Artinian $E$-Banach algebra of finite dimension over $E$, then for $\ell < k \in \mathbf{Z}$ and $B \in \set{\bdr,t^k\bdr^+,t^k\bdrk{\ell}}$ we have
\begin{equation}\label{eqn:beq} \dim_E H^0(G_K,B \otimes_{K,\sigma} \mathfrak{M}) = \dim_E H^1(G_K,B \otimes_{K,\sigma} \mathfrak{M}).\end{equation}
Writing $d(B)$ for this common dimension, we have inequalities
\begin{equation}\label{eqn:dimeq} d(\bdr) \ge d(t^k\bdr^+) \ge d(t^k\bdrk{\ell}).\end{equation}
The first is an equality if $k$ is sufficiently small, as is the second if $\ell$ is sufficiently large.

\end{prop}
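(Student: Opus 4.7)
The plan is to establish (\ref{eqn:beq}) separately for the three choices of $B$ and then deduce (\ref{eqn:dimeq}). Since $\mathfrak{R}$ has finite $E$-dimension, $\mathfrak{M}$ itself is finite-dimensional over $E$, and by Lemma \ref{lem:artcomplete} the algebraic and completed tensor products $B\otimes_{K,\sigma}\mathfrak{M}$ agree throughout; we use this freely. The bounded case $B = t^k\bdrk{\ell}$ follows immediately from Lemma \ref{lem:dimh0h1}: multiplication by $t^{-k}$ is a $G_K$-equivariant isomorphism $t^k\bdrk{\ell}\otimes_{K,\sigma}\mathfrak{M} \cong \bdrk{\ell-k}\otimes_{K,\sigma}\mathfrak{M}(k)$, and applying that lemma with $\mathfrak{M}(k)$ in place of $\mathfrak{M}$ gives (\ref{eqn:beq}).

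For $B = t^k\bdr^+$ I would apply Proposition \ref{prop:profcoh} with $M = t^k\bdr^+\otimes_{K,\sigma}\mathfrak{M}$ and $M_j = t^{k+j}\bdr^+\otimes_{K,\sigma}\mathfrak{M}$, using that $t^k\bdr^+ = \varprojlim_j t^k\bdrk{k+j}$ topologically and that continuous sections of the transition maps exist (a twisted form of Remark \ref{remark:hkvanish}). The crux is that the Sen polynomial of $\mathfrak{M}(j)$ is a shift of $P_\sigma$, so applying Proposition \ref{prop:htgap} with $\mathfrak{M}(j)$ in place of $\mathfrak{M}$ shows that $H^r(G_K,\mathbf{C}_p(j)\otimes_{K,\sigma}\mathfrak{M}) = H^r(G_K,\mathbf{C}_p\otimes_{K,\sigma}\mathfrak{M}(j))$ vanishes for $r\in\set{0,1}$ as soon as $j$ falls outside the finite set of roots of the reduction of $P_\sigma$ modulo the maximal ideal of $\mathfrak{R}$ (since in an Artinian local ring an element outside the maximal ideal is a unit)---in particular for $|j|$ sufficiently large. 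Feeding this vanishing into the long exact sequence of $0\to\mathbf{C}_p(k+j-1)\otimes\mathfrak{M}\to t^k\bdrk{k+j}\otimes\mathfrak{M}\to t^k\bdrk{k+j-1}\otimes\mathfrak{M}\to 0$ shows that the projective system $\set{H^r(G_K,t^k\bdrk{k+j}\otimes\mathfrak{M})}_j$ stabilizes, so $\varprojlim^1$ vanishes and Proposition \ref{prop:profcoh} produces $H^r(G_K,t^k\bdr^+\otimes\mathfrak{M}) \cong H^r(G_K,t^k\bdrk{\ell}\otimes\mathfrak{M})$ for $\ell$ sufficiently large. Combined with the bounded case, this yields (\ref{eqn:beq}) for $t^k\bdr^+$ and the equality $d(t^k\bdr^+) = d(t^k\bdrk{\ell})$ for $\ell$ sufficiently large.

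For $B = \bdr$, Lemma \ref{lem:indlim} gives $H^r(G_K,\bdr\otimes_{K,\sigma}\mathfrak{M}) = \varinjlim_k H^r(G_K,t^k\bdr^+\otimes_{K,\sigma}\mathfrak{M})$ as $k\to-\infty$. The same twisted vanishing, applied to the long exact sequence of $0\to t^k\bdr^+\otimes\mathfrak{M}\to t^{k-1}\bdr^+\otimes\mathfrak{M}\to \mathbf{C}_p(k-1)\otimes\mathfrak{M}\to 0$, shows that the transition maps in this colimit are isomorphisms for $k$ sufficiently small (i.e., negative), so the colimit stabilizes and yields both (\ref{eqn:beq}) for $\bdr$ and the equality $d(\bdr) = d(t^k\bdr^+)$ for $k$ sufficiently small. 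The inequality $d(\bdr)\ge d(t^k\bdr^+)$ is then immediate from the injection $H^0(t^k\bdr^+\otimes\mathfrak{M})\hookrightarrow H^0(\bdr\otimes\mathfrak{M})$ induced by $t^k\bdr^+\hookrightarrow\bdr$, and $d(t^k\bdr^+)\ge d(t^k\bdrk{\ell})$ follows by a dimension count in the long exact sequence of $0\to t^\ell\bdr^+\otimes\mathfrak{M}\to t^k\bdr^+\otimes\mathfrak{M}\to t^k\bdrk{\ell}\otimes\mathfrak{M}\to 0$, using the already-established (\ref{eqn:beq}) for $t^\ell\bdr^+$ to bound the image of the connecting map $H^0(t^k\bdrk{\ell}\otimes\mathfrak{M})\to H^1(t^\ell\bdr^+\otimes\mathfrak{M})$ by $d(t^\ell\bdr^+)$.

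The main obstacle is simply the bookkeeping around twists---carefully verifying that the Sen polynomial of $\mathfrak{M}(j)$ is the expected shift of $P_\sigma$ and that its value at $0$ becomes a unit in $\mathfrak{R}$ for $|j|$ large. Once that is in hand, both limit operations in Proposition \ref{prop:profcoh} and Lemma \ref{lem:indlim} reduce to finite-dimensional linear algebra over $E$, since the associated filtrations stabilize.
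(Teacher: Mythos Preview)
Your proposal is correct and follows essentially the same architecture as the paper: establish the bounded case via Lemma~\ref{lem:dimh0h1}, pass to $t^k\bdr^+$ using Proposition~\ref{prop:profcoh} together with the stabilization coming from Proposition~\ref{prop:htgap}, and then pass to $\bdr$ via Lemma~\ref{lem:indlim}. Two cosmetic differences are worth noting. For $B=t^k\bdrk{\ell}$ you twist by $t^{-k}$ to reduce directly to Lemma~\ref{lem:dimh0h1} applied to $\mathfrak{M}(k)$, whereas the paper instead inducts on $k$ using the sequence $0\to t^k\bdrk{\ell}\to t^{k-1}\bdrk{\ell}\to\mathbf{C}_p(k-1)\to 0$; your shortcut is cleaner. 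For the inequality $d(t^k\bdr^+)\ge d(t^k\bdrk{\ell})$ you use the exact sequence $0\to t^\ell\bdr^+\to t^k\bdr^+\to t^k\bdrk{\ell}\to 0$ directly, which requires a continuous section of $t^k\bdr^+\to t^k\bdrk{\ell}$ (obtainable by composing the sections of Remark~\ref{remark:hkvanish} into a compatible system and passing to the limit); the paper instead stays in the bounded world, observing that $H^1(G_K,t^k\bdrk{\ell+1}\otimes\mathfrak{M})\to H^1(G_K,t^k\bdrk{\ell}\otimes\mathfrak{M})$ is always surjective and iterating up to the stable range. Both routes are valid; yours is marginally more direct once the section is in hand, while the paper's avoids needing any long exact sequence involving $\bdr^+$ itself.
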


\begin{proof}

Using Lemma \ref{lem:artcomplete}, we may work with ordinary tensor products.  Let $\kappa(\mathfrak{R})$ denote the residue field.  The image of $P_\sigma(T)$ in $\kappa(\mathfrak{R})[T]$ has finitely many roots.  For $j$ sufficiently large or small, we deduce from Proposition \ref{prop:htgap} that for $r \in \set{0,1}$, we have
\begin{equation}\label{eqn:cpkvanish} \dim_E H^r(G_K,\mathbf{C}_p(j) \otimes_{K,\sigma} \mathfrak{M}) = 0.\end{equation}

For $k < \ell$, we have exact sequences
\begin{gather}\label{eqn:bdrexact1} \ses{\mathbf{C}_p(\ell)}{t^k\bdrk{\ell+1}}{t^k\bdrk{\ell}},\\
\label{eqn:bdrexact3} 0\rightarrow t^k \bdrk{\ell} \rightarrow t^{k-1} \bdrk{\ell} \rightarrow \mathbf{C}_p(k-1) \rightarrow 0\\
\label{eqn:bdrexact2} \textrm{and} \quad \ses{t^k\bdr^+}{t^{k-1}\bdr^+}{\mathbf{C}_p(k-1)}.\end{gather}
For $k$ sufficiently small in (\ref{eqn:lsurj1}) and (\ref{eqn:lsurj0}) and $\ell$ sufficiently large in (\ref{eqn:ksurj0}), the natural maps
\begin{align}\label{eqn:ksurj0} H^r(G_K,t^k\bdrk{\ell+1} \otimes_{K,\sigma} \mathfrak{M}) &\rightarrow H^r(G_K,t^k\bdrk{\ell} \otimes_{K,\sigma} \mathfrak{M}),\\
\label{eqn:lsurj1} H^r(G_K,t^k\bdrk{\ell} \otimes_{K,\sigma} \mathfrak{M}) &\rightarrow H^r(G_K,t^{k-1}\bdrk{\ell} \otimes_{K,\sigma} \mathfrak{M}),\\
\label{eqn:lsurj0} \textrm{and}\quad H^r(G_K,t^k\bdr^+ \otimes_{K,\sigma} \mathfrak{M}) &\rightarrow H^r(G_K,t^{k-1}\bdr^+ \otimes_{K,\sigma} \mathfrak{M})
\end{align}
are isomorphisms for $r \in \set{0,1}$ by (\ref{eqn:cpkvanish}) and, for (\ref{eqn:ksurj0}) and $r=1$, the existence of a continuous section from Remark \ref{remark:hkvanish}.

Now suppose $k < \ell \in \mathbf{Z}$ are given.  By Lemma \ref{lem:dimh0h1},
\[\dim_E H^0(G_K,\bdrk{\ell} \otimes_{K,\sigma} \mathfrak{M})=\dim_E H^1(G_K,\bdrk{\ell} \otimes_{K,\sigma} \mathfrak{M}).\]
We induct using the 6-term exact sequence similar to (\ref{eqn:exactsnake}) obtained from (\ref{eqn:bdrexact3}) to find
\[\dim_E H^0(G_K,t^k\bdrk{\ell} \otimes_{K,\sigma} \mathfrak{M})=\dim_E H^1(G_K,t^k\bdrk{\ell} \otimes_{K,\sigma} \mathfrak{M}).\]
This is (\ref{eqn:beq}) for $B = t^k\bdrk{\ell}$.

Fix $\ell$ large enough so that (\ref{eqn:cpkvanish}) holds for all $j \ge \ell$.  Also note that by right-exactness of the tensor product, $t^k\bdrk{i} \otimes_{K,\sigma} \mathfrak{M} \cong (t^k\bdr^+ \otimes_{K,\sigma} \mathfrak{M})/(t^i\bdr^+ \otimes_{K,\sigma} \mathfrak{M})$.  Using the isomorphisms (\ref{eqn:ksurj0}) with both $r=0$ and $r=1$, we see that in the exact sequence (\ref{eqn:h2gammaexact}) with $n=1$, $M = t^k\bdr^+ \otimes_{K,\sigma} \mathfrak{M}$, and $M/M_i = t^k\bdrk{i} \otimes_{K,\sigma} \mathfrak{M}$, the left-hand term vanishes and the right-hand term has dimension equal to $\dim_E H^1(G_K,t^k\bdrk{\ell} \otimes_{K,\sigma} \mathfrak{M})$.  Using the isomorphisms (\ref{eqn:ksurj0}) with $r=0$ and the $B = t^k\bdrk{\ell}$ case of (\ref{eqn:beq}), we deduce the equality (\ref{eqn:beq}) for $B = t^k\bdr^+$.

Choose $k$ so that (\ref{eqn:cpkvanish}) holds for all $j \le k$.  Beginning with the $B=t^k\bdr^+$ case of (\ref{eqn:beq}), we form the inductive limit over $k$, and use that (\ref{eqn:lsurj0}) is an isomorphism for $r \in \set{0,1}$ and $j \le k$ together with Lemmas \ref{lem:artcomplete} and \ref{lem:indlim} to deduce (\ref{eqn:beq}) for $B=\bdr$.

We obtain the first inequality in (\ref{eqn:dimeq}) for 0-cocycles by applying left-exactness of $G_K$-invariants to the injection $t^k\bdr^+ \otimes_{K,\sigma} \mathfrak{M} \inj \bdr \otimes_{K,\sigma} \mathfrak{M}$.  For the second, we observe that
\[H^1(G_K,t^k\bdrk{\ell+1} \otimes_{K,\sigma} \mathfrak{M}) \rightarrow H^1(G_K,t^k\bdrk{\ell} \otimes_{K,\sigma} \mathfrak{M})\]
is always surjective by the sequence similar to (\ref{eqn:exactsnake}) obtained from (\ref{eqn:bdrexact1}), so we have
\[d(t^k\bdrk{\ell+1}) \ge d(t^k\bdrk{\ell}).\]
If we apply this inequality $\ell'-\ell$ times, where $\ell'$ is large enough for (\ref{eqn:cpkvanish}) to always hold, we obtain $d(t^k\bdrk{\ell'}) \ge d(t^k\bdrk{\ell})$.  It now follows from the proof of (\ref{eqn:beq}) for $B=t^k\bdr^+$ that
\[d(t^k\bdr^+)= \dim_E H^1(G_K,t^k\bdr^+ \otimes_{K,\sigma} \mathfrak{M}) = d(t^k\bdrk{\ell'}) \ge d(t^k\bdrk{\ell}).\]
\end{proof}

We use Proposition \ref{prop:profcoh} with Theorem \ref{thm:drcase} to deduce the existence of de Rham periods for certain base changes of a family.  We remark that in the Berger-Colmez case (i.e. when $Q_\sigma(T)$ is a unit), we find that $H^r(G_K,\bdr \hat{\otimes}_{K,\sigma} \mathfrak{M})$ is finite flat of rank $n$ for $r \in \set{0,1}$.

\begin{prop} \label{prop:nicespecial}

Maintain the hypotheses of Theorem \ref{thm:drcase}.  Suppose that $\mathfrak{R}'$ is a Noetherian Banach $\mathfrak{R}$-algebra such that the image of $Q_{\sigma,k}$ is a unit in $\mathfrak{R}'$ for all $k\ge 0$.  Then for $r \in \set{0,1}$, the $\mathfrak{R}'$-module $H^r(G_K,\bdr^+ \hat{\otimes}_{K,\sigma} (\mathfrak{M} \otimes_{\mathfrak{R}} \mathfrak{R}'))$ is finite flat of rank $n$ and $H^0(G_K,\bdr \hat{\otimes}_{K,\sigma} (\mathfrak{M} \otimes_{\mathfrak{R}} \mathfrak{R}'))$ has a finite flat $\mathfrak{R}'$-submodule of rank $n$.

If, in addition, $Q_\sigma(j)$ is a unit in $\mathfrak{R}'$ for all $j > 0$, then $H^r(G_K,\bdr \hat{\otimes}_{K,\sigma} (\mathfrak{M} \otimes_{\mathfrak{R}} \mathfrak{R}'))$ are finite flat of rank $n$ for $r \in\set{0,1}$.

\end{prop}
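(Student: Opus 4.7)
The plan is to upgrade Theorem \ref{thm:drcase} from bounded $\bdrk{\ell}$ to $\bdr^+$ (and, under the extra hypothesis, to $\bdr$) using the limit formalisms of Proposition \ref{prop:profcoh} and Lemma \ref{lem:indlim}.  Write $\mathfrak{M}' = \mathfrak{M}\otimes_\mathfrak{R}\mathfrak{R}'$.  First, for each $\ell \ge k$ the hypothesis forces $\xi(Q_{\sigma,\ell}) = \prod_{j=0}^{\ell-1}\xi(Q_\sigma(-j))$ to be a unit in $\mathfrak{R}'$, so the base-change map in Theorem \ref{thm:drcase}.(b) collapses to an isomorphism and Theorem \ref{thm:drcase}.(a) yields that $H^r(G_K,\bdrk{\ell}\hat{\otimes}_{K,\sigma}\mathfrak{M}')$ is finite flat over $\mathfrak{R}'$ of rank $d_\ell$ for $r\in\set{0,1}$; once $\ell > \max_i w_{i,\sigma}$ we have $d_\ell = n$.

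To pass to $\bdr^+$, by Proposition \ref{prop:htgap} the module $H^r(G_K,\mathbf{C}_p(\ell-1)\hat{\otimes}_{K,\sigma}\mathfrak{M}')$ is annihilated by the value $\xi(P_\sigma(1-\ell)) = \xi(S_\sigma(1-\ell))\xi(Q_\sigma(1-\ell))$ of the Sen polynomial of $\mathfrak{M}'(\ell-1)$ at $0$.  For $\ell - 1 \ge 0$ and $\ell - 1 > \max_i w_{i,\sigma}$ the factor $\xi(Q_\sigma(1-\ell))$ is a unit by hypothesis and $\xi(S_\sigma(1-\ell)) = \prod_i(1-\ell+w_{i,\sigma})$ is a nonzero rational integer, so both are invertible in $\mathfrak{R}'$ and the cohomology vanishes.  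The six-term exact sequence attached to the analog of (\ref{eqn:mainexact}) for $\mathfrak{M}'$ then makes the transition maps $H^r(G_K,\bdrk{\ell}\hat{\otimes}_{K,\sigma}\mathfrak{M}') \to H^r(G_K,\bdrk{\ell-1}\hat{\otimes}_{K,\sigma}\mathfrak{M}')$ isomorphisms for all sufficiently large $\ell$.  Granting the identification $\bdr^+\hat{\otimes}_{K,\sigma}\mathfrak{M}' \cong \varprojlim_\ell(\bdrk{\ell}\hat{\otimes}_{K,\sigma}\mathfrak{M}')$ (by the strictness/completion argument of Lemma \ref{eqn:bdrtensortop}) and continuous sections (Remark \ref{remark:hkvanish}), Proposition \ref{prop:profcoh} applies: stability of the inverse system forces $\varprojlim^1 = 0$ and $\varprojlim H^r$ equal to the common stable value, whence $H^r(G_K,\bdr^+\hat{\otimes}_{K,\sigma}\mathfrak{M}')$ is finite flat over $\mathfrak{R}'$ of rank $n$ for $r\in\set{0,1}$.

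The submodule claim for $H^0(G_K,\bdr\hat{\otimes}_{K,\sigma}\mathfrak{M}')$ is then immediate from the injection $\bdr^+\hat{\otimes}_{K,\sigma}\mathfrak{M}' \hookrightarrow \bdr\hat{\otimes}_{K,\sigma}\mathfrak{M}'$ and left-exactness of $G_K$-invariants.  Under the additional hypothesis that $\xi(Q_\sigma(j))$ is a unit for $j > 0$, the analogous vanishing analysis applied to the exact sequences $0 \to t^k\bdr^+\hat{\otimes}_{K,\sigma}\mathfrak{M}' \to t^{k-1}\bdr^+\hat{\otimes}_{K,\sigma}\mathfrak{M}' \to \mathbf{C}_p(k-1)\hat{\otimes}_{K,\sigma}\mathfrak{M}' \to 0$ for $k \le 0$ shows that $\xi(P_\sigma(1-k))$ is a unit (now $\xi(S_\sigma(1-k)) = \prod_i(1-k+w_{i,\sigma})$ is a positive integer and $\xi(Q_\sigma(1-k))$ is a unit by the extra hypothesis), so all transition maps in the inductive system $H^r(G_K,t^k\bdr^+\hat{\otimes}_{K,\sigma}\mathfrak{M}')$ for $k \le 0$ are isomorphisms, and Lemma \ref{lem:indlim} yields $H^r(G_K,\bdr\hat{\otimes}_{K,\sigma}\mathfrak{M}') \cong H^r(G_K,\bdr^+\hat{\otimes}_{K,\sigma}\mathfrak{M}')$, finite flat of rank $n$.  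The main technical obstacle is verifying the compatibility of $\hat{\otimes}_{K,\sigma}\mathfrak{M}'$ with the inverse and strict inductive limits defining $\bdr^+$ and $\bdr$: because $\mathfrak{R}'$ need not be Artinian, Lemma \ref{lem:artcomplete} does not apply directly, but since $\mathfrak{M}'$ is finitely generated over the Noetherian Banach algebra $\mathfrak{R}'$, the arguments in Lemmas \ref{lem:ctp} and \ref{eqn:bdrtensortop} adapt to provide the required identifications and strictness.
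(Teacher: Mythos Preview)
Your argument is correct and follows the same architecture as the paper's proof: pass to the projective limit via Proposition~\ref{prop:profcoh} for $\bdr^+$, then to the inductive limit via Lemma~\ref{lem:indlim} for $\bdr$, using that the transition maps eventually stabilize. Two small points of comparison are worth noting.

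First, for the identification $\bdr^+\hat{\otimes}_{K,\sigma}\mathfrak{M}' \cong \varprojlim_\ell(\bdrk{\ell}\hat{\otimes}_{K,\sigma}\mathfrak{M}')$, your reference to Lemma~\ref{eqn:bdrtensortop} is misplaced: that lemma concerns the \emph{inductive} limit presentation of $\bdr$, not the projective limit for $\bdr^+$. The paper obtains this identification directly from \cite[Proposition~1.1.29]{emerton} (equation~(\ref{eqn:switchprojlim})), which handles completed tensor products of Fr\'echet limits with Banach spaces; this is the clean reference you want here, and it does not require $\mathfrak{R}'$ to be Artinian. Second, you deduce stabilization of the transition maps by showing $H^r(G_K,\mathbf{C}_p(\ell-1)\hat{\otimes}_{K,\sigma}\mathfrak{M}')=0$ for large $\ell$ via Proposition~\ref{prop:htgap} and the unit hypothesis on $\xi(P_\sigma(1-\ell))$, whereas the paper instead invokes the vanishing of the connecting map $\psi_2$ established in the proof of Theorem~\ref{thm:drcase} (the discussion around (\ref{eqn:bcexactsnake1})--(\ref{eqn:bcexactsnake2})) to get surjectivity of all transition maps, and then compares ranks. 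Both routes are short and valid; yours is arguably more self-contained. Finally, note that Proposition~\ref{prop:profcoh} only addresses $H^n$ for $n\ge 1$; for $r=0$ you should simply observe (as the paper does) that $G_K$-invariants commute with the projective limit directly.
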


\begin{proof}

By \cite[Proposition 1.1.29]{emerton}, we have a natural isomorphism
\begin{equation} \label{eqn:switchprojlim} \bdr^+\hat{\otimes}_{K,\sigma} (\mathfrak{M} \otimes_{\mathfrak{R}} \mathfrak{R}') \cong \varprojlim_k (\bdrk{k} \hat{\otimes}_{K,\sigma} (\mathfrak{M} \otimes_{\mathfrak{R}} \mathfrak{R}')).\end{equation}
It follows that $H^0(G_K,\bdr^+\hat{\otimes}_{K,\sigma} (\mathfrak{M} \otimes_{\mathfrak{R}} \mathfrak{R}')) = \varprojlim_k H^0(G_K,\bdrk{k} \hat{\otimes}_{K,\sigma} (\mathfrak{M} \otimes_{\mathfrak{R}} \mathfrak{R}'))$. The discussion of the exact sequences (\ref{eqn:bcexactsnake1}) and (\ref{eqn:bcexactsnake2}) in the proof of Theorem \ref{thm:drcase} implies that the transition maps on the right-hand side are surjective maps of finite flat modules.  For $k$ sufficiently large, combining Theorem \ref{thm:drcase}.(a) and (b),
\[\rank_{\mathfrak{R}'} H^0(G_K,\bdrk{k} \hat{\otimes}_{K,\sigma} (\mathfrak{M} \otimes_{\mathfrak{R}} \mathfrak{R}')) = d_k = n\]
is independent of $k$, so the transition maps are isomorphisms and the limit is finite flat of rank $n$ as well.

From Proposition \ref{prop:profcoh} and (\ref{eqn:switchprojlim}), we have an exact sequence
\begin{align*}
	0 &\rightarrow {\varprojlim_i}^1 H^0(G_K,\bdrk{k} \hat{\otimes}_{K,\sigma} (\mathfrak{M} \otimes_{\mathfrak{R}} \mathfrak{R}')) \rightarrow H^1(G_K,\bdr^+\hat{\otimes}_{K,\sigma} (\mathfrak{M} \otimes_{\mathfrak{R}} \mathfrak{R}'))\\
	&\rightarrow \varprojlim_i H^1(G_K,\bdrk{k} \hat{\otimes}_{K,\sigma} (\mathfrak{M} \otimes_{\mathfrak{R}} \mathfrak{R}')) \rightarrow 0.
\end{align*}
By the aforementioned discussion of (\ref{eqn:bcexactsnake1}) and (\ref{eqn:bcexactsnake2}), the transition maps of the first and third terms are surjective maps of finite flat $\mathfrak{R}'$-modules, so the first term vanishes.  As before, we have
\[\rank_{\mathfrak{R}'} H^1(G_K,\bdrk{k} \hat{\otimes}_{K,\sigma} (\mathfrak{M} \otimes_{\mathfrak{R}} \mathfrak{R}')) = d_k = n\]
for $k$ sufficiently large by Theorem \ref{thm:drcase}.(a) and (b), so the transition maps are eventually isomorphisms and the limit is finite flat of rank $n$ over $\mathfrak{R}'$.

We take $G_K$-invariants of the natural injection $\bdr^+\hat{\otimes}_{K,\sigma} (\mathfrak{M} \otimes_{\mathfrak{R}} \mathfrak{R}') \hookrightarrow \bdr\hat{\otimes}_{K,\sigma} (\mathfrak{M} \otimes_{\mathfrak{R}} \mathfrak{R}')$ to deduce that $H^0(G_K,\bdr \hat{\otimes}_{K,\sigma} (\mathfrak{M} \otimes_{\mathfrak{R}} \mathfrak{R}'))$ has a finite flat rank $n$ submodule.

If $Q_\sigma(j)$ is a unit in $\mathfrak{R}'$ for all $j>0$, then the isomorphisms (\ref{eqn:lsurj0}) hold for all $k \le 0$.  We deduce the final claim from Lemma \ref{lem:indlim}.
\end{proof}

If we no longer ask for flatness, we can relax the strong hypothesis on $Q_{\sigma,k}$.  Observe that the bound $d_k\dim_E \mathfrak{R}'$ below is compatible with the expectation of $d_k$ interpolated periods.

\begin{prop} \label{prop:anyspecial}

Maintain the hypotheses of Theorem \ref{thm:drcase}.  Suppose that we are given a local Artinian Banach $\mathfrak{R}$-algebra $\mathfrak{R}'$ of finite $E$-dimension with structure morphism $\xi: \mathfrak{R} \rightarrow \mathfrak{R}'$.  Fix $k \ge 0$, and suppose that $\xi(Q_{\sigma,k})$ is a unit.  Then for $r \in \set{0,1}$ we have
\[ \dim_E H^r(G_K,\bdr \otimes_{K,\sigma} (\mathfrak{M} \otimes_{\mathfrak{R}} \mathfrak{R}')) \ge \dim_E H^r(G_K,\bdr^+ \otimes_{K,\sigma} (\mathfrak{M} \otimes_{\mathfrak{R}} \mathfrak{R}')) \ge d_k\dim_E \mathfrak{R}'.\]

\end{prop}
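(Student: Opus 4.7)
My plan is to chain together Proposition \ref{prop:bdrdimh0h1}, applied with $\mathfrak{R}'$ in place of $\mathfrak{R}$, with Theorem \ref{thm:drcase}. The first step is to observe that $\mathfrak{R}'$ is local Artinian of finite $E$-dimension by hypothesis and $\mathfrak{M}\otimes_\mathfrak{R}\mathfrak{R}'$ is a finite free $\mathfrak{R}'$-module with a continuous $G_K$-action, so Proposition \ref{prop:bdrdimh0h1} applies directly. Using the proposition with its $k$-parameter equal to $0$ and renaming its $\ell$ to our $k$ yields
\[\dim_E H^r(G_K, \bdr \otimes_{K,\sigma}(\mathfrak{M}\otimes_\mathfrak{R}\mathfrak{R}')) \ge \dim_E H^r(G_K, \bdr^+ \otimes_{K,\sigma}(\mathfrak{M}\otimes_\mathfrak{R}\mathfrak{R}')) \ge \dim_E H^r(G_K, \bdrk{k} \otimes_{K,\sigma}(\mathfrak{M}\otimes_\mathfrak{R}\mathfrak{R}'))\]
for $r \in \set{0,1}$. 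Thus the problem reduces to bounding the rightmost quantity below by $d_k \dim_E \mathfrak{R}'$.

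For that, I will switch from the ordinary tensor product to the completed tensor product using Lemma \ref{lem:artcomplete} (valid because $\mathfrak{R}'$ has finite $E$-dimension) and then invoke Theorem \ref{thm:drcase}. Part (b) of that theorem supplies a natural map
\[H^r(G_K, \bdrk{k} \hat{\otimes}_{K,\sigma} \mathfrak{M}) \otimes_\mathfrak{R} \mathfrak{R}' \longrightarrow H^r(G_K, \bdrk{k} \hat{\otimes}_{K,\sigma}(\mathfrak{M}\otimes_\mathfrak{R}\mathfrak{R}'))\]
whose kernel and cokernel are annihilated by a power of $\xi(Q_{\sigma,k})$; since this element is a unit in $\mathfrak{R}'$ by hypothesis, a module killed by a power of it must vanish, so the map is an isomorphism. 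The unit hypothesis also means $\xi$ factors through the localization $\mathfrak{R}_{Q_{\sigma,k}}$, so the source is $H^r(G_K, \bdrk{k} \hat{\otimes}_{K,\sigma} \mathfrak{M})_{Q_{\sigma,k}} \otimes_{\mathfrak{R}_{Q_{\sigma,k}}} \mathfrak{R}'$, which by Theorem \ref{thm:drcase}.(a) is finite flat of $\mathfrak{R}'$-rank $d_k$. Its $E$-dimension is therefore exactly $d_k \dim_E \mathfrak{R}'$, completing the chain of inequalities.

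The whole argument is essentially a formality once Proposition \ref{prop:bdrdimh0h1} and Theorem \ref{thm:drcase} are in hand; there is no genuine obstacle. The only point where anything nontrivial is invoked is the passage from the ``kernel and cokernel killed by a power of $\xi(Q_{\sigma,k})$'' conclusion of Theorem \ref{thm:drcase}.(b) to an honest base change isomorphism, which is immediate from the hypothesis that this element is a unit in $\mathfrak{R}'$. This proposition should be understood as the direct corollary of the bounded finite flatness result that one obtains once Proposition \ref{prop:bdrdimh0h1} is available to bridge the bounded and unbounded settings.
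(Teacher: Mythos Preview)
Your proof is correct and follows exactly the route the paper takes: the paper's proof simply says the result follows from Theorem \ref{thm:drcase} and Proposition \ref{prop:bdrdimh0h1}, and you have spelled out precisely how those two ingredients combine. Your handling of the localization and the passage from ``kernel and cokernel killed by a power of $\xi(Q_{\sigma,k})$'' to an actual isomorphism is the intended reading of Theorem \ref{thm:drcase}.(b) under the unit hypothesis.
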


Using Theorem \ref{thm:dr1cocycle}, we can also prove a result without the hypothesis that $Q_{\sigma,k}$ vanishes, but we need to assume that $\mathfrak{R}$ is $E$-affinoid.

\begin{prop} \label{prop:drspecial}

Maintain the hypotheses of Theorem \ref{thm:drcase}, assume that $\mathfrak{R}$ is $E$-affinoid, and let $\mathfrak{R} \rightarrow E'$ be a map to a finite extension $E'$ of $E$.  Then for $r \in \set{0,1}$ we have
\[\dim_{E'} H^r(G_K,\bdr \otimes_{K,\sigma} (\mathfrak{M} \otimes_{\mathfrak{R}} E')) \ge \dim_{E'} H^r(G_K,\bdr^+ \otimes_{K,\sigma} (\mathfrak{M} \otimes_{\mathfrak{R}} E')) \ge d_k.\]

\end{prop}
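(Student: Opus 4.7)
The plan is to reduce to the bounded case of Theorem \ref{thm:dr1cocycle} and then climb back up to $\bdr^+$ and $\bdr$ using the dimension comparison from Proposition \ref{prop:bdrdimh0h1}, which applies to the Artinian $E$-algebra $E'$ of finite $E$-dimension.

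First, since $\mathfrak{R}$ is $E$-affinoid, Theorem \ref{thm:dr1cocycle} applies to the specialization $\xi:\mathfrak{R} \rightarrow E'$ and gives
\[\dim_{E'} H^r(G_K,\bdrk{k} \hat{\otimes}_{K,\sigma} (\mathfrak{M}\otimes_\mathfrak{R} E')) \ge d_k\]
for $r \in \set{0,1}$. Because $E'$ is Artinian of finite $E$-dimension, Lemma \ref{lem:artcomplete} identifies $\bdrk{k} \hat{\otimes}_{K,\sigma} (\mathfrak{M}\otimes_\mathfrak{R} E')$ with the ordinary tensor product $\bdrk{k} \otimes_{K,\sigma} (\mathfrak{M}\otimes_\mathfrak{R} E')$, and similarly for $\bdr^+$ and $\bdr$ in place of $\bdrk{k}$.

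Next, view $\mathfrak{M}\otimes_\mathfrak{R} E'$ as a finite free $E'$-module with continuous $G_K$-action and apply Proposition \ref{prop:bdrdimh0h1} with its role of $\mathfrak{R}$ played by $E'$. Writing $d(B) = \dim_{E'} H^r(G_K,B \otimes_{K,\sigma} (\mathfrak{M}\otimes_\mathfrak{R} E'))$ for $r \in \set{0,1}$ (which is independent of $r \in \set{0,1}$ by (\ref{eqn:beq})), the inequalities (\ref{eqn:dimeq}) yield
\[d(\bdr) \ge d(\bdr^+) \ge d(\bdrk{k}) \ge d_k,\]
where the final inequality is the bound from Theorem \ref{thm:dr1cocycle} transported through Lemma \ref{lem:artcomplete}. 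Since the equality of $H^0$ and $H^1$ dimensions holds separately for each of $\bdr$, $\bdr^+$, and $\bdrk{k}$, this directly gives the claimed chain of inequalities for each $r \in \set{0,1}$.

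There is no real obstacle here: the only nontrivial input is the identification of $\bdrk{k} \otimes_{K,\sigma}$ with $\bdrk{k} \hat{\otimes}_{K,\sigma}$ over $E'$ via Lemma \ref{lem:artcomplete}, which is what allows us to combine Theorem \ref{thm:dr1cocycle} (phrased with completed tensor products) with Proposition \ref{prop:bdrdimh0h1} (phrased with ordinary tensor products). Once that compatibility is in hand, the proposition follows from chaining the two results.
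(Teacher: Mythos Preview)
Your proof is correct and follows essentially the same approach as the paper: apply Theorem \ref{thm:dr1cocycle} to obtain the bound for $\bdrk{k}$-periods, then invoke Proposition \ref{prop:bdrdimh0h1} to pass to $\bdr^+$ and $\bdr$. Your explicit mention of Lemma \ref{lem:artcomplete} to reconcile completed and ordinary tensor products over the finite-dimensional $E'$ is a detail the paper leaves implicit, but otherwise the arguments are identical.
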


\begin{proof}[Proof of Propositions \ref{prop:anyspecial} and \ref{prop:drspecial}]
Proposition \ref{prop:anyspecial} follows from Theorem \ref{thm:drcase} and Proposition \ref{prop:bdrdimh0h1}.  For Proposition \ref{prop:drspecial}, we apply Theorem \ref{thm:dr1cocycle} to the specialization $\mathfrak{R} \rightarrow E'$ and use Proposition \ref{prop:bdrdimh0h1}.
\end{proof}

\subsection{Essentially self-dual and decomposable specializations} \label{subsec:esd}

We study specializations $V$ that are essentially self-dual in the sense that $V \cong V^\vee(s)$ for some $s$. Examples of such representations include polarizable representations with trivial complex conjugation.  In the Hodge-Tate case, we will be able to recover $2m$ periods of weight $w$ of $V$ from just $m$ fixed periods of weight $w$ in a family, except for when the weight $w$ satisfies $w = s-w$.

\begin{thm} \label{thm:vcp}
Maintain the notation and hypotheses of Theorem \ref{thm:htcase}.  Assume that a morphism of $E$-Banach algebras $\xi:\mathfrak{R} \rightarrow E'$ is given, where $E'$ is a finite extension of $E$.  Let $V=\mathfrak{M} \otimes_\mathfrak{R} E'$, and suppose that there is an isomorphism $V \cong V^\vee(s)$ for some $s \in \mathbf{Z}$.  Then for $r\in\set{0,1}$,
\begin{equation} \label{eqn:vcp} \dim_{E'}H^r(G_K,\mathbf{C}_p\otimes_{K,\sigma} V)+\dim_{E'}H^r(G_K,\mathbf{C}_p(-s)\otimes_{K,\sigma} V) \ge 2m.\end{equation}
\end{thm}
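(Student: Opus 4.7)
My plan is to derive Theorem~\ref{thm:vcp} by applying Theorem~\ref{thm:dr1cocycle} with $k = 1$ (where no affinoid hypothesis on $\mathfrak{R}$ is required) to both $\mathfrak{M}$ and to the dual family $\mathfrak{M}^\vee$, and then to convert the second application into a lower bound on $\dim_{E'} H^r(G_K, \mathbf{C}_p(-s) \otimes_{K,\sigma} V)$ using the essentially self-dual isomorphism $V \cong V^\vee(s)$. A pleasant feature of this approach is that no case distinction on whether $\xi(Q_\sigma(0))$ is a unit in $E'$ is required, because Theorem~\ref{thm:dr1cocycle} already supplies an unconditional lower bound of $d_1 = m$ Hodge-Tate periods at any geometric specialization.

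The key preliminary step is to check that the hypotheses of Theorem~\ref{thm:htcase} for $\mathfrak{M}$ and the integer $m$ also hold for $\mathfrak{M}^\vee$ with the same $m$ and the same stably dense collection $\set{\xi_i}$. The Sen polynomial of $\mathfrak{M}^\vee$ is, up to sign, $P_\sigma(-T)$, which factors as $T^m \tilde{Q}(T)$ with $\tilde{Q}(0) = \pm Q_\sigma(0)$, so the required factorization and condition~(ii) carry over. For condition~(i), after possibly extending $E$ and $K$ so that Proposition~\ref{prop:wstarprop} applies, the Sen module of $\mathfrak{M}^\vee \otimes_\mathfrak{R} \mathfrak{R}_i$ identifies with the $\mathfrak{R}_i$-linear dual of the Sen module of $\mathfrak{M} \otimes_\mathfrak{R} \mathfrak{R}_i$, with Sen operator given by the negated adjoint. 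Since $\dim_E \ker \phi = \dim_E \ker \phi^\ast$ for any $E$-linear endomorphism $\phi$ of a finite-dimensional $E$-vector space (by rank-nullity applied to $\phi$ and its transpose), condition~(i) for $\mathfrak{M}^\vee$ matches that for $\mathfrak{M}$. Condition~(iii) is unchanged, and $\mathfrak{M}^\vee \otimes_\mathfrak{R} E' \cong V^\vee$ because $\mathfrak{M}$ is finite free over $\mathfrak{R}$.

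Applying Theorem~\ref{thm:dr1cocycle} with $k = 1$ to each of $\mathfrak{M}$ and $\mathfrak{M}^\vee$ at the specialization $\xi$ yields
\[\dim_{E'} H^r(G_K, \mathbf{C}_p \otimes_{K,\sigma} V) \ge m \quad \text{and} \quad \dim_{E'} H^r(G_K, \mathbf{C}_p \otimes_{K,\sigma} V^\vee) \ge m\]
for $r \in \set{0,1}$. The essential self-duality $V \cong V^\vee(s)$ gives $V^\vee \cong V(-s)$ and thus a $G_K$-equivariant isomorphism $\mathbf{C}_p \otimes_{K,\sigma} V^\vee \cong \mathbf{C}_p(-s) \otimes_{K,\sigma} V$. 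Substituting this identification and summing the two inequalities yields the desired bound~(\ref{eqn:vcp}). I expect the principal technicality to be the identification of the Sen operator on $\mathfrak{M}^\vee$ with the negated adjoint of the Sen operator on $\mathfrak{M}$; this should follow by unwinding the construction of the Sen operator from the $\gamma$-action on $(\mathbf{C}_p \hat{\otimes}_{K,\sigma} \mathfrak{M}^\vee)^{H_K}$ and comparing with the natural duality pairing between this module and $(\mathbf{C}_p \hat{\otimes}_{K,\sigma} \mathfrak{M})^{H_K}$.
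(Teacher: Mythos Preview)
Your approach is correct and essentially the same as the paper's: both dualize, verify that the hypotheses of Theorem~\ref{thm:htcase} transfer to $\mathfrak{M}^\vee$ (via the Sen operator on the dual being the negated adjoint), apply Theorem~\ref{thm:dr1cocycle} with $k=1$, and then use $V^\vee \cong V(-s)$ to rewrite the second term. The only cosmetic difference is that the paper applies Theorem~\ref{thm:dr1cocycle} once to the direct sum $\mathfrak{M} \oplus \mathfrak{M}^\vee$ (with $d_1 = 2m$) rather than twice separately; your version in fact yields the slightly sharper information that each summand in~(\ref{eqn:vcp}) is individually at least $m$.
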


\begin{proof}
The module $\mathfrak{M} \oplus \mathfrak{M}^\vee$ satisfies the same hypotheses as $\mathfrak{M}$.  We apply Theorem \ref{thm:dr1cocycle} (with $k=1$) to this module and the map $\xi$ to derive the conclusion.
\end{proof}

To study the de Rham case, we will impose stronger hypotheses.  We will be interested in specializations that decompose as an extension of a representation $V$ by its twisted dual $V^\vee(s)$ or vice-versa, for $s \ge 1$.  We will also need to know the determinant of $V$.

\begin{thm} \label{thm:vhtdr}

Maintain the hypotheses of Theorem \ref{thm:drcase}.  Assume that $\rank_\mathfrak{R} \mathfrak{M} = 2d_k$.  Let $\xi:\mathfrak{R} \rightarrow E'$ be a map of $E$-Banach algebras, with $E'$ is a finite extension of $E$.  Suppose that there is a short exact sequence
\begin{align}\label{eqn:vexact1} &\ses{V}{\mathfrak{M} \otimes_\mathfrak{R} E'}{V^\vee(s)}\\
\label{eqn:vexact2}\textrm{or}\quad &\ses{V^\vee(s)}{\mathfrak{M} \otimes_\mathfrak{R} E'}{V},\end{align}
where $s\in \mathbf{Z}_{\ge 1}$ and $V$ is an $E'$-vector space equipped with a continuous action of $G_K$.  Assume in addition that $\det(\mathbf{C}_p \otimes_{K,\sigma} V) \cong \mathbf{C}_p\(-\sum_i w_{i,\sigma}\) \otimes_{K,\sigma} E',$ where $\mathbf{C}_p \otimes_{K,\sigma} V$ is viewed as a module over $\mathbf{C}_p \otimes_{K,\sigma} E'$.  For $r \in \set{0,1}$, we have
\begin{equation} \label{eqn:vdr}  \dim_{E'}H^r(G_K,\bdr^+\otimes_{K,\sigma} V)=d_k,\quad  \dim_{E'}H^r(G_K,\bdr\otimes_{K,\sigma} V)=d_k,\end{equation}
and, in the case of (\ref{eqn:vexact2}),
\begin{equation} \label{eqn:mdr} \dim_{E'}H^r(G_K,\bdr\otimes_{K,\sigma} (\mathfrak{M}\otimes_\mathfrak{R} E'))=2d_k.\end{equation}

\end{thm}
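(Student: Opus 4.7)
The plan is to combine Theorem \ref{thm:dr1cocycle} with a Sen-theoretic rigidity argument that, using the determinant hypothesis together with $s \ge 1$, pins down the Sen eigenvalues of $V$ exactly, so that $V^\vee(s)$ has no ``low-weight'' periods and the $d_k$ bounded periods of $\mathfrak{M} \otimes_\mathfrak{R} E'$ all come from the $V$-component.  In detail, since $V^\vee(s)$ is a subobject (resp.\ quotient) of $\mathfrak{M} \otimes_\mathfrak{R} E'$ and $V$ the complementary quotient (resp.\ subobject), the characteristic polynomial of the Sen operator on $\mathfrak{M} \otimes_\mathfrak{R} E'$ factors as the product of those on $V$ and on $V^\vee(s)$, and the Sen eigenvalues of $V^\vee(s)$ are $\set{s - \lambda : \lambda \text{ a Sen eigenvalue of } V}$.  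In particular, the multiset of Sen eigenvalues of $\mathfrak{M} \otimes_\mathfrak{R} E'$ is invariant under $\lambda \mapsto s - \lambda$, and by Proposition \ref{prop:wstarprop} it coincides with the roots of $\xi(S_\sigma Q_\sigma)$.  The roots $\set{-w_{j,\sigma}}_{j=1}^n$ of $\xi(S_\sigma)$ are nonpositive, so their pairs $\set{s + w_{j,\sigma}} \ge 1$ must appear as roots of $\xi(Q_\sigma)$; combined with $\deg P_\sigma = 2d_k$, this forces $n = d_k$ and pins down the full multiset as $\set{-w_{j,\sigma}}_{j=1}^{d_k} \cup \set{s + w_{j,\sigma}}_{j=1}^{d_k}$.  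The determinant hypothesis (trace of Sen on $V$ equal to $-\sum w_{j,\sigma}$) then forces the partition uniquely: any swap replacing some $-w_{j,\sigma}$ by $s + w_{j,\sigma}$ in $V$'s eigenvalues strictly increases the trace by $s + 2w_{j,\sigma} \ge 1$.  Hence $V$ has Sen eigenvalues $\set{-w_{j,\sigma}}_{j=1}^{d_k}$ (Hodge--Tate weights in $[0, k-1]$) and $V^\vee(s)$ has Sen eigenvalues $\set{s + w_{j,\sigma}}_{j=1}^{d_k}$ (all $\ge 1$).

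Since no Sen eigenvalue of $V^\vee(s)$ lies in $\set{0, -1, \ldots, -(k-1)}$, iterating the six-term exact sequence attached to (\ref{eqn:mainexact}) as in the proof of Theorem \ref{thm:drcase} yields $H^r(G_K, \bdrk{k} \otimes_{K,\sigma} V^\vee(s)) = 0$ for $r \in \set{0, 1}$.  Applying Theorem \ref{thm:dr1cocycle} to $\mathfrak{M}$ and $\xi$ gives $\dim_{E'} H^r(G_K, \bdrk{k} \otimes_{K,\sigma} (\mathfrak{M} \otimes_\mathfrak{R} E')) \ge d_k$, and the long exact sequence attached to (\ref{eqn:vexact1}) or (\ref{eqn:vexact2}) then induces isomorphisms $H^r(G_K, \bdrk{k} \otimes_{K,\sigma} V) \cong H^r(G_K, \bdrk{k} \otimes_{K,\sigma} (\mathfrak{M} \otimes_\mathfrak{R} E'))$.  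Combined with the general bound $\dim_{E'} H^0(G_K, \bdrk{k} \otimes_{K,\sigma} V) \le \dim_{E'} V = d_k$, this yields equality, and Proposition \ref{prop:bdrdimh0h1} handles $H^1$.  Since all Hodge--Tate weights of $V$ lie in $[0, k-1]$, the parallel vanishing $H^r(G_K, t^k \bdr^+ \otimes_{K,\sigma} V) = 0$ (the Sen eigenvalues of $V(k)$ being all $\ge 1$) together with the long exact sequence for $0 \rightarrow t^k \bdr^+ \rightarrow \bdr^+ \rightarrow \bdrk{k} \rightarrow 0$ gives $H^r(G_K, \bdr^+ \otimes_{K,\sigma} V) \cong H^r(G_K, \bdrk{k} \otimes_{K,\sigma} V)$, and further $H^0(G_K, \bdr \otimes_{K,\sigma} V) = H^0(G_K, \bdr^+ \otimes_{K,\sigma} V)$ since all HT weights of $V$ are nonnegative.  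With Proposition \ref{prop:bdrdimh0h1} for $H^1$, this proves (\ref{eqn:vdr}).

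For (\ref{eqn:mdr}) in case (\ref{eqn:vexact2}), the long exact sequence with $\bdr^+$-coefficients, combined with the $V^\vee(s)$-vanishing, yields an isomorphism $H^0(G_K, \bdr^+ \otimes_{K,\sigma} (\mathfrak{M} \otimes_\mathfrak{R} E')) \iso H^0(G_K, \bdr^+ \otimes_{K,\sigma} V) = H^0(G_K, \bdr \otimes_{K,\sigma} V)$, exhibiting a lift of every $\bdr$-period of $V$ to an element of $H^0(G_K, \bdr \otimes_{K,\sigma} (\mathfrak{M} \otimes_\mathfrak{R} E'))$.  On the other hand, $V^\vee(s)$ is de Rham since $V$ is, so $\dim_{E'} H^0(G_K, \bdr \otimes_{K,\sigma} V^\vee(s)) = d_k$, and the inclusion $V^\vee(s) \hookrightarrow \mathfrak{M} \otimes_\mathfrak{R} E'$ induces an injection $H^0(G_K, \bdr \otimes_{K,\sigma} V^\vee(s)) \hookrightarrow H^0(G_K, \bdr \otimes_{K,\sigma} (\mathfrak{M} \otimes_\mathfrak{R} E'))$.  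These two $d_k$-dimensional subspaces of $H^0(G_K, \bdr \otimes_{K,\sigma} (\mathfrak{M} \otimes_\mathfrak{R} E'))$ intersect trivially: the first vanishes under the quotient to $V$, while the second maps isomorphically onto $H^0(G_K, \bdr \otimes_{K,\sigma} V)$.  Hence $\dim_{E'} H^0(G_K, \bdr \otimes_{K,\sigma} (\mathfrak{M} \otimes_\mathfrak{R} E')) \ge 2d_k$, with the reverse inequality being the general bound; $H^1$ is handled by Proposition \ref{prop:bdrdimh0h1}.  The main obstacle is the Sen rigidity step --- uniquely identifying the Sen eigenvalues of $V$ from the trace constraint imposed by the determinant hypothesis --- on which all subsequent arguments depend.
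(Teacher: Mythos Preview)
Your approach is essentially the paper's: both pin down the Sen polynomial of $V$ via the determinant hypothesis (your trace-minimization argument makes this step more explicit than the paper's one-line assertion), deduce vanishing of the relevant periods for $V^\vee(s)$, pull a lower bound on periods of $\mathfrak{M}\otimes_\mathfrak{R} E'$ from the interpolation theory, and transfer to $V$ via the long exact sequence.  The paper works directly with $\bdr^+$ through Proposition~\ref{prop:nicespecial}, whereas you route through $\bdrk{k}$ first and then upgrade; for (\ref{eqn:mdr}) the paper simply cites closure of de Rham representations under extensions, while your explicit splitting argument still relies on closure under duals and Tate twists.

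There is one technical slip worth fixing.  You invoke Theorem~\ref{thm:dr1cocycle}, which for $k\ge 2$ requires $\mathfrak{R}$ to be $E$-affinoid --- a hypothesis not present in Theorem~\ref{thm:vhtdr} (which inherits only the Noetherian $E$-Banach setting of Theorem~\ref{thm:drcase}).  Your own Sen analysis already supplies the remedy: once you know the roots of $\xi(Q_\sigma)$ are $\{s+w_{j,\sigma}\}_{j}$, all $\ge 1$, it follows that $\xi(Q_\sigma(-j))\ne 0$ for every $j\ge 0$, so $\xi(Q_{\sigma,k})$ is a unit in $E'$ for all $k$.  You can then invoke Proposition~\ref{prop:nicespecial} (as the paper does) or Theorem~\ref{thm:drcase}(a),(b) directly, both of which hold over an arbitrary Noetherian $E$-Banach algebra and give the exact dimension $d_k$ rather than merely a lower bound.
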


\begin{proof}

With respect to the fixed $\sigma: K \rightarrow E$, let $P_{\sigma,V}(T) \in E'[T]$ denote the $\sigma$-factor of the Sen polynomial of $V$.  We will use the subscript $E'$ to denote the specialization of an element of $\mathfrak{R}[T]$ to $E'$.  Then from (\ref{eqn:vexact1}) or (\ref{eqn:vexact2}), the $\sigma$-factor of the Sen polynomial of $\mathfrak{M} \otimes_\mathfrak{R} E'$ satisfies $P_{\sigma,E'}(T) = S_{\sigma,E'}(T)Q_{\sigma,E'}(T) = P_{\sigma,V}(T)P_{\sigma,V}(-s-T).$  Since the $w_{i,\sigma}$ are nonnegative, $w_{i,\sigma} \ne -s-w_{i',\sigma}$ for any $i,i'$.  The zeros of $P_{\sigma,V}(T)P_{\sigma,V}(-s-T)$ must be the multiset $\coprod_j\set{w_{i,\sigma},-s-w_{i,\sigma}},$ so we must have $Q_{\sigma,E'}(T) = S_{\sigma,E'}(-s-T)$.

The determinant condition ensures that $P_{\sigma,V}(T)=S_{\sigma,E'}(T)$ and $P_{\sigma,V}(-s-T)=Q_{\sigma,E'}(T)$.  For any vector space $V'$ over $E'$ equipped with a linear $G_K$-action,
\[\dim_{E'} H^0(G_K, \bdr^+ \otimes_{K,\sigma} V') \le \dim_{E'} H^0(G_K, \bht^+ \otimes_{K,\sigma} V').\]
From this and our identification of $P_{\sigma,V}(-s-T)$, $\dim_{E'} H^0(G_K, \bdr^+ \otimes_{K,\sigma} V^\vee(s)) = 0$.  By Proposition \ref{prop:nicespecial}, $\dim_{E'} H^0(G_K, \bdr^+ \otimes_{K,\sigma} (\mathfrak{M}\otimes_\mathfrak{R} E'))\ge d_k.$  Tensoring (\ref{eqn:vexact1}) or (\ref{eqn:vexact2}) with $\bdr^+$ and using the exact sequence in $G_K$-cohomology, if $V$ is a submodule, we are done.  If it is a quotient, we have an exact sequence
\[0 \rightarrow  H^0(G_K, \bdr^+ \otimes_{K,\sigma} (\mathfrak{M}\otimes_\mathfrak{R} E')) \rightarrow H^0(G_K, \bdr^+ \otimes_{K,\sigma} V) \rightarrow H^1(G_K, \bdr^+ \otimes_{K,\sigma} V^\vee(s)).\]
By Proposition \ref{prop:bdrdimh0h1}, the last entry in the exact sequence vanishes, giving $\dim_{E'}H^0(\bdr^+\otimes_{K,\sigma} V)= d_k$ and thus (\ref{eqn:vdr}).  For the case (\ref{eqn:vexact2}), we obtain (\ref{eqn:mdr}) from \cite[Example 6.3.6]{bcon} and Proposition \ref{prop:bdrdimh0h1}.
\end{proof}

\section{Stratification by de Rham data} \label{sec:stratfam}

Given a reduced rigid space $\mathfrak{X}$ with a $G_K$-representation on a coherent locally free sheaf $\mathfrak{M}$ of $\mathfrak{O}_\mathfrak{X}$-modules, we are now interested in constructing closed subvarieties $\mathfrak{S}_\mathbf{D} \subseteq \mathfrak{X}$ indexed by \emph{de Rham data} $\mathbf{D}$.  Such a datum consists of lower bounds for the dimensions of bounded de Rham periods in all intervals of weights at geometric specializations of $\mathfrak{S}_\mathbf{D}$.  Moreover, we want to decompose the entirety of $\mathfrak{X}$ into locally closed strata over which sheaves of bounded de Rham periods in a fixed range are locally free.  In the terminology of this section, Section \ref{sec:nonreducedbase} is a study of the interpolation and specialization properties of a \emph{full} de Rham datum for possibly non-reduced bases.

We will make use of the methodology of the previous sections.  Our approach is as follows.  We define the notion of a de Rham datum in Section \ref{subsec:drdatumdef} and study their existence, specialization, and interpolation on integral affinoid algebras $\mathfrak{R}$ in Section \ref{subsec:drdatumatt}.  In Section \ref{subsec:global}, we do the same for a reduced rigid space $\mathfrak{X}$.  In Section \ref{subsec:stratfam}, we state and prove the main theorems using the previous results.

\subsection{Definition of a de Rham datum} \label{subsec:drdatumdef}

We describe all of the terminology we will use to describe a de Rham datum.

We define a \emph{de Rham datum} $\mathbf{D} = (\Omega,\Delta)$ as a tuple consisting of functions $\Omega: \mathbf{Z} \rightarrow \mathbf{Z}_{\ge 0}$ and $\Delta: \mathbf{Z} \times \mathbf{Z} \rightarrow \mathbf{Z}_{\ge 0}$ such that
\begin{enumerate}[{\normalfont (i)}]
	\item the function $\Omega$ is finitely supported,
	\item we have $\Delta(i,j)= 0$ if $i \ge j$,
	\item we have $\min(\Omega(i),1) \le \Delta(i,i+1) \le \Omega(i)$ for $i \in \mathbf{Z}$, and
	\item we have the inequality $\max(\Delta(i,j),\Delta(j,k)) \le \Delta(i,k) \le \Delta(i,j) + \Delta(j,k)$ for all $i \le j \le k \in \mathbf{Z}$.
\end{enumerate}

\begin{remark}

One should think of $\Omega(i)$ as keeping track of the number of Hodge-Tate-Sen weights equal to $i$ and $\Delta(i,j)$ as keeping track of the dimension of de Rham periods in the weight range $[i,j)$ of a representation.  The conditions (i)-(iv) encode the relationships among these values implied by properties of the Sen operator and the structure of $\bdr$.

\end{remark}

We say that $\mathbf{D}$ is \emph{full} if $\Delta(i,i+1) = \Omega(i)$ and $\Delta(i,k) = \Delta(i,j) + \Delta(j,k)$ for all $i \le j\le k \in \mathbf{Z}$.

We say that $\mathbf{D}$ is a \emph{Hodge-Tate datum} if $\max(\Delta(i,j),\Delta(j,k)) = \Delta(i,k)$ for all $i\le j\le k \in \mathbf{Z}$.  We say that $\mathbf{D}$ is a \emph{Sen datum} if $\Delta(i,i+1) = \min(\Omega(i),1)$ and it is Hodge-Tate.  For a de Rham datum $\mathbf{D}$, we define its \emph{associated Hodge-Tate datum} $\mathrm{HT}(\mathbf{D}) = (\Omega_\mathrm{HT},\Delta_\mathrm{HT})$ by $\Omega_\mathrm{HT} = \Omega$ and $\Delta_\mathrm{HT}(i,j) = \max_{i \le k < j} \Delta(k,k+1)$ and its \emph{associated Sen datum} $\mathrm{Sen}(\mathbf{D}) = (\Omega_\mathrm{Sen},\Delta_\mathrm{Sen})$ by $\Omega_\mathrm{Sen} = \Omega$ and $\Delta_\mathrm{Sen}(i,j) = \max_{i \le k < j} \min(\Omega(k),1)$.

\begin{remark}

A Sen datum is determined by $\Omega$, which as mentioned above keeps track of the Hodge-Tate-Sen weights, and a Hodge-Tate datum is determined by $\Delta(i,i+1)$ for $i \in \mathbf{Z}$, which keeps track of the Hodge-Tate weights.

\end{remark}

For a de Rham datum $\mathbf{D}$, we define
\begin{itemize}
	\item the \emph{Sen dimension} by $\sd(\mathbf{D}) = \sum_{i \in \mathbf{Z}} \Omega(i)$,
	\item the \emph{Hodge-Tate dimension} by $\htd(\mathbf{D}) = \sum_{i \in \mathbf{Z}} \Delta(i,i+1)$ and its bounded variant $\htd^{(k,\ell)}(\mathbf{D}) = \sum_{i=k}^{\ell-1} \Delta(i,i+1)$, and
	\item the \emph{de Rham dimension} by $\drd(\mathbf{D}) = \max_{i,j \in \mathbf{Z}}\Delta(i,j)$.
\end{itemize}

\begin{remark}

The sums defining the first two are finite by conditions (i) and (iv), and satisfy $\htd(\mathbf{D}) \le \sd(\mathbf{D})$.  Observe that for any $i,j$, $\Delta(i,j) \le \sum_{\nu=i}^{j-1} \Delta(i,i+1)$ by (iv), so $\drd(\mathbf{D}) \le \htd(\mathbf{D})$.

\end{remark}

We define the $n^\textrm{th}$ twist of $\mathbf{D} = (\Omega,\Delta)$ by $\mathbf{D}(n)=(\Omega',\Delta')$, where $\Omega'(i) = \Omega(i+n)$ and $\Delta'(i,j) = \Delta(i+n,j+n)$.

We define the \emph{lower and upper bounds} of $\mathbf{D}$ by
\[L(\mathbf{D}) = \min\set{i:\Omega(i)>0}\textrm{  and  }U(\mathbf{D}) = \max\set{i:\Omega(i)>0},\]
respectively.  By conditions (ii), (iii), and (iv), $\mathbf{D}$ is determined by $\Omega|_{[L(\mathbf{D}),U(\mathbf{D})]}$ and $\Delta(i,j)$ for $L(\mathbf{D}) \le i < j \le  U(\mathbf{D})+1$.  We write $\supp(\mathbf{D}) = [L(\mathbf{D}),U(\mathbf{D})] \subseteq \mathbf{Z}$ for the \emph{support} of $\mathbf{D}$.

We define a various notions of ordering on de Rham data $\mathbf{D} = (\Omega,\Delta)$ and $\mathbf{D}' = (\Omega',\Delta')$ as follows.  The definition of $\le$ is the most important, but $<_\mathrm{min}^{[i,j]}$ will be used in Theorem \ref{thm:stratfam2} to study flatness of period sheaves.
\begin{itemize}
	\item We write $\mathbf{D}\le \mathbf{D}'$ if $\Omega(i) \le \Omega'(i)$ and $\Delta(i,j) \le \Delta'(i,j)$ for $i,j \in \mathbf{Z}$.
	\item We write $\mathbf{D} < \mathbf{D}'$ if $\mathbf{D} \le \mathbf{D}'$ but not $\mathbf{D}' \le \mathbf{D}$.
	\item We write $\mathbf{D} <^{[i,j]} \mathbf{D}'$ if $\mathbf{D} < \mathbf{D}'$ with $\supp(\mathbf{D}') \subseteq [i,j]$, and write $\mathbf{D} <_\mathrm{min}^{[i,j]} \mathbf{D}'$ if there is additionally no de Rham datum $\mathbf{D}''$ with $\mathbf{D} <^{[i,j]} \mathbf{D}'' <^{[i,j]} \mathbf{D}'$.
\end{itemize}
We also define the $[i,j]$-\emph{truncation} $\mathbf{D}^{[i,j]} = (\Omega^{[i,j]},\Delta^{[i,j]})$ of a de Rham datum $\mathbf{D}$ by $\Omega^{[i,j]} = \chi_{[i,j]}\Omega$ and $\Delta^{[i,j]}(k,\ell)=\Delta(\max(k,i),\min(\ell,j+1))$, where $\chi_{[i,j]}$ is the characteristic function of $[i,j]$.  Note that $\mathbf{D}^{[i,j]}$ is a de Rham datum and $\mathbf{D}^{[i,j]} \le \mathbf{D}$.

\subsection{The de Rham datum of a family} \label{subsec:drdatumatt}

We attach a de Rham datum to any $G_K$-representation on a free module over an integral affinoid $E$-algebra, and study its specialization and interpolation properties.

\begin{remark}

The properties of affinoid $E$-algebras $\mathfrak{R}$ we will need are that they are Jacobson, Noetherian, compatible with analytic field extension, and that their residue fields at maximal points are finite extensions of $E$.

\end{remark}

We will need the following.

\begin{lem}\label{lem:bdrkgeneric}

Maintain the notation of Section \ref{subsec:senkisin}. If $\mathfrak{R}$ is an integral Noetherian $E$-Banach algebra with fraction field $L$, then for any $k < \ell \in \mathbf{Z}$,
\begin{equation} \label{eqn:bdrkgeneric} \dim_L H^0(G_K,t^k\bdrk{\ell} \hat{\otimes}_{K,\sigma} \mathfrak{M}) \otimes_\mathfrak{R} L = \dim_L H^1(G_K,t^k\bdrk{\ell} \hat{\otimes}_{K,\sigma} \mathfrak{M}) \otimes_\mathfrak{R} L.\end{equation}

\end{lem}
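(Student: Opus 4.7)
The plan is to induct on $d = \ell - k$, using the short exact sequence (\ref{eqn:bdrexact1}) relating $t^k\bdrk{\ell+1}$, $\mathbf{C}_p(\ell)$, and $t^k\bdrk{\ell}$, and exploiting flatness of $L$ over $\mathfrak{R}$ to convert the resulting $6$-term cohomology sequence into an alternating-sum identity of $L$-dimensions.

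For the base case $d = 1$ we have $t^k\bdrk{k+1} \cong \mathbf{C}_p(k)$. I would first apply Lemma \ref{lem:changeek} to pass to finite Galois extensions $E',K'$ with $E'$ containing the normal closure of $K'$, and with $K'$ large enough that Proposition \ref{prop:wstarprop} and Corollary \ref{coro:sencoh} apply to $\mathfrak{M}(k)$, identifying $H^0(G_K,\mathbf{C}_p(k)\hat{\otimes}_{K,\sigma}\mathfrak{M})$ and $H^1(G_K,\mathbf{C}_p(k)\hat{\otimes}_{K,\sigma}\mathfrak{M})$ with the kernel and cokernel of the Sen operator $\phi$ on a finite free $\mathfrak{R}$-module $\mathfrak{E}$. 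Then Lemma \ref{lem:niceend}, applied with the flat $\mathfrak{R}$-module $L$, identifies these cohomology groups after base change with the kernel and cokernel of $\phi \otimes_\mathfrak{R} L$, an endomorphism of a finite-dimensional $L$-vector space; rank-nullity then finishes the base case.

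For the inductive step with $\ell = k + d$, the short exact sequence (\ref{eqn:bdrexact1}) together with the obvious generalization of Claim \ref{claim:hkvanish} produces, via the snake lemma argument behind (\ref{eqn:exactsnake}), a $6$-term exact sequence of $\mathfrak{R}$-modules in $G_K$-cohomology. Tensoring with the flat $\mathfrak{R}$-module $L$ preserves exactness, so the alternating sum of $L$-dimensions vanishes. The base case applied to $\mathbf{C}_p(\ell) = t^\ell\bdrk{\ell+1}$ and the inductive hypothesis applied to $t^k\bdrk{\ell}$ make the $H^0$ and $H^1$ dimensions agree at two of the three module pairs appearing in the sequence, and the desired equality for $t^k\bdrk{\ell+1}$ follows by cancellation.

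The main obstacle will be cleanly handling the extension of scalars from $E$ to $E'$ in the base case: $\mathfrak{R} \otimes_E E'$ is typically not integral, and its total ring of fractions becomes $L \otimes_E E' \cong \prod_j L_j$, a finite product of finite separable field extensions $L_j/L$. I plan to reduce the lemma over $\mathfrak{R}$ to its analog over each integral quotient of $\mathfrak{R} \otimes_E E'$, whose fraction field is the corresponding $L_j$, using that for an $L$-vector space $V$ one has $\dim_L V = \dim_{L_j}(V \otimes_L L_j)$, so the desired $L$-dimension equality is detected after base change to each $L_j$.
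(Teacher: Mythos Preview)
Your proposal is correct and follows essentially the same approach as the paper: reduce the base case $\ell-k=1$ to the kernel and cokernel of the Sen operator via Corollary \ref{coro:sencoh} (after extending $E$ and $K$, handling the possible loss of integrality of $\mathfrak{R}\otimes_E E'$ by working over each minimal prime, exactly as you outline), and then induct using the $6$-term exact sequence coming from (\ref{eqn:bdrexact1}) tensored with the fraction field. The only organizational difference is that the paper performs the entire induction after the base change (over $\mathfrak{R}_\mathfrak{p}$ for each minimal prime $\mathfrak{p}$ of $\mathfrak{R}\otimes_E E'$), whereas you extend scalars only for the base case and run the induction over the original $\mathfrak{R}$ and $L$; both are valid.
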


\begin{proof}

First assume $\ell-k=1$.  By symmetry we can just assume $k=0$ and $\ell=1$.  As before, using Lemma \ref{lem:changeek}, we may replace $E,K$ with finite Galois extensions $E',K'$ so that Corollary \ref{coro:sencoh} holds.  Extending $E$ might cause $\mathfrak{R}$ to no longer be integral, but if we prove (\ref{eqn:bdrkgeneric}) for $L$ replaced by the localization at each minimal prime of $\mathfrak{R} \otimes_E E'$ the conclusion for $\mathfrak{R}$ follows from (\ref{eqn:edescalc}) and (\ref{eqn:eswitch}).

So assume Corollary \ref{coro:sencoh} holds, but that $\mathfrak{R}$ is only reduced, and let $\mathfrak{p}$ be a minimal prime of $\mathfrak{R}$.  We have $\phi: \mathfrak{E} \rightarrow \mathfrak{E}$ with $\mathfrak{E}$ finitely generated over $\mathfrak{R}$,
\[H^0(G_K,\mathbf{C}_p \hat{\otimes}_{K,\sigma} \mathfrak{M}) \cong \ker\phi, \textrm{ and } H^1(G_K,\mathbf{C}_p \hat{\otimes}_{K,\sigma} \mathfrak{M}) \cong \coker\phi.\]
Localizing at $\mathfrak{p}$, we obtain (\ref{eqn:bdrkgeneric}) with $L = \mathfrak{R}_\mathfrak{p}$ for $\ell-k=1$ by dimension counting.  The equation (\ref{eqn:bdrkgeneric}) for general $k$ follows by induction from the sequence similar to (\ref{eqn:exactsnake}) but instead constructed from (\ref{eqn:bdrexact1}) and localized at $\mathfrak{p}$.
\end{proof}

\begin{prop} \label{prop:drdatumaff}
Suppose $\mathfrak{R}$ is an integral affinoid $E$-algebra with $L = \Frac(\mathfrak{R})$ and $\mathfrak{M}$ is a free $\mathfrak{R}$-module equipped with a continuous $G_K$-action.  Let $P_\sigma(T) \in \mathfrak{R}[T]$ be the $\sigma$-factor of the associated Sen polynomial.  We define $\Omega_\mathfrak{M}(i)=\max\set{n:(T+i)^n|P_\sigma(T)}$ and
\[\Delta(i,j) = \dim_L H^0(G_K,t^i\bdrk{j} \hat{\otimes}_{K,\sigma} \mathfrak{M}) \otimes_\mathfrak{R} L\]
for $i, j \in\mathbf{Z}$ with $i < j$.  If  $i \ge j$, we set $\Delta_\mathfrak{M}(i,j)=0$.  Then $\mathbf{D}_\mathfrak{M} = (\Omega_\mathfrak{M},\Delta_\mathfrak{M})$ is a de Rham datum.

We have $\mathbf{D}_{\mathfrak{M}(n)} = \mathbf{D}_\mathfrak{M}(n)$ and $\Delta_\mathfrak{M}(i,j) = \dim_L H^1(G_K,t^i\bdrk{j} \hat{\otimes}_{K,\sigma} \mathfrak{M}) \otimes_\mathfrak{R} L$.

\end{prop}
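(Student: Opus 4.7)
The plan is to verify directly the four axioms defining a de Rham datum, then read off the twist compatibility and the $H^1$ identity. Axiom (i) is immediate since $P_\sigma(T) \in \mathfrak{R}[T]$ has finite degree and thus only finitely many integer roots, and axiom (ii) holds by construction. For axiom (iv), given $i \le j \le k$, I would apply the $G_K$-invariants functor to the short exact sequence
\[0 \to t^j \bdrk{k} \hat{\otimes}_{K,\sigma} \mathfrak{M} \to t^i \bdrk{k} \hat{\otimes}_{K,\sigma} \mathfrak{M} \to t^i \bdrk{j} \hat{\otimes}_{K,\sigma} \mathfrak{M} \to 0,\]
whose exactness follows by iteratively invoking Remark \ref{remark:hkvanish} to produce a continuous additive section before completing the tensor product, and then tensor with the flat $\mathfrak{R}$-module $L$. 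The resulting three-term left exactness yields $\Delta_\mathfrak{M}(j,k) \le \Delta_\mathfrak{M}(i,k) \le \Delta_\mathfrak{M}(i,j) + \Delta_\mathfrak{M}(j,k)$. The remaining inequality $\Delta_\mathfrak{M}(i,j) \le \Delta_\mathfrak{M}(i,k)$ I would derive from the $G_K$-equivariant isomorphism $t^{k-j}: t^i \bdrk{j} \iso t^{i+k-j} \bdrk{k}$, which yields $\Delta_\mathfrak{M}(i,j) = \Delta_\mathfrak{M}(i+k-j, k)$, and then applying the first inequality once more.

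For axiom (iii), after using the twist compatibility (treated below) to reduce to $i=0$, the claim becomes $\min(\Omega_\mathfrak{M}(0),1) \le \Delta_\mathfrak{M}(0,1) \le \Omega_\mathfrak{M}(0)$. I would enlarge $E$ and $K$ via Lemma \ref{lem:changeek} so that Corollary \ref{coro:sencoh} identifies $H^0(G_K, \mathbf{C}_p \hat{\otimes}_{K,\sigma} \mathfrak{M})$ with $\ker \phi$, where $\phi \in \End_\mathfrak{R} \mathfrak{E}$ has characteristic polynomial $P_\sigma(T)$. Localizing at the generic point of $\mathfrak{R}$ (or, since $\mathfrak{R} \otimes_E E'$ may fail to be a domain, at a chosen minimal prime of $\mathfrak{R} \otimes_E E'$ and then descending via Galois invariants, which preserves $L$-dimensions), the needed inequalities become the standard bounds relating the geometric multiplicity $\dim_L \ker(\phi \otimes_\mathfrak{R} L)$ to the algebraic multiplicity $\Omega_\mathfrak{M}(0)$ of $0$ as a root of the characteristic polynomial, using that $T \mid P_\sigma(T)$ in $\mathfrak{R}[T]$ survives to $L[T]$ since $\mathfrak{R}$ is a domain. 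For the twist identity, multiplication by $t^n$ supplies a $G_K$-equivariant isomorphism $t^i \bdrk{j} \otimes_{\mathbf{Q}_p} \mathbf{Q}_p(n) \iso t^{i+n} \bdrk{j+n}$, immediately giving $\Delta_{\mathfrak{M}(n)}(i,j) = \Delta_\mathfrak{M}(i+n,j+n)$. Since $\mathbf{Q}_p(n)$ has Sen operator equal to $n$ (a direct computation from Proposition \ref{prop:wstarprop}.(b)), the Sen operator of $\mathfrak{M}(n)$ is $\phi_\mathfrak{M} + n$, so $P_\sigma^{\mathfrak{M}(n)}(T) = P_\sigma^\mathfrak{M}(T-n)$, yielding $\Omega_{\mathfrak{M}(n)}(i) = \Omega_\mathfrak{M}(i+n)$.

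The $H^1$ identity is then a direct consequence of Lemma \ref{lem:bdrkgeneric}. The main obstacle in the argument is axiom (iii): it is the only step that genuinely invokes Sen's theory and the only place where one must carefully navigate the enlargement of $E$ and $K$ without losing track of $L$-dimensions, since integrality of $\mathfrak{R}$ need not persist under $\otimes_E E'$. All the other verifications reduce to formal manipulation of the short exact sequences among the $t^i \bdrk{j}$ and of Tate-twist isomorphisms.
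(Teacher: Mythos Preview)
Your argument for axiom (iv) contains a genuine error. Multiplication by $t^{k-j}$ is \emph{not} $G_K$-equivariant: since $g\cdot t = \chi(g)t$, one has $g\cdot(t^{k-j}x) = \chi(g)^{k-j}\,t^{k-j}(g\cdot x)$, so the map $t^{k-j}$ identifies $t^i\bdrk{j}$ with $t^{i+k-j}\bdrk{k}$ only after a cyclotomic twist by $(k-j)$. Consequently the asserted equality $\Delta_\mathfrak{M}(i,j) = \Delta_\mathfrak{M}(i+k-j,k)$ is false in general (unwinding the twist just returns you to $\Delta_\mathfrak{M}(i,j)$), and your derivation of $\Delta_\mathfrak{M}(i,j) \le \Delta_\mathfrak{M}(i,k)$ collapses. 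The paper obtains this inequality by using the full six-term exact sequence built from $0 \to t^j\bdrk{k} \to t^i\bdrk{k} \to t^i\bdrk{j} \to 0$ as in (\ref{eqn:exactsnake}); after tensoring with $L$ the surjection $H^1(G_K,t^i\bdrk{k}\hat{\otimes}_{K,\sigma}\mathfrak{M})\otimes_\mathfrak{R} L \twoheadrightarrow H^1(G_K,t^i\bdrk{j}\hat{\otimes}_{K,\sigma}\mathfrak{M})\otimes_\mathfrak{R} L$ together with Lemma~\ref{lem:bdrkgeneric} gives $\Delta_\mathfrak{M}(i,j) \le \Delta_\mathfrak{M}(i,k)$ directly. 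You already have Lemma~\ref{lem:bdrkgeneric} in hand for the final statement, so this repair is cheap.

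Your treatment of axiom (iii) differs from the paper's and is arguably cleaner. The paper specializes to a well-chosen closed point (using generic freeness, the Jacobson property, and the base-change isomorphism of Proposition~\ref{prop:dr1cocycle}) and reads off the inequalities from the classical Sen operator there. You instead work directly at the generic point, identifying $\Delta_\mathfrak{M}(0,1)$ with $\dim_L\ker(\phi\otimes_\mathfrak{R} L)$ via Corollary~\ref{coro:sencoh} and flatness of $L$, and then invoke the standard bound between geometric and algebraic multiplicity of the eigenvalue $0$. This is correct and avoids the affinoid-specific input; the only delicate point, which you flag, is that enlarging $E$ may destroy integrality of $\mathfrak{R}$, and this is handled exactly as in the proof of Lemma~\ref{lem:bdrkgeneric} by passing to a minimal prime of $\mathfrak{R}\otimes_E E'$ and descending.
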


\begin{proof}

The $(T+i)^n$ are coprime for distinct $i$, so condition (i) is clear and (ii) is true by definition.  The modules $H^r(G_K,t^k\bdrk{\ell} \hat{\otimes}_{K,\sigma} \mathfrak{M})$ for $r \in \set{0,1}$ are finitely generated by induction using Proposition \ref{prop:htgap} and the 6-term exact sequences constructed as in (\ref{eqn:exactsnake}) from the sequences (\ref{eqn:bdrexact1}) and (\ref{eqn:bdrexact3}).  Condition (iv) follows from tensoring $L$ with the long exact sequence similar to (\ref{eqn:exactsnake}), but constructed from
\[\ses{t^j\bdrk{k}}{t^i\bdrk{k}}{t^i\bdrk{j}}\]
instead of (\ref{eqn:mainexact}).  The equality $\mathbf{D}_{\mathfrak{M}(n)} = \mathbf{D}_\mathfrak{M}(n)$ is clear.  The last claim follows from Lemma \ref{lem:bdrkgeneric}.

We check condition (iii).  We may let $i=0$ by symmetry.  We write $P_\sigma(T) = S_\sigma(T)Q_\sigma(T)$, where $S_\sigma(T) = \prod_{i \in \mathbf{Z}} (T+i)^{\Omega_\mathfrak{M}(i)}$.  By Proposition \ref{prop:dr1cocycle}, for any maximal ideal $\mathfrak{m}\subseteq \mathfrak{R}$ with residue field $\kappa(\mathfrak{m})$, we have an isomorphism
\[H^1(G_K,\mathbf{C}_p \hat{\otimes}_{K,\sigma} \mathfrak{M}) \otimes_\mathfrak{R} \kappa(\mathfrak{m}) \cong H^1(G_K,\mathbf{C}_p \hat{\otimes}_{K,\sigma} (\mathfrak{M} \otimes_\mathfrak{R} \kappa(\mathfrak{m}))).\]
Since $H^1(G_K,\mathbf{C}_p \hat{\otimes}_{K,\sigma} \mathfrak{M})$ is finitely generated, there exists a nonzero element $f \in \mathfrak{R}$ so that $H^1(G_K,\mathbf{C}_p \hat{\otimes}_{K,\sigma} \mathfrak{M})_f$ is free.  By definition, $Q_\sigma(0) \ne 0$, so by the Jacobson property, $fQ_\sigma(0)$ is nonvanishing in $\kappa(\mathfrak{m})$ for some $\mathfrak{m}$.  By (\ref{eqn:bclocal}) we have an identification
\[H^1(G_K,\mathbf{C}_p \hat{\otimes}_{K,\sigma} \mathfrak{M}) \otimes_\mathfrak{R} \kappa(\mathfrak{m}) \cong H^1(G_K,\mathbf{C}_p \hat{\otimes}_{K,\sigma} \mathfrak{M})_{fQ_\sigma(0)} \otimes_{\mathfrak{R}_{fQ_\sigma(0)}} \kappa(\mathfrak{m}).\]
In particular, $\dim_{\kappa(\mathfrak{m})} H^1(G_K,\mathbf{C}_p \hat{\otimes}_{K,\sigma} (\mathfrak{M} \otimes_\mathfrak{R} \kappa(\mathfrak{m}))) = \Delta_\mathfrak{M}(0,1) \le \Omega_\mathfrak{M}(0)$ by Lemma \ref{lem:dimh0h1} and compatibility of the Sen polynomial with specialization.  If the bound $\Omega_\mathfrak{M}(0) \ge 1$ holds, then $\dim_{\kappa(\mathfrak{m})} H^1(G_K,\mathbf{C}_p \hat{\otimes}_{K,\sigma} (\mathfrak{M} \otimes_\mathfrak{R} \kappa(\mathfrak{m}))) \ge 1$ since $T$ divides the image of $P_\sigma(T)$ in $\kappa(\mathfrak{m})[T]$, so $\Delta_\mathfrak{M}(0,1) \ge 1$.
\end{proof}

We study the behavior of a de Rham datum under specialization.

\begin{prop} \label{prop:drdatumspecialaff}

Suppose that $\mathfrak{R}$, $\mathfrak{M}$, and $\mathbf{D}_\mathfrak{M}$ are as in Proposition \ref{prop:drdatumaff}.  Then for any prime ideal $\mathfrak{p}\subseteq \mathfrak{R}$ with quotient $\mathfrak{R}'=\mathfrak{R}/\mathfrak{p}$, $\mathbf{D}_{\mathfrak{M} \otimes_\mathfrak{R} \mathfrak{R}'} \ge \mathbf{D}_\mathfrak{M}$.

\end{prop}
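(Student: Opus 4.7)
The inequality on the $\Omega$-functions is immediate. By the base-change compatibility of Proposition \ref{prop:wstarprop}(d), the Sen polynomial of $\mathfrak{M}\otimes_\mathfrak{R}\mathfrak{R}'$ is the image $\bar{P}_\sigma(T)\in \mathfrak{R}'[T]$ of $P_\sigma(T)$. The factor $(T+i)^{\Omega_\mathfrak{M}(i)}$ of $P_\sigma$ remains a divisor of $\bar{P}_\sigma$ over $\mathfrak{R}'$, so $\Omega_{\mathfrak{M}\otimes_\mathfrak{R}\mathfrak{R}'}(i)\ge\Omega_\mathfrak{M}(i)$ (the inequality can be strict, since the image $\bar{Q}_\sigma$ of the complementary factor may acquire $-i$ as a root).

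For the $\Delta$-inequality, I first reduce to the case $i=0$. Multiplication by $t^{-i}$ gives a $G_K$-equivariant isomorphism $t^i\bdr^+/t^j\bdr^+\cong\bdrk{j-i}(i)$ (where $(i)$ denotes the $i^{\mathrm{th}}$ Tate twist), so $t^i\bdrk{j}\hat\otimes_{K,\sigma}\mathfrak{M}\cong\bdrk{j-i}\hat\otimes_{K,\sigma}\mathfrak{M}(i)$ as $G_K$-modules, and Tate twisting commutes with $\otimes_\mathfrak{R}\mathfrak{R}'$. Replacing $\mathfrak{M}$ by $\mathfrak{M}(i)$ and setting $k=j-i$, it suffices to establish
\[ \dim_{L'}H^0(G_K,\bdrk{k}\hat\otimes_{K,\sigma}(\mathfrak{M}\otimes_\mathfrak{R}\mathfrak{R}'))\otimes_{\mathfrak{R}'}L' \;\ge\; \dim_L H^0(G_K,\bdrk{k}\hat\otimes_{K,\sigma}\mathfrak{M})\otimes_\mathfrak{R} L. \]
By the last statement of Proposition \ref{prop:drdatumaff}, I may replace $H^0$ by $H^1$ throughout without changing any of the four dimensions.

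The core of the argument compares both sides to a common finite specialization. Set $N=H^1(G_K,\bdrk{k}\hat\otimes_{K,\sigma}\mathfrak{M})$ and $N'=H^1(G_K,\bdrk{k}\hat\otimes_{K,\sigma}(\mathfrak{M}\otimes_\mathfrak{R}\mathfrak{R}'))$, both finitely generated (by the argument used in the proof of Proposition \ref{prop:drdatumaff}). Choose a maximal ideal $\mathfrak{m}'\subseteq \mathfrak{R}'$ in the dense open where the fibre dimension of $N'$ realises its generic rank $\Delta_{\mathfrak{M}\otimes_\mathfrak{R}\mathfrak{R}'}(0,k)$; the residue field $E'=\mathfrak{R}'/\mathfrak{m}'$ is a finite extension of $E$ because $\mathfrak{R}'$ is affinoid. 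Let $\mathfrak{m}\subseteq\mathfrak{R}$ be the preimage of $\mathfrak{m}'$, so $\mathfrak{R}/\mathfrak{m}=E'$ as well. Applying Proposition \ref{prop:dr1cocycle} to $\mathfrak{R}\rightarrow E'$ (finite over the affinoid $\mathfrak{R}$) and separately to $\mathfrak{R}'\rightarrow E'$ (finite over the affinoid $\mathfrak{R}'$) produces natural isomorphisms
\[ N\otimes_\mathfrak{R} E' \;\iso\; H^1(G_K,\bdrk{k}\hat\otimes_{K,\sigma}(\mathfrak{M}\otimes_\mathfrak{R} E')) \;\iso\; N'\otimes_{\mathfrak{R}'}E'. \]
Let $d$ denote this common $E'$-dimension. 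Upper semi-continuity of fibre dimension for a finitely generated module over the Noetherian integral domain $\mathfrak{R}$ gives $d\ge \dim_L N\otimes_\mathfrak{R} L=\Delta_\mathfrak{M}(0,k)$, while the choice of $\mathfrak{m}'$ forces $d=\Delta_{\mathfrak{M}\otimes_\mathfrak{R}\mathfrak{R}'}(0,k)$. Chaining the two gives the desired inequality.

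The only real subtlety is that $\mathfrak{R}\rightarrow\mathfrak{R}'$ is ordinarily not a finite map, so Proposition \ref{prop:dr1cocycle} cannot be applied directly to relate $N$ and $N'$. Routing through a common finite quotient $E'$ of both $\mathfrak{R}$ and $\mathfrak{R}'$ bridges the gap, and choosing $\mathfrak{m}'$ generically ensures that upper semi-continuity contributes an inequality in the direction we need.
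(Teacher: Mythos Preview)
Your argument is correct, but it rests on a false premise that makes it longer than necessary. You write that ``$\mathfrak{R}\to\mathfrak{R}'$ is ordinarily not a finite map, so Proposition~\ref{prop:dr1cocycle} cannot be applied directly.'' In fact $\mathfrak{R}'=\mathfrak{R}/\mathfrak{p}$ is a \emph{quotient} of $\mathfrak{R}$, hence generated by $1$ as an $\mathfrak{R}$-module; it is as finite as a ring extension can be. Proposition~\ref{prop:dr1cocycle} therefore applies directly and gives
\[
N\otimes_\mathfrak{R}\mathfrak{R}'\;\iso\;N',
\]
after which one simply localizes at $\mathfrak{p}$: over the local ring $\mathfrak{R}_\mathfrak{p}$ one has $\dim_\kappa N_\mathfrak{p}\otimes_{\mathfrak{R}_\mathfrak{p}}\kappa=\Delta_{\mathfrak{M}\otimes_\mathfrak{R}\mathfrak{R}'}(k,\ell)$ (with $\kappa=\Frac(\mathfrak{R}')$) and $\dim_L N_\mathfrak{p}\otimes_{\mathfrak{R}_\mathfrak{p}}L=\Delta_\mathfrak{M}(k,\ell)$, and Nakayama's lemma finishes. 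This is exactly the paper's proof.

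Your detour through a closed point $\mathfrak{m}'\subset\mathfrak{R}'$ where $N'$ is free is a valid alternative: it replaces the single application of Nakayama at the non-closed prime $\mathfrak{p}$ by generic freeness on $\mathfrak{R}'$ plus upper semicontinuity on $\mathfrak{R}$. Both routes are ultimately the same Nakayama/semicontinuity mechanism, just organized differently; once you drop the mistaken finiteness worry, the paper's route is the shorter of the two.
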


\begin{proof}

The inequality $\Omega_{\mathfrak{M} \otimes_\mathfrak{R} \mathfrak{R}'} \ge \Omega_{\mathfrak{M}}$ is clear.  For any $k,\ell \in \mathbf{Z}$ we have an isomorphism
\begin{equation} \label{eqn:bciso} H^1(G_K,t^k\bdrk{\ell} \hat{\otimes}_{K,\sigma} \mathfrak{M}) \otimes_\mathfrak{R} \mathfrak{R}' \cong H^1(G_K,t^k\bdrk{\ell} \hat{\otimes}_{K,\sigma} (\mathfrak{M} \otimes_\mathfrak{R} \mathfrak{R}'))\end{equation}
by Proposition \ref{prop:dr1cocycle} (and twisting).  Let $L = \Frac(\mathfrak{R})$ and $\kappa = \Frac(\mathfrak{R}')$.  By (\ref{eqn:bciso}) and (\ref{eqn:bclocal}) we have
\begin{align*} \dim_\kappa H^1(G_K,t^k\bdrk{\ell} \hat{\otimes}_{K,\sigma} \mathfrak{M})_{\mathfrak{p}} \otimes_{\mathfrak{R}_\mathfrak{p}} \kappa &= \Delta_{\mathfrak{M} \otimes_\mathfrak{R} \mathfrak{R}'}(k,\ell)\\ \textrm{and}\quad\dim_L H^1(G_K,t^k\bdrk{\ell} \hat{\otimes}_{K,\sigma} \mathfrak{M})_{\mathfrak{p}} \otimes_{\mathfrak{R}_\mathfrak{p}} L &= \Delta_\mathfrak{M}(k,\ell).\end{align*}
We deduce the inequality $\Delta_{\mathfrak{M} \otimes_\mathfrak{R} \mathfrak{R}'}(k,\ell) \ge \Delta_\mathfrak{M}(k,\ell)$ from Nakayama's lemma.
\end{proof}

We prove a converse to Proposition \ref{prop:drdatumspecialaff}.  Taken together, they form an interpolation result for de Rham data.  If $x \in \spm \mathfrak{R}$ has residue field $\kappa(x)$, we write $\mathbf{D}_{\mathfrak{M},x}$ for $\mathbf{D}_{\mathfrak{M} \otimes_\mathfrak{R} \kappa(x)}$.

\begin{prop} \label{prop:drdatuminterpaff}

Maintain the notation and hypotheses of Proposition \ref{prop:drdatumaff}.  Let $\mathfrak{X} = \spm\mathfrak{R}$, let $\mathbf{D} = (\Omega,\Delta)$ be a de Rham datum, assume that $\Xi \subseteq \mathfrak{X}$ is Zariski dense, and assume that $\mathbf{D}_{\mathfrak{M},x} \ge \mathbf{D}$ for all $x \in \Xi$.  Then $\mathbf{D}_\mathfrak{M} \ge \mathbf{D}$.

\end{prop}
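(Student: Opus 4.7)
The plan is to verify separately that $\Omega_\mathfrak{M}(i) \ge \Omega(i)$ for each $i \in \mathbf{Z}$ and that $\Delta_\mathfrak{M}(k,\ell) \ge \Delta(k,\ell)$ for each $k < \ell \in \mathbf{Z}$, using Zariski density in slightly different ways for each.

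For the Hodge-Tate-Sen component, I would fix $i \in \mathbf{Z}$ and expand the Sen polynomial as $P_\sigma(T) = \sum_{k \ge 0} b_k(T+i)^k$ with $b_k \in \mathfrak{R}$. By Proposition \ref{prop:wstarprop}.(d), this expansion specializes to the corresponding one for the Sen polynomial of $\mathfrak{M} \otimes_\mathfrak{R} \kappa(x)$, and the hypothesis $\Omega_{\mathfrak{M},x}(i) \ge \Omega(i)$ means that $(T+i)^{\Omega(i)}$ divides that polynomial, forcing the image of $b_k$ in $\kappa(x)$ to vanish whenever $k < \Omega(i)$. Hence each such $b_k$ lies in $\bigcap_{x \in \Xi} \mathfrak{m}_x$. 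Because $\mathfrak{R}$ is an integral, hence reduced, Jacobson ring and $\Xi$ is Zariski dense in $\mathfrak{X}$, this intersection is the nilradical, namely $0$; so each $b_k$ with $k < \Omega(i)$ vanishes in $\mathfrak{R}$, giving $(T+i)^{\Omega(i)} \mid P_\sigma(T)$ and thus $\Omega_\mathfrak{M}(i) \ge \Omega(i)$.

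For the de Rham component, I would route everything through $H^1$, where Proposition \ref{prop:dr1cocycle} provides a base-change isomorphism on finite algebras. Fix $k < \ell$ and set $N = H^1(G_K, t^k\bdrk{\ell} \hat{\otimes}_{K,\sigma} \mathfrak{M})$; this is a finitely generated $\mathfrak{R}$-module, as noted in the proof of Proposition \ref{prop:drdatumaff}, and its generic rank equals $\Delta_\mathfrak{M}(k,\ell)$ by the last claim of that proposition. Since $\mathfrak{R}$ is Noetherian and integral, I would pick a nonzero $f \in \mathfrak{R}$ such that $N_f$ is free over $\mathfrak{R}_f$ of rank $\Delta_\mathfrak{M}(k,\ell)$. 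Zariski density of $\Xi$ yields some $x \in \Xi$ with $f(x) \ne 0$; since $\kappa(x)$ is a finite extension of $E$ and hence a finite $\mathfrak{R}$-algebra, the twist of Proposition \ref{prop:dr1cocycle} by $t^k$ identifies
\[N \otimes_\mathfrak{R} \kappa(x) \cong H^1(G_K, t^k\bdrk{\ell} \hat{\otimes}_{K,\sigma} (\mathfrak{M} \otimes_\mathfrak{R} \kappa(x))),\]
whose $\kappa(x)$-dimension is $\Delta_{\mathfrak{M},x}(k,\ell) \ge \Delta(k,\ell)$ by Proposition \ref{prop:drdatumaff} applied over $\kappa(x)$ together with the hypothesis, while at the same time it equals the rank of the free module $N_f$, namely $\Delta_\mathfrak{M}(k,\ell)$. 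Comparing the two expressions gives the required inequality.

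The main obstacle to navigate is that $H^0(G_K, t^k\bdrk{\ell} \hat{\otimes}_{K,\sigma} \mathfrak{M})$ does not in general commute with specialization, so one cannot directly match generic and fiber $H^0$-dimensions along $\Xi$. The trick is to rely on Proposition \ref{prop:dr1cocycle}, which supplies the missing base-change property for $H^1$, together with Lemma \ref{lem:bdrkgeneric} (packaged as the last claim of Proposition \ref{prop:drdatumaff}), which shows that $H^0$ and $H^1$ share the same generic $L$-dimension. Once these two ingredients are in hand, picking $x \in \Xi$ in the locally free locus of $N$ collapses the generic computation to a single-fiber dimension count.
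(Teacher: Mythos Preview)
Your proof is correct and follows essentially the same route as the paper: for $\Omega$ you use Zariski density and the Jacobson property to force the low-order coefficients of $P_\sigma(T+i)$ to vanish, and for $\Delta$ you pass to $H^1$ via Proposition~\ref{prop:dr1cocycle}, pick $f$ so that $H^1(G_K,t^k\bdrk{\ell}\hat{\otimes}_{K,\sigma}\mathfrak{M})_f$ is free, and then evaluate at some $x\in\Xi$ with $f(x)\ne 0$. The only minor point you leave implicit is the identification $N\otimes_\mathfrak{R}\kappa(x)=N_f\otimes_{\mathfrak{R}_f}\kappa(x)$ when $f(x)\ne 0$, which the paper records as (\ref{eqn:bclocal}); with that, your dimension comparison goes through exactly as written.
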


\begin{proof}
To see that $\Omega_\mathfrak{M} \ge \Omega$, write the $\sigma$-factor $P_\sigma(T)$ of the Sen polynomial of $\mathfrak{M}$ as a function of $T+i$ and observe that the images of the lowest $\Omega_\mathfrak{M}(i)$ coefficients vanish in $(\prod_{x \in \Xi} \kappa(x))[T]$.

By generic freeness (and the finite generation from the proof of Proposition \ref{prop:drdatumaff}), there is a nonzero element $f \in \mathfrak{R}$ such that $H^1(G_K,t^k\bdrk{\ell} \hat{\otimes}_{K,\sigma} \mathfrak{M})_f$ is free of rank $\Delta_{\mathfrak{M}}(k,\ell)$.  

By Proposition \ref{prop:dr1cocycle} and (\ref{eqn:bclocal}), for any $x \in \spm\mathfrak{R} \setminus V((f))$, we have a natural isomorphism
\[H^1(G_K,t^k\bdrk{\ell} \hat{\otimes}_{K,\sigma} \mathfrak{M})_f \otimes_{\mathfrak{R}_f} \kappa(x) \iso H^1(G_K,t^k\bdrk{\ell} \hat{\otimes}_{K,\sigma} (\mathfrak{M} \otimes_\mathfrak{R} \kappa(x))\]
By picking $x \in \Xi \cap (\spm\mathfrak{R} \setminus V((f)))$, it follows that $\Delta_{\mathfrak{M}}(k,\ell) = \Delta_{\mathfrak{M},x}(k,\ell) \ge \Delta(k,\ell)$.
\end{proof}

We can deduce the following.

\begin{coro} \label{coro:drdatuminterp}

Maintain the hypotheses of Proposition \ref{prop:drdatumaff}.  Suppose $\xi:\mathfrak{R} \rightarrow \mathfrak{R}'$ is a finite map of integral $E$-affinoid algebras, and let $\mathfrak{M}' = \mathfrak{M}\otimes_\mathfrak{R} \mathfrak{R}'$.  Then $\mathbf{D}_{\mathfrak{M}'} \ge \mathbf{D}_{\mathfrak{M}}$.  If $\Xi \subseteq \spm\mathfrak{R}$ is Zariski-dense in the image of $\xi^*$ and $\mathbf{D}$ is a de Rham datum with $\mathbf{D}_{\mathfrak{M},x} \ge \mathbf{D}$ for $x \in \Xi$, then $\mathbf{D}_{\mathfrak{M}'} \ge \mathbf{D}$.

\end{coro}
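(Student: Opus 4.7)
Let $\mathfrak{p}=\ker\xi$, so that $\xi$ factors as $\mathfrak{R}\twoheadrightarrow \mathfrak{R}/\mathfrak{p}\hookrightarrow \mathfrak{R}'$ and $\spm(\mathfrak{R}/\mathfrak{p})$ is the (irreducible) image of $\xi^*\colon\spm\mathfrak{R}'\to\spm\mathfrak{R}$.  My plan is to deduce the first assertion from the second by taking $\mathbf{D}=\mathbf{D}_\mathfrak{M}$ and $\Xi=\spm(\mathfrak{R}/\mathfrak{p})$, which is Zariski dense in itself by the Jacobson property of affinoids; for every $x\in\Xi$, Proposition \ref{prop:drdatumspecialaff} applied to the composite $\mathfrak{R}\to\kappa(x)$ gives $\mathbf{D}_{\mathfrak{M},x}\ge\mathbf{D}_\mathfrak{M}$, so the second assertion (once proved) immediately yields $\mathbf{D}_{\mathfrak{M}'}\ge\mathbf{D}_\mathfrak{M}$.

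For the second assertion, I would set $\Xi'=(\xi^*)^{-1}(\Xi)\subseteq\spm\mathfrak{R}'$ and apply Proposition \ref{prop:drdatuminterpaff} to $\mathfrak{M}'$ over $\spm\mathfrak{R}'$.  The first subsidiary step is to check $\Xi'$ is Zariski dense in $\spm\mathfrak{R}'$: letting $Z$ be its Zariski closure, $\xi^*(Z)$ is closed in $\spm\mathfrak{R}$ (finiteness of $\xi^*$) and contains the Zariski-dense set $\Xi\subseteq\spm(\mathfrak{R}/\mathfrak{p})$, so $\xi^*(Z)=\spm(\mathfrak{R}/\mathfrak{p})$.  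Since $\xi^*|_Z\colon Z\to\spm(\mathfrak{R}/\mathfrak{p})$ is thus finite surjective, $\dim Z=\dim\spm(\mathfrak{R}/\mathfrak{p})=\dim\spm\mathfrak{R}'$, and irreducibility of $\spm\mathfrak{R}'$ forces $Z=\spm\mathfrak{R}'$.

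The remaining input is the identity $\mathbf{D}_{\mathfrak{M}',y}=\mathbf{D}_{\mathfrak{M},\xi^*(y)}$ for each $y\in\spm\mathfrak{R}'$.  Writing $x=\xi^*(y)$, the residue field $\kappa(y)$ is a finite extension of $\kappa(x)$ and $\mathfrak{M}'_y\cong\mathfrak{M}_x\otimes_{\kappa(x)}\kappa(y)$.  The Sen polynomial of $\mathfrak{M}'_y$ is the image of that of $\mathfrak{M}_x$ under $\kappa(x)[T]\hookrightarrow\kappa(y)[T]$, so multiplicities of integer roots are preserved, giving $\Omega_{\mathfrak{M}',y}=\Omega_{\mathfrak{M},x}$.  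For the $\Delta$-components, after enlarging $E$ to contain both a Galois closure of $\kappa(y)/\kappa(x)$ and the normal closure of $K$, Lemma \ref{lem:changeek} furnishes a natural isomorphism
\[H^0(G_K,t^k\bdrk{\ell}\otimes_{K,\sigma}\mathfrak{M}_x)\otimes_{\kappa(x)}\kappa(y)\;\iso\;H^0(G_K,t^k\bdrk{\ell}\otimes_{K,\sigma}\mathfrak{M}'_y),\]
where $\hat{\otimes}$ collapses to $\otimes$ over Artinian bases via Lemma \ref{lem:artcomplete}; descending via $\gal$-invariants and Lemma \ref{lem:galdes} yields $\Delta_{\mathfrak{M}',y}(k,\ell)=\Delta_{\mathfrak{M},x}(k,\ell)$.

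Putting everything together, every $y\in\Xi'$ satisfies $\mathbf{D}_{\mathfrak{M}',y}=\mathbf{D}_{\mathfrak{M},\xi^*(y)}\ge\mathbf{D}$ by hypothesis, and Proposition \ref{prop:drdatuminterpaff} applied to $\mathfrak{M}'$ with the dense set $\Xi'$ concludes.  The main obstacle I anticipate is the residue-field base change in the third paragraph: one has to track carefully that enlarging the coefficient field $E$ is compatible with the Sen-theoretic conventions of Section \ref{subsec:senkisin}, and that the non-Galois case of $\kappa(y)/\kappa(x)$ can be handled by passing to the Galois closure before descending.  This is routine once Proposition \ref{prop:wstarprop}.(d) is invoked, but it is the only place where the argument leaves the purely formal manipulations of de Rham data.
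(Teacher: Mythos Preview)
Your argument is correct, but it takes a different route from the paper's proof.  The paper factors $\xi$ through $\mathfrak{R}''=\mathfrak{R}/\ker\xi$ just as you do, but then handles the two claims in the opposite order and entirely at the generic point: Proposition~\ref{prop:drdatumspecialaff} gives $\mathbf{D}_{\mathfrak{M}\otimes_\mathfrak{R}\mathfrak{R}''}\ge\mathbf{D}_\mathfrak{M}$, and since $\Frac(\mathfrak{R}'')\hookrightarrow\Frac(\mathfrak{R}')$ is a field extension, the very definition of $\Delta$ (together with the $H^1$-description from Proposition~\ref{prop:drdatumaff} and base change as in Proposition~\ref{prop:dr1cocycle}) forces $\mathbf{D}_{\mathfrak{M}\otimes_\mathfrak{R}\mathfrak{R}''}=\mathbf{D}_{\mathfrak{M}'}$, proving the first claim directly.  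The second claim then follows by applying Proposition~\ref{prop:drdatuminterpaff} to $\mathfrak{R}''$ (where $\Xi$ already lives and is dense) and invoking the first claim for $\mathfrak{R}''\to\mathfrak{R}'$.  This avoids both your density-pullback argument for $\Xi'$ and the pointwise comparison $\mathbf{D}_{\mathfrak{M}',y}=\mathbf{D}_{\mathfrak{M},\xi^*(y)}$; your approach is more explicit but longer.

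One small correction: your invocation of Lemma~\ref{lem:changeek} in the third paragraph is not quite right, since that lemma concerns enlarging the coefficient field $E$ rather than the residue field $\kappa(x)$.  What you actually need is the elementary fact that $H^0(G_K,-)$ commutes with $\otimes_{\kappa(x)}\kappa(y)$ because $\kappa(y)$ is a finite free $\kappa(x)$-module with trivial $G_K$-action; no Galois hypothesis or descent via Lemma~\ref{lem:galdes} is needed.  With that adjustment your argument goes through cleanly.
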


\begin{proof}
We may factor our map as $\mathfrak{R} \rightarrow \mathfrak{R}'' \rightarrow \mathfrak{R}'$, where the first map is surjective and the second is injective, so in particular $\mathfrak{R}''$ is also integral.  We apply Proposition \ref{prop:drdatumspecialaff} to $\mathfrak{R} \rightarrow \mathfrak{R}''$ to obtain $\mathbf{D}_{\mathfrak{M} \otimes_\mathfrak{R} \mathfrak{R}''} \ge \mathbf{D}_{\mathfrak{M}}$ and use the definition of the de Rham datum to find $\mathbf{D}_{\mathfrak{M} \otimes_\mathfrak{R} \mathfrak{R}''} = \mathbf{D}_{\mathfrak{M} \otimes_\mathfrak{R} \mathfrak{R}'}$, since $\Frac(\mathfrak{R}')$ is just a field extension of $\Frac(\mathfrak{R}'')$ and the equality $\Omega_{\mathfrak{M} \otimes_\mathfrak{R} \mathfrak{R}''} = \Omega_{\mathfrak{M} \otimes_\mathfrak{R} \mathfrak{R}'}$ is clear.  For the second claim, we apply Proposition \ref{prop:drdatuminterpaff} to $\mathfrak{R}''$ and the first claim to the map $\mathfrak{R}'' \rightarrow \mathfrak{R}'$.
\end{proof}

\subsection{Globalization} \label{subsec:global}

Specialization of a de Rham datum is a local phenomenon, but one may ask for a global version of the definition and interpolation of a de Rham datum.

For a rigid analytic space $\mathfrak{X}$, we use $\mathfrak{O}_\mathfrak{X}$ to denote the structure sheaf, $\mathfrak{O}_{\mathfrak{X},x}$ for the stalk at $x \in \mathfrak{X}$, and $\kappa(x)$ for the residue field.  For a coherent sheaf $\mathfrak{M}$ on $\mathfrak{X}$, we write $\mathfrak{M}_x$ for the stalk at $x$ and define $\ol{\mathfrak{M}}_x = \mathfrak{M}_x \otimes_{\mathfrak{O}_{\mathfrak{X},x}} \kappa(x)$.  We define $\mathbf{D}_{\mathfrak{M},x} = \mathbf{D}_{\ol{\mathfrak{M}}_x}$.

If $\mathfrak{X}$ is a rigid analytic space and $\mathfrak{M}$ is a coherent locally free sheaf on $\mathfrak{X}$, we say that a continuous $G_K$-representation on $\mathfrak{M}$ is a map $G_K \rightarrow \End_{\mathfrak{O}_\mathfrak{X}}(\mathfrak{M})$ such that the restriction to any affinoid is continuous with respect to the canonical topology.

Since the $\sigma$-factor $P_\sigma(T)$ of the Sen polynomial attached to a finite free module $\mathfrak{M}$ over an $E$-Banach algebra $\mathfrak{R}$ and $\sigma \in \Sigma$ as before is canonical, if $\mathfrak{M}$ is instead a coherent locally free sheaf over a rigid space $\mathfrak{X}$, one can patch together locally defined Sen polynomials on a sufficiently fine admissible affinoid cover to obtain $P_\sigma(T) \in \mathfrak{O}_\mathfrak{X}(\mathfrak{X})$.

We will use the notion of an irreducible component of a rigid analytic space $\mathfrak{X}$ \cite{con}.  Following Conrad, we say that a closed analytic subvariety $\mathfrak{Z} \subseteq \mathfrak{X}$ is nowhere dense in $\mathfrak{X}$ if it contains no non-empty admissible open of $\mathfrak{X}$.  Equivalently, $\dim \mathfrak{O}_{\mathfrak{X},z} > \dim \mathfrak{O}_{\mathfrak{Z},z}$ for all $z\in\mathfrak{Z}$ \cite[\S2]{con}.  This local definition implies that the finite union of nowhere dense closed analytic subvarieties is again nowhere dense.  If $\mathfrak{X}$ is irreducible, then every proper closed analytic subvariety is nowhere dense \cite[Lemma 2.2.3]{con}.

We attach a de Rham datum to any reduced, irreducible rigid analytic space $\mathfrak{X}$ and a $G_K$-representation on a coherent locally free sheaf $\mathfrak{M}$.  We begin with the case where $\mathfrak{X}$ is normal.

\begin{prop} \label{prop:drdatumglobalnorm}

Let $\mathfrak{X}$ be a reduced, normal, irreducible rigid analytic space and let $\mathfrak{M}$ be a coherent locally free sheaf equipped with a continuous $G_K$-action.  Then there is a naturally associated de Rham datum $\mathbf{D}_\mathfrak{M}$ such that for any connected (thus integral) affinoid subdomain $\mathfrak{U} \subseteq \mathfrak{X}$ with $\mathfrak{M}(\mathfrak{U})$ free, we have $\mathbf{D}_{\mathfrak{M}(\mathfrak{U})} = \mathbf{D}_\mathfrak{M}$.

\end{prop}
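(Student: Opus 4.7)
The plan is to define $\mathbf{D}_\mathfrak{M}$ locally using Proposition \ref{prop:drdatumaff} and then establish independence of the choice. Pick any connected affinoid subdomain $\mathfrak{U}_0 \subseteq \mathfrak{X}$ on which $\mathfrak{M}$ is free and set $\mathbf{D}_\mathfrak{M} := \mathbf{D}_{\mathfrak{M}(\mathfrak{U}_0)}$. Since $\mathfrak{X}$ is reduced and normal and $\mathfrak{U}_0$ is connected, $\mathfrak{O}_\mathfrak{X}(\mathfrak{U}_0)$ is an integral $E$-affinoid algebra, so Proposition \ref{prop:drdatumaff} applies. The entire content of the statement reduces to showing that this value does not depend on $\mathfrak{U}_0$.

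The heart of the argument is the case of an inclusion: if $\mathfrak{U}_2 \subseteq \mathfrak{U}_1 \subseteq \mathfrak{X}$ are two connected affinoid subdomains on which $\mathfrak{M}$ is free and $\mathfrak{U}_2$ is non-empty, then $\mathbf{D}_{\mathfrak{M}(\mathfrak{U}_1)} = \mathbf{D}_{\mathfrak{M}(\mathfrak{U}_2)}$. For any nonzero $f \in \mathfrak{O}_\mathfrak{X}(\mathfrak{U}_1)$, the proper closed analytic subvariety $V(f) \subsetneq \mathfrak{U}_1$ is nowhere dense in the irreducible space $\mathfrak{U}_1$ by \cite[Lemma 2.2.3]{con}, so the admissible open $\mathfrak{U}_2$ is not contained in $V(f)$. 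Hence the set of maximal points of $\mathfrak{U}_2$, viewed inside $\mathfrak{U}_1$, is Zariski-dense in $\mathfrak{U}_1$. Since for any such point $x$ the stalk $\ol{\mathfrak{M}}_x$ and residue field $\kappa(x)$ depend only on $x$, we have $\mathbf{D}_{\mathfrak{M}(\mathfrak{U}_1),x} = \mathbf{D}_{\mathfrak{M}(\mathfrak{U}_2),x}$, and combining Proposition \ref{prop:drdatumspecialaff} applied to the specialization at $x$ with Proposition \ref{prop:drdatuminterpaff} in both directions yields the desired equality.

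For arbitrary pairs $\mathfrak{U}_1,\mathfrak{U}_2$, cover $\mathfrak{X}$ admissibly by connected affinoid subdomains $\{\mathfrak{W}_\alpha\}$ on which $\mathfrak{M}$ is free. When $\mathfrak{U}_1 \cap \mathfrak{U}_2 \ne \emptyset$, a small connected affinoid subdomain $\mathfrak{V}$ of the intersection carries $\mathfrak{M}$ freely, and two applications of the inclusion case give $\mathbf{D}_{\mathfrak{M}(\mathfrak{U}_1)} = \mathbf{D}_{\mathfrak{M}(\mathfrak{V})} = \mathbf{D}_{\mathfrak{M}(\mathfrak{U}_2)}$. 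In general, since $\mathfrak{X}$ is irreducible and therefore connected in the G-topology, any two members of $\{\mathfrak{W}_\alpha\}$ can be joined by a finite chain with successive overlaps, so all $\mathbf{D}_{\mathfrak{M}(\mathfrak{W}_\alpha)}$ agree; chaining an arbitrary $\mathfrak{U}$ as in the statement to some overlapping $\mathfrak{W}_\alpha$ finishes the proof. The principal obstacle is the comparison along the subdomain inclusion $\mathfrak{U}_2 \hookrightarrow \mathfrak{U}_1$, which is flat but not finite, so Corollary \ref{coro:drdatuminterp} does not apply directly; the fix is to route through stalks and invoke Conrad's nowhere-denseness result to secure the density hypothesis of Proposition \ref{prop:drdatuminterpaff}, while the globalization step rests on a standard G-topological connectedness argument.
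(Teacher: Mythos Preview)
Your proof is correct and follows essentially the same strategy as the paper: a connectedness/chain argument reduces to the overlapping case, which is then settled by comparing at a point of a common connected affinoid in the intersection. The only cosmetic difference is that you package the comparison by citing Propositions \ref{prop:drdatumspecialaff} and \ref{prop:drdatuminterpaff} as black boxes (and factor through the inclusion case first), whereas the paper re-runs the generic freeness step from the proof of Proposition \ref{prop:drdatuminterpaff} explicitly to locate such a point.
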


\begin{proof}
It suffices to check that if connected affinoid subdomains $\mathfrak{U},\mathfrak{V} \subseteq \mathfrak{X}$ have $\mathfrak{M}(\mathfrak{U})$ and $\mathfrak{M}(\mathfrak{V})$ free, then $\mathbf{D}_{\mathfrak{M}(\mathfrak{U})} = \mathbf{D}_{\mathfrak{M}(\mathfrak{V})}$.  Fix an admissible covering $\set{\mathfrak{U}_j}_{j \in J}$ of $\mathfrak{X}$ by connected open affinoid subdomains over which $\mathfrak{M}$ is free.  We claim that there exists a sequence $\mathfrak{U}_{j_0}, \mathfrak{U}_{j_1},\dots,\mathfrak{U}_{j_n}$ of affinoid subdomains from the covering such that $\mathfrak{U} \cap \mathfrak{U}_{j_0} \ne \emptyset$, $\mathfrak{U}_{j_n} \cap \mathfrak{V} \ne \emptyset$, and $\mathfrak{U}_{j_\nu} \cap \mathfrak{U}_{j_{\nu+1}}\ne \emptyset$ for each $\nu \in [0,n-1]$.  Indeed, observe that the union $\mathfrak{X}_\mathfrak{U}$ of all $\mathfrak{U}_j$ for $j \in J$ such that $\mathfrak{U}_j$ can be reached in finitely many steps by such a sequence starting from $\mathfrak{U}$ is closed, since it either contains or does not intersect each $\mathfrak{U}_j$.  Since $\mathfrak{X}_\mathfrak{U}$ contains an affinoid and $\mathfrak{X}$ is irreducible, we must have $\mathfrak{X}_\mathfrak{U} = \mathfrak{X}$.

By the existence of such a sequence, we are reduced to the case where $\mathfrak{U} \cap \mathfrak{V} \ne 0$.  Write $\mathfrak{U} = \spm \mathfrak{R}_1$ and $\mathfrak{V} = \spm \mathfrak{R}_2$.  Fix any connected affinoid subdomain $\spm \mathfrak{R} = \mathfrak{W} \subseteq \mathfrak{U} \cap \mathfrak{V}$.  As in the proof of Proposition \ref{prop:drdatuminterpaff}, there exist nonzero $f \in \mathfrak{R}_1$ and $g \in \mathfrak{R}_2$ so that the localizations $H^1(G_K,t^k\bdrk{\ell} \hat{\otimes}_{K,\sigma} \mathfrak{M}(\mathfrak{U}))_f$ and $H^1(G_K,t^k\bdrk{\ell} \hat{\otimes}_{K,\sigma} \mathfrak{M}(\mathfrak{V}))_g$ are free.  The proof of Proposition \ref{prop:drdatuminterpaff} also shows that for any $x \in \mathfrak{U} \setminus V((f))$, we have $\Delta_{\mathfrak{M},x}(k,\ell) = \Delta_{\mathfrak{M}(\mathfrak{U})}(k,\ell)$, and similarly for $x \in \mathfrak{V} \setminus V((g))$.  Both $f$ and $g$ define nonzero elements of $\mathfrak{R}$.  Since $\mathfrak{R}$ is integral, we may choose $x \in \mathfrak{W} \setminus V((fg))$.  This gives the equality $\Delta_{\mathfrak{M}(\mathfrak{U})} = \Delta_{\mathfrak{M}(\mathfrak{V})}$.

By irreducibility of $\mathfrak{U}$ and $\mathfrak{V}$ and by examining the vanishing locus of the lowest order coefficients of $P_\sigma(T)|_{\mathfrak{U}}$ and $P_\sigma(T)|_{\mathfrak{V}}$ on $\mathfrak{W}$, we find $\Omega_{\mathfrak{M}(\mathfrak{U})}(0) = \Omega_{\mathfrak{M}(\mathfrak{V})}(0)$; $\Omega_{\mathfrak{M}(\mathfrak{U})} = \Omega_{\mathfrak{M}(\mathfrak{V})}$ follows similarly.
\end{proof}

We obtain the general case by applying Corollary \ref{coro:drdatuminterp} to the normalization of $\mathfrak{X}$.

\begin{prop} \label{prop:drdatumglobal}

Let $\mathfrak{X}$ be a reduced, irreducible rigid analytic space and let $\mathfrak{M}$ be a coherent locally free sheaf equipped with a continuous $G_K$-action.  Then there is a naturally associated de Rham datum $\mathbf{D}_\mathfrak{M}$ such that for any irreducible component (with the reduced structure) $\mathfrak{V}$ of any affinoid subdomain $\mathfrak{U} \subseteq \mathfrak{X}$ with $\mathfrak{M}(\mathfrak{U})$ free, if we write $\mathfrak{M}'$ for the pullback to $\mathfrak{V}$, we have $\mathbf{D}_{\mathfrak{M}'} = \mathbf{D}_\mathfrak{M}$.

\end{prop}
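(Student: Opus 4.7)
The plan is to reduce the proposition to the already-proved normal case (Proposition \ref{prop:drdatumglobalnorm}) by passing to the normalization of $\mathfrak{X}$. Let $\pi \colon \tilde{\mathfrak{X}} \to \mathfrak{X}$ denote this normalization, which by Conrad's theory \cite{con} is a finite surjective morphism with $\tilde{\mathfrak{X}}$ reduced, normal, and irreducible. The pullback $\tilde{\mathfrak{M}} := \pi^*\mathfrak{M}$ is a coherent locally free sheaf on $\tilde{\mathfrak{X}}$, and the $G_K$-action on $\mathfrak{M}$ induces one on $\tilde{\mathfrak{M}}$ that is continuous on every affinoid subdomain (the preimage $\pi^{-1}(\mathfrak{U})$ of an affinoid $\mathfrak{U}$ is affinoid by finiteness of $\pi$, and the action factors through the action on $\mathfrak{M}(\mathfrak{U})$). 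Proposition \ref{prop:drdatumglobalnorm} thus attaches a de Rham datum $\mathbf{D}_{\tilde{\mathfrak{M}}}$ to $\tilde{\mathfrak{M}}$, and I define $\mathbf{D}_{\mathfrak{M}} := \mathbf{D}_{\tilde{\mathfrak{M}}}$.

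To verify the stated compatibility, fix an affinoid subdomain $\mathfrak{U} \subseteq \mathfrak{X}$ with $\mathfrak{M}(\mathfrak{U})$ free, an irreducible component $\mathfrak{V} = \spm\mathfrak{R}_\mathfrak{V}$ of $\mathfrak{U}$, and let $\mathfrak{M}'$ be the pullback of $\mathfrak{M}$ to $\mathfrak{V}$. Since $\pi$ is finite and $\mathfrak{V}$ is irreducible, I can choose an irreducible component $\tilde{\mathfrak{V}} = \spm\mathfrak{R}_{\tilde{\mathfrak{V}}}$ of the affinoid $\pi^{-1}(\mathfrak{V})$ that surjects onto $\mathfrak{V}$; the induced map $\mathfrak{R}_\mathfrak{V} \to \mathfrak{R}_{\tilde{\mathfrak{V}}}$ is a finite injection of integral $E$-affinoid algebras. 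Corollary \ref{coro:drdatuminterp} applied to this map gives $\mathbf{D}_{\tilde{\mathfrak{M}}(\tilde{\mathfrak{V}})} \ge \mathbf{D}_{\mathfrak{M}'}$, and Proposition \ref{prop:drdatumglobalnorm} applied on $\tilde{\mathfrak{X}}$ gives $\mathbf{D}_{\tilde{\mathfrak{M}}(\tilde{\mathfrak{V}})} = \mathbf{D}_{\tilde{\mathfrak{M}}}$, so $\mathbf{D}_{\tilde{\mathfrak{M}}} \ge \mathbf{D}_{\mathfrak{M}'}$. For the reverse inequality, I take a Zariski-dense set $\tilde{\Xi} \subseteq \tilde{\mathfrak{V}}$ and set $\Xi = \pi(\tilde{\Xi}) \subseteq \mathfrak{V}$, which is Zariski-dense since $\pi|_{\tilde{\mathfrak{V}}}$ is finite and surjective. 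For $\tilde{x} \in \tilde{\Xi}$ mapping to $x \in \Xi$, one has $\ol{\tilde{\mathfrak{M}}}_{\tilde{x}} \cong \ol{\mathfrak{M}}_x \otimes_{\kappa(x)} \kappa(\tilde{x})$; the Sen polynomial is preserved under this base change and, by Lemma \ref{lem:changeek}, so are the $\Delta$-dimensions, so $\mathbf{D}_{\mathfrak{M},x} = \mathbf{D}_{\tilde{\mathfrak{M}},\tilde{x}} = \mathbf{D}_{\tilde{\mathfrak{M}}}$, the last equality being Proposition \ref{prop:drdatumglobalnorm} applied to $\tilde{\mathfrak{V}}$. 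Proposition \ref{prop:drdatuminterpaff} applied to $\mathfrak{V}$ with dense set $\Xi$ and target datum $\mathbf{D}_{\tilde{\mathfrak{M}}}$ then yields $\mathbf{D}_{\mathfrak{M}'} \ge \mathbf{D}_{\tilde{\mathfrak{M}}}$, so $\mathbf{D}_{\mathfrak{M}'} = \mathbf{D}_{\tilde{\mathfrak{M}}} = \mathbf{D}_{\mathfrak{M}}$ as desired.

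The main obstacle is the structural input: verifying that the normalization of a reduced irreducible rigid space is itself irreducible (not merely a disjoint union of normal irreducible pieces) and that some irreducible component of $\pi^{-1}(\mathfrak{V})$ can be chosen to surject onto $\mathfrak{V}$. Both follow from the theory of irreducible components in Conrad \cite{con} combined with the finiteness and surjectivity of $\pi$, but they are the essential geometric glue that makes the sandwich argument between Corollary \ref{coro:drdatuminterp} and Proposition \ref{prop:drdatuminterpaff} go through.
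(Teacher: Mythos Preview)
Your approach via normalization is exactly the paper's, but the paper's execution is shorter. Once you have chosen the irreducible component $\tilde{\mathfrak{V}}$ of $\pi^{-1}(\mathfrak{V})$ dominating $\mathfrak{V}$, this $\tilde{\mathfrak{V}}$ is precisely $\spm$ of the normalization $\mathfrak{R}_{(\mathfrak{p})}'$ of the integral domain $\mathfrak{R}_\mathfrak{V}=\mathfrak{R}/\mathfrak{p}$, and the map $\mathfrak{R}_\mathfrak{V}\to\mathfrak{R}_{\tilde{\mathfrak{V}}}$ is an injection with the \emph{same} fraction field. Since both $\Omega$ and $\Delta$ are computed from data over the fraction field, one gets $\mathbf{D}_{\mathfrak{M}'}=\mathbf{D}_{\tilde{\mathfrak{M}}(\tilde{\mathfrak{V}})}$ immediately; this is the equality step already recorded inside the \emph{proof} of Corollary~\ref{coro:drdatuminterp}, not merely the inequality in its statement. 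Combined with Proposition~\ref{prop:drdatumglobalnorm} this yields $\mathbf{D}_{\mathfrak{M}'}=\mathbf{D}_{\tilde{\mathfrak{M}}}=\mathbf{D}_\mathfrak{M}$ in one line, and no sandwich is needed.

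Your sandwich argument is salvageable but has a slip. You assert $\mathbf{D}_{\tilde{\mathfrak{M}},\tilde{x}}=\mathbf{D}_{\tilde{\mathfrak{M}}}$ for every $\tilde{x}$ in an arbitrary Zariski-dense $\tilde{\Xi}\subseteq\tilde{\mathfrak{V}}$, citing Proposition~\ref{prop:drdatumglobalnorm}. That proposition identifies the \emph{generic} datum on a connected affinoid subdomain with $\mathbf{D}_{\tilde{\mathfrak{M}}}$, not the pointwise datum at $\tilde{x}$; pointwise equality fails off a proper closed subset. What does hold at every $\tilde{x}$ is the inequality $\mathbf{D}_{\tilde{\mathfrak{M}},\tilde{x}}\ge\mathbf{D}_{\tilde{\mathfrak{M}}(\tilde{\mathfrak{V}})}=\mathbf{D}_{\tilde{\mathfrak{M}}}$ (Proposition~\ref{prop:drdatumspecialaff} together with Proposition~\ref{prop:drdatumglobalnorm}), and since Proposition~\ref{prop:drdatuminterpaff} only requires $\ge$ on the dense set, this is enough to close the sandwich. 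So the argument works once you replace the claimed equality by this inequality and correct the citation.
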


\begin{proof}
Let $\pi:\widetilde{\mathfrak{X}} \rightarrow \mathfrak{X}$ be the normalization of $\mathfrak{X}$.  Let $\mathfrak{U}$ and $\mathfrak{V}$ be as in the statement of the proposition.  The restriction $\pi|_{\pi^{-1}(\mathfrak{U})}: \widetilde{\mathfrak{U}} \rightarrow \mathfrak{U}$ is a finite map of reduced affinoids.  Let $\mathfrak{R} \rightarrow \mathfrak{R}'$ be the corresponding map of reduced affinoid algebras.  The map factors as $\mathfrak{R} \rightarrow \prod_\mathfrak{p} \mathfrak{R}/\mathfrak{p} \rightarrow \prod_\mathfrak{p} \mathfrak{R}_{(\mathfrak{p})}'$, where $\mathfrak{p}$ ranges over the minimal primes of $\mathfrak{R}$ and $\mathfrak{R}_{(\mathfrak{p})}'$ denotes the normalization of $\mathfrak{R}/\mathfrak{p}$.  Choose $\mathfrak{p}$ corresponding to $\mathfrak{V}$.  The proof of Corollary \ref{coro:drdatuminterp} shows that $\mathbf{D}_{\mathfrak{M}(\mathfrak{U}) \otimes_\mathfrak{R} \mathfrak{R}/\mathfrak{p}} = \mathbf{D}_{\mathfrak{M}(\mathfrak{U}) \otimes_\mathfrak{R} \mathfrak{R}_{(\mathfrak{p})}'}$.  By Proposition \ref{prop:drdatumglobalnorm}, $\mathbf{D}_{\mathfrak{M}(\mathfrak{U}) \otimes_\mathfrak{R} \mathfrak{R}_{(\mathfrak{p})}'} = \mathbf{D}_{\pi^*\mathfrak{M}}$, which we take to be $\mathbf{D}_\mathfrak{M}$.
\end{proof}

We now globalize the interpolation of a de Rham datum.  We will assume Zariski density for our interpolation result, which has the following definition for a general rigid analytic space.

\begin{defin} \label{defin:zdense}

We say that a subset $S \subseteq \mathfrak{X}$ of a rigid analytic space is \emph{Zariski dense} if every closed analytic subvariety $\mathfrak{Z} \subseteq \mathfrak{X}$ containing $S$ is equal to $\mathfrak{X}$.

\end{defin}

\begin{prop} \label{prop:drdatuminterp}

Suppose that $\mathfrak{X}$, $\mathfrak{M}$, and $\mathbf{D}_\mathfrak{M}$ are as in Proposition \ref{prop:drdatumglobal}.  Let $\mathbf{D} = (\Omega,\Delta)$ be a de Rham datum, assume that $\Xi \subseteq \mathfrak{X}$ is Zariski dense, and assume that $\mathbf{D}_{\mathfrak{M},x} \ge \mathbf{D}$ for all $x \in \Xi$.  Then $\mathbf{D}_\mathfrak{M} \ge \mathbf{D}$.

\end{prop}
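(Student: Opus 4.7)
My plan is to prove the Sen inequality $\Omega_\mathfrak{M} \ge \Omega$ and the de Rham inequalities $\Delta_\mathfrak{M}(k,\ell) \ge \Delta(k,\ell)$ separately. The Sen part is a direct application of Zariski density to the global Sen polynomial $P_\sigma(T) \in \mathfrak{O}_\mathfrak{X}(\mathfrak{X})[T]$: expanding as $P_\sigma(T) = \sum_j a_{i,j}(T+i)^j$, the hypothesis $\Omega_{\mathfrak{M},x}(i) \ge \Omega(i)$ at each $x \in \Xi$ forces $a_{i,j}(x) = 0$ for $j < \Omega(i)$. Each vanishing locus $V(a_{i,j}) \subseteq \mathfrak{X}$ is a closed analytic subvariety containing $\Xi$, hence equal to $\mathfrak{X}$ by Zariski density, so $a_{i,j} = 0$ globally. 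Restricting to any irreducible component of any affinoid subdomain on which $\mathfrak{M}$ is free and applying Proposition \ref{prop:drdatumglobal} then gives $\Omega_\mathfrak{M}(i) \ge \Omega(i)$.

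The heart of the proof is the de Rham part, where I would build a global closed analytic subvariety encoding the pointwise de Rham dimension condition and then use Zariski density. Fix $k < \ell$ and define
\[\mathfrak{Z}_{k,\ell} = \set{x \in \mathfrak{X} : \dim_{\kappa(x)} H^1(G_K, t^k\bdrk{\ell} \otimes_{K,\sigma} \ol{\mathfrak{M}}_x) \ge \Delta(k,\ell)}.\]
On any affinoid subdomain $\mathfrak{U} = \spm \mathfrak{R}$ on which $\mathfrak{M}(\mathfrak{U})$ is free, the module $\mathfrak{H} := H^1(G_K, t^k\bdrk{\ell} \hat{\otimes}_{K,\sigma} \mathfrak{M}(\mathfrak{U}))$ is finitely generated over $\mathfrak{R}$ by the argument in the proof of Proposition \ref{prop:drdatumaff}, and Proposition \ref{prop:dr1cocycle} applied to the finite surjection $\mathfrak{R} \surj \kappa(x)$, together with Lemma \ref{lem:artcomplete}, produces a natural isomorphism $\mathfrak{H} \otimes_\mathfrak{R} \kappa(x) \iso H^1(G_K, t^k\bdrk{\ell} \otimes_{K,\sigma} \ol{\mathfrak{M}}_x)$. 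So $\mathfrak{Z}_{k,\ell} \cap \mathfrak{U}$ is the fiber-dimension locus of the coherent sheaf associated to $\mathfrak{H}$, cut out by a Fitting ideal, and hence closed analytic in $\mathfrak{U}$. These local loci agree on overlaps (being defined pointwise by the same condition) and patch to a closed analytic subvariety $\mathfrak{Z}_{k,\ell} \subseteq \mathfrak{X}$. By hypothesis $\Xi \subseteq \mathfrak{Z}_{k,\ell}$, and Zariski density forces $\mathfrak{Z}_{k,\ell} = \mathfrak{X}$.

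To convert this pointwise bound into the generic bound, I would pick any affinoid $\mathfrak{U}$ with $\mathfrak{M}(\mathfrak{U})$ free and any irreducible component $\mathfrak{V}$ of $\mathfrak{U}$ (with its reduced structure); write $\mathfrak{M}'$ for the pullback to $\mathfrak{V}$, so $\Delta_{\mathfrak{M}'}(k,\ell) = \Delta_\mathfrak{M}(k,\ell)$ by Proposition \ref{prop:drdatumglobal}. The generic-freeness step in the proof of Proposition \ref{prop:drdatumaff} produces a nonzero $f \in \mathfrak{O}(\mathfrak{V})$ such that $\Delta_{\mathfrak{M}',x}(k,\ell) = \Delta_{\mathfrak{M}'}(k,\ell)$ for every $x \in \mathfrak{V} \setminus V(f)$, and $\Delta_{\mathfrak{M}',x}(k,\ell) = \Delta_{\mathfrak{M},x}(k,\ell)$ since $\ol{\mathfrak{M}'}_x = \ol{\mathfrak{M}}_x$ at points of the component. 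Because $\mathfrak{V}$ is integral and $f \ne 0$, the complement $\mathfrak{V} \setminus V(f)$ is nonempty; at any of its points the global identity $\mathfrak{Z}_{k,\ell} = \mathfrak{X}$ yields $\Delta_{\mathfrak{M},x}(k,\ell) \ge \Delta(k,\ell)$, and chaining the equalities gives $\Delta_\mathfrak{M}(k,\ell) \ge \Delta(k,\ell)$.

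The main obstacle is the construction of the global $\mathfrak{Z}_{k,\ell}$: the pointwise dimension datum must be recognized as the fiber dimension of a coherent sheaf on each affinoid in a compatible way. What makes this work is that the map $\mathfrak{R} \surj \kappa(x)$ is tautologically finite as a quotient, which lets Proposition \ref{prop:dr1cocycle} furnish the base-change isomorphism translating a pointwise condition into a closed fiber-dimension condition on the coherent module $\mathfrak{H}$; once this is in hand, Zariski density immediately promotes the hypothesis to every point of $\mathfrak{X}$, and the generic-freeness statement on an irreducible component closes the gap between the pointwise bound and the generic invariant $\Delta_\mathfrak{M}(k,\ell)$.
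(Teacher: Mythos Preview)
Your proposal is correct and follows essentially the same strategy as the paper: show that the locus of points satisfying the pointwise bound is a closed analytic subvariety, conclude it is all of $\mathfrak{X}$ by Zariski density, and then pass from the pointwise bound at every point to the generic invariant via generic freeness on an irreducible component of an affinoid (exactly the mechanism in Proposition~\ref{prop:drdatuminterpaff}).

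The one organizational difference is in how closedness of the locus is established. You treat each pair $(k,\ell)$ separately and recognize $\mathfrak{Z}_{k,\ell}\cap\mathfrak{U}$ as a Fitting-ideal locus of the coherent module $H^1(G_K, t^k\bdrk{\ell}\hat{\otimes}_{K,\sigma}\mathfrak{M}(\mathfrak{U}))$, using Proposition~\ref{prop:dr1cocycle} to identify fibers. The paper instead takes the Zariski closure $\ol{\mathfrak{Y}}_\mathfrak{U}$ of the full locus $\mathfrak{Y}\cap\mathfrak{U}$ on each affinoid, applies Corollary~\ref{coro:drdatuminterp} to each irreducible component of that closure to get $\mathbf{D}_{\mathfrak{M}'}\ge\mathbf{D}$, and then uses the specialization result Proposition~\ref{prop:drdatumspecialaff} to conclude that no new points were added. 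Your Fitting-ideal argument is a touch more elementary and self-contained; the paper's route has the virtue of handling the entire datum at once and of recycling the already-packaged affinoid interpolation and specialization results. Either way the final reduction to the generic rank is the same.
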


\begin{proof}
Write $P_\sigma(T)$ for the $\sigma$-factor of the Sen polynomial of $\mathfrak{M}$.  To see that $\Omega_\mathfrak{M} \ge \Omega$, observe that the vanishing locus of any of the first $\Omega(i)$ coefficients of $P_\sigma(T)$ expanded around $T+i$ includes $\Xi$ and is closed.

We claim that the locus $\mathfrak{Y}$ of points $x \in \mathfrak{X}$ such that $\mathbf{D}_{\mathfrak{M},x} \ge \mathbf{D}$ is closed.  To see this, pick an admissible cover of $\mathfrak{X}$ by connected affinoid subsets $\mathfrak{U}$ small enough that $\mathfrak{M}(\mathfrak{U})$ is free, and let $\ol{\mathfrak{Y}}_\mathfrak{U}$ denote the Zariski closure of $\mathfrak{Y} \cap \mathfrak{U}$.  We apply Corollary \ref{coro:drdatuminterp} to each irreducible component $\mathfrak{Y}' \rightarrow \ol{\mathfrak{Y}}_\mathfrak{U}$ to deduce that the pullback $\mathfrak{M}'$ of $\mathfrak{M}(\mathfrak{U})$ to $\mathfrak{Y}'$ satisfies $\mathbf{D}_{\mathfrak{M}'}\ge \mathbf{D}$.  By Proposition \ref{prop:drdatumspecialaff}, we deduce that $\mathfrak{Y} \cap \mathfrak{U} = \ol{\mathfrak{Y}}_\mathfrak{U}$.

Thus $\mathfrak{Y}$ is closed and contains the Zariski-dense subset $\Xi$, so $\mathfrak{Y} = \mathfrak{X}$.  The result now follows from applying Proposition \ref{prop:drdatuminterpaff} to any affinoid of $\mathfrak{X}$ small enough that $\mathfrak{M}(\mathfrak{U})$ is free and the dense set consisting of all of its points.
\end{proof}

We deduce the following globalization of Corollary \ref{coro:drdatuminterp}.

\begin{coro} \label{coro:drdatuminterpglobal}

Suppose $\pi:\mathfrak{Y} \rightarrow \mathfrak{X}$ is a finite map of reduced, irreducible rigid spaces and $\mathfrak{M}$ is a coherent locally free sheaf on $\mathfrak{X}$ equipped with a continuous $G_K$-action as before.  Let $\mathfrak{M}' = \pi^*\mathfrak{M}$.  Then $\mathbf{D}_{\mathfrak{M}'} \ge \mathbf{D}_{\mathfrak{M}}$.

Moreover, if $\Xi \subseteq \mathfrak{X}$ is Zariski-dense in the image of $\mathfrak{Y}$ and $\mathbf{D}$ is a de Rham datum with $\mathbf{D}_{\mathfrak{M},x} \ge \mathbf{D}$ for $x \in \Xi$, then $\mathbf{D}_{\mathfrak{M}'} \ge \mathbf{D}$.

\end{coro}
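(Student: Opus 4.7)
My plan is to reduce both claims to the affinoid Corollary \ref{coro:drdatuminterp} combined with the globalization result Proposition \ref{prop:drdatumglobal} and the base-change result Proposition \ref{prop:drdatumspecialaff}.

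For the first claim, I would take an admissible cover of $\mathfrak{X}$ by connected affinoid subdomains $\mathfrak{U} = \spm \mathfrak{R}$ on which $\mathfrak{M}(\mathfrak{U})$ is free, pull back via $\pi$ to get affinoids $\pi^{-1}(\mathfrak{U}) = \spm \mathfrak{R}'$ on which $\mathfrak{M}'$ is free, and choose any irreducible component $\mathfrak{V} = \spm(\mathfrak{R}'/\mathfrak{p}')$ of $\pi^{-1}(\mathfrak{U})$. The contracted prime $\mathfrak{q} = (\pi^*)^{-1}(\mathfrak{p}') \subseteq \mathfrak{R}$ defines a closed irreducible subvariety $\mathfrak{W} = \spm(\mathfrak{R}/\mathfrak{q})$ containing the image of $\mathfrak{V}$, and there is a minimal prime $\mathfrak{p} \subseteq \mathfrak{q}$ of $\mathfrak{R}$ cutting out an irreducible component $\mathfrak{U}^\circ = \spm(\mathfrak{R}/\mathfrak{p}) \supseteq \mathfrak{W}$ of $\mathfrak{U}$. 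I then chain
\[
\mathbf{D}_{\mathfrak{M}'} \;=\; \mathbf{D}_{\mathfrak{M}'(\mathfrak{V})} \;\ge\; \mathbf{D}_{\mathfrak{M}(\mathfrak{W})} \;\ge\; \mathbf{D}_{\mathfrak{M}(\mathfrak{U}^\circ)} \;=\; \mathbf{D}_\mathfrak{M},
\]
where the outer equalities come from Proposition \ref{prop:drdatumglobal}, the first inequality from Corollary \ref{coro:drdatuminterp} applied to the injective finite map $\mathfrak{R}/\mathfrak{q} \inj \mathfrak{R}'/\mathfrak{p}'$, and the second from Proposition \ref{prop:drdatumspecialaff} applied to the surjection $\mathfrak{R}/\mathfrak{p} \surj \mathfrak{R}/\mathfrak{q}$.

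For the second claim, I would set $\Xi' = \pi^{-1}(\Xi) \subseteq \mathfrak{Y}$ and aim to apply Proposition \ref{prop:drdatuminterp} to $(\mathfrak{Y},\mathfrak{M}',\mathbf{D})$ with the set $\Xi'$. The pointwise bound $\mathbf{D}_{\mathfrak{M}',y} \ge \mathbf{D}$ for $y \in \Xi'$ is immediate from the identification $\ol{\mathfrak{M}'}_y = \ol{\mathfrak{M}}_{\pi(y)} \otimes_{\kappa(\pi(y))} \kappa(y)$ and the single-point case of Corollary \ref{coro:drdatuminterp} applied to the finite field extension $\kappa(\pi(y)) \rightarrow \kappa(y)$ (both viewed as $E$-affinoid algebras). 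Given this, Proposition \ref{prop:drdatuminterp} will yield $\mathbf{D}_{\mathfrak{M}'} \ge \mathbf{D}$ as soon as I verify Zariski density of $\Xi'$ in $\mathfrak{Y}$.

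The Zariski density of $\Xi'$ is the main technical point. If $\mathfrak{Z} \subseteq \mathfrak{Y}$ is a closed analytic subvariety containing $\Xi'$, then since $\pi$ is finite and hence proper, $\pi(\mathfrak{Z})$ is a closed analytic subvariety of $\mathfrak{X}$ that is contained in $\pi(\mathfrak{Y})$ and contains $\pi(\Xi') = \Xi$ (using that $\Xi \subseteq \pi(\mathfrak{Y})$). By hypothesis $\Xi$ is Zariski dense in $\pi(\mathfrak{Y})$, so $\pi(\mathfrak{Z}) = \pi(\mathfrak{Y})$. The restriction $\pi|_\mathfrak{Z}:\mathfrak{Z} \to \pi(\mathfrak{Y})$ is finite and surjective, so at every point the local dimensions match, giving $\dim \mathfrak{Z} = \dim \pi(\mathfrak{Y}) = \dim \mathfrak{Y}$; since $\mathfrak{Y}$ is irreducible and reduced, any proper closed analytic subvariety is nowhere dense and thus has strictly smaller dimension at some point, so $\mathfrak{Z} = \mathfrak{Y}$. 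This is the step I expect to require the most care, since it uses properness of finite maps in rigid geometry together with local dimension comparisons under finite maps; once it is in hand, both parts of the corollary fall out of the machinery already developed in this section.
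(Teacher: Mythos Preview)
Your argument for the first claim is essentially the paper's: work on an affinoid over which $\mathfrak{M}$ is free, pass to irreducible components upstairs and downstairs, and invoke Corollary~\ref{coro:drdatuminterp} together with Proposition~\ref{prop:drdatumglobal}. The only difference is that you factor through the intermediate image $\mathfrak{W}$, which is harmless but unnecessary.

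For the second claim you take a genuinely different route. The paper avoids the Zariski-density computation for $\Xi'=\pi^{-1}(\Xi)$ altogether: it factors $\pi$ through its image $\mathfrak{Z}\subseteq\mathfrak{X}$ (a closed irreducible analytic subvariety, by \cite[Propositions 9.6.2/5 and 9.6.3/3]{bgr}), applies Proposition~\ref{prop:drdatuminterp} directly to $\mathfrak{Z}$ with the set $\Xi$ (which is Zariski-dense there by hypothesis), and then uses the first claim for the surjective finite map $\mathfrak{Y}\to\mathfrak{Z}$. Your approach---pulling $\Xi$ back and showing $\Xi'$ is Zariski-dense in $\mathfrak{Y}$ via properness of $\pi$ and a local-dimension comparison---is correct, and your caution about this step is well placed: you are using that a finite surjection preserves local Krull dimension and that, for irreducible $\mathfrak{Y}$, any proper closed subvariety has strictly smaller local dimension at every point (Conrad's characterization of nowhere dense). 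This is fine, but it is extra work that the paper's factorization sidesteps entirely. The paper's method is cleaner; yours has the minor advantage of making the density of $\pi^{-1}(\Xi)$ explicit, which could be reused elsewhere.
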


\begin{proof}
For the first claim, pick an affinoid open $\mathfrak{U} \subseteq \mathfrak{X}$ so that $\mathfrak{M}(\mathfrak{U})$ is free, and let $\mathfrak{V} \subseteq \mathfrak{Y}$ be its preimage.  By \cite[Corollary 9.4.4/2]{bgr}, $\mathfrak{V}$ is an affinoid open.  Pick an irreducible component $\mathfrak{Z} \subseteq \mathfrak{U}$, pick an irreducible component $\mathfrak{Z}' \subseteq \pi^{-1}(\mathfrak{Z})$, and write $\xi: \mathfrak{Z} \rightarrow \mathfrak{X}$ and $\xi': \mathfrak{Z}' \rightarrow\mathfrak{Y}$ for the inclusions.  Apply Corollary \ref{coro:drdatuminterp} and Proposition \ref{prop:drdatumglobal} to the map $\mathfrak{Z}' \rightarrow \mathfrak{Z}$ to find that $\mathbf{D}_{\mathfrak{M}'} = \mathbf{D}_{\xi^{\prime *}\mathfrak{M}'} \ge \mathbf{D}_{\xi^*\mathfrak{M}} = \mathbf{D}_{\mathfrak{M}}$.

For the second claim, the image of $\mathfrak{Y}$ is an irreducible closed analytic subvariety \cite[Propositions 9.6.2/5 and 9.6.3/3]{bgr}, so we may factor our map as $\mathfrak{Y} \rightarrow \mathfrak{Z} \rightarrow \mathfrak{X}$, where the second map is a closed immersion.  We then apply Proposition \ref{prop:drdatuminterp} to $\mathfrak{Z}$ and the first claim to the map $\mathfrak{Y} \rightarrow \mathfrak{Z}$.
\end{proof}

\subsection{Stratification of families of Galois representations} \label{subsec:stratfam}

We collect the results of this section into a simple statement.  The motivating case is where $\mathfrak{X}$ is the global eigenvariety on a group and $\mathfrak{M}$ is the associated Galois representation over $\mathfrak{X}$.  A discussion of the importance of understanding the geometry of $\mathfrak{X}$ can be found in, for instance, Bella\"iche-Chenevier \cite{bch}.

We will use the following flatness criterion.

\begin{lem} \label{lem:flatcrit}

Suppose that $R$ is a reduced Noetherian ring with total ring of fractions $K$ and $M$ is a finitely generated $R$-module.  Suppose that $M \otimes_R K$ is free of rank $r$ and that $\dim_{\kappa(\mathfrak{m})} M \otimes_R \kappa(\mathfrak{m}) = r$ for every maximal ideal $\mathfrak{m} \subseteq R$, denoting the residue field of $\mathfrak{m}$ by $\kappa(\mathfrak{m})$.  Then $M$ is locally free of rank $r$.

\end{lem}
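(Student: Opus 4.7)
The plan is to reduce to showing $M_\mathfrak{m}$ is free of rank $r$ for each maximal ideal $\mathfrak{m} \subseteq R$; once this is known, finite generation of $M$ gives local freeness. So fix a maximal ideal $\mathfrak{m}$ of $R$.

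First I would use Nakayama's lemma, applied to the finitely generated $R_\mathfrak{m}$-module $M_\mathfrak{m}$, which by hypothesis satisfies $M_\mathfrak{m}/\mathfrak{m}M_\mathfrak{m} \cong M \otimes_R \kappa(\mathfrak{m})$ of $\kappa(\mathfrak{m})$-dimension $r$, to produce a surjection $\pi: R_\mathfrak{m}^r \twoheadrightarrow M_\mathfrak{m}$. Let $N = \ker\pi$, so we have a short exact sequence
\[ 0 \rightarrow N \rightarrow R_\mathfrak{m}^r \rightarrow M_\mathfrak{m} \rightarrow 0. \]
Since $R_\mathfrak{m}$ is Noetherian and $R_\mathfrak{m}^r$ is finitely generated, $N$ is finitely generated as well. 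The task reduces to showing $N = 0$.

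Next I would pass to minimal primes. The minimal primes of $R_\mathfrak{m}$ are exactly the minimal primes $\mathfrak{p}$ of $R$ contained in $\mathfrak{m}$, and since $R$ is reduced, each $R_\mathfrak{p}$ is a field (namely the corresponding residue field at a minimal point of $\spec R$). The total ring of fractions $K$ decomposes as $K \cong \prod_\mathfrak{p} R_\mathfrak{p}$ over the finitely many minimal primes of $R$, so freeness of rank $r$ for $M \otimes_R K$ forces $\dim_{R_\mathfrak{p}}(M \otimes_R R_\mathfrak{p}) = r$ for every minimal prime $\mathfrak{p}$. Localizing the above exact sequence at such a $\mathfrak{p} \subseteq \mathfrak{m}$, we get $0 \to N_\mathfrak{p} \to R_\mathfrak{p}^r \to M_\mathfrak{p} \to 0$, and dimension counting over the field $R_\mathfrak{p}$ yields $N_\mathfrak{p} = 0$.

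Finally I would exploit reducedness to promote these vanishings to $N = 0$. Since $R_\mathfrak{m}$ is reduced and Noetherian, the intersection of its minimal primes is zero, so the natural map $R_\mathfrak{m} \inj \prod_{\mathfrak{p} \text{ min}, \mathfrak{p} \subseteq \mathfrak{m}} R_\mathfrak{p}$ is injective. Taking $r$-th powers, $R_\mathfrak{m}^r \inj \prod_\mathfrak{p} R_\mathfrak{p}^r$ is also injective, and this map sends any $n \in N$ to the tuple $(n_\mathfrak{p})_\mathfrak{p}$, which vanishes componentwise by the previous step. Hence $N = 0$, giving $M_\mathfrak{m} \cong R_\mathfrak{m}^r$ as desired. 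The only subtle point is that vanishing at minimal primes alone does not kill finitely generated modules without an extra hypothesis; the reducedness of $R$ (hence of $R_\mathfrak{m}$) is exactly what makes $R_\mathfrak{m}$ embed into the product of fields at its minimal primes, and this is the main structural ingredient that makes the argument go through.
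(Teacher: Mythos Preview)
Your proof is correct and follows essentially the same approach as the paper: reduce to $M_\mathfrak{m}$ free, use Nakayama to get a surjection $R_\mathfrak{m}^r \twoheadrightarrow M_\mathfrak{m}$, and kill the kernel by embedding $R_\mathfrak{m}^r$ into the product of its localizations at minimal primes, where a dimension count over each field $R_\mathfrak{p}$ shows the kernel vanishes. The paper phrases the last step as a contradiction (pick a nonzero kernel element and find a minimal prime where it survives), but this is only a cosmetic difference.
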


\begin{proof}
It suffices to check that $M_\mathfrak{m}$ is free of rank $r$ for all maximal $\mathfrak{m} \subseteq R$ by \cite[Theorem II.5.2.1]{bourbakica}.  Also note that by hypothesis $M_\mathfrak{p}$ has dimension $r$ for any minimal prime $\mathfrak{p}$ of $R$.

Let $L = \prod_{\mathfrak{p}} R_\mathfrak{p}$ be the total ring of fractions of $R_\mathfrak{m}$, where $\mathfrak{p}$ ranges over the minimal primes of $R_\mathfrak{m}$.  Using Nakayama's lemma, let $\psi:R_\mathfrak{m}^r \surj M_\mathfrak{m}$ be a surjection, and suppose that $a$ is a nonzero element of its kernel.  Since $R_\mathfrak{m}^r \rightarrow L^r$ is injective, there is some $\mathfrak{p}$ so that $a$ is nonzero in $R_\mathfrak{p}^r$.  Localizing $\psi$ at $\mathfrak{p}$, we obtain $R_\mathfrak{p}^r \surj M_\mathfrak{p}$.  By dimension count, this map is an isomorphism, so $a$ cannot have been nonzero.  In particular, $\psi$ is an isomorphism as needed.
\end{proof}

We will use the following to reduce checking flatness and base change to the study of 1-cocycles.

\begin{lem} \label{lem:flat1cocycle}

Maintain the notation of Section \ref{subsec:senkisin}.  Let $[i,j]\subseteq \mathbf{Z}$ be a fixed interval, let $f\in \mathfrak{R}$, and suppose that $H^1(G_K,t^k\bdrk{\ell} \hat{\otimes}_{K,\sigma} \mathfrak{M})_f$ is flat for $i \le k < \ell \le j+1$.  Then the following hold.
\begin{enumerate}[{\normalfont (a)}]
	\item The $\mathfrak{R}_f$-module $H^0(G_K,t^k\bdrk{\ell} \hat{\otimes}_{K,\sigma} \mathfrak{M})_f$ is flat for $i \le k < \ell \le j+1$.
	\item If $\xi:\mathfrak{R}\rightarrow \mathfrak{R}'$ is a map of Noetherian $E$-Banach algebras, the natural maps
	\begin{equation}\label{eqn:flat6bc} H^r(G_K,t^k\bdrk{\ell} \hat{\otimes}_{K,\sigma} \mathfrak{M}) \otimes_\mathfrak{R} \mathfrak{R}' \rightarrow H^r(G_K,t^k\bdrk{\ell} \hat{\otimes}_{K,\sigma} (\mathfrak{M} \otimes_\mathfrak{R} \mathfrak{R}'))\end{equation}
	have kernel and cokernel annihilated by a power of $\xi(f)$ for $r \in \set{0,1}$.
\end{enumerate}

\end{lem}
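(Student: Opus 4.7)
The plan is to prove parts (a) and (b) together by induction on the length $\ell - k$, with the Hodge-Tate case $\ell = k+1$ as the base case and the inductive step powered by the six-term $G_K$-cohomology sequence attached to \eqref{eqn:bdrexact1}.

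For the base case, I would first invoke Lemma \ref{lem:changeek} to replace $E$ and $K$ with finite Galois extensions large enough for Corollary \ref{coro:sencoh} to apply, which identifies $H^0(G_K, \mathbf{C}_p(k) \hat{\otimes}_{K,\sigma} \mathfrak{M})$ with $\ker \phi$ and $H^1$ with $\coker \phi$ for the Sen operator $\phi : \mathfrak{E} \to \mathfrak{E}$. Flatness of $(\coker \phi)_f$ (the hypothesis) combined with Lemma \ref{lem:niceend} yields flatness of $(\ker \phi)_f$, proving (a), and further gives that kernel and cokernel of $\phi_f$ commute with base change along $\mathfrak{R}_f \to \mathfrak{R}'_{\xi(f)}$. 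Together with Proposition \ref{prop:wstarprop}.(d) (which supplies the Sen operator $\phi \otimes_\mathfrak{R} \mathfrak{R}'$ computing cohomology over $\mathfrak{R}'$) and Corollary \ref{coro:sencoh} applied over $\mathfrak{R}'$, this produces (b) for $r = 0$ by the same diagram of localization and base change identities used at the end of the proof of Theorem \ref{thm:htcase}.(b); the $r = 1$ case of (b) is immediate from Lemma \ref{lem:cokerspec}. Proposition \ref{prop:htgap} supplies the finite generation needed to convert the localized isomorphism into annihilation of kernel and cokernel by a power of $\xi(f)$.

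For the inductive step of (a), I would apply the long exact sequence in $G_K$-cohomology to \eqref{eqn:bdrexact1} (shifted by one in $\ell$) tensored with $\mathfrak{M}$, yielding
\[0 \to A^0 \to B^0 \to C^0 \to A^1 \to B^1 \to C^1 \to 0,\]
where $A^r = H^r(G_K, \mathbf{C}_p(\ell-1) \hat{\otimes}_{K,\sigma} \mathfrak{M})$, $B^r = H^r(G_K, t^k\bdrk{\ell} \hat{\otimes}_{K,\sigma} \mathfrak{M})$, and $C^r = H^r(G_K, t^k\bdrk{\ell-1} \hat{\otimes}_{K,\sigma} \mathfrak{M})$. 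The base case and the inductive hypothesis give flatness of $A^r_f$ and $C^r_f$ for $r \in \{0, 1\}$; combined with the hypothesized flatness of $B^1_f$, splitting the six-term sequence into four short exact sequences and propagating flatness via vanishing of $\tor_1$ against flat modules upgrades this at each stage to flatness of $B^0_f$.

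For the inductive step of (b), part (a) guarantees flatness of all six terms after localization at $f$, so tensoring with $\mathfrak{R}'_{\xi(f)}$ preserves exactness, producing a commutative diagram of two six-term exact sequences related by base change maps; by the inductive hypothesis, the four corner maps on $A^r$ and $C^r$ are isomorphisms after localizing at $\xi(f)$. The main obstacle is that iterating the five-lemma through the four short exact sequences coming from the splitting only yields the equivalence ``$\beta^0$ iso $\Leftrightarrow \beta^1$ iso'', with neither conclusion automatic from four corner isomorphisms. To break this symmetry, I would establish the $r = 1$ case independently: by Lemma \ref{lem:zpcoh} and Claim \ref{claim:hkvanish}, $H^1$ is the cokernel of $\gamma - 1$ acting on the $H_K$-invariants, and Lemma \ref{lem:cokerspec} reduces the $r = 1$ base change to the natural map $(t^k\bdrk{\ell} \hat{\otimes}_{K,\sigma} \mathfrak{M})^{H_K} \otimes_\mathfrak{R} \mathfrak{R}' \to (t^k\bdrk{\ell} \hat{\otimes}_{K,\sigma} (\mathfrak{M} \otimes_\mathfrak{R} \mathfrak{R}'))^{H_K}$. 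I expect the latter to have kernel and cokernel killed by a power of $\xi(f)$ via a parallel induction in $\ell - k$, with Proposition \ref{prop:wstarprop}.(d) as the base case and the short exact sequence of $H_K$-invariants (exact by Claim \ref{claim:hkvanish}) propagating the statement; the five-lemma equivalence then yields the $r = 0$ case. Finite generation via Proposition \ref{prop:htgap} and induction along \eqref{eqn:mainexact} finally converts isomorphism after localization at $\xi(f)$ into annihilation of kernel and cokernel by a power of $\xi(f)$.
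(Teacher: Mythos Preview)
Your base case and your inductive step for part (a) match the paper's argument essentially verbatim. The discrepancy is in your inductive step for (b), where you claim an obstacle that does not exist.

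You write that ``iterating the five-lemma through the four short exact sequences coming from the splitting only yields the equivalence `$\beta^0$ iso $\Leftrightarrow \beta^1$ iso'\thinspace''. This is false: the five-lemma applied directly to the six-term ladder (padded by zeros on both ends) already gives both isomorphisms. Concretely, with the rows
\[
0 \to A^0 \to B^0 \to C^0 \to A^1 \to B^1 \to C^1 \to 0
\]
and $\alpha^0,\gamma^0,\alpha^1,\gamma^1$ isomorphisms after localization at $\xi(f)$, apply the five-lemma to the window $0,A^0,B^0,C^0,A^1$ to get $\beta^0$ an isomorphism, and to the window $C^0,A^1,B^1,C^1,0$ to get $\beta^1$ an isomorphism. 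This is exactly what the paper does: it notes that after localizing at $\xi(f)$ both rows are exact (the top by part (a) and flatness, the bottom automatically), the first, third, fourth, and sixth vertical maps are isomorphisms by the inductive hypothesis, and concludes by the five-lemma.

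Your proposed workaround---proving the $r=1$ case independently by reducing to a base-change statement for $H_K$-invariants---is not needed, and as stated may not even be available at this level of generality: the passage from $(\,\cdot\,)^{H_K}\hat{\otimes}_\mathfrak{R}\mathfrak{R}'$ to $(\,\cdot\,)^{H_K}\otimes_\mathfrak{R}\mathfrak{R}'$ (Lemma~\ref{lem:ctp}) uses that $\widehat{K}_\infty\hat{\otimes}_{K,\sigma}\mathfrak{R}$ is Noetherian, which the paper only invokes when $\mathfrak{R}$ is affinoid. Lemma~\ref{lem:flat1cocycle} is stated for arbitrary Noetherian $E$-Banach algebras, so the direct five-lemma argument is both simpler and strictly more general.
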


\begin{proof}
For a fixed interval $[i,j]$, we prove both results by induction on $\ell-k$.  If $\ell-k=1$, after increasing $E$ and $K$, both parts follow from Corollary \ref{coro:sencoh} and Lemma \ref{lem:niceend}.  The reduction to the original $E$ and $K$ at the beginning of the proof of Theorem \ref{thm:htcase}.(a) applies here for both parts, giving the base case.

For part (a), observe that if $R$ is a ring, $0 \rightarrow A \rightarrow B \rightarrow C \rightarrow D \rightarrow E \rightarrow F \rightarrow 0$ is a 6-term exact sequence of $R$-modules, and $A,C,D,E,$ and $F$ are flat over $R$, then $B$ is flat as well.  For the inductive step, we localize the 6-term exact sequence similar to (\ref{eqn:exactsnake}) but starting instead with (\ref{eqn:bdrexact1}) at $f$ to reduce to a case with smaller $\ell-k$.

For part (b), first note that as in the proof of Theorem \ref{thm:drcase}.(b), it suffices to check that the map is an isomorphism after localization at $\xi(f)$.  We consider the map between two rows of 6-term complexes as in (\ref{eqn:bcexactsnake1}) and (\ref{eqn:bcexactsnake2}), except beginning with the exact sequence (\ref{eqn:bdrexact1}).  In particular, the downward maps have the form of (\ref{eqn:flat6bc}).  The lower row is exact, while the top row becomes exact after localizing at $\xi(f)$ by (\ref{eqn:bclocal}) and the flatness of $H^r(G_K,t^k\bdrk{\ell} \hat{\otimes}_{K,\sigma} \mathfrak{M})_f$ from part (a).  By the inductive hypothesis, after localizing at $\xi(f)$, the first, third, fourth, and sixth downward maps are isomorphisms, which gives the result by the 5-lemma.
\end{proof}

We also check a finiteness property of de Rham data.  In fact, what is useful here is not the finiteness but the somewhat explicit description of the $\mathbf{D}'$ with $\mathbf{D} <_\mathrm{min}^{[i,j]} \mathbf{D}'$.

\begin{lem} \label{lem:minfin}

Let $\mathbf{D}=(\Omega,\Delta)$ be a de Rham datum.  For a fixed interval $[i,j] \supseteq \supp(\mathbf{D})$, there are finitely many de Rham data $\mathbf{D}'$ with $\mathbf{D} <_\mathrm{min}^{[i,j]} \mathbf{D}'$.

\end{lem}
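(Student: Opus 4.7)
The plan is to reduce the statement to Dickson's lemma, which says that every subset of $\mathbf{Z}_{\ge 0}^N$ has only finitely many minimal elements with respect to the component-wise partial order. To apply it, I would first show that the set of de Rham data with support contained in $[i,j]$ admits an order-preserving injection into $\mathbf{Z}_{\ge 0}^N$ for some finite $N$, and then observe that $\mathbf{D} <_\mathrm{min}^{[i,j]} \mathbf{D}'$ is exactly the statement that $\mathbf{D}'$ is minimal in the upper set of $\mathbf{D}$.

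For the first step, the key observation is that a de Rham datum $\mathbf{D}' = (\Omega', \Delta')$ with $\supp(\mathbf{D}') \subseteq [i,j]$ is completely determined by the finitely many values $\Omega'(k)$ for $k \in [i,j]$ together with $\Delta'(k,\ell)$ for $i \le k < \ell \le j+1$. Indeed, condition (iii) combined with $\Omega'(\nu) = 0$ for $\nu \notin [i,j]$ forces $\Delta'(\nu, \nu+1) = 0$ for $\nu \notin [i,j]$, and then iterating condition (iv) one obtains, for arbitrary $k < \ell$, that $\Delta'(k,\ell) = 0$ if $[k,\ell) \cap [i,j] = \emptyset$ and $\Delta'(k,\ell) = \Delta'(\max(k,i),\min(\ell,j+1))$ otherwise. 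The upper bound $\Delta'(k,\ell) \le \sum_{\nu=k}^{\ell-1} \Delta'(\nu,\nu+1)$ and the lower bound $\Delta'(a,c) \ge \max(\Delta'(a,b),\Delta'(b,c))$ from (iv), together with the vanishing of the $\Delta'(\nu,\nu+1)$ outside $[i,j]$, give both inequalities. Enumerating these finitely many values as coordinates yields the desired order-preserving injection into $\mathbf{Z}_{\ge 0}^N$ for $N = (j - i + 1) + \binom{j-i+2}{2}$, where the component-wise order on $\mathbf{Z}_{\ge 0}^N$ corresponds exactly to the order $\le$ on de Rham data (restricted to those with support in $[i,j]$).

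Finally, the relation $\mathbf{D} <_\mathrm{min}^{[i,j]} \mathbf{D}'$ says precisely that $\mathbf{D}'$ is a minimal element of the subset $\{\mathbf{D}'' : \mathbf{D} < \mathbf{D}'', \ \supp(\mathbf{D}'') \subseteq [i,j]\}$ of de Rham data (here $\mathbf{D}'' < \mathbf{D}'$ automatically forces $\supp(\mathbf{D}'') \subseteq \supp(\mathbf{D}') \subseteq [i,j]$). Under the embedding above, this is a subset of $\mathbf{Z}_{\ge 0}^N$, and Dickson's lemma then supplies only finitely many minimal elements, completing the proof. There is no serious obstacle: the only care needed is the bookkeeping in the second paragraph showing that the values of $\Delta'$ outside the finite index set are forced by those inside, after which the result is a direct application of a standard well-quasi-ordering statement.
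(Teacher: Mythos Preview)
Your proof is correct, but it takes a genuinely different route from the paper's argument. You reduce to Dickson's lemma by embedding the set of de Rham data with support in $[i,j]$ order-isomorphically into $\mathbf{Z}_{\ge 0}^N$, and the bookkeeping you outline (showing that $\Delta'(k,\ell)$ for arbitrary $k<\ell$ is determined by the values with $i\le k<\ell\le j+1$) is accurate.

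The paper instead argues constructively: for each $w\in[i,j]$ it writes down an explicit datum $\mathbf{D}_w$ obtained by increasing $\Omega(w)$ by one and minimally adjusting $\Delta$, checks that $\mathbf{D}<_\mathrm{min}^{[i,j]}\mathbf{D}_w$, and shows that any $\mathbf{D}'$ with $\mathbf{D}<_\mathrm{min}^{[i,j]}\mathbf{D}'$ and $\Omega'\ne\Omega$ must equal some $\mathbf{D}_w$. The remaining case $\Omega'=\Omega$ is handled by the bound $\Delta'(k,\ell)\le\sum_\nu\Omega(\nu)$, leaving only finitely many possibilities. Your approach is cleaner and avoids the case analysis, but the paper explicitly remarks that ``what is useful here is not the finiteness but the somewhat explicit description of the $\mathbf{D}'$,'' and indeed records the consequence that for a full datum one has $\mathrm{Min}(i,j)=\{\mathbf{D}_w\}_{w\in[i,j]}$. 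So the paper's extra work buys a concrete description of the minimal strata that your Dickson argument does not see.
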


\begin{proof}
We claim that it suffices to define de Rham data $\mathbf{D}_w =(\Omega_w,\Delta_w)$ for each $w \in [i,j]$ that have the properties
\begin{itemize}
	\item $\mathbf{D} <_\mathrm{min}^{[i,j]} \mathbf{D}_w$ and
	\item if $\mathbf{D}'=(\Omega',\Delta')$ satisfies $\mathbf{D}' \ge \mathbf{D}$ and $\Omega'(w) > \Omega(w)$, then $\mathbf{D} \ge \mathbf{D}_w$.
\end{itemize}
To see this, suppose such $\mathbf{D}_w$'s are given.  For any $\mathbf{D}'$ with $\mathbf{D} <_\mathrm{min}^{[i,j]} \mathbf{D}'$, we must either have (a) $\Omega'=\Omega$ or (b) $\mathbf{D}'=\mathbf{D}_w$ for some $w$ (due to minimality).  There are finitely many $\mathbf{D}_w$'s, so it suffices to check that there are only finitely many possibilities for $\mathbf{D}'$ in case (a).  But it is easy to see that if $\Omega= \Omega'$, there can only be finitely many choices $\Delta'$ meeting the defining conditions.

We define $\Omega_w(i) = \Omega(i)$ for $i\ne w$ and $\Omega_w(w)=\Omega(w)+1$, and for $k < \ell$, we set $\Delta_w(k,\ell) = \max(\Delta(k,\ell),1)$ if $k \le w < \ell$ and $\Delta_w(k,\ell)=\Delta(k,\ell)$ otherwise.  It is clear that $\mathbf{D}_w$ is a de Rham datum and has the needed properties.
\end{proof}

We denote the set of such $\mathbf{D}'$ by $\mathrm{Min}(i,j)$.  If $\mathbf{D}$ is a full de Rham datum, then the proof shows that $\set{\mathbf{D}_w}_{w \in [i,j]} = \mathrm{Min}(i,j)$.

We will use the following gluing result to globalize the construction of sheaves of periods and 1-cocycles.

\begin{lem} \label{lem:constructsheaves}

Let $\mathfrak{X}$ be a rigid analytic space over $E$, let $\mathfrak{M}$ be a coherent locally free sheaf of $\mathfrak{O}_\mathfrak{X}$-modules equipped with a continuous $G_K$-action.

Let $\mathfrak{V} \subseteq \mathfrak{U}$ be a pair of affinoid subdomains of $\mathfrak{X}$ such that $\mathfrak{M}(\mathfrak{U})$ is free, and let $\mathfrak{U} = \spm \mathfrak{R}_\mathfrak{U}$ and $\mathfrak{V} = \spm\mathfrak{R}_\mathfrak{V}$.  Assume that for any such pair $(\mathfrak{U},\mathfrak{V})$, the natural map
\begin{equation}\label{eqn:hrhyp} H^r(G_K,t^k\bdrk{\ell} \hat{\otimes}_{K,\sigma} \mathfrak{M}(\mathfrak{U})) \otimes_{\mathfrak{R}_\mathfrak{U}} \mathfrak{R}_\mathfrak{V}\rightarrow H^r(G_K,t^k\bdrk{\ell} \hat{\otimes}_{K,\sigma} \mathfrak{M}(\mathfrak{V}))\end{equation}
is an isomorphism.

Then there are coherent sheaves $\mathfrak{H}^r_{(k,\ell)}(\mathfrak{M})$ on $\mathfrak{X}$ for $r \in \set{0,1}$ such that for any affinoid subdomain $\mathfrak{W}$ with $\mathfrak{M}(\mathfrak{W})$ free,
\begin{equation}\label{eqn:hrdef} \mathfrak{H}_{(k,\ell)}^r(\mathfrak{W}) \cong H^r(G_K, t^k\bdrk{\ell} \hat{\otimes}_{K,\sigma} \mathfrak{M}(\mathfrak{W})).\end{equation}

\end{lem}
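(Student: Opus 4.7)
I would build the sheaves $\mathfrak{H}^r_{(k,\ell)}(\mathfrak{M})$ by defining them on affinoids where $\mathfrak{M}$ trivializes, then gluing. Fix an admissible cover $\set{\mathfrak{U}_\alpha}$ of $\mathfrak{X}$ by affinoids on which $\mathfrak{M}$ is free; this exists because $\mathfrak{M}$ is coherent and locally free.

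\textbf{Local construction.} On each $\mathfrak{U} = \mathfrak{U}_\alpha = \spm \mathfrak{R}_\mathfrak{U}$, set $M^r_\mathfrak{U} := H^r(G_K, t^k \bdrk{\ell} \hat{\otimes}_{K,\sigma} \mathfrak{M}(\mathfrak{U}))$. I would first check that this is a finitely generated $\mathfrak{R}_\mathfrak{U}$-module by induction on $\ell - k$: the base case $\ell = k+1$ reduces after a twist to the Hodge-Tate setting, where Proposition \ref{prop:htgap} gives finite generation, and the inductive step uses the six-term exact sequence built from (\ref{eqn:bdrexact1}) in the same manner as (\ref{eqn:exactsnake}). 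The hypothesis (\ref{eqn:hrhyp}) then says exactly that the presheaf $\mathfrak{V} \mapsto M^r_\mathfrak{V}$ on affinoid subdomains $\mathfrak{V} \subseteq \mathfrak{U}$ coincides with the presheaf $\mathfrak{V} \mapsto M^r_\mathfrak{U} \otimes_{\mathfrak{R}_\mathfrak{U}} \mathfrak{R}_\mathfrak{V}$, which is the standard criterion for $M^r_\mathfrak{U}$ to determine a coherent $\mathfrak{O}_\mathfrak{U}$-module $\mathfrak{H}^r_\mathfrak{U}$.

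\textbf{Gluing.} For two cover members $\mathfrak{U}_\alpha, \mathfrak{U}_\beta$, the intersection $\mathfrak{U}_\alpha \cap \mathfrak{U}_\beta$ admits an admissible cover by affinoid subdomains $\mathfrak{V}$ contained in both (with $\mathfrak{M}(\mathfrak{V})$ automatically free, since it is obtained by base change from a free module). Applying (\ref{eqn:hrhyp}) to each of the containments $\mathfrak{V} \subseteq \mathfrak{U}_\alpha$ and $\mathfrak{V} \subseteq \mathfrak{U}_\beta$ yields canonical identifications
\[\mathfrak{H}^r_{\mathfrak{U}_\alpha}(\mathfrak{V}) = M^r_{\mathfrak{U}_\alpha} \otimes_{\mathfrak{R}_{\mathfrak{U}_\alpha}} \mathfrak{R}_\mathfrak{V} \cong M^r_\mathfrak{V} \cong M^r_{\mathfrak{U}_\beta} \otimes_{\mathfrak{R}_{\mathfrak{U}_\beta}} \mathfrak{R}_\mathfrak{V} = \mathfrak{H}^r_{\mathfrak{U}_\beta}(\mathfrak{V}),\]
and naturality in $\mathfrak{V}$ gives the cocycle condition on triple overlaps. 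Gluing the $\mathfrak{H}^r_\mathfrak{U}$ along these isomorphisms produces the global coherent sheaf $\mathfrak{H}^r_{(k,\ell)}(\mathfrak{M})$. Identification (\ref{eqn:hrdef}) on an arbitrary affinoid $\mathfrak{W}$ with $\mathfrak{M}(\mathfrak{W})$ free then follows by covering $\mathfrak{W}$ with subsets of the $\mathfrak{U}_\alpha$ and invoking (\ref{eqn:hrhyp}) once more.

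\textbf{Main obstacle.} The delicate claim is local freeness: (\ref{eqn:hrhyp}) yields coherence but by itself gives no flatness. I would handle this by working on a small affinoid $\mathfrak{U}$, showing that $M^r_\mathfrak{U}$ has a constant generic rank and a matching fiber dimension at every maximal ideal, and then invoking Lemma \ref{lem:flatcrit} (combined, if needed, with reducedness of the base) to conclude local freeness. The input that makes these two dimensions coincide is not purely formal; in the intended application it will come from the defining property of a stratum $\mathfrak{L}_\mathbf{D}^{[i,j]}$, which prescribes the pointwise dimensions of $H^r(G_K,t^k\bdrk{\ell}\otimes_{K,\sigma}\ol{\mathfrak{M}}_x)$ via the de Rham datum $\mathbf{D}$. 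This is the step where one must import genuine content from the interpolation theorems of Section \ref{sec:nonreducedbase}, rather than purely formal gluing.
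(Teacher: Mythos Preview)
Your approach is essentially identical to the paper's: fix an admissible affinoid cover on which $\mathfrak{M}$ trivializes, associate to each member the coherent sheaf coming from the cohomology module, glue on overlaps using hypothesis (\ref{eqn:hrhyp}), and check (\ref{eqn:hrdef}) on an arbitrary $\mathfrak{W}$ by refining to a cover subordinate to $\set{\mathfrak{U}_\alpha}$ and invoking (\ref{eqn:hrhyp}) once more.

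Your diagnosis of the ``main obstacle'' is exactly right, and in fact the paper handles it just as you propose: the proof of the lemma itself carries out only the formal gluing, while local freeness is established separately in the application (Theorem \ref{thm:stratfam2}) by showing that on each stratum $\mathfrak{L}_\mathbf{D}^{[i,j]}$ the fiber dimensions are constant (equal to $\Delta(k,\ell)$, by definition of the stratum) and then invoking Lemma \ref{lem:flatcrit} on a reduced base. So the phrase ``locally free'' in the lemma statement is really a promise redeemed only in context, not something the gluing argument alone delivers.
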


\begin{proof}
Let $\set{\mathfrak{U}_j}$ be an admissible affinoid cover of $\mathfrak{X}$ such that $\mathfrak{M}(\mathfrak{U}_j)$ is free for each $j$.  We claim that if we define $\mathfrak{H}_r^{(k,\ell)}(\mathfrak{M})|_{\mathfrak{U}_j}$ to be the sheaf on $\mathfrak{U}_j$ associated to the module in (\ref{eqn:hrdef}) for each $j$, then these naturally patch together on the intersections to form a coherent sheaf.  So let $\mathfrak{V} \subseteq \mathfrak{U}_{j_1} \cap \mathfrak{U}_{j_2}$, where all of these are affinoid, and write $\mathfrak{V} = \spm \mathfrak{R}$ and $\mathfrak{U}_{j_\nu}= \spm \mathfrak{R}_\nu$ for $\nu \in \set{1,2}$.  Our hypothesis (\ref{eqn:hrhyp}) implies that the restrictions of each $\mathfrak{H}_r^{(k,\ell)}(\mathfrak{M})|_{\mathfrak{U}_{j_\nu}}$ to $\mathfrak{V}$ are naturally identified with the sheaf associated to $H^r(G_K,t^k\bdrk{\ell} \hat{\otimes}_{K,\sigma} \mathfrak{M}(\mathfrak{V}))$.  We pick an admissible affinoid cover of $\mathfrak{U}_{j_1} \cap \mathfrak{U}_{j_2}$ to define the gluing via this identification, and it is clear from the hypothesis (\ref{eqn:hrhyp}) that these are compatible on the triple intersections (by again restricting to affinoids).

If $\mathfrak{W} = \spm \mathfrak{R}$ is any affinoid subdomain of $\mathfrak{X}$ with $\mathfrak{M}(\mathfrak{W})$ free, then we claim that $\mathfrak{H}_r^{(k,\ell)}(\mathfrak{M})|_{\mathfrak{W}}$ is naturally isomorphic to the sheaf associated to the module in (\ref{eqn:hrdef}).  This can be checked locally, so we pick an admissible affinoid cover that refines the cover $\set{\mathfrak{W} \cap \mathfrak{U}_j}$.  For any member $\mathfrak{V}$ of this cover, we again use the hypothesis (\ref{eqn:hrhyp}) to obtain the needed compatibility.
\end{proof}

We break the main theorem up into three parts, which respectively discuss the closed subvarieties of the base associated to de Rham data, behavior of periods on the corresponding stratification, and specializations.  We maintain the same notation and hypotheses throughout.

\begin{thm} \label{thm:stratfam1}

Suppose that $\mathfrak{X}$ is a reduced rigid space over $E$ and $\mathfrak{M}$ is a coherent locally free sheaf equipped with a continuous homomorphism $G_K \rightarrow \End_{\mathfrak{O}_\mathfrak{X}} \mathfrak{M}$.

Then for any de Rham datum $\mathbf{D}$, the points $x \in \mathfrak{X}$ such that $\mathbf{D}_{\mathfrak{M},x} \ge \mathbf{D}$ form a closed analytic subvariety $\mathfrak{S}_\mathbf{D}$, which we give its reduced structure.  If $\mathbf{D} \ge \mathbf{D}'$, then $\mathfrak{S}_\mathbf{D} \subseteq \mathfrak{S}_{\mathbf{D}'}$.  We write $\pi_\mathbf{D}: \mathfrak{S}_\mathbf{D} \rightarrow \mathfrak{X}$ for the inclusion and define $\mathfrak{M}_\mathbf{D} = \pi_\mathbf{D}^*\mathfrak{M}$.

If we range instead over the various Sen data or Hodge-Tate data, we obtain coarser stratifications.  For any Sen datum $\mathbf{D}$, the various Hodge-Tate or de Rham data $\mathbf{D}'$ with $\mathrm{Sen}(\mathbf{D}') = \mathbf{D}$ give stratifications of $\mathfrak{S}_\mathbf{D}$, and the same holds for a Hodge-Tate datum $\mathbf{D}$ and the de Rham data $\mathbf{D}'$ with $\mathrm{HT}(\mathbf{D}')=\mathbf{D}$.

\end{thm}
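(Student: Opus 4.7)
The inclusion $\mathfrak{S}_\mathbf{D} \subseteq \mathfrak{S}_{\mathbf{D}'}$ when $\mathbf{D} \ge \mathbf{D}'$ is immediate from the pointwise partial order on de Rham data. The refinement claims involving Sen or Hodge-Tate data reduce to the de Rham case once one checks that $\mathbf{D}' \ge \mathrm{HT}(\mathbf{D}') \ge \mathrm{Sen}(\mathbf{D}')$ always (using conditions (iii) and (iv) in the definition of a de Rham datum), so $\mathfrak{S}_{\mathbf{D}'} \subseteq \mathfrak{S}_{\mathrm{HT}(\mathbf{D}')} \subseteq \mathfrak{S}_{\mathrm{Sen}(\mathbf{D}')}$; this organizes the closed subvarieties into the claimed nested families.

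The core task is to show $\mathfrak{S}_\mathbf{D}$ is a closed analytic subvariety. Since closedness is local, I would pick an admissible affinoid cover $\set{\mathfrak{U}}$ of $\mathfrak{X}$ with $\mathfrak{M}(\mathfrak{U})$ free for each $\mathfrak{U}$, and verify closedness of $\mathfrak{Y}_\mathfrak{U} := \mathfrak{S}_\mathbf{D} \cap \mathfrak{U}$ in each such $\mathfrak{U}$. Write $\ol{\mathfrak{Y}}_\mathfrak{U}$ for its Zariski closure equipped with the reduced structure, and decompose it into irreducible components $\mathfrak{Z}_1, \dots, \mathfrak{Z}_N = \spm \mathfrak{R}_1, \dots, \spm \mathfrak{R}_N$, each with integral affinoid algebra. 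By minimality of the Zariski closure, $\mathfrak{Y}_\mathfrak{U} \cap \mathfrak{Z}_i$ is Zariski-dense in $\mathfrak{Z}_i$ for every $i$, since otherwise replacing $\mathfrak{Z}_i$ by the Zariski closure of its intersection with $\mathfrak{Y}_\mathfrak{U}$ would yield a strictly smaller closed subvariety containing $\mathfrak{Y}_\mathfrak{U}$.

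Now apply Proposition \ref{prop:drdatuminterpaff} to $\mathfrak{R}_i$ with $\Xi = \mathfrak{Y}_\mathfrak{U} \cap \mathfrak{Z}_i$ and the pullback $\mathfrak{M}_i := \mathfrak{M}(\mathfrak{U}) \otimes_{\mathfrak{O}_\mathfrak{X}(\mathfrak{U})} \mathfrak{R}_i$, noting that $\mathfrak{M}_i \otimes_{\mathfrak{R}_i} \kappa(x) \cong \ol{\mathfrak{M}}_x$ for $x \in \mathfrak{Z}_i$; this yields $\mathbf{D}_{\mathfrak{M}_i} \ge \mathbf{D}$. For any $y \in \mathfrak{Z}_i$, Proposition \ref{prop:drdatumspecialaff} applied to the maximal ideal of $\mathfrak{R}_i$ defining $y$ then gives $\mathbf{D}_{\mathfrak{M},y} \ge \mathbf{D}_{\mathfrak{M}_i} \ge \mathbf{D}$, so $y \in \mathfrak{Y}_\mathfrak{U}$. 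Ranging over $i$ shows $\ol{\mathfrak{Y}}_\mathfrak{U} \subseteq \mathfrak{Y}_\mathfrak{U}$, hence equality, and $\mathfrak{Y}_\mathfrak{U}$ is closed.

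The analytic heart of the argument is already packaged in Propositions \ref{prop:drdatumspecialaff} and \ref{prop:drdatuminterpaff}; both require an integral affinoid base, so the principal obstacle in this theorem is the geometric step of reducing a possibly non-integral affinoid $\mathfrak{U}$ to its integral irreducible components $\mathfrak{Z}_i$ via Zariski closures, and justifying the Zariski-density hypothesis on each component. This is handled by Conrad's framework for irreducible components of rigid spaces \cite{con}; with those facts in hand, the theorem is a formal consequence of the affinoid-level results of Section \ref{subsec:drdatumatt}.
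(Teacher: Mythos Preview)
Your proposal is correct and follows essentially the same approach as the paper: take the Zariski closure, pass to irreducible components, interpolate the de Rham datum there, and then specialize to every point. The only cosmetic difference is that the paper invokes the globalized Corollary \ref{coro:drdatuminterpglobal} (applied to an irreducible component $\mathfrak{Z}$ of $\mathfrak{S}_\mathbf{D}$ mapping into an irreducible component $\mathfrak{Z}'$ of $\mathfrak{X}$), whereas you work directly on an affinoid cover using Propositions \ref{prop:drdatuminterpaff} and \ref{prop:drdatumspecialaff}; this is precisely the local argument already packaged inside the proof of Proposition \ref{prop:drdatuminterp}.
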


\begin{proof}
Let $\mathfrak{S}_\mathbf{D}$ be the Zariski closure of the set of $x\in\mathfrak{X}$ with $\mathbf{D}_{\mathfrak{M},x} \ge \mathbf{D}$, let $\mathfrak{Z}$ be any irreducible component of $\mathfrak{S}_\mathbf{D}$, and let $\mathfrak{Z}'$ be any irreducible component of $\mathfrak{X}$ containing $\mathfrak{Z}$.  We apply the second part of Corollary \ref{coro:drdatuminterpglobal} to the closed immersion of $\mathfrak{Z}$ into $\mathfrak{Z}'$ to deduce that $\mathbf{D}_{\mathfrak{M}_\mathfrak{Z}} \ge \mathbf{D}$, where $\mathfrak{M}_\mathfrak{Z}$ is the pullback of $\mathfrak{M}$ to $\mathfrak{Z}$.  Using Proposition \ref{prop:drdatumspecialaff}, we deduce that for any $x \in \mathfrak{Z}$, $\mathbf{D}_{\mathfrak{M},x} \ge \mathbf{D}.$  The containment $\mathfrak{S}_\mathbf{D} \subseteq \mathfrak{S}_{\mathbf{D}'}$ is clear, as is the second claim.
\end{proof}

We next study flatness and base change.  Note that the proof of Lemma \ref{lem:minfin} gives a somewhat explicit description of the possible $\mathfrak{L}_\mathbf{D}^{[i,j]}$ below.  Moreover, $\sd(\mathbf{D})$ is constrained by the rank of $\mathfrak{M}$.

\begin{thm} \label{thm:stratfam2}

For any interval $[i,j] \subseteq \mathbf{Z}$, we define Zariski open (in $\mathfrak{S}_\mathbf{D}$) subvarieties $\mathfrak{L}^{[i,j]}_\mathbf{D} = \mathfrak{S}_\mathbf{D} \setminus \cup_{\mathbf{D}' \in \mathrm{Min}(i,j)} \mathfrak{S}_{\mathbf{D}'}$ for $\mathbf{D}$ with $\supp(\mathbf{D})\subseteq [i,j]$.  These are locally closed in $\mathfrak{X}$, and $\coprod_{\mathbf{D}} \mathfrak{L}^{[i,j]}_\mathbf{D}=\mathfrak{X}$.  We write $\pi_\mathbf{D}^{[i,j]}:\mathfrak{L}_{\mathbf{D}}^{[i,j]} \rightarrow \mathfrak{X}$ for the inclusion.  Let $\mathfrak{M}_{\mathbf{D}}^{[i,j]} = \pi_{\mathbf{D}}^{[i,j]*}\mathfrak{M}$.  Then we have the following.

\begin{enumerate}[{\normalfont (a)}]
	\item If $i \le k < \ell \le j+1$, $r \in \set{0,1}$, and $\mathbf{D} = (\Omega,\Delta)$ has $\supp(\mathbf{D}) \subseteq [i,j]$, then there exists a coherent locally free sheaf $\mathfrak{H}_{(k,\ell)}^r(\mathfrak{M}_\mathbf{D}^{[i,j]})$ of rank $\Delta(k,\ell)$ such that for any affinoid subdomain $\mathfrak{U}\subseteq \mathfrak{L}_\mathbf{D}^{[i,j]}$, there is a canonical isomorphism $\mathfrak{H}_{(k,\ell)}^r(\mathfrak{M}_{\mathbf{D}}^{[i,j]})(\mathfrak{U}) \cong H^r(G_K,t^k\bdrk{\ell} \hat{\otimes}_{K,\sigma} \mathfrak{M}_\mathbf{D}^{[i,j]} (\mathfrak{U})).$
		\item Suppose that $i \le k < \ell \le j+1$, $r \in \set{0,1}$, and $\pi:\mathfrak{Y} \rightarrow \mathfrak{L}_\mathbf{D}^{[i,j]}$ is a map of reduced rigid spaces.  Then for any $y \in \mathfrak{Y}$, $\mathbf{D}_{\pi^*\mathfrak{M}_{\mathbf{D}}^{[i,j]},y}^{[i,j]} = \mathbf{D}$ and the natural map $\pi^*\mathfrak{H}_{(k,\ell)}^r(\mathfrak{M}_{\mathbf{D}}^{[i,j]}) \rightarrow \mathfrak{H}_{(k,\ell)}^r(\pi^*\mathfrak{M}_{\mathbf{D}}^{[i,j]})$ is an isomorphism.
\end{enumerate}

\end{thm}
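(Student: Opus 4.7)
The plan is to verify that the $\mathfrak{L}_\mathbf{D}^{[i,j]}$ form a locally closed decomposition of $\mathfrak{X}$ on which the $[i,j]$-truncated de Rham datum is the constant $\mathbf{D}$, to establish flatness of the period modules on each stratum, and finally to glue and verify compatibility with pullback using Lemma~\ref{lem:flat1cocycle}(b).

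For the decomposition, Lemma~\ref{lem:minfin} makes $\mathrm{Min}(i,j)$ finite, so $\mathfrak{L}_\mathbf{D}^{[i,j]}$ is locally closed in $\mathfrak{X}$. I would show $x \in \mathfrak{L}_\mathbf{D}^{[i,j]}$ if and only if $\mathbf{D}_{\mathfrak{M},x}^{[i,j]} = \mathbf{D}$. If equality holds, then $x \in \mathfrak{S}_\mathbf{D}$, and for any $\mathbf{D}' \in \mathrm{Min}(i,j)$, having $x \in \mathfrak{S}_{\mathbf{D}'}$ would force $\mathbf{D}_{\mathfrak{M},x}^{[i,j]} \ge \mathbf{D}'$ (since $\supp(\mathbf{D}') \subseteq [i,j]$), a contradiction. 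Conversely, strict inequality $\mathbf{D}_{\mathfrak{M},x}^{[i,j]} > \mathbf{D}$ produces, by finiteness of de Rham data supported in $[i,j]$ and bounded above by $\mathbf{D}_{\mathfrak{M},x}^{[i,j]}$, some $\mathbf{D}' \in \mathrm{Min}(i,j)$ with $\mathbf{D}' \le \mathbf{D}_{\mathfrak{M},x}^{[i,j]}$, placing $x \in \mathfrak{S}_{\mathbf{D}'}$ and contradicting membership in $\mathfrak{L}_\mathbf{D}^{[i,j]}$.

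For part~(a), I would work locally on an affinoid subdomain $\mathfrak{U} = \spm \mathfrak{R}_\mathfrak{U} \subseteq \mathfrak{L}_\mathbf{D}^{[i,j]}$ (with its reduced structure) over which $\mathfrak{M}$ is free. On each irreducible component, every closed point has truncated datum $\mathbf{D}$, so Propositions~\ref{prop:drdatumspecialaff} and~\ref{prop:drdatuminterpaff} give $\Delta_{\mathfrak{M}(\mathfrak{U})}(k,\ell) = \Delta(k,\ell)$ for $k,\ell \in [i,j+1]$. Proposition~\ref{prop:dr1cocycle} then yields $\dim_{\kappa(x)} H^1(G_K, t^k\bdrk{\ell} \hat{\otimes}_{K,\sigma} \mathfrak{M}(\mathfrak{U})) \otimes_{\mathfrak{R}_\mathfrak{U}} \kappa(x) = \Delta(k,\ell)$ at each closed point $x$, and Lemma~\ref{lem:flatcrit} promotes this constant rank to local freeness of $H^1$ of rank $\Delta(k,\ell)$. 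Lemma~\ref{lem:flat1cocycle}(a) with $f=1$, combined with Lemma~\ref{lem:dimh0h1} at each closed point and a second use of Lemma~\ref{lem:flatcrit}, yields the analogous statement for $H^0$. Lemma~\ref{lem:flat1cocycle}(b) with $f=1$ then supplies the base-change isomorphism (\ref{eqn:hrhyp}) required by Lemma~\ref{lem:constructsheaves}, producing the desired coherent locally free sheaves $\mathfrak{H}^r_{(k,\ell)}(\mathfrak{M}_\mathbf{D}^{[i,j]})$ on $\mathfrak{L}_\mathbf{D}^{[i,j]}$.

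For part~(b), the equality $\mathbf{D}_{\pi^*\mathfrak{M}_\mathbf{D}^{[i,j]},y}^{[i,j]} = \mathbf{D}$ follows from the invariance of the Sen polynomial under the field extension $\kappa(y)/\kappa(\pi(y))$ together with Lemma~\ref{lem:flat1cocycle}(b) (with $f=1$) applied to that extension to preserve the $H^r$-dimensions. Rerunning the argument of part~(a) for $\pi^*\mathfrak{M}_\mathbf{D}^{[i,j]}$ on $\mathfrak{Y}$ produces the sheaf $\mathfrak{H}^r_{(k,\ell)}(\pi^*\mathfrak{M}_\mathbf{D}^{[i,j]})$, and the natural map from $\pi^*\mathfrak{H}^r_{(k,\ell)}(\mathfrak{M}_\mathbf{D}^{[i,j]})$ is checked locally on pairs of affinoids $\mathfrak{V} \subseteq \mathfrak{Y}$, $\mathfrak{U} \subseteq \mathfrak{L}_\mathbf{D}^{[i,j]}$ with $\pi(\mathfrak{V}) \subseteq \mathfrak{U}$ and $\mathfrak{M}(\mathfrak{U})$ free, by invoking Lemma~\ref{lem:flat1cocycle}(b) with $f=1$ for the map $\mathfrak{R}_\mathfrak{U} \to \mathfrak{R}_\mathfrak{V}$. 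The hardest part is the flatness step on each stratum: it requires a careful synthesis of generic freeness, closed-point constancy of the truncated datum, the finite base-change isomorphism of Proposition~\ref{prop:dr1cocycle}, and Lemma~\ref{lem:flatcrit}. Once flatness is established, Lemma~\ref{lem:flat1cocycle}(b) with $f=1$ does the heavy lifting of upgrading finite base change to base change along arbitrary morphisms of Noetherian $E$-Banach algebras, which is exactly what part~(b) demands for arbitrary $\pi$.
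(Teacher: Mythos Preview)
Your proposal is correct and follows essentially the same approach as the paper: characterize $\mathfrak{L}_\mathbf{D}^{[i,j]}$ as the locus where $\mathbf{D}_{\mathfrak{M},x}^{[i,j]}=\mathbf{D}$, establish flatness of $H^1$ on each affinoid via Proposition~\ref{prop:dr1cocycle} and Lemma~\ref{lem:flatcrit}, deduce the same for $H^0$ via Lemma~\ref{lem:flat1cocycle}(a), glue with Lemma~\ref{lem:constructsheaves}, and handle base change with Lemma~\ref{lem:flat1cocycle}(b). The only cosmetic differences are that the paper cites Proposition~\ref{prop:drdatumglobal} for the generic datum on irreducible components of $\mathfrak{U}$ (whereas you invoke Propositions~\ref{prop:drdatumspecialaff} and~\ref{prop:drdatuminterpaff} directly, which is equivalent in the affinoid setting), and that your second appeal to Lemma~\ref{lem:flatcrit} for $H^0$ is unnecessary once Lemma~\ref{lem:flat1cocycle}(a) has already given flatness and Lemma~\ref{lem:dimh0h1} pins down the rank.
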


\begin{proof}
If $x \in \mathfrak{L}_\mathbf{D}^{[i,j]} \cap \mathfrak{L}_{\mathbf{D}'}^{[i,j]}$, then $\mathbf{D}_x^{[i,j]} \ge \mathbf{D}$ and $\mathbf{D}_x^{[i,j]} \ge \mathbf{D}'$, so by definition of the $\mathfrak{L}_\mathbf{D}^{[i,j]}$, we must have $\mathbf{D}_x^{[i,j]} = \mathbf{D} = \mathbf{D}'$.  Moreover, for any $x \in \mathfrak{X}$, $x \in \mathfrak{L}_{\mathbf{D}_x^{[i,j]}}^{[i,j]}$, so $\coprod_\mathbf{D} \mathfrak{L}_\mathbf{D}^{[i,j]} = \mathfrak{X}$.

To prove (a), first let $\mathfrak{U} =\spm\mathfrak{R}$ be any affinoid subdomain of $\mathfrak{L}_\mathbf{D}^{[i,j]}$ such that $\mathfrak{M}_\mathbf{D}^{[i,j]}(\mathfrak{U})$ is free.  We define
\[M^r = H^r(G_K,t^k\bdrk{\ell} \hat{\otimes}_{K,\sigma} \mathfrak{M}_\mathbf{D}^{[i,j]} (\mathfrak{U}))\textrm{ and }M^r(x) = H^r(G_K,t^k\bdrk{\ell} \hat{\otimes}_{K,\sigma} \ol{\mathfrak{M}}_{\mathbf{D},x}^{[i,j]}),\]
for $x \in \mathfrak{U}$ and $r \in \set{0,1}$.  We claim that for $i \le k < \ell \le j+1$, $M^1$ is finite flat of rank $\Delta(k,\ell)$.  It will then follow from Lemma \ref{lem:flat1cocycle}.(a) that the same is true for $M^0$.

For any $x \in \mathfrak{U}$, $M^1(x) \cong M^1\otimes_\mathfrak{R} \kappa(x)$ by Proposition \ref{prop:dr1cocycle}.  By the bound $i \le k < \ell \le j+1$ and the definition of $\mathfrak{L}_\mathbf{D}^{[i,j]}$, we have $\mathbf{D}_x^{[i,j]} = \mathbf{D}$ for each $x$. By Proposition \ref{prop:drdatumglobal}, for every irreducible component $\mathfrak{Z}$ of $\mathfrak{U}$, if $\mathfrak{Z}$ is equipped with the pullback of $\mathfrak{M}^{[i,j]}_\mathbf{D}$, it has de Rham datum $\mathbf{D}$.  Thus the module $M^1$ satisfies the hypotheses of Lemma \ref{lem:flatcrit}, so it is finite flat of rank $\Delta(k,\ell)$.  It now follows from Lemma \ref{lem:flat1cocycle}.(b) that the hypothesis (\ref{eqn:hrhyp}) of Lemma \ref{lem:constructsheaves} holds for $\mathfrak{L}_\mathbf{D}^{[i,j]}$, $\mathfrak{M}_\mathbf{D}^{[i,j]}$, and the pair $(k,\ell)$, which gives (a).  (That $\mathfrak{H}_{(k,\ell)}^r(\mathfrak{M}_\mathbf{D}^{[i,j]})$ is locally free follows from the finite flatness of $M^1$ above and Lemma \ref{lem:flatcrit}.(a).)

For (b), we pick an admissible affinoid cover $\set{\mathfrak{U}_j}$ of $\mathfrak{L}_\mathbf{D}^{[i,j]}$ with $\mathfrak{M}_\mathbf{D}^{[i,j]}(\mathfrak{U}_j)$ free.  For each $y \in \mathfrak{Y}$, pick $j$ so that $y$ maps to $\mathfrak{U}_j$.  We obtain $\mathbf{D}_{\pi^*\mathfrak{M}_{\mathbf{D}}^{[i,j]},y}^{[i,j]} = \mathbf{D}$ from the restriction of $\pi$ to $y \rightarrow \mathfrak{U}_j$, part (a), and Lemma \ref{lem:flat1cocycle}.(b).  We use this and the construction of part (a) to define $\mathfrak{H}_{(k,\ell)}^r(\pi^*\mathfrak{M}_{\mathbf{D}}^{[i,j]})$.

For each $j$, we pick an admissible affinoid cover $\set{\mathfrak{V}_{ij}}$ of $\pi^{-1}(\mathfrak{U}_j)$ for each $j$.  These form an admissible cover of $\mathfrak{Y}$ by $(G_2)$ of \cite[\S9.1.2]{bgr}. We now apply Lemma \ref{lem:flat1cocycle}.(b) to each $\mathfrak{U}_j$ and $\mathfrak{V}_{ij}$, using the flatness in part (a), to deduce that the restriction of the natural map $\pi^*\mathfrak{H}_{(k,\ell)}^r(\mathfrak{M}_{\mathbf{D}}^{[i,j]}) \rightarrow \mathfrak{H}_{(k,\ell)}^r(\pi^*\mathfrak{M}_{\mathbf{D}}^{[i,j]})$ to each $\mathfrak{V}_{ij}$ is an isomorphism, as needed.
\end{proof}

\begin{thm} \label{thm:stratfam3}

Suppose that we have an interval $[i,j] \subseteq \mathbf{Z}$ and a map $\xi:\mathfrak{x}\rightarrow \mathfrak{L}_{\mathbf{D}}^{[i,j]}$, where $\mathfrak{x}=\spm\mathfrak{R}$ for a local Artinian $E$-algebra $\mathfrak{R}$ of finite dimension over $E$ and $\supp(\mathbf{D}) \subseteq [i,j]$.  Then for $i \le k < \ell \le j+1$ we have $\rank_\mathfrak{R} H^r(G_K,t^k\bht^+/t^\ell\bht^+ \otimes_{K,\sigma} \xi^*\mathfrak{M}) = \htd^{(k,\ell)}(\mathbf{D})$ and $\rank_\mathfrak{R}H^r(G_K,t^k\bdrk{\ell} \otimes_{K,\sigma} \xi^*\mathfrak{M})=\Delta(k,\ell)$ for $r \in \set{0,1}$, where these modules are finite flat.  Moreover, for $r,r^+ \in \set{0,1}$,
\begin{equation}\label{eqn:stratbdrbound} \dim_E H^r(G_K,\bdr \otimes_{K,\sigma} \xi^*\mathfrak{M}) \ge \dim_E H^{r^+}(G_K,t^i\bdr^+ \otimes_{K,\sigma} \xi^*\mathfrak{M}) \ge \drd(\mathbf{D})\dim_E \mathfrak{R}.\end{equation}

\end{thm}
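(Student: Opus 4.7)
The plan is to reduce Theorem \ref{thm:stratfam3} to Theorem \ref{thm:stratfam2} by base change to the Artinian thickening $\mathfrak{x}$. Let $x_0 \in \mathfrak{L}_\mathbf{D}^{[i,j]}$ be the image of the unique point of $\mathfrak{x}$, and let $\mathfrak{U} = \spm \mathfrak{R}_\mathfrak{U} \subseteq \mathfrak{L}_\mathbf{D}^{[i,j]}$ be an affinoid subdomain containing $x_0$ small enough that $\mathfrak{M}_\mathbf{D}^{[i,j]}(\mathfrak{U})$ is free. Since $\mathfrak{x}$ is supported at a single point, $\xi$ factors through an $E$-Banach algebra map $\mathfrak{R}_\mathfrak{U} \to \mathfrak{R}$. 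Theorem \ref{thm:stratfam2}.(a) then supplies finite flatness of $H^r(G_K, t^k \bdrk{\ell} \hat{\otimes}_{K,\sigma} \mathfrak{M}_\mathbf{D}^{[i,j]}(\mathfrak{U}))$ of rank $\Delta(k,\ell)$ over $\mathfrak{R}_\mathfrak{U}$ for $i \le k < \ell \le j+1$ and $r \in \set{0,1}$.

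I would then apply Lemma \ref{lem:flat1cocycle}.(b) with $f = 1$---permissible since the required $H^1$-flatness holds without localization---to obtain the base-change isomorphism
\[ H^r(G_K, t^k \bdrk{\ell} \hat{\otimes}_{K,\sigma} \mathfrak{M}_\mathbf{D}^{[i,j]}(\mathfrak{U})) \otimes_{\mathfrak{R}_\mathfrak{U}} \mathfrak{R} \iso H^r(G_K, t^k \bdrk{\ell} \otimes_{K,\sigma} \xi^*\mathfrak{M}), \]
where Lemma \ref{lem:artcomplete} drops the completion on the right. This gives the bounded de Rham claim with the correct rank. For the Hodge-Tate statement, the $G_K$-equivariant graded decomposition $t^k \bht^+/t^\ell \bht^+ \cong \bigoplus_{\nu=k}^{\ell-1} t^\nu \bdrk{\nu+1}$ splits the cohomology into summands of respective ranks $\Delta(\nu,\nu+1)$, summing to $\htd^{(k,\ell)}(\mathbf{D})$.

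For the unbounded inequality (\ref{eqn:stratbdrbound}), I would apply Proposition \ref{prop:bdrdimh0h1} to $\xi^*\mathfrak{M}$ to obtain the chain
\[ \dim_E H^r(G_K, \bdr \otimes_{K,\sigma} \xi^*\mathfrak{M}) \ge \dim_E H^{r^+}(G_K, t^i \bdr^+ \otimes_{K,\sigma} \xi^*\mathfrak{M}) \ge \dim_E H^0(G_K, t^i \bdrk{j+1} \otimes_{K,\sigma} \xi^*\mathfrak{M}) \]
for arbitrary $r, r^+ \in \set{0,1}$, using that the same proposition equates $\dim_E H^0$ with $\dim_E H^1$ on each of the three modules. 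The last term equals $\Delta(i,j+1)\dim_E \mathfrak{R}$ by the bounded case above, and axiom (iv) of a de Rham datum combined with $\supp(\mathbf{D}) \subseteq [i,j]$ (forcing $\Delta(\nu,\nu+1) = 0$ for $\nu \notin [i,j]$ and hence $\Delta(k,\ell) = \Delta(\max(k,i), \min(\ell, j+1))$ in general) gives $\Delta(i, j+1) = \drd(\mathbf{D})$.

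The main (modest) obstacle is ensuring that flatness on the reduced stratum upgrades to a genuine base-change isomorphism at the non-reduced point $\mathfrak{x}$, rather than merely equality up to $f$-torsion; this is precisely what Lemma \ref{lem:flat1cocycle}.(b) delivers when $f = 1$, so the density/interpolation machinery of Section \ref{sec:nonreducedbase} need not be re-run at this stage.
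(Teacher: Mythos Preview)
Your argument is correct and follows the same route as the paper, which cites Theorem \ref{thm:stratfam2}.(b) for the bounded claims and Proposition \ref{prop:bdrdimh0h1} for (\ref{eqn:stratbdrbound}). In fact your version is slightly more careful: Theorem \ref{thm:stratfam2}.(b) is stated for maps of \emph{reduced} rigid spaces, so it does not literally apply to the non-reduced thickening $\mathfrak{x}$; by going directly to Lemma \ref{lem:flat1cocycle}.(b) with $f=1$ (valid for any map of Noetherian $E$-Banach algebras), you bypass this issue, and your explicit identification $\drd(\mathbf{D})=\Delta(i,j+1)$ via $\supp(\mathbf{D})\subseteq[i,j]$ makes the final step transparent.
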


\begin{proof}
The first two claims follow from Theorem \ref{thm:stratfam2}.(b).  We deduce (\ref{eqn:stratbdrbound}) from Proposition \ref{prop:bdrdimh0h1}.
\end{proof}

\section{Higher cohomology} \label{sec:highercoh}

The strategy for proving vanishing of higher cohomology of $\bdr \hat{\otimes}_{K,\sigma} \mathfrak{M}$ and $\bht \hat{\otimes}_{K,\sigma} \mathfrak{M}$ (in the notation above) is to first prove vanishing of $H_K$-cohomology and $\Gamma_K$-cohomology in the bounded setting, and then use a continuous cohomology version of the inflation-restriction exact sequence to obtain vanishing of $G_K$-cohomology.  We apply Proposition \ref{prop:profcoh} and Lemma \ref{lem:indlim} to pass to $t^k\bdr^+$ and then $\bdr$.

We remark that Pottharst \cite{pottharst} has proven strong results about higher cohomology of families of $(\varphi,\Gamma)$-modules using algebraic methods.  Due to the functional analytic difficulties that arise when studying modules obtained from completed tensor products of families of Galois representations with rings like $\bdr$, we need to use an estimation approach here instead.

\subsection{Higher $H_K$-cohomology}

We begin by proving a higher dimensional variant of a vanishing result of Sen \cite[Proposition 2]{sen}.  Sen proves the $n=1$ case of the following result.

\begin{thm} \label{thm:hkvanish}

Let $\mathfrak{M}$ be a Banach space over $\mathbf{C}_p$ on which $H_K$ acts continuously by semilinear automorphisms.  Then $H^n(H_K,\mathfrak{M})=0$ for $n>0$.

\end{thm}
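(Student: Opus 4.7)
The plan is to induct on $n$, taking Sen's Proposition 2 in \cite{sen} (with the mild extension from finite-dimensional $\mathbf{C}_p$-representations to arbitrary $\mathbf{C}_p$-Banach coefficients) as the base case $n=1$, and reducing the higher cases via a Shapiro-style dimension-shifting argument.

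For the inductive machinery, I will introduce, for any $\mathbf{C}_p$-Banach space $\mathfrak{N}$ with continuous semilinear $H_K$-action, the $\mathbf{C}_p$-Banach space $\mathcal{C}(H_K, \mathfrak{N})$ of continuous functions $H_K \to \mathfrak{N}$ equipped with the sup norm and the continuous semilinear action $(g \cdot f)(h) = g(f(g^{-1}h))$. The constant-function embedding $\iota: \mathfrak{N} \hookrightarrow \mathcal{C}(H_K, \mathfrak{N})$, $m \mapsto (h \mapsto m)$, is $H_K$-equivariant and admits the continuous $\mathbf{C}_p$-linear retract $r(f) = f(1)$, so its cokernel $\mathfrak{Q}$ is again a $\mathbf{C}_p$-Banach space with continuous semilinear $H_K$-action and the short exact sequence
\[0 \rightarrow \mathfrak{N} \rightarrow \mathcal{C}(H_K, \mathfrak{N}) \rightarrow \mathfrak{Q} \rightarrow 0\]
splits continuously as Banach spaces (non-equivariantly), giving rise to a long exact sequence in continuous $H_K$-cohomology. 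Via the continuous equivariant isomorphism $f \mapsto (h \mapsto h(f(h^{-1})))$, the module $\mathcal{C}(H_K, \mathfrak{N})$ is identified with the standard coinduced module $\mathrm{Coind}_{\{1\}}^{H_K}(\mathfrak{N}^{\mathrm{triv}})$ under pure translation, so Shapiro's lemma for continuous cohomology yields $H^n(H_K, \mathcal{C}(H_K, \mathfrak{N})) \cong H^n(\{1\}, \mathfrak{N}) = 0$ for $n \ge 1$. Consequently the long exact sequence produces $H^n(H_K, \mathfrak{N}) \cong H^{n-1}(H_K, \mathfrak{Q})$ for all $n \ge 2$, and iterating reduces the theorem to the base case $n=1$ applied to a Banach space obtained by $n-1$ successive quotients.

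The main obstacle is the base case itself. Sen's original argument produces, for a continuous 1-cocycle $c: H_K \to \mathbf{C}_p$, an explicit primitive using Tate's normalized trace operators on $\widehat{K}_\infty$; these traces act coefficient-wise on an arbitrary $\mathbf{C}_p$-Banach space $\mathfrak{N}$ via the semilinearity and come with uniform norm estimates, so Sen's proof extends to Banach coefficients with only cosmetic modifications. Once this is in hand, the Shapiro reduction becomes a formal matter dependent only on the continuous splitting of the defining sequence, and the inductive argument closes.
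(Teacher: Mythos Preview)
Your proof is correct but follows a genuinely different route from the paper's. The paper proves the theorem for all $n$ simultaneously by directly generalising Sen's $n=1$ computation: given a normalized $n$-cocycle $\psi$ with $|\psi|\le\epsilon$, it produces an explicit cochain $\beta$ built from Tate's trace element $z\in\mathbf{C}_p^{H'}$ so that $|\psi-d\beta|\le\epsilon'$, and then iterates. Your argument instead treats Sen's $n=1$ result as a black box and reduces the higher cases by dimension shifting through the coinduced module $\mathcal{C}(H_K,\mathfrak{N})$. Your approach is shorter and more conceptual; the paper's approach is self-contained and makes the uniform estimates visible at every degree.

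Two points deserve a bit more care in your write-up. First, the base case is already exactly Sen's Proposition~2 as stated for semilinear $\mathbf{C}_p$-Banach spaces, so no ``mild extension'' is needed. Second, invoking ``Shapiro's lemma for continuous cohomology'' for the acyclicity of $\mathcal{C}(H_K,\mathfrak{N})$ is a little glib: Shapiro in the standard references is formulated for discrete coefficients, and the continuous version requires an argument. You can supply this either by exhibiting an explicit contracting homotopy on $C^\bullet(H_K,\mathcal{C}(H_K,\mathfrak{N}))\cong C(H_K^{\bullet+1},\mathfrak{N})$ (using the exponential law for compact $H_K$), or by observing that a $\mathbf{C}_p$-Banach space is an inverse limit of discrete $p^\infty$-torsion quotients by its open balls (after renorming so that $H_K$ acts by isometries) and then appealing to the paper's own Lemma~\ref{lem:indcoh} with $G=H_K$. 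Once that acyclicity is nailed down, your inductive reduction $H^n(H_K,\mathfrak{M})\cong H^{n-1}(H_K,\mathfrak{Q})$ goes through cleanly, since $\mathfrak{Q}$ is again a $\mathbf{C}_p$-Banach space with continuous semilinear $H_K$-action.
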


\begin{proof}
For a continuous function $\psi: H_K^n \rightarrow \mathfrak{M}$, define $|\psi| = \max_{h_i\in H_K} \|\psi(h_1,\dots,h_n)\|.$  Fix $\delta > 1$.  Given $\epsilon, \epsilon' > 0$, and a normalized cocycle $\psi:H_K^n \rightarrow \mathfrak{M}$ such that $|\psi| \le \epsilon$, we will produce a normalized cocycle $\psi' = \psi - d\beta$ where $|\psi'| \le \epsilon'$ and $d\beta$ is a normalized coboundary such that $|d\beta| \le \delta \epsilon$.  (Recall that a normalized cocycle has the property that $\psi(h_1,\dots,h_n)=0$ if any $h_i=1$.  Every cocycle is equivalent to a normalized cocycle \cite[Lemma 6.1]{em}, and the coboundary of a normalized cochain is normalized.)

Assume that such a construction is possible.  By compactness of $H_K$, any cocycle $\psi=\psi_1$ (which we may assume is normalized) has a bound $\epsilon_1$.  We choose a decreasing sequence $\epsilon_i \to 0$, and in iterating the construction, we set $\epsilon=\epsilon_i,$ $\epsilon'=\epsilon_{i+1}$ at each stage.  We obtain $\psi_i, d\beta_{i-1}$ with $\psi_i = \psi_{i-1} - d\beta_{i-1}$ with $|\psi_i| \le \epsilon_i$ and $|d\beta_{i-1}| \le \delta\epsilon_{i-1}$.  Then the coboundary $\sum_i d\beta_i$ is defined and $\psi_1 = -\sum_i d\beta_i$, as needed.

Now suppose $\psi:H_K^n \rightarrow \mathfrak{M}$, $\epsilon>0$, and $\epsilon'>0$ as above are given.  We claim that there exists an open normal subgroup $H' \subseteq H_K$ such that $\|\psi(h_1,\dots,h_n)\|\le \frac{\epsilon'}{\delta}$ for $h_1,\dots,h_{n-1} \in H_K$ and $h_n \in H'$.  Using \cite[Proposition 1.1.3]{nsw}, let $\set{H_i}_{i \in I}$ be a set of open normal subgroups of $H_K$ such that $\cap_i H_i=1$.  The images $\mathfrak{M}_i=\psi(H_K,\dots,H_K,H_i)$ satisfy $\cap_i \mathfrak{M}_i=\set{0}$ (because $\psi$ is normalized and continuous and $\mathfrak{M}$ is Hausdorff) and are compact.  Let $Y_i = (\mathfrak{M} \setminus B_{\frac{\epsilon'}{\delta}}) \cap \mathfrak{M}_i$, where $B_{\frac{\epsilon'}{\delta}}$ denotes the open ball of radius $\frac{\epsilon'}{\delta}$.  Since $\mathfrak{M} \setminus B_{\frac{\epsilon'}{\delta}}$ is closed, $Y_i$ is compact.  We have $\cap_i Y_i = \emptyset$.  By compactness, some $\cap_{i \in I'} Y_i$ is empty with $I' \subseteq I$ finite, so we may set $H'= \cap_{i \in I'} H_i$.

Let $S$ be a set of coset representatives of $H_K/H'$, and as in \cite[Proposition 2]{sen}, we define an element $z \in \mathbf{C}_p^{H'}$ such that $|z| \le \delta$ and $\tr z = \sum_{s \in S} s(z) = 1$.

We define a normalized cochain $\beta: H_K^{n-1}\rightarrow \mathfrak{M}$ by
\[ \beta(h_1,\dots,h_{n-1}) = (-1)^n\sum_{s \in S} h_1\dots h_{n-1} s(z) \psi(h_1,\dots,h_{n-1},s).\]
Note that we have the bound $|d\beta| \le \delta\epsilon$.  In the following calculation, we write $O(r)$ to denote any element $m \in \mathfrak{M}$ such that $\|m\| \le r$.  Since $\psi$ is a cocycle, we have
\begin{equation} \label{eqn:cocyclerel}
	h_1\psi(h_2,\dots,h_n,s) = \sum_{j=1}^n (-1)^{j+1}\psi(h_1,\dots,h_jh_{j+1},\dots,h_n,s) + (-1)^n\psi(h_1,\dots,h_n)
\end{equation}
We have
\begin{align*}
	(\psi - d\beta)(h_1,\dots,h_n) &= \psi(h_1,\dots,h_n) - h_1\((-1)^n\sum_{s \in S} h_2 \dots h_n s(z) \psi(h_2,\dots,h_n,s)\)\\
	&- \sum_{j=1}^{n-1} (-1)^j\((-1)^n\sum_{s \in S} h_1 \dots h_n s(z) \psi(h_1,\dots, h_jh_{j+1},\dots,h_n,s)\)\\
	&+ (-1)^n\((-1)^n\sum_{s \in S} h_1 \dots h_{n-1} s(z) \psi(h_1,\dots,h_{n-1},s)\).
\end{align*}
By (\ref{eqn:cocyclerel}) and semilinearity of the $H_K$-action, we have
\begin{align*}
	&(\psi - d\beta)(h_1,\dots,h_n) = \psi(h_1,\dots,h_n)\\
	&+ (-1)^{n+1}\sum_{s \in S} h_1h_2 \dots h_n s(z)\\
	&\quad\quad\cdot \(\sum_{j=1}^n (-1)^{j+1}\psi(h_1,\dots,h_jh_{j+1},\dots,h_n,s)+(-1)^n\psi(h_1,\dots,h_n)\)\\
	&+ \sum_{j=1}^{n-1} (-1)^{j+1}\((-1)^n\sum_{s \in S} h_1 \dots h_n s(z) \psi(h_1,\dots, h_jh_{j+1},\dots,h_n,s)\)\\
	&- \sum_{s \in S} h_1 \dots h_{n-1} s(z) \psi(h_1,\dots,h_{n-1},s)\\
	&= \psi(h_1,\dots,h_n) - \sum_{s \in S} h_1h_2 \dots h_n s(z) \psi(h_1,\dots,h_n)\\
	&+ (-1)^{n+1}\sum_{s \in S} h_1h_2 \dots h_n s(z)(-1)^{n+1}\psi(h_1,\dots,h_{n-1},h_ns)\\
	&- \sum_{s \in S} h_1 \dots h_{n-1} s(z) \psi(h_1,\dots,h_{n-1},s).
\end{align*}
Note that $\sum_{s \in S} h_1h_2 \dots h_n s(z) =\tr z=1$.  We are left with
\[\sum_{s \in S} h_1 \dots h_n s(z)\psi(h_1,\dots,h_{n-1},h_ns) - \sum_{s \in S} h_1 \dots h_{n-1} s(z) \psi(h_1,\dots,h_{n-1},s).\]

Let $s'$ and $h'$ be the unique elements of $S$ and $H'$, respectively, such that $h_ns=s'h'$.  Then by $H'$-invariance of $z$, this expression becomes
\[\sum_{s \in S} h_1 \dots h_{n-1} s'(z)\psi(h_1,\dots,h_{n-1},s'h') - \sum_{s \in S} h_1 \dots h_{n-1} s(z) \psi(h_1,\dots,h_{n-1},s),\]
which we may rewrite as
\[\sum_{s' \in S} h_1 \dots h_{n-1} s'(z)\(\psi(h_1,\dots,h_{n-1},s'h') - \psi(h_1,\dots,h_{n-1},s')\)\]
since the $s'$ are a permutation of the $s$.  We use the cocycle relation
\begin{align*}
	&\psi(h_1,\dots,h_{n-1},s'h') = (-1)^{n+1}\big[h_1\psi(h_2,\dots,h_{n-1},s',h')\\
	&+ \sum_{j=1}^{n-1} (-1)^j \psi(h_1,\dots,h_jh_{j+1},\dots,h_{n-1},s',h')+(-1)^{n+1}\psi(h_1,\dots,h_{n-1},s')\big]\\
	&=\psi(h_1,\dots,h_{n-1},s') + O\(\frac{\epsilon'}{\delta}\).
\end{align*}

Therefore
\begin{align*}
	\sum_{s' \in S} & h_1 \dots h_{n-1} s'(z)\(\psi(h_1,\dots,h_{n-1},s'h') - \psi(h_1,\dots,h_{n-1},s')\)\\
	&= \sum_{s' \in S} h_1 \dots h_{n-1} s'(z)O\(\frac{\epsilon'}{\delta}\) = O(\epsilon')
\end{align*}
as needed, using $|z| \le \delta$.
\end{proof}

\subsection{$\Gamma_K$-cohomology and inflation-restriction}

We now use Proposition \ref{prop:profcoh} to find sufficient conditions for vanishing of higher continuous $\Gamma_K$-cohomology.

\begin{prop}\label{prop:gammakcohvanish}

Let $M$ be a continuous $\mathbf{Z}_p[\Gamma_K]$-module, and suppose that there exist $\mathbf{Z}_p[\Gamma_K]$-submodules $M_i \subseteq M$ for $i \in \mathbf{Z}_{\ge 0}$ such that $M_i \subseteq M_{i-1}$, each $M/M_i$ is a discrete $p^\infty$-torsion module, and $M \cong \varprojlim_i M/M_i$ as topological $\mathbf{Z}_p[\Gamma_K]$-modules.  Then $H^n(\Gamma_K,M)=0$ for $n\ge 2$.

\end{prop}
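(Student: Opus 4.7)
The plan is to reduce to the cohomology of the discrete quotients $M/M_i$ via Proposition~\ref{prop:profcoh}, and then exploit the fact that $\Gamma_K \cong \mathbf{Z}_p$ is a pro-cyclic group of cohomological dimension $1$ acting on discrete torsion modules. First I would verify that the hypotheses of Proposition~\ref{prop:profcoh} are met: the transition surjections $M/M_i \twoheadrightarrow M/M_{i-1}$ are maps between discrete spaces, so any set-theoretic section (which exists for any surjection of abelian groups) is automatically continuous. Applying the proposition to $G = \Gamma_K$ yields, for each $n \geq 1$, a short exact sequence
\[
0 \rightarrow {\varprojlim_i}^1 H^{n-1}(\Gamma_K, M/M_i) \rightarrow H^n(\Gamma_K, M) \rightarrow \varprojlim_i H^n(\Gamma_K, M/M_i) \rightarrow 0.
\]

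The key computational input is that for any discrete $\mathbf{Z}_p[\Gamma_K]$-module $N$, with $\gamma \in \Gamma_K$ a topological generator, the continuous cohomology is computed by the two-term complex $N \xrightarrow{\gamma - 1} N$, so $H^m(\Gamma_K, N) = 0$ for all $m \geq 2$ (alternatively, this is the standard fact that $\mathbf{Z}_p$ has cohomological dimension $1$). Applied to $N = M/M_i$, this gives $H^m(\Gamma_K, M/M_i) = 0$ for $m \geq 2$. For $n \geq 3$ both end terms of the exact sequence vanish and we are done immediately.

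The only delicate case is $n=2$, where the right-hand limit vanishes but we must separately establish ${\varprojlim_i}^1 H^1(\Gamma_K, M/M_i) = 0$. For this, I would use Lemma~\ref{lem:zpcoh} to identify $H^1(\Gamma_K, M/M_i) \cong \coker(\gamma - 1 : M/M_i \to M/M_i)$ and note that since each transition map $M/M_i \twoheadrightarrow M/M_{i-1}$ is surjective, the induced map on cokernels of $\gamma - 1$ is also surjective. Hence the projective system $\{H^1(\Gamma_K, M/M_i)\}_i$ satisfies the Mittag-Leffler condition, so its derived limit vanishes.

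The main obstacle I anticipate is not logical but notational: verifying carefully that the hypotheses on sections in Proposition~\ref{prop:profcoh} are indeed free in the discrete $p^\infty$-torsion setting (so that there is no hidden assumption about $\Gamma_K$-equivariance or linearity being imposed on the sections). Once that is confirmed, the argument is a quick assembly of Proposition~\ref{prop:profcoh}, the cohomological-dimension-one computation for discrete $\Gamma_K$-modules, and the surjectivity-implies-Mittag-Leffler observation for the $n=2$ case.
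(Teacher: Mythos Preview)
Your proposal is correct and follows essentially the same route as the paper: apply Proposition~\ref{prop:profcoh}, use cohomological dimension one of $\Gamma_K$ on discrete $p^\infty$-torsion modules to kill $H^m(\Gamma_K,M/M_i)$ for $m\ge 2$, and for $n=2$ invoke Lemma~\ref{lem:zpcoh} to see that the transition maps on $H^1$ are surjective, giving Mittag-Leffler. Your explicit check that the continuous-section hypothesis of Proposition~\ref{prop:profcoh} is automatic for discrete quotients is a detail the paper leaves implicit, but otherwise the arguments coincide.
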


\begin{proof}
Observe that $H^n(\Gamma_K,M/M_i)=0$ for $n\ge 2$ and all $i$ by \cite[Proposition 1.7.7]{nsw}.  For $n\ge 3$, the claim follows from (\ref{eqn:h2gammaexact}).  For $n=2$, we find instead that $\varprojlim_i^1 H^1(\Gamma_K,M/M_i) \cong H^2(\Gamma_K,M),$ so we need only show that $(H^1(\Gamma_K,M/M_i))_i$ has surjective transition maps.  But we have $(H^1(\Gamma_K,M/M_i))_i = (M/(M_i+(\gamma-1)M))_i$ by Lemma \ref{lem:zpcoh}, so the result follows.
\end{proof}

We prove the inflation-restriction exact sequence for continuous cohomology of profinite groups acting on a special class of modules.

\begin{prop}\label{prop:infres}

Suppose that $H \subseteq G$ are profinite groups, with $H$ closed and normal in $G$.  Assume that $M$ is a continuous $G$-module with $H^k(H,M)=0$ for $1 \le k \le n-1$.  Moreover, assume that there exist open $G$-submodules $M_i \subseteq M$ for $i \in \mathbf{Z}_{\ge 0}$ such that $M_i \subseteq M_{i-1}$ for each $i$ and $M \cong \varprojlim_i M/M_i$ as topological $G$-modules.  Then the inflation and restriction maps induce an exact sequence
\begin{equation}\label{eqn:infres} 0 \rightarrow H^n(G/H, M^H) \rightarrow H^n(G,M) \rightarrow H^n(H,M).\end{equation}

\end{prop}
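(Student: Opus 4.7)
The plan is to induct on $n \ge 1$. The base case $n = 1$ can be verified directly at the cochain level without any vanishing hypothesis: injectivity of inflation on $H^1$ is immediate (a 1-cochain $\phi$ on $G/H$ whose inflation is a coboundary $g \mapsto gm - m$ forces $m \in M^H$ because $\phi$ vanishes on $H$), and if a continuous 1-cocycle $\phi : G \to M$ restricts to $0$ on $H$ then for $g \in G$ and $h \in H$ the cocycle relation gives $\phi(gh) = \phi(g)$ while $\phi(hg) = h\phi(g)$; using normality of $H$ and computing $\phi(g h')$ for $h' = g^{-1}hg \in H$ shows $h\phi(g)=\phi(g)$, so $\phi$ takes values in $M^H$ and factors through $G/H$.

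For the inductive step $n \ge 2$, I plan to use dimension shifting via the continuous coinduced module $N := \operatorname{Map}_{\mathrm{cts}}(G, M)$ equipped with the right-translation action $(g \cdot f)(x) = f(xg)$. The $G$-equivariant inclusion $\iota : M \hookrightarrow N$ by $m \mapsto (g \mapsto g^{-1}m)$ admits a continuous linear splitting (evaluation at $1$), so the quotient $M' := N/\iota(M)$ is a continuous $G$-module that inherits the inverse-limit-of-open-submodules hypothesis from $M$ (one sets $M'_i$ to be the image of $\operatorname{Map}_{\mathrm{cts}}(G, M_i) + \iota(M)$). Two acyclicity facts are required: (a) $N$ is $G$-acyclic for continuous cohomology, via the explicit contracting homotopy $(h\beta)(g_1,\dots,g_{r-1})(g) = \beta(g, g_1, \dots, g_{r-1})(1)$ on continuous cochains; (b) $N$ is also $H$-acyclic, which follows from the coset decomposition $G = \bigsqcup g_\alpha H$ identifying $N|_H$, via an explicit continuous cochain computation, with a generalized $H$-coinduction admitting the analogous contracting homotopy.

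Given (a) and (b), the long exact sequences attached to $0 \to M \to N \to M' \to 0$ yield $H^r(G, M) \cong H^{r-1}(G, M')$ and $H^k(H, M) \cong H^{k-1}(H, M')$ for all $r, k \ge 2$. The hypothesis $H^1(H, M) = 0$ ensures that $0 \to M^H \to N^H \to (M')^H \to 0$ is exact as continuous $G/H$-modules, and the analogous contracting homotopy shows $N^H$ is $G/H$-acyclic, hence $H^n(G/H, M^H) \cong H^{n-1}(G/H, (M')^H)$. Since $H^k(H, M') = 0$ for $1 \le k \le n-2$ by the degree shift, the inductive hypothesis applies to $M'$ at level $n-1$, and naturality of the connecting homomorphisms identifies the inflation-restriction sequence for $(M, n)$ with that for $(M', n-1)$.

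The hard part will be the topological verification of the two acyclicity statements for $N$, especially the $H$-acyclicity when $H$ has infinite index in $G$ (as in $H_K \subseteq G_K$), since $\operatorname{Map}_{\mathrm{cts}}(G, M)$ with the compact-open topology is not literally the product over $G/H$ of copies of $\operatorname{Map}_{\mathrm{cts}}(H, M)$. The Shapiro-style reduction must therefore be executed by writing out a continuous contracting homotopy by hand rather than quoting a module isomorphism. A secondary technical point is confirming that $M'$ genuinely satisfies the standing inverse-limit hypothesis, i.e.\ that the natural map $M' \to \varprojlim M'/M'_i$ is a topological isomorphism; this reduces to the fact that $\iota$ has a continuous section, but must be stated carefully to apply the inductive hypothesis.
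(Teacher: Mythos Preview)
Your overall strategy---dimension shifting via the coinduced module $N=\operatorname{Map}_{\mathrm{cts}}(G,M)$, verifying acyclicity of $N$ for $G$, $H$, and $G/H$, checking that the quotient $M'$ again satisfies the inverse-limit hypothesis, and then inducting---is exactly the framework the paper uses. The paper's $\ind^G M$ is your $N$, and the paper's 9-lemma argument is precisely the verification you flag as the ``secondary technical point'' that $M'\cong\varprojlim_i M'/M'_i$.

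The one substantive difference is in how acyclicity is established. You plan explicit contracting homotopies, and you correctly anticipate that the $H$-acyclicity step is delicate because $N|_H$ is not literally a product of copies of $\operatorname{Map}_{\mathrm{cts}}(H,M)$. The paper sidesteps this entirely: rather than working with $N$ directly, it applies Proposition~\ref{prop:profcoh} (the $\varprojlim^1$ exact sequence, already proved) to reduce each of the three acyclicity statements to the corresponding statement for the \emph{discrete} modules $\ind^G(M/M_i)$, where classical results (e.g.\ \cite[Proposition 1.3.6.(ii)]{nsw} for the $H$-case) apply immediately. This is cleaner and avoids the topological subtleties you anticipate; since you have the $\varprojlim^1$ machinery available, you might prefer it.

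One small slip: with the right-translation action $(g\cdot f)(x)=f(xg)$, the map $m\mapsto(g\mapsto g^{-1}m)$ is \emph{not} $G$-equivariant; you want $m\mapsto(g\mapsto gm)$ instead (or switch to the left-translation action $(g\cdot f)(x)=f(g^{-1}x)$, which is what the paper uses). This does not affect the argument.
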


We use dimension shifting, so we need to construct an induced module for continuous cohomology.

\begin{lem} \label{lem:indcoh}

Let $G$ be profinite, and let $M$ and $M_i$ be as in Proposition \ref{prop:infres}.  Define $\ind^G M$ to be the set of continuous maps $\xi: G \rightarrow M$ equipped with the $G$-action $(g\xi)(g') = \xi(g^{-1}g')$ for $g,g' \in G$ and the projective limit topology coming from the identification $\ind^G M \cong \varprojlim_i \ind^G (M/M_i)$, where $\ind^G (M/M_i)$ has the discrete topology.  Then $H^n(G,\ind^G M) = 0$ for $n\ge 1$.  If $H \subseteq G$ is a closed subgroup, then $H^n(H,\ind^G M) = 0$ for $n \ge 1$.  If $H$ is also normal, $H^n(G/H,(\ind^G M)^H)=0$ for $n\ge 1$ as well.

\end{lem}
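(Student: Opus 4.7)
The plan is to exhibit an explicit continuous contracting homotopy on the complex of homogeneous cochains. Recall that for a continuous $G$-module $N$, the complex $C^n_{\mathrm{hom}}(G,N)$ of continuous $G$-equivariant maps $\phi\colon G^{n+1}\to N$ (with $G$ acting diagonally by left translation on $G^{n+1}$) endowed with the simplicial differential $d\phi(g_0,\dots,g_{n+1}) = \sum_{i=0}^{n+1}(-1)^i\phi(g_0,\dots,\hat{g_i},\dots,g_{n+1})$ computes $H^n(G,N)$. For $N=\ind^G M$, the $G$-equivariance of $\phi$ becomes $\phi(gg_0,\dots,gg_n)(y) = \phi(g_0,\dots,g_n)(g^{-1}y)$ for all $y\in G$.

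For $n\ge 1$, I define $s\colon C^n_{\mathrm{hom}}(G,\ind^G M) \to C^{n-1}_{\mathrm{hom}}(G,\ind^G M)$ by
\[(s\phi)(g_0,\dots,g_{n-1})(x) = \phi(x,g_0,\dots,g_{n-1})(x).\]
A short computation using the equivariance identity shows $s\phi(gg_0,\dots,gg_{n-1}) = g\cdot s\phi(g_0,\dots,g_{n-1})$, so $s\phi$ lands in $C^{n-1}_{\mathrm{hom}}$. Moreover, $sd\phi(g_0,\dots,g_n)(x) = d\phi(x,g_0,\dots,g_n)(x)$ produces the term $\phi(g_0,\dots,g_n)(x)$ (from omitting the leading $x$) together with $\sum_{i=1}^{n+1}(-1)^i\phi(x,g_0,\dots,\hat{g_{i-1}},\dots,g_n)(x)$. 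Reindexing via $j=i-1$ exhibits this sum as the negative of $ds\phi(g_0,\dots,g_n)(x) = \sum_{j=0}^n(-1)^j\phi(x,g_0,\dots,\hat{g_j},\dots,g_n)(x)$, so $sd+ds = \mathrm{id}$ on $C^n_{\mathrm{hom}}$ for $n\ge 1$ and hence $H^n(G,\ind^G M)=0$ for $n\ge 1$.

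For the second claim, the same formula for $s$ preserves $H$-equivariance for any closed subgroup $H\subseteq G$, since the verification above involves only $h\in H$ in place of $g\in G$. Hence $s$ restricts to a contracting homotopy on the subcomplex $C^n_{\mathrm{hom}}(H,\ind^G M)$, giving $H^n(H,\ind^G M)=0$ for $n\ge 1$. For the third claim, when $H$ is normal, an $H$-invariant continuous map $\xi\colon G\to M$ factors uniquely through the profinite quotient $G/H$, and the residual $G/H$-action on $(\ind^G M)^H$ matches the coinduction from the trivial subgroup of $G/H$; this yields a natural identification $(\ind^G M)^H\cong \ind^{G/H}M$ as $G/H$-modules, so applying the first claim with $G/H$ in place of $G$ (the $M_i$ hypothesis is preserved) gives $H^n(G/H,(\ind^G M)^H)=0$ for $n\ge 1$.

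The main technical obstacle is verifying continuity of $s\phi$ in the projective-limit topology $\ind^G M = \varprojlim_i \ind^G(M/M_i)$, where each $\ind^G(M/M_i)$ is discrete. It suffices to show that each reduction $(s\phi)_i\colon G^n\to \ind^G(M/M_i)$ is locally constant. Given $\vec g_0\in G^n$, for each $x\in G$ local constancy of $\phi_i$ at $(x,\vec g_0)\in G^{n+1}$, together with the fact that $\phi_i(x,\vec g_0)$ factors through $G/U$ for some open subgroup $U\subseteq G$, produces an open product neighborhood on which the function $\phi_i(y,\vec g)(y)$ is constant. Compactness of $G$ then lets one intersect finitely many such neighborhoods in the $\vec g$-coordinate to obtain the required open neighborhood of $\vec g_0$ in $G^n$.
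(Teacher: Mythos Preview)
Your argument is correct, but it follows a different route from the paper's. The paper proves the lemma by invoking Proposition~\ref{prop:profcoh}: one writes $\ind^G M = \varprojlim_i \ind^G(M/M_i)$, uses the short exact sequence
\[
0 \rightarrow {\varprojlim_i}^1 H^{n-1}(G,\ind^G(M/M_i)) \rightarrow H^n(G,\ind^G M) \rightarrow \varprojlim_i H^n(G,\ind^G(M/M_i)) \rightarrow 0,
\]
and then appeals to the classical vanishing of cohomology of induced discrete modules (for $G$, and for $H$ via \cite[Proposition~1.3.6(ii)]{nsw}). The cases $n\ge 2$ follow immediately, while $n=1$ requires checking that the transition maps on $H^0$ are surjective. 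The statement for $G/H$ is handled, as you do, by identifying $(\ind^G M)^H$ with $\ind^{G/H} M$.

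Your approach is more direct: the explicit contracting homotopy $s$ kills all $H^n$ for $n\ge 1$ at once, uniformly for $G$, $H$, and $G/H$, with no separate treatment of $n=1$ and no appeal to the $\varprojlim^1$ machinery or to external references about discrete induced modules. The cost is the compactness argument for continuity of $s\phi$, which you sketch correctly (for each $x$ one intersects the neighborhood on which $\phi_i$ is constant with a coset on which the value $\phi_i(x,\vec g_0)$ is constant, then passes to a finite subcover in the $x$-variable). The paper's approach, by contrast, reuses Proposition~\ref{prop:profcoh}, which is already needed elsewhere, so it is economical in context; yours is more self-contained.
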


\begin{proof}
By Proposition \ref{prop:profcoh}, we have an exact sequence
\[0 \rightarrow {\varprojlim_i}^1 H^{n-1}(G,\ind^G (M/M_i)) \rightarrow H^n(G,\ind^G M) \rightarrow \varprojlim_i H^n(G,\ind^G (M/M_i)) \rightarrow 0.\]
The usual vanishing of cohomology of induced modules implies that $H^n(G,\ind^G M) = 0$ for $n \ge 2$.  If $n=1$, the same follows from $H^0(G,\ind^G (M/M_i))=M/M_i$.

Consider the exact sequence
\[0 \rightarrow {\varprojlim_i}^1 H^{n-1}(H,\ind^G (M/M_i)) \rightarrow H^n(H,\ind^G M) \rightarrow \varprojlim_i H^n(H,\ind^G (M/M_i)) \rightarrow 0.\]
By \cite[Proposition 1.3.6.(ii)]{nsw}, $\ind^G (M/M_i)$ is also an induced $H$-module, so we obtain vanishing of $H^n(H,\ind^G M)$ for $n \ge 2$.  (The definition of induced module in \cite{nsw} has the action $(g\xi')(g') = g\xi'(g^{-1}g')$ instead, but we can construct a $G$-equivariant isomorphism between these two definitions by mapping $\xi'$ with $(g\xi')(g') = g\xi'(g^{-1}g')$ to the function $\xi(g) = g^{-1}\xi'(g)$.)  If $n=1$, we need to show that the transition maps $H^0(H,\ind^G (M/M_{i+1})) \rightarrow H^0(H,\ind^G (M/M_i))$ are surjective.  Note that $(\ind^G (M/M_{i+1}))^H$ just consists of the functions that factor through $H\backslash G$, so since any map $H\backslash G \rightarrow M/M_i$ can be lifted (by discreteness) to a map $H\backslash G \rightarrow M/M_{i+1}$, the surjectivity is clear.

If $H$ is normal, we may identify $(\ind^G M)^H$ with $\ind^{G/H} M$, so $H^n(G/H,(\ind^G M)^H)=0$ for $n \ge 1$.
\end{proof}

\begin{proof}[Proof of Proposition \ref{prop:infres}]
For $n=1$, this is well known, so assume $n \ge 2$.

There is a closed $G$-equivariant embedding $M_i \rightarrow \ind^G M_i$ taking $m$ to $g \mapsto g^{-1}m$, and similarly for $M$ and $M/M_i$.  (To see that the embedding is closed, recall from the proof of Lemma \ref{lem:indcoh} that if we define the induced module using $(g\xi')(g') = g\xi'(g^{-1}g')$, the resulting module is isomorphic to the original.  The $G$-equivariant embedding into the new module takes $M, M_i,$ or $M/M_i$ into the constant functions, which are easily seen to form a closed subspace.)  Write $M_i' = (\ind^G M_i)/M_i$ and $M' = (\ind^G M)/M$ with the topologies induced from $\ind^G M_i$ and $\ind^G M$.  The quotient $M'/M_i'$ then carries the discrete topology.  The commutative diagram
\[\xymatrix@R=10pt{ & 0 \ar[d] & 0 \ar[d] & 0 \ar[d] & \\ 0 \ar[r] & M_i \ar[r] \ar[d] & \ind^G(M_i) \ar[r] \ar[d] & M_i' \ar[r] \ar[d] & 0 \\ 0 \ar[r] & M \ar[r] \ar[d] & \ind^G(M) \ar[r] \ar[d] & M' \ar[r] \ar[d] & 0 \\ 0 \ar[r] & M/M_i \ar[r] \ar[d] & \ind^G(M/M_i) \ar[r] \ar[d] & M'/M_i' \ar[r] \ar[d] & 0 \\ & 0 & 0 & 0 & }\]
has first and third column exact by definition.  The horizontal maps are continuous, as are the vertical maps in the left two columns.  The map $M_i' \rightarrow M'$ is continuous and open, since the composite map $\ind^G(M_i) \rightarrow \ind^G(M) \rightarrow M'$ is continuous and open and $M_i'$ has the topology induced from the surjection $\ind^G(M_i)$.  It follows that every map is continuous.  To see that the second column is also exact, note that since $M/M_i$ is discrete, there is a continuous section to $M \rightarrow M/M_i$, so any continuous map $G \rightarrow M/M_i$ can be lifted to one into $M$.  The first two rows are exact by definition of $M_i'$ and $M'$, so the third row is exact by the 9-lemma.  If we take $H$-invariants of this diagram, left-exactness shows that $M^H/M_i^H$, $(\ind^G(M))^H/(\ind^G(M_i))^H$, and $M^{\prime H}/M_i^H$ are discrete.  The system $M/M_i$ has the Mittag-Leffler property, so if we look at the morphism from the middle row of the diagram to the limit over $i$ of the third row, we find that $M' \cong \varprojlim_i M'/M_i'$.

We have an exact sequence of $G$-modules
\begin{equation}\label{eqn:exactgh} 0 \rightarrow M \rightarrow \ind^G M \rightarrow M' \rightarrow 0.\end{equation}
We construct a topological section to $\ind^G M \rightarrow M'$ by choosing a compatible family of sections $M'/M_i' \rightarrow \ind^G (M/M_i)$.  (Note that since the topology on $M'/M_i'$ is discrete, we may do this by picking an arbitrary section for $i=0$ and then choosing arbitrary liftings when incrementing $i$.)  By the assumption $H^1(H,M)=0$, we also have an exact sequence
\begin{equation}\label{eqn:exactgmodh} 0 \rightarrow M^H \rightarrow (\ind^G M)^H \rightarrow M^{\prime H} \rightarrow 0.\end{equation}
We claim that there is again a continuous topological section to the surjection.  For this, observe that by the above, $(\ind^G M_i)^H$ (resp.\ $M_i^{\prime H}$) is open in $(\ind^G M)^H$ (resp.\ $M^{\prime H}$), and $\cap_i (\ind^G M_i)^H = 0$ (resp.\ $\cap_i M_i^{\prime H}=0$).  Since $(\ind^G M)^H$ (resp.\ $M^{\prime H}$) is closed in $\ind^G M$ (resp.\ $M'$), we have $(\ind^G M)^H \cong \varprojlim_i (\ind^G M)^H/(\ind^G M_i)^H$ (resp.\ $M^{\prime H} \cong \varprojlim_i M^{\prime H}/M_i^{\prime H}$).  We pick compatible sections $M^{\prime H}/M_i^{\prime H} \rightarrow (\ind^G M)^H/(\ind^G M_i)^H$ to construct a continuous section.  

From vanishing of cohomology in Lemma \ref{lem:indcoh} and the exact sequences (\ref{eqn:exactgh}) and (\ref{eqn:exactgmodh}), we obtain isomorphisms
\begin{align*} H^n(G,M') \iso H^{n+1}(G,M), H^n(H,M') &\iso H^{n+1}(H,M),\\  \textrm{ and }H^n(G/H,M^{\prime H}) &\iso H^{n+1}(G/H,M^H)\end{align*}
for $n \ge 1$.

To obtain (\ref{eqn:infres}), we induct on $n$; the base case is the usual inflation-restriction sequence for 1-cocycles.  If we assume the result for $n$, the upper row of
\[\xymatrix@R=15pt{ 0 \ar[r] & H^n(G/H,M^{\prime H}) \ar[r] \ar[d]^{\begin{sideways}$\sim$\end{sideways}} & H^n(G,M') \ar[r]\ar[d]^{\begin{sideways}$\sim$\end{sideways}}  & H^n(H,M') \ar[d]^{\begin{sideways}$\sim$\end{sideways}} \\ 0 \ar[r] & H^{n+1}(G/H,M^H) \ar[r] & H^{n+1}(G,M) \ar[r] & H^{n+1}(H,M)}\]
is exact by the inductive hypothesis using the isomorphisms $H^k(H,M') \iso H^{k+1}(H,M) = 0$ for $k=1,\dots,n-1$.  The downward maps are isomorphisms, so the lower row is exact as well.
\end{proof}

\subsection{Vanishing of cohomology for bounded and unbounded periods}

We may now prove a vanishing theorem for continuous cohomology of Hodge-Tate and bounded de Rham periods.

\begin{thm}\label{thm:gkvanishbdd}

Suppose that $E$ and $K$ are finite extensions of $\mathbf{Q}_p$ such that $E$ contains the normal closure of $K$, $\mathfrak{R}$ is a Noetherian $E$-Banach algebra, and the finitely generated $\mathfrak{R}$-Banach module $\mathfrak{M}$ is equipped with a continuous $\mathfrak{R}$-linear action of $G_K$.  Fix $\sigma \in \Sigma$ as before.  Then for any $n \ge 2$, we have $H^n(G_K,t^k\bdrk{\ell}\hat{\otimes}_{K,\sigma} \mathfrak{M})=0$ for $k \le \ell \in \mathbf{Z}$, including the special case $H^n(G_K,\mathbf{C}_p(k)\hat{\otimes}_{K,\sigma} \mathfrak{M})=0$ for $k \in \mathbf{Z}$.

\end{thm}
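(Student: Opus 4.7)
The plan is to proceed in two steps, as suggested at the start of Section~\ref{sec:highercoh}: first establish the Hodge-Tate case $H^n(G_K,\mathbf{C}_p(k)\hat{\otimes}_{K,\sigma}\mathfrak{M})=0$ for $n\ge 2$ by combining Theorem~\ref{thm:hkvanish} with Propositions~\ref{prop:gammakcohvanish} and~\ref{prop:infres}, and then induct on $\ell-k$ in the bounded de Rham case via the short exact sequence (\ref{eqn:bdrexact1}).

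For the Hodge-Tate case, I set $M=\mathbf{C}_p(k)\hat{\otimes}_{K,\sigma}\mathfrak{M}$. Since $G_K$ is compact and acts continuously on the Banach module $M$, an equicontinuity argument starting from the unit ball produces a bounded open $G_K$-stable $\mathbf{Z}_p$-sublattice $L\subseteq M$; setting $M_i=p^iL$ yields a descending sequence of open $G_K$-submodules with each $M/M_i$ a discrete $p^\infty$-torsion module and $M\cong\varprojlim_i M/M_i$ topologically (by completeness of $M$). Theorem~\ref{thm:hkvanish} provides $H^k(H_K,M)=0$ for $k\ge 1$, so Proposition~\ref{prop:infres} applies in every degree $n\ge 2$ and produces the exact sequence
\[0\to H^n(\Gamma_K,M^{H_K})\to H^n(G_K,M)\to H^n(H_K,M)=0,\]
reducing the claim to the vanishing of $H^n(\Gamma_K,M^{H_K})$. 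Taking $H_K$-invariants of $0\to M_i\to M\to M/M_i\to 0$ and using $H^1(H_K,M_i)=0$ shows $M^{H_K}/M_i^{H_K}$ is again discrete $p^\infty$-torsion, and because $M^{H_K}$ is closed in $M$ it is identified with $\varprojlim_i M^{H_K}/M_i^{H_K}$ topologically. Proposition~\ref{prop:gammakcohvanish} then yields the desired vanishing for $n\ge 2$.

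For the bounded de Rham statement I induct on $\ell-k$. When $\ell\le k$ the module is zero, and $\ell=k+1$ is the Hodge-Tate case just handled. For $\ell>k+1$, tensoring (\ref{eqn:bdrexact1}) (with $\ell$ replaced by $\ell-1$) by $\mathfrak{M}$ gives
\[0\to\mathbf{C}_p(\ell-1)\hat{\otimes}_{K,\sigma}\mathfrak{M}\to t^k\bdrk{\ell}\hat{\otimes}_{K,\sigma}\mathfrak{M}\to t^k\bdrk{\ell-1}\hat{\otimes}_{K,\sigma}\mathfrak{M}\to 0,\]
which inherits a continuous additive section from the splittings underlying Remark~\ref{remark:hkvanish} and therefore induces a long exact sequence in continuous $G_K$-cohomology. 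In degrees $n\ge 2$ the outer terms vanish by the Hodge-Tate case (after twisting) and by the inductive hypothesis respectively, so the middle term vanishes as well.

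The main obstacle is the topological set-up in the Hodge-Tate step: one must verify that $\mathbf{C}_p(k)\hat{\otimes}_{K,\sigma}\mathfrak{M}$ admits a $G_K$-stable open-lattice filtration whose quotients, together with their $H_K$-invariants, form compatible systems of discrete $p^\infty$-torsion modules recovering $M$ and $M^{H_K}$ as topological inverse limits, so that the hypotheses of Propositions~\ref{prop:infres} and~\ref{prop:gammakcohvanish} are literally satisfied. Once this is in place, everything else is routine bookkeeping with the long exact sequence.
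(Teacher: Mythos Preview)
Your proposal is correct and follows essentially the same route as the paper: vanish $H_K$-cohomology via Theorem~\ref{thm:hkvanish}, apply the continuous inflation--restriction of Proposition~\ref{prop:infres} to reduce to $\Gamma_K$-cohomology of $M^{H_K}$, kill that with Proposition~\ref{prop:gammakcohvanish}, and then induct on $\ell-k$ using the filtration sequence with its continuous section. The only difference is cosmetic: the paper takes the filtration on $M^{H_K}$ to be the open balls of radius $p^{-i}$ directly, whereas you first produce a $G_K$-stable lattice $L\subseteq M$ by equicontinuity and then intersect; your version has the virtue of making the $G_K$-stability hypothesis in Proposition~\ref{prop:infres} visibly satisfied on $M$ itself.
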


\begin{proof}
Note that $\mathbf{C}_p(j)\hat{\otimes}_{K,\sigma} \mathfrak{M}$ is an $H_K$-semilinear $\mathbf{C}_p$-Banach space.  Write $N = (\mathbf{C}_p(j)\hat{\otimes}_{K,\sigma} \mathfrak{M})^{H_K}$ for any fixed $j \in \mathbf{Z}$.  Let $N_i$ be the open ball in $N$ of radius $p^{-i}$ centered at 0.  Then $N = \varprojlim_i N/N_i$ and each $N/N_i$ is a discrete $p$-torsion module.

By Theorem \ref{thm:hkvanish}, $H^n(H_K,\mathbf{C}_p(j)\hat{\otimes}_{K,\sigma} \mathfrak{M})=0$ for $n \ge 1$.  It follows from Proposition \ref{prop:infres} that $H^n(G_K,\mathbf{C}_p(j)\hat{\otimes}_{K,\sigma} \mathfrak{M}) \cong H^n(\Gamma_K,N)$.  By Proposition \ref{prop:gammakcohvanish}, $H^n(\Gamma_K,N)=H^n(G_K,\mathbf{C}_p(j)\hat{\otimes}_{K,\sigma} \mathfrak{M})=0$ for $n \ge 2$.  This handles the $\mathbf{C}_p(j)$ case, which is also the $k-\ell=1$ case for bounded de Rham periods.  For larger $k-\ell$, we obtain the result using induction via the long exact sequence in cohomology associated to
\[0 \rightarrow \mathbf{C}_p(k-1)\hat{\otimes}_{K,\sigma} \mathfrak{M} \rightarrow t^\ell\bdrk{k}\hat{\otimes}_{K,\sigma} \mathfrak{M} \rightarrow t^{\ell}\bdrk{k-1}\hat{\otimes}_{K,\sigma} \mathfrak{M} \rightarrow 0.\]
As noted in Remark \ref{remark:hkvanish}, a continuous section to the surjection exists.
\end{proof}

We pass to the limit using Proposition \ref{prop:profcoh} and Lemma \ref{lem:indlim}.

\begin{thm}\label{thm:gkvanishdr}

Maintain the hypotheses of Theorem \ref{thm:gkvanishbdd}.  We have
\[H^n(G_K,t^k\bdr^+ \hat{\otimes}_{K,\sigma} \mathfrak{M})=0\textrm{ and }H^n(G_K,\bdr \hat{\otimes}_{K,\sigma} \mathfrak{M})=0\]
for $n \ge 2$ and $k \in \mathbf{Z}$.

\end{thm}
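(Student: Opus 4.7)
The plan is to bootstrap from the bounded vanishing in Theorem \ref{thm:gkvanishbdd} to the two unbounded cases using the inverse-limit tool Proposition \ref{prop:profcoh} for $t^k\bdr^+$ and the direct-limit identification Lemma \ref{lem:indlim} for $\bdr$. For the $t^k\bdr^+$ case, I would first identify $t^k\bdr^+ \hat{\otimes}_{K,\sigma} \mathfrak{M} \cong \varprojlim_\ell (t^k\bdrk{\ell} \hat{\otimes}_{K,\sigma} \mathfrak{M})$ using \cite[Proposition 1.1.29]{emerton}, exactly as in the proof of Proposition \ref{prop:nicespecial}. The hypotheses of Proposition \ref{prop:profcoh} then require continuous additive sections of each transition map $t^k\bdrk{\ell+1} \hat{\otimes}_{K,\sigma} \mathfrak{M} \surj t^k\bdrk{\ell} \hat{\otimes}_{K,\sigma} \mathfrak{M}$; these come from Remark \ref{remark:hkvanish} after translating by the power of $t$.

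Having set up Proposition \ref{prop:profcoh}, I get for each $n \ge 2$ an exact sequence
\[0 \to {\varprojlim_\ell}^1 H^{n-1}(G_K, t^k\bdrk{\ell} \hat{\otimes}_{K,\sigma} \mathfrak{M}) \to H^n(G_K, t^k\bdr^+ \hat{\otimes}_{K,\sigma} \mathfrak{M}) \to \varprojlim_\ell H^n(G_K, t^k\bdrk{\ell} \hat{\otimes}_{K,\sigma} \mathfrak{M}) \to 0.\]
Theorem \ref{thm:gkvanishbdd} kills the rightmost term for every $n \ge 2$. For $n \ge 3$, Theorem \ref{thm:gkvanishbdd} also kills every $H^{n-1}(G_K, t^k\bdrk{\ell} \hat{\otimes}_{K,\sigma} \mathfrak{M})$ (since $n - 1 \ge 2$), so the projective system is zero and its $\varprojlim^1$ vanishes trivially. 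The only case demanding real work is $n = 2$, where I must check that the system $(H^1(G_K, t^k\bdrk{\ell} \hat{\otimes}_{K,\sigma} \mathfrak{M}))_\ell$ has surjective transition maps (Mittag-Leffler). The long exact sequence attached to
\[0 \to \mathbf{C}_p(\ell) \hat{\otimes}_{K,\sigma} \mathfrak{M} \to t^k\bdrk{\ell+1} \hat{\otimes}_{K,\sigma} \mathfrak{M} \to t^k\bdrk{\ell} \hat{\otimes}_{K,\sigma} \mathfrak{M} \to 0\]
places the obstruction to such surjectivity in $H^2(G_K, \mathbf{C}_p(\ell) \hat{\otimes}_{K,\sigma} \mathfrak{M})$, which once again vanishes by Theorem \ref{thm:gkvanishbdd}. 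Hence $\varprojlim^1 = 0$ and $H^2(G_K, t^k\bdr^+ \hat{\otimes}_{K,\sigma} \mathfrak{M}) = 0$.

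Finally, the $\bdr$ case is immediate from Lemma \ref{lem:indlim}: the natural map $\varinjlim_k H^n(G_K, t^k\bdr^+ \hat{\otimes}_{K,\sigma} \mathfrak{M}) \to H^n(G_K, \bdr \hat{\otimes}_{K,\sigma} \mathfrak{M})$ is an isomorphism, and every term in the colimit already vanishes by what was just proved. The main conceptual obstacle is the $n = 2$ case for $t^k\bdr^+$, where naive degree-counting fails and one must observe that the \emph{same} vanishing statement (for $\mathbf{C}_p(\ell)$-coefficients, in degree $2$) provides the Mittag-Leffler input; a minor housekeeping obstacle is verifying the continuous-section hypothesis, but this reduces to Remark \ref{remark:hkvanish} by twisting.
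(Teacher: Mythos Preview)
Your proposal is correct and matches the paper's proof essentially step for step: the same inverse-limit identification via \cite[Proposition 1.1.29]{emerton}, the same invocation of Proposition \ref{prop:profcoh} with the continuous sections from Remark \ref{remark:hkvanish}, and the same appeal to Lemma \ref{lem:indlim} for $\bdr$. The only cosmetic difference is in the $n=2$ step: the paper cites the already-established six-term sequence (\ref{eqn:exactsnake}) to read off surjectivity of the $H^1$ transition maps, whereas you argue via the long exact sequence and the vanishing of $H^2(G_K,\mathbf{C}_p(\ell)\hat{\otimes}_{K,\sigma}\mathfrak{M})$ from Theorem \ref{thm:gkvanishbdd}; these are the same observation.
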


\begin{proof}
Note that $t^k\bdr^+ \hat{\otimes}_{K,\sigma} \mathfrak{M} \cong \varprojlim_\ell (t^k\bdrk{\ell} \hat{\otimes}_{K,\sigma} \mathfrak{M})$ by (\ref{eqn:switchprojlim}).  By Proposition \ref{prop:profcoh} and the continuous sections from Remark \ref{remark:hkvanish}, we have short exact sequences
\begin{align*} 0 \rightarrow {\varprojlim_\ell}^1 H^{n-1}(G_K,t^k\bdrk{\ell}\hat{\otimes}_{K,\sigma} \mathfrak{M}) &\rightarrow H^n(G_K,t^k\bdr^+ \hat{\otimes}_{K,\sigma} \mathfrak{M})\\
	&\rightarrow \varprojlim_\ell H^n(G_K,t^k\bdrk{\ell}\hat{\otimes}_{K,\sigma} \mathfrak{M})\rightarrow 0.
\end{align*}
For $n \ge 3$, the vanishing of the middle term follows from Theorem \ref{thm:gkvanishbdd}.  For $n=2$, we use the exact sequence (\ref{eqn:exactsnake}) to see that the transition maps of the left-hand term are surjective.  The result for $\bdr$ follows from Lemma \ref{lem:indlim}.
\end{proof}

\appendix\section{Approach via invariants} \label{sec:appendix}

In this section, we briefly describe an alternative approach to the results of the preceding sections by replacing the base change result Proposition \ref{prop:dr1cocycle} with a weaker statement for invariants.  This may be valuable in situations where 1-cocycles are poorly behaved.

\begin{lem} \label{lem:regbound}

Let $R$ be a regular local ring with fraction field $K$ and residue field $\kappa(R)$, let $M$ and $N$ be projective $R$-modules, and let $\psi: M \rightarrow N$ be an $R$-linear homomorphism.  Then if $\dim_K \ker (\psi \otimes_R K)$ is finite,
\begin{equation} \label{eqn:regbound} \dim_{\kappa(R)} \ker(\psi \otimes_R \kappa(R)) \ge \dim_K \ker (\psi \otimes_R K),\end{equation}
where the left-hand side may be infinite, and if $\dim_K \ker (\psi \otimes_R K)$ is infinite, then $\dim_{\kappa(R)} \ker(\psi \otimes_R \kappa(R))$ is infinite as well.

\end{lem}

\begin{proof}

Note that $M$ and $N$ are free by Kaplansky's theorem \cite{kaplansky}.

Let $(x_1,\dots,x_n)$ be a regular system of parameters of $R$.  Define
\[R_i = R/(x_1,\dots,x_i) \textrm{ and } K_i = \Frac(R/(x_1,\dots,x_i)).\]
Then $K_0 = K$ and $K_n = \kappa(R)$.  We will prove for each $i$ that either
\[\dim_{K_{i+1}} \ker(\psi \otimes_R K_{i+1}) \ge \dim_{K_i} \ker (\psi \otimes_R K_i),\]
or $\dim_{K_{i+1}} \ker(\psi \otimes_R K_{i+1})$ is infinite.  The inequality (\ref{eqn:regbound}) will follow.

The localization $S_i = (R_i)_{(x_{i+1})}$ of the regular local ring $R_i$ is a DVR, which is hereditary, so all submodules of projective $S_i$-modules are projective.  We have $\Frac(S_i) = K_i$.  Since the $S_i$-module $K_i$ is flat, there is a natural identification
\begin{equation}\label{eqn:kiswitch} \ker(\psi \otimes_R S_i) \otimes_{S_i} K_i \iso \ker(\psi \otimes_R K_i)\end{equation}
by Lemma \ref{lem:niceend}.  Since $\ker(\psi \otimes_R S_i)$ is a submodule of a projective module, it is projective.  From (\ref{eqn:kiswitch}) and Kaplansky's theorem, it follows that $\ker(\psi \otimes_R S_i)$ is free of rank 
\begin{equation} \label{eqn:regrankdim} \rank_{S_i} \ker(\psi \otimes_R S_i)=\dim_{K_i}\ker(\psi \otimes_R K_i),\end{equation}
possibly infinite.

We have an exact sequence of $S_i$-modules
\begin{equation} \label{eqn:siseq} 0 \rightarrow \ker(\psi \otimes_R S_i) \rightarrow M \otimes_R S_i \rightarrow N \otimes_R S_i.\end{equation}
Since $S_i$ is hereditary and $N \otimes_R S_i$ is projective, $\im(\psi \otimes_R S_i)$ is projective as well.  By projectivity and Kaplansky's theorem again, we obtain a splitting
\[M \otimes_R S_i \cong \ker(\psi \otimes_R S_i) \oplus \coim(\psi \otimes_R S_i)\]
of $M\otimes_R S_i$ as the direct sum of two free $S_i$ modules, with $\psi \otimes_R S_i$ vanishing on the first factor and injective on the second.  We have $S_i/(x_{i+1}) = K_{i+1}$.  The splitting and (\ref{eqn:regrankdim}) imply that
\[\dim_{K_{i+1}} \ker(\psi \otimes_R K_{i+1}) \ge \rank_{S_i} \ker(\psi \otimes_R S_i) = \dim_{K_i}\ker(\psi \otimes_R K_i),\]
where both sides are possibly infinite, but the left hand side is infinite if the right hand side is infinite.
\end{proof}

This lemma allows one to directly work with the invariants rather than 1-cocycles when studying base change.  One first takes a resolution of singularities of the rigid analytic space so that the local rings become regular.  One reference for the existence of such a resolution is the work of Temkin \cite[Theorem 5.2.2]{tem}; the result is originally due to Bierstone-Milman \cite[\S I.(0.1).(2)]{bm}.  Then Lemma \ref{lem:regbound} and Theorem \ref{thm:dr1cocycle} give opposite inequalities for the dimension of periods (rather than 1-cocycles) before and after specialization, which can then be used in the subsequent arguments as above.  At the end, it is easy to deduce consequences for the original affinoid space as the fibers of the resolution have constant Galois representations when specializing the pullback of the module.  The statements regarding base change on strata are slighly weaker with this approach due to the need to exclude the singular locus.

\bibliography{InterpolatingPeriods}
\bibliographystyle{plain}

\end{document}